\newcommand{\mysection}[1]{\section{#1}
\setcounter{equation}{0}}
\newtheorem{theorem}{Theorem}[section]
\newtheorem{corollary}[theorem]{Corollary}
\newtheorem{lemma}[theorem]{Lemma}
\newtheorem{proposition}[theorem]{Proposition}
\theoremstyle{definition}
\newtheorem{remark}[theorem]{Remark}
\theoremstyle{definition}
\theoremstyle{definition}
\newtheorem{assumption}[theorem]{Assumption}
\def\dashint{\operatorname%
{\,\,\text{\bf--}\kern-.98em\DOTSI\intop\ilimits@\!\!}}
\def\vc{\textit{\textbf{c}}}
\def\vu{\textit{\textbf{u}}}
\def\vv{\textit{\textbf{v}}}
\def\vw{\textit{\textbf{w}}}
\def\vf{\textit{\textbf{f}}}
\def\vg{\textit{\textbf{g}}}
\def\vh{\textit{\textbf{h}}}
\def\vP{\textit{\textbf{P}}}
\def\rA{{\sf A}}
\def\rB{{\sf B}}
\def\bA{\mathbb{A}}
\def\bO{\mathbb{O}}
\def\bR{\mathbb{R}}
\def\bZ{\mathbb{Z}}
\def\bH{\mathbb{H}}
\def\bC{\mathbb{C}}
\def\cA{\mathcal{A}}
\def\cB{\mathcal{B}}
\def\cC{\mathcal{C}}
\def\cD{\mathcal{D}}
\def\cH{\mathcal{H}}
\def\cM{\mathcal{M}}
\def\cQ{\mathcal{Q}}
\def\cT{\mathcal{T}}
\def\cL{\mathcal{L}}
\def\cI{\mathcal{I}}
\newcommand{\Div}{\operatorname{div}}
\newcommand{\dist}{\text{dist}}
\begin{document}
\title[Higher order elliptic and parabolic systems]{Higher order elliptic and parabolic systems with variably partially BMO coefficients in regular and irregular domains}

\author[H. Dong]{Hongjie Dong}
\address[H. Dong]{Division of Applied Mathematics, Brown University,
182 George Street, Providence, RI 02912, USA}
\email{Hongjie\_Dong@brown.edu}
\thanks{H. Dong was partially supported by a start-up funding from the Division of Applied Mathematics of Brown University and NSF grant number DMS-0800129.}

\author[D. Kim]{Doyoon Kim}
\address[D. Kim]{Department of Applied Mathematics, Kyung Hee University, 1, Seochun-dong, Gihung-gu, Yongin-si, Gyeonggi-do 446-701 Korea}
\email{doyoonkim@khu.ac.kr}
%\thanks{.}

\subjclass[2010]{35K52, 35J58,35R05}

\keywords{higher-order systems, vanishing mean oscillation, partially small BMO coefficients, Sobolev spaces.}

\begin{abstract}
The solvability in Sobolev spaces is proved for divergence form complex-valued higher order parabolic systems in the whole space, on a half space, and on a Reifenberg flat domain. The leading coefficients are assumed to be merely measurable in one spacial direction and have small mean oscillations in the orthogonal directions on each small cylinder.
The directions in which the coefficients are only measurable vary depending on each cylinder. The corresponding elliptic problem is also considered.
\end{abstract}

\maketitle

\setcounter{tocdepth}{1}
\tableofcontents

\mysection{Introduction}

We study the solvability in Sobolev spaces for parabolic operators in divergence form of order $2m$
\begin{equation}							 \label{eq0617_02}
\vu_t + (-1)^m \cL \vu
:=\vu_t+(-1)^m\sum_{|\alpha|\le m,|\beta|\le m}D^\alpha(a_{\alpha\beta}D^\beta\vu),
\end{equation}
where $\alpha$ and $\beta$ are multi-indices,
$a_{\alpha\beta}=[a_{\alpha\beta}^{ij}(x)]_{i,j=1}^n$ are $n\times n$ complex matrix-valued functions,
and $\vu$ is a complex vector-valued function.
As usual, for $\alpha=(\alpha_1,\ldots,\alpha_d)$, we write
$$
D^\alpha \vu =D_1^{\alpha_1}\ldots D_d^{\alpha_d} \vu.
$$
All the coefficients are assumed to be bounded and measurable, and $\cL$ is uniformly elliptic; cf. \eqref{eq11.28}.
In the case that  all the coefficients and functions involved are independent of the time variable, we also deal with elliptic operators in divergence form of order $2m$ as in \eqref{eq0617_02} without the $\vu_t$ term.

Our first focus in this paper is to find minimal regularity assumptions on coefficients for the $L_p$-solvability of {\em higher order} elliptic and parabolic systems.
In other words, we prove that there exist unique solutions in Sobolev spaces (e.g., $W_p^m(\Omega)$, $1<p<\infty$ in the elliptic case) to higher order elliptic and parabolic systems with coefficients having less regularity assumptions than those available in the literature.
We call the class of coefficients in the paper {\em variably partially BMO coefficients}, the key feature of which is that, on each cylinder (or ball in the elliptic case), the coefficients are allowed to have no regularity assumptions (i.e., merely measurable) in one spatial direction.
If we name the spatial direction {\em the measurable direction}, variably partially VMO coefficients mean that, for each small cylinder, there exists a measurable direction, which may depend only on the cylinder, such that coefficients are allowed to be merely measurable in that direction and have small mean oscillations in the orthogonal directions.
See Section \ref{sec082001} for a precise formulation of the assumption.
This class of coefficients was first introduced by N. V. Krylov about two years ago in \cite{Krylov08} in the context of second order elliptic equations in non-divergence form. The same class of coefficients is studied in \cite{DK09} for second order elliptic and parabolic systems in divergence form defined in the whole space. See also a more recent paper \cite{BW10} for second order elliptic equations without lower-order terms and with a symmetric coefficient matrix on a bounded Reifenberg flat domain.

In order to put into perspective the class of coefficients in this paper,
let us review very briefly the coefficients for $L_p$-theory of elliptic and parabolic equations/systems studied in the literature.
%Indeed, in the last five decades there has been a number of studies to find appropriate assumptions on coefficients for the $L_p$-theory of elliptic and parabolic equations/systems.
A rather rich collection of papers about $L_p$-theory is devoted to the study of equations with {\em constant or uniformly continuous coefficients}.
For instance, see \cite{ADN64,A65,Mi06} for elliptic systems and
\cite{Solo,LSU,Fried} for parabolic systems.
There are also a large number of papers concerning equations with discontinuous coefficients.
An important example of discontinuous coefficients is a family of functions with vanishing mean oscillations (VMO), which was firstly considered in \cite{CFL2, BC93, DFG} for second order elliptic and parabolic equations. With this class of coefficients, $L_p$-estimates for higher order equations have been obtained in \cite{CFF}, \cite{HHH}, \cite{PS1}, \cite{PS3} and \cite{MaMiSh}. We also refer the reader to \cite{Lo72} for elliptic equations with piecewise constant coefficients, and \cite{Davies} for results of elliptic equations with measurable coefficients for a restricted range of $m$ or $p$, and \cite{Fried83,AuQa} for H\"older estimates of systems with uniformly continuous or VMO coefficients, as well as \cite{B09} for a result of fourth order elliptic equations in nonsmooth domains.

Recently, in \cite{DK09_01} we considered higher order systems in both divergence and non-divergence form with coefficients having locally small mean oscillations with respect to the spatial variables (and measurable in the time variable in the parabolic case).
We call coefficients in \cite{DK09_01} BMO coefficients, the class of which includes VMO coefficients as a proper subset.
In this paper the coefficients also have small mean oscillations, but need no regularity assumptions in one {\em spatial} direction (determined locally). Thus especially in the elliptic case the coefficients here are strictly more general than those studied in \cite{DK09_01}.
For the parabolic case, once a measurable direction is determined on each cylinder, the coefficients are required to have small mean oscillations in the orthogonal directions, which include the time direction,
whereas the coefficients in \cite{DK09_01} are merely measurable in time, but have small mean oscillations in all the spatial directions.
Thus the two classes of coefficients here and in \cite{DK09_01} are not mutually inclusive.

Our second focus in this paper is to examine the $L_p$-solvability of elliptic and parabolic systems {\em in various domains}.
We first deal with systems in the whole space, where interior $L_p$-estimates are the key ingredients. Then we prove the solvability of systems defined on a half space, where we obtain boundary $L_p$-estimates.
As usual, combining the interior and boundary $L_p$-estimates yields the solvability of systems defined on a domain as long as the boundary is regular, for instance, if the domain is a Lipschitz domain with a small Lipschitz constant.
Indeed, in \cite{DK09_01} elliptic and parabolic systems with BMO coefficients are studied in the whole space, on a half space, and on a Lipschitz domain.
Here we further examine the $L_p$-solvability of systems with variably partially BMO coefficients defined on Reifenberg flat domains (possibly unbounded), which are more general than Lipschitz domains.
As is well known, at small scales the boundary of a Reifenberg flat domain can be approximated by hyper-planes.
We remark that, for the Reifenberg flat domain case, on each small cylinder centered at the boundary, we assume that coefficients are merely measurable in the direction normal to the approximating hyper-plane.

In the half space case, compared to previous results in \cite{KimKrylov07,KimKrylov:par06,DongKim08a} for second order equations, one of the virtues of our results is that
on each small cylinder centered on the boundary, we do not require the measurable direction to be exactly perpendicular to the boundary, but allow it to be sufficiently close to the normal direction. To this end, our proof is founded on a delicate cutoff argument combined with a higher order Hardy-type inequality; see Lemma \ref{gHardy}.
Indeed, due to the nature of the boundary of Reifenberg flat domains, studying the half space case with almost normal measurable directions is readily applied to obtaining the key estimates for systems on Reifenberg flat domains with variably partially BMO coefficients.
For further discussions about equations/systems on Reifenberg flat domains, see Section \ref{Reifenberg}.

%that, for each small ball (or cylinder), coefficients are allowed to be measurable in one spatial direction depending on the ball (or cylinder), and have small mean oscillations in the other directions.

The results in this paper and in \cite{DK09_01} imply several generalizations of the interior H\"older estimate for higher order elliptic equations with VMO coefficients proved in \cite{AuQa} Proposition 50. Indeed, by the Sobolev imbedding theorem, we obtain {\em both interior and boundary} H\"older estimates for higher order elliptic and parabolic systems with coefficients in \cite{DK09_01} and with coefficients considered in this paper.

It is worth mentioning that we impose the uniform ellipticity condition in this paper, while the main results in \cite{DK09_01} are established under a slightly weaker Legendre-Hadamard ellipticity condition. Because the coefficients are measurable in one spatial variable, when getting the $L_2$-estimates (or equivalently the G{\aa}rding inequality) the Fourier transform method used in \cite{DK09_01} is not applicable here.
This is the only place where the uniform ellipticity condition plays its role.

Our proofs for the whole space and half space cases are in the spirit of \cite{Krylov_2005}, in which the author gave a novel method of studying $L_p$ estimates of second order elliptic and parabolic equations in the whole space with rough coefficients.
Differed from the arguments in \cite{CFL2,DFG,CFF,PS3}, which are based on singular integrals and commutator estimates, the crucial step in \cite{Krylov_2005} is to establish certain interior {\em mean oscillation estimates} of solutions to equations with `simple' coefficients, which are measurable in some directions and independent of the others. As a consequence, VMO coefficients are treated in a rather straightforward way. The method in \cite{Krylov_2005} was later further developed in a series of papers
\cite{Krylov_2007_mixed_VMO,KimKrylov07,KimKrylov:par06,Kim:par06,Kim07a,Krylov08,DongKrylov08,DK09,Dong08b} on $L_p$-estimates of second order equations with rather general discontinuous coefficients.

One of the main differences in our study between second order equations and higher order equations is the following. As is explained in \cite{KimKrylov07,KimKrylov:par06,DongKim08a},
when dealing with second order equations (or systems) on a half space with homogeneous Dirichlet or Neumann boundary conditions,
we used the odd and even extension techniques by observing that the odd or even extension of the solution satisfies the equation extended in the whole space.
Then we were able to rely on the fact that the coefficients are allowed to be merely measurable in one spatial direction. Thus the boundary $L_p$-estimates were derived  almost immediately from interior estimates.
However, the extension techniques do not work for higher order equations or systems, since in general extensions over the whole space do not satisfy the extended equations and,
in the non-divergence case, they do not even belong to the correct solution spaces. Because of this, in this paper we use a modified version of a generalized Fefferman-Stein Theorem developed in \cite{Krylov08} in order to produce {\em boundary} mean oscillation estimates of solutions.
This approach for boundary  $L_p$-estimates is applicable to a wide class of equations or systems.

Another worth noting difference comparing to second order equations is in the proof of interior H\"older estimates; see Lemmas \ref{lemma01} and \ref{lemma3.6}. To get around the lack of regularity of $D_1\vu$, the main idea in \cite{DK09} is to estimate instead the interior H\"older norm of certain linear combination of derivatives of the solution which in particular contains $D_1\vu$. For higher order systems, the proof is more involved and we also make use of some $L_p$ estimate of a one dimensional problem; see Lemma \ref{lemA.1}.

%==========================================================================

A brief outline of the paper: We introduce some notation and state the main theorems in the next section. Section \ref{sec_aux} is devoted to some auxiliary results including the $L_2$-estimates. In Section \ref{sec3.2} we prove some H\"older estimates. We obtain the interior mean oscillation estimates in Section \ref{sec4} and prove the solvability of systems in the whole space, i.e. Theorem \ref{thm1}, in Section \ref{sec5}. In Section \ref{sec6}, we establish several boundary estimates including the boundary mean oscillation estimates and prove the solvability of systems on a half space, i.e. Theorem \ref{thm3}.
Finally we deal with parabolic and elliptic systems on a Reifenberg flat domain in Section \ref{Reifenberg}.
%In the appendix, we state and prove several lemmas used in the previous section.

\mysection{Main results}								 \label{sec082001}

Before we state our assumptions and main theorems, we introduce some necessary notation.
By $\bR^d$ we mean a $d$-dimensional Euclidean space,
a point in $\bR^d$ is denoted by $x=(x_1,\ldots,x_d)=(x_1,x')$,
and $\{e_j\}_{j=1}^d$ is the standard basis of $\bR^d$.
%We also consider $\bR^{d+1}$ for parabolic systems, and
Let $\bR^{d+1}:=\bR \times \bR^d = \{ (t,x) : t \in \bR, x \in \bR^d \}$.
Throughout the paper, $\Omega$ indicates an open set in $\bR^d$
and $\Omega_T := (-\infty,T)\times\Omega \subset \bR^{d+1}$.
%Thus, for example, $\Omega_T = (-\infty,T)\times\bR^d$ if $\Omega = \bR^d$.
%Unless specified otherwise,
%by $L_p$ we mean $L_p(\bR^d)$.
%Similarly, whenever we use $W_p^k$, $L_{p, \text{loc}}$, $W_{p,\text{loc}}^k$,
%and $C_0^{\infty}$,
%we understand that $\bR^d$ is omitted.
For vectors $\xi,\eta\in \bC^n$, we denote
$$
(\xi,\eta)=\sum_{i=1}^n \xi^i\overline{\eta^i}.
$$
For a function $f$ defined on a subset $\cD$ in $\bR^{d+1}$, we set
\begin{equation*}
(f)_{\cD} = \frac{1}{|\cD|} \int_{\cD} f(t,x) \, dx \, dt
= \dashint_{\cD} f(t,x) \, dx \, dt,
\end{equation*}
where $|\cD|$ is the
$d+1$-dimensional Lebesgue measure of $\cD$.

Throughout the paper, we assume that the $n \times n$ complex-valued coefficient matrices $a_{\alpha\beta}$ are measurable and bounded, and the leading coefficients $a_{\alpha\beta}$, $|\alpha|=|\beta|=m$, satisfy an ellipticity condition.
More precisely, we assume the following.
\begin{enumerate}
\item There exists a constant $\delta \in (0,1)$ such that
the leading coefficients $a_{\alpha\beta}$, $|\alpha|=|\beta|=m$, are bounded by $\delta^{-1}$
and satisfy
\begin{equation}
                            \label{eq11.28}
\delta |\xi|^2 \le \sum_{|\alpha|=|\beta|=m}\Re(a_{\alpha\beta}(t,x) \xi_{\beta}, \xi_{\alpha})
\le \delta^{-1} |\xi|^2
\end{equation}
for any $(t,x)\in \bR^{d+1}$ and $\xi = (\xi_{\alpha})_{|\alpha|=m}$, $\xi_{\alpha} \in \bC^n$.
Note that $\xi$ can be considered as a vector in $\bC^{n\times {m+d-1\choose d-1}}$
where
$$
{m+d-1\choose d-1} = \sum_{|\alpha|=m}1 = \frac{(m+d-1)!}{m!(d-1)!}.
$$
Here we use $\Re(f)$ to denote the real part of $f$.

\item All the lower-order coefficients $a_{\alpha\beta}$, $|\alpha| \ne m$ or $|\beta| \ne m$, are bounded by a constant $K\ge 1$.
\end{enumerate}

Let
$$
B_r(x) = \{ y \in \bR^d: |x-y| < r\},
\quad
B'_r(x') = \{ y' \in \bR^{d-1}: |x'-y'| < r\},
$$
$$
Q_r(t,x) = (t-r^{2m},t)\times B_r(x),
\quad
Q'_r(t,x') = (t-r^{2m},t)\times B'_R(x').
$$

On the leading coefficients we impose a very mild regularity assumption with a parameter $\gamma\in (0,1)$, which will be specified later.
To state this assumption, throughout the paper we write $\{\bar{a}_{\alpha\beta}\}_{|\alpha|=|\beta|=m} \in \bA$ whenever the
$n \times n$ complex-valued matrices $\bar{a}_{\alpha\beta}$ are measurable functions of $y_1 \in \bR$ only, $|\bar{a}_{\alpha\beta}|\le \delta^{-1}$,
and $\{\bar{a}_{\alpha\beta}\}_{|\alpha|=|\beta|=m}$ satisfies the ellipticity condition \eqref{eq11.28}.
For a linear map $\cT$ from $\bR^d$ to $\bR^d$, we write $\cT \in \bO$ if
$\cT$ is of the form
$$
\cT(x) = \rho x + \xi,
$$
where $\rho$ is a $d \times d$ orthogonal matrix
and $\xi \in \bR^d$.

\begin{assumption}[$\gamma$]                          \label{assumption20080424}
There is a constant $R_0\in (0,1]$ such that,
for each parabolic cylinder $Q:=(t_0-r^{2m},t_0)\times B_r(x_0)$ with $r \le R_0$,
one can find $\cT_Q \in \bO$
and coefficient matrices $\{\bar a_{\alpha\beta}\}_{|\alpha|=|\beta|=m} \in \bA$ satisfying
\begin{equation}
								\label{eq10_23}
\sup_{|\alpha|=|\beta|=m}\int_Q |a_{\alpha\beta}(t,x) - \bar{a}_{\alpha\beta}(y_1)| \, dx \, dt \le \gamma |Q|,
\end{equation}
where $y = \cT_Q(x)$.
\end{assumption}

Let us introduce some function spaces utilized throughout the paper.
We define the solution spaces $\cH_p^m((S,T)\times\Omega)$ as follows.
For a domain $\Omega$ in $\bR^d$,
we set
$$
\bH^{-m}_p((S,T)\times \Omega)
= \Big\{ f: f = \sum_{|\alpha|\le m} D^{\alpha}f_{\alpha}, \quad f_{\alpha} \in L_p((S,T) \times \Omega)\Big\},
$$
$$
\|f\|_{\bH^{-m}_p((S,T)\times \Omega)}
= \inf \Big\{ \sum_{|\alpha|\le m} \|f_{\alpha}\|_{L_p((S,T)\times \Omega)} : f = \sum_{|\alpha|\le m} D^{\alpha}f_{\alpha}\Big\}.
$$
Then
\begin{multline*}
\cH_p^m((S,T) \times \Omega)\\
=\{u: u_t \in \bH_p^{-m}((S,T)\times\Omega), D^{\alpha}u \in L_p((S,T)\times\Omega), 0 \le |\alpha| \le m \},
\end{multline*}
$$
\|u\|_{\cH_p^m((S,T) \times \Omega)}
= \|u_t\|_{\bH_p^{-m}((S,T)\times\Omega)} + \sum_{|\alpha|\le m} \|D^{\alpha}u\|_{L_p((S,T)\times\Omega)}.
$$
In this paper $\vu \in C_{\text{loc}}^{\infty}(\cD)$ means that $\vu$ is infinitely differentiable on $\cD$, where $\cD$ is a subset of either $\bR^{d+1}$ or $\bR^d$.
As usual, $C_0^{\infty}(\cD)$ means the collection of infinitely differentiable functions with compact support $\Subset \cD$.
We define $C_0^\infty([S,T]\times\Omega)$ to be the collection of infinitely differentiable functions $\phi(t,x)$ defined on $[S,T]\times\Omega$ such that, for each $t \in [S,T]$, $\phi(t,\cdot) \in C_0^{\infty}(\Omega)$.
The reader understands that if either $S$ or/and $T$ is infinity, then the closed interval should be replaced by a half-open or open interval.
Finally, we set $\mathring \cH^m_p((S,T)\times\Omega)$ to be the closure of $C_0^\infty([S,T]\times\Omega)$ in $\cH^m_p((S,T)\times\Omega)$.

Now we state the main result concerning parabolic systems in divergence form defined in the whole space.

\begin{theorem}
    							\label{thm1}
Let $\Omega=\bR^d$, $p \in (1,\infty)$, $T\in (-\infty,+\infty]$,
and $\vf_\alpha= (f_\alpha^1, \ldots, f_\alpha^n)^{\text{tr}} \in L_p(\Omega_T)$, $|\alpha|\le m$.
Then there exists a constant $\gamma=\gamma(d,n,m,p,\delta)$
such that, under Assumption \ref{assumption20080424} ($\gamma$),
the following hold true.

\noindent
(i)
For any $\vu \in \cH_p^m(\Omega_T)$ satisfying
\begin{equation}							 \label{eq081902}
\vu_t + (-1)^m \cL \vu + \lambda \vu = \sum_{|\alpha|\le m}D^\alpha \vf_\alpha
\end{equation}
in $\Omega_T$,
we have
\begin{equation}							 \label{eq080904}
\sum_{|\alpha|\le m}\lambda^{1-\frac {|\alpha|} {2m}} \|D^\alpha \vu \|_{L_p(\Omega_T)}
\le N \sum_{|\alpha|\le m}\lambda^{\frac {|\alpha|} {2m}} \| \vf_\alpha \|_{L_p(\Omega_T)},
\end{equation}
provided that $\lambda \ge \lambda_0$,
where $N$ and $\lambda_0 \ge 0$
depend only on $d$, $n$, $m$, $p$, $\delta$, $K$ and $R_0$.

\noindent
(ii)
For any  $\lambda > \lambda_0$, there exists a unique $\vu \in \cH^m_p(\Omega_T)$ satisfying \eqref{eq081902}.

\noindent
(iii)
If all the lower-order coefficients of $\cL$ are zero and the leading coefficients are measurable functions of $x_1 \in \bR$ only, then one can take $\lambda_0=0$.
\end{theorem}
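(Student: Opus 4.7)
My plan follows the mean oscillation approach initiated in \cite{Krylov_2005}. The scheme is to prove (iii) first (the ``simple coefficient'' case), then use Assumption \ref{assumption20080424}($\gamma$) to pass, by a perturbation-plus-cutoff argument, to the general variably partially BMO case; finally a Fefferman-Stein sharp function theorem combined with the Hardy-Littlewood maximal inequality converts mean oscillation estimates of $D^m\vu$ into the $L_p$ bound \eqref{eq080904}. Existence is obtained by the method of continuity from $\Delta^m+\lambda$ to $(-1)^m\cL+\lambda$, with uniqueness supplied by \eqref{eq080904}.

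For (iii), assume $\cL$ has no lower-order terms and the leading coefficients depend only on $x_1$. The basic $L_2$-estimate with $\lambda_0=0$ comes from the G{\aa}rding inequality, which is available thanks to the uniform ellipticity in \eqref{eq11.28}; this is precisely the place where uniform ellipticity is essential, since the Fourier argument of \cite{DK09_01} is unavailable when $a_{\alpha\beta}$ depends on $x_1$. For a solution of the homogeneous equation on $Q_R$, translation invariance in $x'$ shows that tangential derivatives $D_{x'}^{\alpha'}\vu$ satisfy the same system and therefore inherit $L_2$-bounds on nested cylinders. The equation itself then expresses a suitable ``conormal'' combination of pure $x_1$-derivatives of order $m$ in terms of these tangential derivatives, and together with the one-dimensional $L_p$-result Lemma \ref{lemA.1} this yields interior H\"older estimates for an appropriate linear combination of $D^m\vu$ (in the spirit of Lemmas \ref{lemma01} and \ref{lemma3.6}). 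Integrating these against averages produces the interior mean oscillation estimate
\begin{equation*}
\dashint_{Q_r}\bigl|D^m\vu - (D^m\vu)_{Q_r}\bigr|\,dx\,dt
\le N(r/R)^{\mu}\Bigl(\dashint_{Q_R}|D^m\vu|^2\,dx\,dt\Bigr)^{1/2}
\end{equation*}
for $r\le R/4$ and some $\mu>0$, which is the form needed later.

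For the general case, fix a small cylinder $Q=Q_R(t_0,x_0)$ with $R\le R_0$, apply the orthogonal change of variables $\cT_Q\in\bO$, and split $a_{\alpha\beta}=\bar a_{\alpha\beta}(y_1)+(a_{\alpha\beta}-\bar a_{\alpha\beta})$ with $\bar a_{\alpha\beta}$ as in Assumption \ref{assumption20080424}($\gamma$). Decompose $\vu=\vv+\vw$ on a slightly enlarged cylinder, where $\vv$ satisfies the simple-coefficient system whose right-hand side absorbs $\vf_\alpha$ together with the perturbation $(a_{\alpha\beta}-\bar a_{\alpha\beta})D^\beta\vu$. Applying the previous step to $\vw$ and using \eqref{eq10_23} with H\"older's inequality to bound $\vv$ yields, for some $q\in(1,p)$ and $\kappa>0$ depending only on $d,n,m,p,\delta$,
\begin{equation*}
\bigl(|D^m\vu|^{\#}\bigr)(t_0,x_0)
\le N\bigl((r/R)^{\mu}+\gamma^{\kappa}\bigr)
\bigl[\cM(|D^m\vu|^q)+\textstyle\sum_{|\alpha|\le m}\cM(|\vf_\alpha|^q)\bigr]^{1/q}.
\end{equation*}
Taking $L_p$-norms, invoking the Fefferman-Stein and Hardy-Littlewood inequalities, and then choosing first $r/R$ small and afterwards $\gamma$ small so the prefactor is $\le 1/2$, the $D^m\vu$ term on the right is absorbed. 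This gives the $D^m$-part of \eqref{eq080904}; the remaining lower-order derivatives and the powers $\lambda^{1-|\alpha|/(2m)}$ follow from the standard Agmon trick of adding $2m$ dummy variables so that $\lambda$ becomes an extra $2m$-th order derivative.

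The principal obstacle lies in Step 1: since $a_{\alpha\beta}$ is merely measurable in $x_1$, no individual $m$-th derivative of $\vu$ is known to be H\"older continuous, so one cannot estimate $D^m\vu$ component-wise. As in the second-order treatment of \cite{DK09}, the resolution is to single out a carefully chosen linear combination of $D^m\vu$ that the equation forces to be regular; for a system of order $2m$ this combination couples pure $x_1$-derivatives with tangential ones through the entire matrix family $a_{m\beta}$, and controlling it requires the auxiliary one-dimensional $L_p$-estimate of Lemma \ref{lemA.1} applied slice-by-slice in $x_1$. Once this H\"older control is in place, the remainder of the argument is a fairly mechanical implementation of the Krylov scheme.
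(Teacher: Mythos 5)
Your overall scheme follows the paper's strategy (Krylov's mean oscillation method, interior H\"older estimates for a carefully chosen combination of $m$-th derivatives, cutoff on cylinders, perturbation using Assumption~\ref{assumption20080424}, and Agmon's trick for the $\lambda$-powers). However, there is a genuine gap in the final conversion from mean oscillation estimates to the $L_p$ bound. You write the mean oscillation estimate for $D^m\vu$ and then invoke the standard Fefferman--Stein sharp function $(|D^m\vu|)^{\#}$. This step cannot be carried out as stated: precisely because the coefficients are merely measurable in the distinguished direction $y_1$, the pure $y_1$-derivative $D_1^m\vu$ has no H\"older regularity, so $|D^m\vu|$ does \emph{not} have small mean oscillation on each cylinder. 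What one can estimate (see Lemmas~\ref{lem4.1} and~\ref{lem4.2}) is the oscillation of a \emph{cylinder-dependent} quantity $U^Q = U' + \Theta$, where $U'$ sums $|D^\alpha\vu|$ over $\alpha_1<m$ and $\Theta=\sum_{|\beta|=m}a_{\bar\alpha\beta}D^\beta\vu$ is the specific combination that the equation forces to be regular. The crucial point your proposal misses is that the direction $y_1$, and hence the definition of $U^Q$, changes from cylinder to cylinder (this is the whole content of ``variably'' partially BMO); since $U^Q$ is only pointwise \emph{comparable} to $U$ rather than equal to a fixed function, a small oscillation of $U^Q$ does not yield a small oscillation of $U$, and the classical sharp function argument breaks down.

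The paper resolves this by invoking the generalized Fefferman--Stein theorem of Krylov (Theorem~\ref{th081201}), which allows a \emph{different} comparison function $U^C$ on each dyadic cube $C$, subject only to the two-sided bound $|U|\le U^C\le V$ for a single $V\in L_p$ comparable to $U$, and then concludes $\|U\|_{L_p}^p \le N\|F\|_{L_p}\|V\|_{L_p}^{p-1}$. Your proposal should be modified to replace the step ``$(|D^m\vu|)^{\#}(t_0,x_0)\le \cdots$'' by an application of this generalized theorem applied to the family $\{U^C\}$: for each dyadic cube $C$, take the smallest cylinder $Q$ containing $C$, use the direction and frozen coefficients from Assumption~\ref{assumption20080424}($\gamma$) for $Q$ to build $U^C=U'+\Theta$, and verify the pointwise comparability $N^{-1}U\le U^C\le NU$ from the positive-definiteness of $a_{\bar\alpha\bar\alpha}$. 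Otherwise your outline of the interior H\"older estimate (via the tangential derivatives, the conormal combination $\Theta$, and the one-dimensional Lemma~\ref{lemA.1}) as well as the $\vu=\vv+\vw$ decomposition, the H\"older-inequality treatment of the coefficient perturbation, the method of continuity, and Agmon's trick all match the paper's argument. One further small omission: for $T<\infty$ the paper extends $\vf_\alpha$ by zero and uses an even reflection in $t$ at $T$ for uniqueness, which your proposal does not address.
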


Our next result is about the Dirichlet problem on a half space.
For this, we impose the following assumption, where the parameter $\gamma \in (0,1/4)$ is
to be determined later.
%For $T\in (-\infty,+\infty]$ and a domain $\Omega$, we denote $\mathring \cH^m_p(\Omega_T)$ to be the closure of $C_0^\infty(\overline{\Omega_T})$ in $\cH^m_p(\Omega_T)$,
%where $C_0^\infty(\overline{\Omega_T})$ is the collection of infinitely differentiable function $\phi(t,x)$ defined on $\overline{\Omega_T}$ such that, for each $t \in (-\infty,T]$, $\phi(t,\cdot) \in C_0^{\infty}(\Omega)$.
Set $\bR^d_+ = \{(x_1,x') \in \bR^d: x_1 > 0\}$ and $\bR^{d+1}_+=\bR\times\bR^d_+$.

\begin{assumption}[$\gamma$]                          \label{assumption20100901}
There is a constant $R_0\in (0,1]$ such that the following holds with  $Q:=(t_0-r^{2m},t_0)\times B_r(x_0)$.

i) For any $x\in \bR^d_+$, $t\in \bR$ and any $r\in \left(0,\min\{R_0,\dist(x,\partial\Omega)\}\right]$ so that $Q\subset \bR^{d+1}_+$,
one can find $\cT_Q \in \bO$
and coefficient matrices $\{\bar a_{\alpha\beta}\}_{|\alpha|=|\beta|=m} \in \bA$ satisfying \eqref{eq10_23}.

ii) For any $x\in \partial \bR^d_+$, $t\in \bR$ and any $r\in (0,R_0]$, one can find $\cT_Q \in \bO$ satisfying $\rho_{11}\ge \cos (\gamma/2)$
and coefficient matrices $\{\bar a_{\alpha\beta}\}_{|\alpha|=|\beta|=m} \in \bA$ satisfying \eqref{eq10_23}.
\end{assumption}

With a sufficiently small $\gamma$, the condition $\rho_{11}\ge \cos (\gamma/2)$ means that at any boundary point the $y_1$-direction is sufficiently close to the $x_1$-direction, i.e., the normal direction of the boundary.

\begin{theorem}
                                    \label{thm3}
Let $\Omega=\bR^d_+$, $p \in (1,\infty)$, $T\in (-\infty,+\infty]$
and
$$
\vf_\alpha= (f_\alpha^1, \ldots, f_\alpha^n)^{\text{tr}} \in L_p(\Omega_T), \quad |\alpha|\le m.
$$
Then there exists a constant $\gamma=\gamma(d,n,m,p,\delta)$
such that, under Assumption \ref{assumption20100901} ($\gamma$),
the following hold true.

\noindent
(i)
For any $\vu \in \mathring \cH^m_p(\Omega_T)$ satisfying
\begin{equation}
                                    \label{eq1.55}
\vu_t+(-1)^m\cL \vu +\lambda \vu = \sum_{|\alpha|\le m}D^\alpha \vf_\alpha
\end{equation} in $\Omega_T$,
we have
\begin{equation*}							 %\label{eq1.53}
\sum_{|\alpha|\le m}\lambda^{1-\frac {|\alpha|} {2m}} \|D^\alpha \vu \|_{L_p(\Omega_T)}
\le N \sum_{|\alpha|\le m}\lambda^{\frac {|\alpha|} {2m}} \| \vf_\alpha \|_{L_p(\Omega_T)},
\end{equation*}
provided that $\lambda \ge \lambda_0$,
where $N$ and $\lambda_0 \ge 0$
depend only on $d$, $n$, $m$, $p$, $\delta$, $K$ and $R_0$.

\noindent
(ii)
For any  $\lambda > \lambda_0$, there exists a unique $\vu \in \mathring \cH_p^m(\Omega_T)$ satisfying \eqref{eq1.55}.

\noindent
(iii)
If all the lower-order coefficients of $\cL$ are zero and the leading coefficients are measurable functions of $x_1\in \bR$ only, then one can take $\lambda_0=0$.
\end{theorem}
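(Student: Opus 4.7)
The plan is to mirror the architecture of the whole-space proof of Theorem \ref{thm1}: establish an a priori $L_p$-estimate via a mean-oscillation bound on $D^m\vu$, convert this into the $L_p$ estimate using a Fefferman--Stein type sharp function argument, and then obtain existence by the method of continuity. The real work lies in deriving boundary mean-oscillation estimates, since cylinders $Q_r(t_0,x_0)$ sitting inside $\bR^{d+1}_+$ are handled directly by Assumption \ref{assumption20100901}(i) together with the interior results from Section \ref{sec4}.

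For a boundary cylinder $Q = Q_r(t_0,x_0)$ with $x_0 \in \partial\bR^d_+$, Assumption \ref{assumption20100901}(ii) yields an orthogonal map $\cT_Q$ with $\rho_{11}\ge \cos(\gamma/2)$ and a simple family $\{\bar{a}_{\alpha\beta}\}\in\bA$ approximating $\{a_{\alpha\beta}\}$ in $L_1$ on $Q$. I would first establish a dedicated half-space solvability and H\"older regularity result for the simple-coefficient model $\vw_t + (-1)^m\bar{\cL}\vw + \lambda\vw = \sum D^\alpha \vf_\alpha$ with zero Dirichlet data, where $\bar{a}_{\alpha\beta}$ depends only on $y_1 = (\cT_Q x)_1$ and $y_1$ is close to the normal $x_1$. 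Then decompose $\vu = \vv + \vw$ on a slightly enlarged cylinder so that $\vw$ inherits $\vf_\alpha$ along with the perturbation $(\bar{a}_{\alpha\beta}-a_{\alpha\beta})D^\beta\vu$ with zero boundary data, while $\vv$ solves the homogeneous simple-coefficient equation. The mean oscillation of $\vw$ is controlled by the $L_p$-solvability of the simple model and the smallness \eqref{eq10_23}; the mean oscillation of $D^m\vv$ is controlled by boundary H\"older estimates in the spirit of Lemmas \ref{lemma01} and \ref{lemma3.6}.

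The main obstacle is the angular tilt: $\vu \in \mathring{\cH}_p^m$ carries Sobolev traces on $\{x_1=0\}$, whereas $\bar{a}_{\alpha\beta}$ varies along the tilted direction $y_1$, so the simple model and the physical boundary are not aligned. Passing to the rotated coordinates $y$ rotates the half-space boundary, and cutting off in order to impose a legitimate Dirichlet problem in the rotated frame produces commutator terms supported in a thin strip near $\{x_1=0\}$ and behaving like $x_1^{-k}$ for $1\le k\le m$. This is precisely the role of Lemma \ref{gHardy}: the higher-order Hardy inequality
$$
\int_{\bR^{d+1}_+}\frac{|D^{m-k}\vu|^p}{x_1^{kp}}\,dx\,dt \le N\int_{\bR^{d+1}_+}|D^m\vu|^p\,dx\,dt, \qquad 1\le k\le m,
$$
valid on $\mathring{\cH}_p^m(\bR^{d+1}_+)$, allows these commutators to be absorbed into $\|D^m\vu\|_{L_p}$ with a factor comparable to $\gamma$. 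Choosing $\gamma$ small relative to the Hardy constant (and to $d,n,m,p,\delta$) closes the estimate. The cutoff has to be engineered carefully so it is adapted to the tilted direction yet supported where $x_1\lesssim \gamma r$, which is exactly the region where the mismatch between $y_1$ and $x_1$ is active.

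Once the boundary mean-oscillation estimate is in place, combining it with the interior one through the modified Fefferman--Stein theorem of \cite{Krylov08} yields an inequality of the shape $\|D^m\vu\|_{L_p(\Omega_T)} \le N\gamma^{\kappa}\|D^m\vu\|_{L_p(\Omega_T)} + N\sum_{|\alpha|\le m}\lambda^{(|\alpha|-m)/(2m)}\|\vf_\alpha\|_{L_p(\Omega_T)}$ for some $\kappa>0$; absorbing the first term into the left-hand side delivers the bound in (i), with the lower-order $\lambda^{1-|\alpha|/(2m)}$ factors appearing through the standard resolvent rescaling. Uniqueness in (ii) is immediate from (i), and existence follows by the method of continuity, connecting $\cL+\lambda$ to $(-\Delta)^m + \lambda$, whose half-space Dirichlet problem is classical. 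For (iii), vanishing lower-order coefficients together with leading coefficients depending only on $x_1$ give translation invariance in $(t,x')$ and the scaling $(t,x_1,x')\mapsto (\mu^{2m}t,x_1,\mu x')$; as in the proof of Theorem \ref{thm1}(iii), a scaling argument combined with uniqueness forces $\lambda_0=0$.
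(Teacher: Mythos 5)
Your proposal follows essentially the same architecture as the paper's actual proof: interior estimates combined with boundary mean-oscillation estimates obtained via the decomposition $\vu=\vv+\vw$ after rotating into the tilted coordinate frame, a cutoff that aligns the cut boundary with the measurable direction, and the higher-order Hardy inequality of Lemma~\ref{gHardy} to absorb the cutoff commutators. The Fefferman--Stein machinery (Theorem~\ref{th081201}) and the method of continuity close the argument, just as in the paper. This is the right plan and captures the main ideas.

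Two small points where the details drift from the actual argument. First, you state the Hardy inequality in the normal direction $x_1$ and invoke the vanishing traces of $\vu\in\mathring\cH^m_p$ on $\{x_1=0\}$; but the cutoff $\chi$ in Proposition~\ref{prop7.9} is a function of the tilted coordinate $y_1$, and the paper applies Lemma~\ref{gHardy} in the $y_1$ direction in two places: for $\hat\vw$, using its vanishing on the artificial cut $\{y_1=\gamma R\}$, and for $(\chi-1)\vu$, using that $\vu$ and its first $m-1$ derivatives vanish at the point $\hat y_1(s,y')$ where the $y_1$-line meets $\partial\Omega$, with $\hat y_1\in(-\gamma R,\gamma R)$. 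Attempting to convert $(y_1-\gamma R)^{-k}$ weights into $x_1^{-k}$ weights uniformly is not clean; working directly in $y_1$ is the natural move. Second, the scaling you propose for assertion (iii), namely $(t,x_1,x')\mapsto(\mu^{2m}t,x_1,\mu x')$, does not preserve the $2m$-th order parabolic operator, since mixed derivatives $D^\alpha D^\beta$ pick up nonuniform powers $\mu^{|\alpha'|+|\beta'|}$. The standard scaling that works (and preserves the class of $x_1$-dependent coefficients and Assumption~\ref{assumption20100901} with any $R_0$) is the isotropic one $(t,x)\mapsto(\mu^{2m}t,\mu x)$, which sends $\lambda$ to $\mu^{2m}\lambda$ and drives $\lambda_0$ to $0$. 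Neither issue changes the overall route, but both should be corrected for the argument to go through as written.
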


Solutions of \eqref{eq1.55} or \eqref{eq09_01} below are understood in the weak sense: we say $\vu \in \cH_p^m((S,T)\times\Omega)$ satisfies \eqref{eq1.55} in $(S,T)\times\Omega$ if
$$
\int_{\Omega} \phi(T,\cdot)\cdot \vu(T,\cdot)\,dx
-\int_S^T\int_{\Omega}\phi_t\cdot u\,dx\,dt
$$
$$
+\sum_{|\alpha|\le m,|\beta|\le m}
\int_S^T\int_{\Omega}\left((-1)^{m+|\alpha|}
D^\alpha\phi \cdot a_{\alpha\beta}D^\beta\vu +\lambda \phi\cdot \vu\right)\,dx\,dt
$$
$$
=\sum_{|\alpha|\le m}\int_S^T\int_{\Omega}(-1)^{|\alpha|}D^\alpha\phi\cdot \vf_\alpha\,dx\,dt
+ \int_{\Omega}\phi(S,\cdot) \cdot \vu(S,\cdot) \, dx
$$
for any test function $\phi=(\phi^1,\phi^2,\ldots,\phi^n)\in C_0^\infty([S,T]\times\Omega)$.
If $S = -\infty$ or $T = \infty$, we assume $\phi(-\infty,\cdot) = 0$ or $\phi(\infty,\cdot) = 0$, respectively.

\begin{remark}
                                \label{rm2.4}
The ellipticity condition \eqref{eq11.28} can be relaxed to a weaker condition
\begin{equation}
                            \label{eq22.21}
\delta \sum_{j=1}^d|\xi_{me_j}|^2\le
\sum_{|\alpha|=|\beta|=m}
\Re(a_{\alpha\beta}(t,x)\xi_\beta, \xi_\alpha)\le \delta^{-1} |\xi|^2.
\end{equation}
For instance, when $d=m=2$ the operator $\cL=D_1^4+D_2^4$ satisfies \eqref{eq22.21} with $\delta=1$, but is not elliptic in the sense of \eqref{eq11.28}. However, the condition \eqref{eq11.28} has the advantage that it is invariant under orthogonal transformations of the coordinates. We claim that any operator $\cL$ satisfying \eqref{eq22.21} can be rewritten into another divergence form operator which satisfies \eqref{eq11.28} with a possibly different $\delta$. In the above example, one way is to write
$$
D_1^4 u+D_2^4 u=D_1^4 u+D_2^4 u-D_1^2(D_2^2 u)+D_{12}(D_{12} u).
$$
The symbol of the right-hand side is
$$
\xi_{(2,0)}^2+ \xi_{(0,2)}^2-\xi_{(2,0)}\xi_{(0,2)}+\xi_{(1,1)}^2,
$$
which obviously satisfies \eqref{eq11.28} with $\delta=1/2$.

The claim is a simple consequence of the following observation. We only consider the case $d=2$. The general case follows from an induction using linear interpolations to cover the convex hull of $d$ vertices.
For simplicity, we also assume $n=1$ and everything is real. First, it is easy to check that
\begin{multline*}
\sum_{j=0}^m 2^{j^2}\xi_{(j,m-j)}^2-\sum_{j=1}^{m-1}2^{j^2}\xi_{(j-1,m-j+1)}\xi_{(j+1,m-j-1)}\\
\ge \sum_{j=0}^m 2^{j^2-1}\xi_{(j,m-j)}^2
+\sum_{j=1}^{m-1}\left(2^{(j-1)^2-2}\xi_{(j-1,m-j+1)}^2
+2^{(j+1)^2-2}\xi_{(j+1,m-j-1)}^2\right)\\
-\sum_{j=1}^{m-1}2^{j^2}\xi_{(j-1,m-j+1)}\xi_{(j+1,m-j-1)}
\ge  \sum_{j=0}^m 2^{j^2-1}\xi_{(j,m-j)}^2.
\end{multline*}
Therefore, there exist $\varepsilon=\varepsilon(m)>0$ and $\delta_1=\delta_1(m)>0$ such that
$$
\sum_{j=1}^2|\xi_{me_j}|^2+\varepsilon\sum_{j=1}^{m-1} \left(2^{j^2}\xi_{(j,m-j)}^2-2^{j^2}\xi_{(j-1,m-j+1)}\xi_{(j+1,m-j-1)}
\right)\ge \delta_1 |\xi|^2.
$$
Then suppose $\cL$ satisfies \eqref{eq22.21}.
Using the fact that
$$
D^{(j,m-j)}D^{(j,m-j)}=D^{(j-1,m-j+1)}D^{(j+1,m-j-1)},
$$
we then rewrite $\cL$ as
$$
\cL+\varepsilon \sum_{j=1}^{m-1} \left(2^{j^2}D^{(j,m-j)}D^{(j,m-j)}-2^{j^2}D^{(j-1,m-j+1)}D^{(j+1,m-j-1)}
\right),
$$
which satisfies \eqref{eq11.28} with $\delta\delta_1$ in place of $\delta$. This completes the proof of the claim. Note that the leading coefficients of the new operator satisfy the same regularity assumption as those of $\cL$. Therefore, the results of our main theorems still hold true under the condition \eqref{eq22.21}.
\end{remark}

\mysection{Some auxiliary estimates}
\label{sec_aux}

In this section we consider operators without lower-order terms. Denote
$$
								%\label{eq10_102}
\cL_0 \vu = \sum_{|\alpha|=|\beta|=m}D^\alpha( a_{\alpha\beta} D^\beta \vu).	
$$

\subsection{$L_2$-estimates}                    \label{sec3.1}
The first result is the classical $L_2$-estimate for parabolic operators in divergence form with measurable coefficients. We give a sketched proof for the sake of completeness.

\begin{theorem}			\label{theorem08061901}
Let $T\in (-\infty,\infty]$ and $\Omega = \bR^d$.
There exists $N = N(d,m,n, \delta)$
such that, for any $\lambda \ge 0$,
\begin{equation}
								\label{eq2010_01}
\sum_{|\alpha|\le m}\lambda^{1-\frac {|\alpha|} {2m}} \|D^\alpha \vu \|_{L_2(\Omega_T)}
\le N \sum_{|\alpha|\le m}\lambda^{\frac {|\alpha|} {2m}} \| \vf_\alpha \|_{L_2(\Omega_T)},
\end{equation}
provided that $\vu \in \cH_2^m(\Omega_T)$, $\vf_\alpha \in L_2(\Omega_T),|\alpha|\le m$,
and
\begin{equation}							 \label{eq080501}
\vu_t + (-1)^m\cL_0 \vu + \lambda \vu = \sum_{|\alpha|\le m}D^\alpha \vf_\alpha
\end{equation}
in $\Omega_T$.
Furthermore, for any $\lambda > 0$ and $\vf_\alpha \in L_2(\Omega_T),|\alpha|\le m$, there exists a unique solution $\vu\in \cH_2^m(\Omega_T)$ to
the equation \eqref{eq080501}.
\end{theorem}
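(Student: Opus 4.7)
The plan is to establish the a priori estimate \eqref{eq2010_01} by a standard energy argument and then obtain existence and uniqueness by the method of continuity. Since $\vu\in \cH^m_2(\Omega_T)$ with $\Omega=\bR^d$, one may use $\phi=\vu$ as a test function in the weak formulation defined just after Theorem~\ref{thm3}; the pairing $\langle \vu_t,\vu\rangle$ makes sense in the duality $\bH^{-m}_2\times H^m_2$, and standard results give $\vu\in C((-\infty,T];L_2(\Omega))$ together with the identity $\frac{d}{dt}\|\vu\|_{L_2}^2=2\Re\langle \vu_t,\vu\rangle$. After integrating by parts in $x$ in each term $D^\alpha(a_{\alpha\beta}D^\beta\vu)$, the identity becomes, for $-\infty<S<T$,
\begin{equation*}
\tfrac12\|\vu(T)\|_{L_2(\Omega)}^2 - \tfrac12\|\vu(S)\|_{L_2(\Omega)}^2 + \Re\sum_{|\alpha|=|\beta|=m}\int_S^T\!\!\int_\Omega (a_{\alpha\beta}D^\beta\vu,D^\alpha\vu)\,dx\,dt + \lambda\|\vu\|_{L_2((S,T)\times\Omega)}^2
= \Re\sum_{|\alpha|\le m}(-1)^{|\alpha|}\int_S^T\!\!\int_\Omega (\vf_\alpha,D^\alpha\vu)\,dx\,dt.
\end{equation*}
Applying the ellipticity condition \eqref{eq11.28} pointwise with $\xi_\alpha=D^\alpha\vu$ bounds the leading bilinear form from below by $\delta\|D^m\vu\|_{L_2}^2$, and letting $S\to-\infty$ along a subsequence with $\|\vu(S)\|_{L_2(\Omega)}\to 0$ (which exists because $\vu\in L_2(\Omega_T)$) removes the past boundary term.

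Next I would estimate the right-hand side using Cauchy--Schwarz in the $|\alpha|=m$ and $|\alpha|=0$ cases and the Gagliardo--Nirenberg interpolation $\|D^\alpha\vu\|_{L_2}\le C\|D^m\vu\|_{L_2}^{|\alpha|/m}\|\vu\|_{L_2}^{1-|\alpha|/m}$ on $\bR^d$ for $0<|\alpha|<m$. Writing $\theta=|\alpha|/m$, the weighted Young inequality gives
\begin{equation*}
\left|\int_{\Omega_T}(\vf_\alpha,D^\alpha\vu)\,dx\,dt\right|
\le \tfrac{1}{2\varepsilon}\lambda^{\theta-1}\|\vf_\alpha\|_{L_2}^2
+\tfrac{C\varepsilon}{2}\bigl(\|D^m\vu\|_{L_2}^2+\lambda\|\vu\|_{L_2}^2\bigr),
\end{equation*}
where the key identity $\lambda^{1-\theta}\|D^m\vu\|^{2\theta}\|\vu\|^{2-2\theta}=(\|D^m\vu\|^2)^\theta(\lambda\|\vu\|^2)^{1-\theta}$ together with AM--GM produces the bracketed combination on the right. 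Choosing $\varepsilon$ small (depending only on $\delta$, $m$, $n$, $d$) and summing over $\alpha$, the derivative terms on the right are absorbed into the left, yielding
\begin{equation*}
\|D^m\vu\|_{L_2(\Omega_T)}^2+\lambda\|\vu\|_{L_2(\Omega_T)}^2
\le N\sum_{|\alpha|\le m}\lambda^{|\alpha|/m-1}\|\vf_\alpha\|_{L_2(\Omega_T)}^2
\end{equation*}
for $\lambda>0$ (the case $\lambda=0$ of \eqref{eq2010_01} is vacuous). Finally, reapplying Gagliardo--Nirenberg to intermediate derivatives and multiplying through by $\lambda^{2-|\alpha|/m}$ gives $\lambda^{2-|\alpha|/m}\|D^\alpha\vu\|_{L_2}^2\le C\lambda(\|D^m\vu\|_{L_2}^2+\lambda\|\vu\|_{L_2}^2)$; substituting the previous display and taking square roots yields \eqref{eq2010_01}.

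For existence and uniqueness, uniqueness is immediate by applying the a priori estimate to the difference of two solutions with $\vf_\alpha\equiv 0$. Existence I would obtain by the method of continuity: link $\cL_0$ to the polyharmonic operator through $\cL_s:=s\cL_0+(1-s)(-1)^m\Delta^m$, $s\in[0,1]$, which all satisfy \eqref{eq11.28} with a common constant and hence enjoy the uniform bound \eqref{eq2010_01} just established. The base case $s=0$ amounts to solving $\vu_t+\Delta^m\vu+\lambda\vu=\sum D^\alpha\vf_\alpha$ in $\bR^{d+1}$ (after extending $\vf_\alpha$ by zero for $t>T$), which is done explicitly via the Fourier transform and Plancherel's theorem. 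Openness of the set of solvable $s\in[0,1]$ is a Neumann-series perturbation argument using the a priori bound as an operator-norm estimate, and closedness is a weak-compactness argument in $\cH^m_2(\Omega_T)$ using the uniform bound. The only delicate point in the whole argument is keeping track of the $\lambda$-powers in the interpolation step so that the final scaling in \eqref{eq2010_01} comes out correctly; this is handled by the scaling-balanced reformulation of Gagliardo--Nirenberg indicated above.
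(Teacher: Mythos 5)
Your proposal is correct and follows essentially the same line as the paper's proof: test the equation with $\vu$, use the ellipticity condition \eqref{eq11.28} and the sign of the time term, absorb the intermediate derivatives via interpolation and a $\lambda$-weighted Young inequality, and close the existence via the method of continuity. The only cosmetic difference is that you work directly in $\cH^m_2$ using the $\bH^{-m}_2$--$H^m_2$ duality and a $\liminf$ argument for the past boundary term, whereas the paper first reduces to $\vu\in C_0^\infty((-\infty,T]\times\Omega)$ by density; both are standard and equivalent.
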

\begin{proof}
By the method of continuity and a standard density argument, it suffices to prove the estimate \eqref{eq2010_01} for $u \in C_0^{\infty}((-\infty,T]\times \Omega)$.
From the equation, it follows that
$$
\int_{\Omega_T} \left[(\vu,\vu_t)
+ (D^{\alpha}\vu, a_{\alpha\beta}D^{\beta}\vu)
+ \lambda |\vu|^2 \right]\,dx\,dt
$$
\begin{equation}
                            \label{eq16.33}
= \sum_{|\alpha|\le m} (-1)^{|\alpha|} \int_{\Omega_T} (D^{\alpha}\vu, f_{\alpha}) \, dx \, dt.
\end{equation}
By the uniform ellipticity \eqref{eq11.28}, we get
$$
\delta \int_{\Omega_T} |D^m\vu|^2 \, dx \, dt
\le \int_{\Omega_T} \Re(a_{\alpha\beta} D^\beta \vu, D^\alpha \vu) \, dx \, dt.
$$
We also have
$$
\int_{\Omega_T} \Re (\vu,\vu_t) \, dx \, dt
= \frac12 \int_{\bR^d} |\vu|^2(T,x) \, dx \ge 0.
$$
Hence, for any $\varepsilon>0$,
$$
\delta \int_{\Omega_T} |D^m\vu|^2 \, dx \, dt + \lambda \int_{\Omega_T} |\vu|^2 \, dx\,dt
\le \sum_{|\alpha|\le m}(-1)^{|\alpha|} \int_{\Omega_T}\Re(D^\alpha \vu, \vf_\alpha) \, dx \, dt
$$
$$
\le \varepsilon \sum_{|\alpha|\le m}\lambda^{\frac {m-|\alpha|} m} \int_{\Omega_T} |D^\alpha \vu|^2 \, dx \, dt  + N\varepsilon^{-1} \sum_{|\alpha|\le m}\lambda^{-\frac {m-|\alpha|} m}\int_{\Omega_T} |\vf_\alpha|^2 \, dx \, dt.
$$
To finish the proof, it suffices to use interpolation inequalities
%,the elementary inequality
%\begin{equation}
%                                        \label{eq11.09}
%\int_{\Omega_T} |D^m\vu|^2 \, dx\le N \sum_{j=1}^d \int_{\Omega_T} |D_j^m\vu|^2 \, dx,
%\end{equation}
and choose $\varepsilon$ sufficiently small depending on $\delta,d,m$ and $n$.
\end{proof}

We also have the following local $L_2$-estimate, where for a later use we consider only a simple case that the right-hand side of \eqref{eq2010_02} is zero.

\begin{lemma}
                                            \label{lem2010_03}
Let $0<r<R<\infty$.
Assume $\vu \in C_{\text{loc}}^{\infty}(\bR^{d+1})$ and
\begin{equation}
								\label{eq2010_02}
\vu_t +(-1)^m\cL_0 \vu = 0
\end{equation}
in $Q_R$. Then there exists a constant $N=N(d,m,n,\delta)$ such that for $j=1,2,\ldots,m$,
\begin{equation}
                                          \label{eq2010_10}
\|D^j\vu\|_{L_2(Q_r)}\leq N(R-r)^{-j}\|\vu\|_{L_2(Q_R)}.
\end{equation}
\end{lemma}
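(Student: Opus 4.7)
The plan is to prove the bound by a standard energy-type Caccioppoli argument combined with an iteration on a chain of nested cylinders. Since $\vu$ is assumed smooth, integration by parts is unrestricted; the essential move will be to test the equation against $\eta^{2m}\vu$ for a suitable cutoff $\eta$ and then deal with the intermediate derivatives that arise from the Leibniz rule.

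Step 1 (Caccioppoli inequality on nested cylinders). Fix $r\le \rho<\rho'\le R$ and take $\eta\in C_0^\infty$ with $\eta\equiv 1$ on $Q_\rho$, $\mathrm{supp}\,\eta\subset Q_{\rho'}$ (so in particular $\eta$ vanishes at the bottom $t=t_0-(\rho')^{2m}$), $|D^j\eta|\le C(\rho'-\rho)^{-j}$ for $0\le j\le m$, and $|\partial_t\eta|\le C(\rho'-\rho)^{-2m}$. Multiply \eqref{eq2010_02} by $\eta^{2m}\overline{\vu}$, take real parts, and integrate by parts $m$ times in space. The time term contributes $\tfrac12\int_{B_{\rho'}}\eta^{2m}(t_0,\cdot)|\vu(t_0,\cdot)|^2-\tfrac12\int|\vu|^2\partial_t(\eta^{2m})$, and the first piece is nonnegative and may be discarded. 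Expanding $D^\alpha(\eta^{2m}\vu)$ by Leibniz isolates the principal piece
$$
\sum_{|\alpha|=|\beta|=m}\Re\!\int a_{\alpha\beta}D^\beta\vu\cdot\eta^{2m}D^\alpha\vu\,dx\,dt\;\ge\;\delta\int\eta^{2m}|D^m\vu|^2\,dx\,dt
$$
by the uniform ellipticity \eqref{eq11.28}. The remaining cross terms are controlled using the pointwise bound $|D^\gamma(\eta^{2m})|\le C\eta^{2m-|\gamma|}(\rho'-\rho)^{-|\gamma|}$ for $0\le|\gamma|\le m$ (from Fa\`a di Bruno). Cauchy--Schwarz and Young's inequality with a small parameter let me absorb the resulting $\varepsilon\int\eta^{2m}|D^m\vu|^2$ into the left side and yield the Caccioppoli-type bound
$$
\int_{Q_\rho}|D^m\vu|^2\,dx\,dt\;\le\;C\sum_{k=0}^{m-1}(\rho'-\rho)^{-2(m-k)}\int_{Q_{\rho'}}|D^k\vu|^2\,dx\,dt.
$$

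Step 2 (Removal of intermediate derivatives and conclusion). For each $1\le k\le m-1$ I invoke a Gagliardo--Nirenberg interpolation on $Q_{\rho'}$ which, after Young's inequality, gives
$$
\int_{Q_{\rho'}}|D^k\vu|^2\le \varepsilon_k(\rho'-\rho)^{2(m-k)}\int_{Q_{\rho'}}|D^m\vu|^2+C\varepsilon_k^{-k/(m-k)}(\rho'-\rho)^{-2k}\int_{Q_{\rho'}}|\vu|^2
$$
for any $\varepsilon_k>0$. Choosing $\varepsilon_k$ so that, after multiplication by $(\rho'-\rho)^{-2(m-k)}$ in the Caccioppoli inequality, the total coefficient in front of $\int_{Q_{\rho'}}|D^m\vu|^2$ is at most $\tfrac12$, the Caccioppoli inequality reduces to
$$
\phi(\rho)\le \tfrac12\phi(\rho')+C(\rho'-\rho)^{-2m}\|\vu\|_{L_2(Q_R)}^2,\qquad \phi(\sigma):=\int_{Q_\sigma}|D^m\vu|^2,
$$
valid uniformly for $r\le\rho<\rho'\le R$. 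The classical Simon iteration lemma (equivalently the Giaquinta--Giusti hole-filling argument) then yields $\phi(r)\le C(R-r)^{-2m}\|\vu\|_{L_2(Q_R)}^2$, i.e.\ the desired estimate for $j=m$. The cases $1\le j<m$ follow from the same Gagliardo--Nirenberg inequality applied on a cylinder intermediate between $Q_r$ and $Q_R$, combined with the $j=m$ bound just established.

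The principal obstacle, absent in the second-order case, is the appearance of the intermediate derivatives $D^k\vu$, $1\le k\le m-1$, in the Caccioppoli inequality: they cannot simply be dropped and cannot be absorbed by a single cutoff. The interplay between iteration on nested cylinders and a scale-invariant interpolation is exactly what closes the loop. A minor book-keeping point is the weighted bound on $D^\gamma(\eta^{2m})$, which is precisely why the test weight must be $\eta^{2m}$ and not a smaller power.
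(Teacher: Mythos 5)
Your proof is correct and follows essentially the same strategy as the paper's: test the equation against a cutoff times $\vu$, extract the coercive term $\delta\int\eta^{\cdot}|D^m\vu|^2$ from ellipticity, control the Leibniz cross terms by Young and a scale-invariant interpolation (producing only $\|\vu\|_{L_2}$ plus a small multiple of $\|D^m\vu\|_{L_2}$), and then close the loop by an iteration on nested cylinders. The paper implements the iteration explicitly with a dyadic sequence of radii $r_k$ and cutoffs $\zeta_k$, tests against $\vu\zeta_k^2$, derives the recursion $\rA_k\le \varepsilon \rA_{k+1}+N 2^{2mk}(R-r)^{-2m}\rB$, and sums a geometric series; you instead package the same step into the Caccioppoli inequality $\phi(\rho)\le\tfrac12\phi(\rho')+C(\rho'-\rho)^{-2m}\|\vu\|_{L_2(Q_R)}^2$ and invoke Simon's iteration lemma, and you test against $\eta^{2m}\vu$ rather than $\vu\zeta^2$. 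Both of these are cosmetic rather than substantive differences. One small book-keeping slip: since the boundary term $\tfrac12\int_{B_{\rho'}}\eta^{2m}(t_0,\cdot)|\vu(t_0,\cdot)|^2$ is to be discarded as nonnegative, $\eta$ should be $\equiv 1$ near the top time $t=t_0$ of $Q_\rho$, so its support is the two-sided cylinder $(t_0-(\rho')^{2m},\,t_0+(\rho')^{2m})\times B_{\rho'}$ rather than $Q_{\rho'}$ itself (exactly as the paper's $\zeta_k$ is supported in $(-s_k^{2m},s_k^{2m})\times B_{s_k}$); otherwise $\eta\equiv 1$ on $Q_\rho$ and $\mathrm{supp}\,\eta\subset Q_{\rho'}$ would be incompatible. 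This does not affect the argument.
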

\begin{proof}
First we consider the case $j=m$. Set
$$
r_0 = r,
\quad
r_k = r +\sum_{i=1}^k\frac{R-r}{2^k},
\quad
k = 1, 2, \ldots,
$$
$$
s_k=\frac{r_k + r_{k+1}}2,\quad
%Q^{(k)} = Q_{r_k},\quad
%\tilde Q^{(k)} = Q_{s_k},
%\quad
k = 0, 1, 2, \ldots.
$$
We choose nonnegative real-valued $\zeta_k(t,x) \in C_0^{\infty}(\bR^{d+1})$ such that
$$
\zeta_k
= \left\{\begin{aligned}
1 \quad &\text{on} \quad Q_{r_k},\\
0 \quad &\text{on} \quad \bR^{d+1} \setminus (-s_k^{2m}, s_k^{2m}) \times B_{s_k},
\end{aligned}\right.
$$
and
\begin{equation}
								\label{eq10_07}
| (\zeta_k)_t | \le N \frac{2^{2mk}}{(R-r)^{2m}},
\quad
| D^l\zeta_k| \le N \frac{2^{lk}}{(R-r)^l},
\quad
l=0,1,\ldots,m.	
\end{equation}

By applying $\vu\zeta_k^2$ as a test function to the system \eqref{eq2010_02} we get
\begin{equation}
								\label{eq10_03}
\int_{Q_R}(\vu\zeta^2_k, \vu_t) \,dx\,dt
+ \int_{Q_R}(D^\alpha(\vu\zeta_k^{2}), a_{\alpha\beta}D^\beta\vu) \,dx\,dt = 0.
\end{equation}
Note that
$$
\int_{Q_R}(\vu\zeta_k^{2}, \vu_t) \,dx\,dt
= \int_{B_R} |\vu\zeta_k|^2(0,x)\, dx
- \int_{Q_R} (\vu_t, \vu\zeta_k^{2}) \,dx\,dt
$$
$$
- \int_{Q_R}2\zeta_k(\zeta_k)_t|\vu|^2 \,dx\,dt,
$$
which shows that
\begin{equation}
								\label{eq10_04}
\Re \int_{Q_R} (\vu\zeta_k^{2},\vu_t) \,dx\,dt
= \frac12 \int_{B_R} |\vu\zeta_k|^2(0,x)\, dx - \int_{Q_R}\zeta_k(\zeta_k)_t|\vu|^2 \,dx\,dt.
\end{equation}
%%%%%%%%%%%%%%%%%%%%%%%%%%%%%%%%%%%%%%%%%%%%%%%%%%%%%%%%%%%%%%%%%%%%
On the other hand, by Leibniz's rule
\begin{multline}
								\label{eq10_05}
\int_{Q_R}\left(D^\alpha(\vu\zeta_k^{2}),a_{\alpha\beta}D^\beta\vu\right) \,dx\,dt
= \int_{Q_R}\left(\zeta_k D^\alpha\vu, a_{\alpha\beta} \zeta_k D^\beta\vu\right) \,dx\,dt
\\
+ \int_{Q_R}\sum_{\substack{\alpha_1+\alpha_2=\alpha\\|\alpha_2|<m}}
c_{\alpha_1,\alpha_2}\left(D^{\alpha_1}\zeta_k^{2}\right)(D^{\alpha_2}\vu, a_{\alpha\beta}D^\beta\vu) \,dx\,dt := I_1 + I_2,
\end{multline}
where $\alpha_1, \alpha_2$ are multi-indices,
and $c_{\alpha_1,\alpha_2}$ are corresponding appropriate constants.
%%%%%%%%%%%%%%%%%%%%%%%%%%%%%%%%%%%%%%%%%%%%%%%%%%%%%%%%%%%%%%%%%%%%
By the ellipticity condition \eqref{eq11.28}, it follows that
\begin{equation}
								\label{eq10_06}
\Re(I_1) = \int_{\bR^d_0} \Re \left(\zeta_k D^\alpha\vu, a_{\alpha\beta} \zeta_k D^\beta\vu\right) \,dx\,dt
\ge \delta \int_{\bR^d_0} |\zeta_k D^m\vu|^2\,dx\,dt.
\end{equation}
Here, we recall $\bR^d_0=(-\infty,0)\times\bR^d$.
To estimate $I_2$, we first see that $Q_R$ and $\vu$ in the integrals can be replaced by
$\bR^d_0$ and $\vu\zeta_{k+1}$, respectively.
Then using \eqref{eq10_07}, we have
$$
|I_2| \le N \sum_{l=0}^{m-1}\sum_{|\alpha_2|=l, |\beta|=m}\frac{2^{(m-l)k}}{(R-r)^{m-l}}\int_{\bR^d_0}|D^{\alpha_2}(\vu\zeta_{k+1})|
|D^{\beta}(\vu\zeta_{k+1})|\,dx\,dt := I_3.
$$
Set
$$
\rB = \|\vu\|_{L_2(Q_R)}^2.
$$
Combining \eqref{eq10_03}, \eqref{eq10_04}, \eqref{eq10_05}, and \eqref{eq10_06} as well as using the inequality for $(\zeta_k)_t$ in \eqref{eq10_07}, we obtain
\begin{equation}
								\label{eq10_09}
\delta \int_{\bR^d_0} |\zeta_k D^m\vu|^2 \,dx\,dt
\le
N\frac{2^{2mk}}{(R-r)^{2m}}\rB + I_3.	
\end{equation}
To estimate $I_3$, using Young's inequality we observe that,
for each $0 \le l \le m-1$,
$$
\int_{\bR^d_0} |D^l(\vu\zeta_{k+1})| |D^m(\vu\zeta_{k+1})|\,dx\,dt
$$
$$
\le \varepsilon\frac{(R-r)^{m-l}}{2^{(m-l)k}} \int_{\bR^d_0} |D^m(\vu\zeta_{k+1})|^2\,dx\,dt
+ \frac{2^{(m-l)k}}{4\varepsilon(R-r)^{m-l}}\int_{\bR^d_0}|D^l(\vu\zeta_{k+1})|^2\,dx\,dt,
$$
where $\varepsilon > 0$ is an arbitrary real number.
Furthermore, for $l=1,\ldots,m-1$, by interpolation inequalities
$$
\int_{\bR^d_0} |D^l(\vu\zeta_{k+1})|^2\,dx\,dt
\le \varepsilon_0 \int_{\bR^d_0} |D^m(\vu\zeta_{k+1})|^2\,dx\,dt
+ N\varepsilon_0^{\frac{l}{l-m}} \rB,
$$
where we set $\varepsilon_0 = 4\varepsilon^{2}(R-r)^{2(m-l)}2^{2(l-m)k}$.
Combining the above two inequalities with \eqref{eq10_09} implies
\begin{multline}
								\label{eq10_08}
\delta \int_{\bR^d_0} |\zeta_k D^m\vu|^2 \,dx\,dt \le \varepsilon \int_{\bR^d_0}|D^m(\vu\zeta_{k+1})|^2\,dx\,dt
\\
+N\left(1+ \sum_{l=0}^{m-1}\varepsilon^{\frac{l+m}{l-m}}\right)\frac{2^{2mk}}{(R-r)^{2m}}\rB.	 \end{multline}

Now we set
$$
\rA_k = \int_{\bR^d_0} |D^m(\vu\zeta_k)|^2 \,dx\,dt.
$$
%Then by the estimate \eqref{eq11.09} with $\Omega_T = \bR^d_0$ and $\vu\zeta_k$ in place of $\vu$
%we have
%\begin{equation}
%								\label{eq10_02}
%\rA_k \le N \sum_{j=1}^d \int_{\bR^d_0} |D_j^m(\vu\zeta_k)|^2\,dx\,dt.
%\end{equation}
To estimate $\rA_k$,
we use \eqref{eq10_07} and interpolation inequalities to get
$$
\int_{\bR^d_0} |\vu D^m\zeta_k|^2 \,dx\,dt
= \int_{Q_R}|\vu D^m\zeta_k|^2 \,dx\,dt
\le N\frac{2^{2mk}}{(R-r)^{2m}}\rB,
$$
and, for $1\le l \le m-1$,
$$
\int_{\bR^d_0} |D^{m-l}\zeta_k D^l \vu|^2 \, dx \, dt
=\int_{\bR^d_0} |D^{m-l}\zeta_k D^l (\vu\zeta_{k+1})|^2 \, dx \, dt
$$
$$
\le N \frac{2^{2(m-l)k}}{(R-r)^{2(m-l)}}\int_{\bR^d_0} |D^l (\vu\zeta_{k+1})|^2 \, dx \, dt
$$
$$
\le \varepsilon \int_{\bR^d_0} |D^m(\vu\zeta_{k+1})|^2\,dx\,dt
+ N \varepsilon^{\frac{l}{l-m}} \frac{2^{2mk}}{(R-r)^{2m}} \rB.
$$
By Leibniz's rule, we estimate $\rA_k$ by
$$
\rA_k \le \varepsilon \rA_{k+1} + N\int_{\bR^d_0} |\zeta_k D^m\vu|^2 \,dx\,dt
+ N \frac{2^{2mk}}{(R-r)^{2m}} \sum_{l=0}^{m-1} \varepsilon^{\frac{l}{l-m}} \rB.
$$
This combined with \eqref{eq10_08} shows
$$
\rA_k \le \varepsilon \rA_{k+1}
+N\sum_{l=0}^{m-1}\left(\varepsilon^{\frac{l}{l-m}}
+\varepsilon^{\frac{l+m}{l-m}}\right)\frac{2^{2mk}}{(R-r)^{2m}}\rB.
$$
We multiply both sides of the above inequality by $\varepsilon^k$ and sum over $k$ to obtain
$$
\sum_{k=0}^{\infty}\varepsilon^k \rA_k
\le \sum_{k=1}^{\infty}\varepsilon^k \rA_k
+ N\sum_{l=0}^{m-1}\left(\varepsilon^{\frac{l}{l-m}}+\varepsilon^{\frac{l+m}{l-m}}\right)
(R-r)^{-2m} \sum_{k=0}^{\infty}(2^{2m}\varepsilon)^k \rB.
$$
Choose $\varepsilon = 2^{-2m-1}$ and observe that $\sum_{k=0}^{\infty}\varepsilon^k\rA_k < \infty$.
Then the above inequality gives
$$
\rA_0 \le N(R-r)^{-2m}\rB,
$$
which clearly implies the desired inequality \eqref{eq2010_10} when $j=m$.

Since
$$
\rA_0 = \int_{\bR^d_0} |D^m(\vu\zeta_0)|^2 \,dx\,dt,
$$	
the proof above shows that
$$
\int_{\bR^d_0} |D^m(\vu\zeta_0)|^2 \,dx\,dt
\le N(d,m,n,\delta) (R-r)^{-2m}\int_{Q_R}|\vu|^2 \,dx\,dt.
$$
%provided that $\vu$ satisfies \eqref{eq2010_02} in $Q_R$.
Then for $0 < j < m$, by interpolation inequalities as well as the above inequality,
\begin{multline*}
								%\label{eq10_11}
\int_{Q_r}|D^j\vu|^2\,dx\,dt
\le \int_{\bR^d_0}|D^j(\vu\zeta_0)|^2\,dx\,dt
\le (R-r)^{-2j}\int_{\bR^d_0}|\vu\zeta_0|^2\,dx\,dt\\
+ N (R-r)^{2(m-j)}\int_{\bR^d_0}|D^m(\vu\zeta_0)|^2\,dx\,dt
\le N (R-r)^{-2j}\int_{Q_R}|\vu|^2 \,dx\,dt,	
\end{multline*}
where $N=N(d,m,n,\delta)$. The lemma is proved.
\end{proof}

We are going to use the following Poincar\'e type inequality, which
generalizes a result in \cite[Sect. 3]{Krylov_2005}
\begin{lemma}
                        \label{lemPoin}
Let $p\in [1,\infty)$, $R\in (0,\infty)$, $\vu\in C^\infty_{\text{loc}}(\bR^{d+1})$.
Suppose that $\vu$ satisfies
\begin{equation}
                                        \label{eq16.11}
\vu_t+(-1)^m \cL \vu=0
\end{equation}
in $Q_R$. Let $\vP=\vP(x)$ be the vector-valued polynomial of order $m-1$ such that
$$
(D^k \vP)_{Q_R}=(D^k \vu)_{Q_R},\quad k=0,1,\ldots,m-1,
$$ and let $\vv=\vu-\vP$. Then for each $k=0,1,\ldots,m-1$, we have
\begin{equation}
                            \label{eq15.39}
\|D^k \vv\|_{L_p(Q_R)}\le NR^{m-k}\|D^m\vu\|_{L_p(Q_R)},
\end{equation}
where $N=N(d,m,n,\delta, p)>0$.
\end{lemma}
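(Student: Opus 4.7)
The natural first attempt -- apply spatial higher-order Poincar\'e at each time $t$, then one-dimensional Poincar\'e in $t$ for the residual $t$-dependence -- succeeds for the spatial part but breaks down for the temporal part: the time derivative of $(D^j\vu(t,\cdot))_{B_R}$, rewritten via the equation as the average of $D^j\cL_0\vu$ on $B_R$, collapses by Stokes' theorem to a boundary integral of derivatives of order up to $2m-2$, which cannot be controlled by the interior $L_p$-norm of $D^m\vu$. My plan is to replace the simple averages by smooth, compactly-supported moment functionals, so that integration by parts against the equation produces no boundary terms, and then to reconcile the resulting auxiliary polynomial with the statement's $\vP$ via a finite-dimensional norm equivalence on polynomials of degree $\le m-1$.

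\emph{Spatial step.} I fix a family $\{\psi_\mu\}_{|\mu|\le m-1}\subset C_0^\infty(B_{R/2})$ dual to the monomials, $\int\psi_\mu x^\nu\,dx=\delta_{\mu\nu}$, constructed on $B_1$ by linear algebra and then rescaled so that $|D^\ell\psi_\mu|\le N R^{-d-|\mu|-|\ell|}$. Setting $c_\mu(t):=\int_{B_R}\psi_\mu\vu(t,\cdot)\,dx$ and $\vP^{\ast}_t(x):=\sum_{|\mu|\le m-1}c_\mu(t)x^\mu$ gives $\int\psi_\mu(\vu(t,\cdot)-\vP^{\ast}_t)\,dx=0$ for all $|\mu|\le m-1$. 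Because the functionals $P\mapsto\int\psi_\mu P\,dx$ separate polynomials of degree $\le m-1$, the Bramble--Hilbert lemma on $B_1$ together with scaling yields
$$
\|D^k(\vu(t,\cdot)-\vP^{\ast}_t)\|_{L_p(B_R)}\le N R^{m-k}\|D^m\vu(t,\cdot)\|_{L_p(B_R)},\qquad 0\le k\le m-1;
$$
raising to the $p$-th power and integrating over $t\in(-R^{2m},0)$ produces the corresponding $L_p(Q_R)$-bound.

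\emph{Temporal step.} Integration by parts of $\vu_t+(-1)^m\cL_0\vu=0$ against $\psi_\mu$, with no boundary contribution by compact support, yields
$$
c_\mu'(t)=-\sum_{|\alpha|=|\beta|=m}\int_{B_R}D^\alpha\psi_\mu\cdot a_{\alpha\beta}D^\beta\vu\,dx,
$$
whence $|c_\mu'(t)|\le N R^{-d/p-|\mu|-m}\|D^m\vu(t,\cdot)\|_{L_p(B_R)}$. I then set $\bar c_\mu:=\dashint_{-R^{2m}}^0 c_\mu(t)\,dt$ and $\vP^{\ast}(x):=\sum\bar c_\mu x^\mu$; one-dimensional Poincar\'e applied to $c_\mu-\bar c_\mu$, combined with the crude bound $|D^k x^\mu|\le N R^{|\mu|-k}$ on $B_R$, produces $\|D^k(\vP^{\ast}_t-\vP^{\ast})\|_{L_p(Q_R)}\le N R^{m-k}\|D^m\vu\|_{L_p(Q_R)}$. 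A triangle inequality upgrades this to the same estimate with the $t$-independent polynomial $\vP^{\ast}$ in place of $\vP$.

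\emph{Reconciliation with $\vP$.} The difference $\vP^{\ast}-\vP$ is a polynomial of degree $\le m-1$ in $x$, and the defining identity $(D^j\vP)_{Q_R}=(D^j\vu)_{Q_R}$ converts the moment $(D^j(\vP^{\ast}-\vP))_{Q_R}$ into $(D^j(\vP^{\ast}-\vu))_{Q_R}$, which H\"older bounds by $NR^{m-j-(d+2m)/p}\|D^m\vu\|_{L_p(Q_R)}$. A rescaling $\tilde Q(y):=Q(Ry)$ combined with the finite-dimensional norm equivalence $\|D^k\tilde Q\|_{L_p(B_1)}\le N\max_{|j|\le m-1}|(D^j\tilde Q)_{B_1}|$ on polynomials of degree $\le m-1$ (valid since $Q\mapsto((D^jQ)_{B_1})_{|j|\le m-1}$ is an isomorphism there) then translates the moment bound into $\|D^k(\vP^{\ast}-\vP)\|_{L_p(Q_R)}\le N R^{m-k}\|D^m\vu\|_{L_p(Q_R)}$. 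A final triangle inequality, together with $D^m\vu=D^m\vv$, gives \eqref{eq15.39}. The main technical obstacle throughout will be bookkeeping of the $R$-exponents: across the three steps the factors from scaling, from $|B_R|$ and $|Q_R|$, from derivatives of $\psi_\mu$, and from H\"older must coalesce to exactly $R^{m-k}$ -- most delicately in the reconciliation, where $R^{2m/p}$ from the time-extension of the $t$-independent polynomial precisely cancels the $R^{-2m/p}$ produced by H\"older on $Q_R$.
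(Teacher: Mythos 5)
Your proof is correct and rests on exactly the same key idea the paper uses to escape the obstruction you identified: replace flat spatial averages by compactly supported smooth weights so that hitting the equation and integrating by parts in the temporal step produces no boundary contribution. Where the two arguments diverge is in the spatial projection and the closing logic. The paper fixes a \emph{single} $\zeta \in C_0^{\infty}(B_1)$ with unit integral, sets $\vg_k(t) = \int_{B_1}\zeta\,D^k\vv(t,\cdot)$ for each $k=0,\dots,m-1$, bounds $\int_{B_1}|D^k\vv - \vg_k|^p \le N\int_{B_1}|D^{k+1}\vv|^p$ by one application of ordinary Poincar\'e, controls $\|\vg_k - \vc_k\|_{L_p}$ through $|\partial_t \vg_k(t)| \le N\|D^m\vu(t,\cdot)\|_{L_p(B_1)}$, and closes by a downward induction on $k$ starting from $D^m\vv=D^m\vu$. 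You instead construct the full dual moment family $\{\psi_\mu\}$, project $\vu(t,\cdot)$ onto the resulting auxiliary polynomial $\vP^\ast_t$, invoke Bramble--Hilbert for the spatial estimate, and then reconcile the auxiliary polynomial $\vP^\ast$ with the lemma's $\vP$ via a finite-dimensional norm equivalence on polynomials of degree $\le m-1$. Both routes are valid and the exponent bookkeeping you flagged does indeed cancel correctly ($R^{-(2m+d)/p}$ from the moment/H\"older step against $R^{(2m+d)/p}$ from measuring the $t$-independent polynomial over $Q_R$). The cost of your version is the extra machinery -- the dual-basis construction, Bramble--Hilbert, and the reconciliation step -- while the paper's single-weight-plus-induction argument is lighter and shorter. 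Your version does produce a natural time-dependent projection $\vP^\ast_t$ which some may find conceptually cleaner than the paper's per-order vectors $\vg_k$. One remark shared by both proofs: the $\cL$ in \eqref{eq16.11} must in fact be the leading-order operator $\cL_0$ (as Section~\ref{sec_aux} announces), since the temporal bound only closes when, after integrating by parts $m$ times, every surviving factor is $D^\beta\vu$ with $|\beta|=m$.
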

\begin{proof}
By a simple scaling, without loss of generality we may assume that $R=1$.
Take a function $\zeta\in C_0^\infty(B_1)$ with unit integral. For any $k=0,1,\ldots,m-1$
and $t\in (-1,0)$, let
$$
\vg_{k}(t)=\int_{B_1}\zeta(y)D^{k}\vv(t,y)\,dy.
$$
Then for any $t\in (-1,0)$, by H\"older's inequality and Poincar\'e's inequality,
we have
$$
\int_{B_1}|D^{k}\vv(t,x)-\vg_{k}(t)|^p\,dx
=\int_{B_1}\Big|\int_{B_1}(D^{k}\vv(t,x)-D^{k}\vv(t,y))\zeta(y)\,dy\Big|^p\,dx
$$
\begin{equation}
                            \label{eq16.05}
\le N\int_{B_1}\int_{B_1}|D^{k}\vv(t,x)-D^{k}\vv(t,y)|^p\,dy\,dx
\le N\int_{B_1}|D^{k+1}\vv(t,x)|^p\,dx.
\end{equation}
Now let $\vc_{k}=\int_{-1}^0 \vg_{k}(t)\,dt$ be a constant vector. Since
$$
\int_{Q_1}D^{k}\vv\,dx\,dt=0,
$$
by the triangle inequality, \eqref{eq16.05}, and Poincar\'e's inequality, we get
$$
\|D^{k}\vv\|_{L_p(Q_1)}\le N\|D^{k}\vv-\vc_{k}\|_{L_p(Q_1)}
\le N\|D^{k}\vv-\vg_{k}\|_{L_p(Q_1)}+N\|\vg_{k}-\vc_{k}\|_{L_p(Q_1)}
$$
\begin{equation}
                                \label{eq16.08}
\le N\|D^{k+1}\vv\|_{L_p(Q_R)}+N\|\partial_t \vg_{k}\|_{L_p((-1,0))}.
\end{equation}
By the definition of $\vg_{k}$, \eqref{eq16.11} and integration by parts,
$$
\partial_t \vg_{k}(t)=\int_{B_1}\zeta(y)D^{k}\partial_t\vv(t,y)\,dy
=\int_{B_1}\zeta(y)D^{k}\partial_t\vu(t,y)\,dy
$$
$$
=(-1)^{m+1}\int_{B_1}\zeta(y)D^{k}\cL \vu(t,y)\,dy
$$
$$
=(-1)^{k+1}\int_{B_1}(D^{k}D^\alpha\zeta)(y)a_{\alpha\beta}D_\beta \vu(t,y)\,dy.
$$
Thus, by H\"older's inequality, for any $t\in (-1,0)$,
\begin{equation}
                                    \label{eq16.19}
|\partial_t \vg_{k}(t)|\le N\|D^m\vu(t,\cdot)\|_{L_p(B_1)}.
\end{equation}
Notice that $D^m\vv=D^m \vu$. Combining \eqref{eq16.08} and \eqref{eq16.19} yields \eqref{eq15.39} by an induction on $k$.
\end{proof}

Now we prove an estimate for $D^{\alpha}\vu$ when $\alpha=(\alpha_1,\ldots,\alpha_d)$ satisfies $|\alpha| \ge m$ and $\alpha_1 \le m$.

\begin{corollary}
                                    \label{cor3}
Let $0<r<R<\infty$ and $a_{\alpha\beta}=a_{\alpha\beta}(x_1)$, $|\alpha|=|\beta|=m$. Assume that $\vu\in C_{\text{loc}}^\infty(\bR^{d+1})$ satisfies
\eqref{eq2010_02} in $Q_R$. Then for any multi-index $\alpha=(\alpha_1,\ldots,\alpha_d)$ such that $|\alpha|\ge m$ and $\alpha_1\le m$,
we have
\begin{equation*}
                                                %\label{eq2010_06}
\|D^\alpha\vu\|_{L_2(Q_r)}
\le N\|D^m\vu\|_{L_2(Q_R)},
\end{equation*}
where $N=N(d,m,n,\delta, R, r, \alpha)$.
\end{corollary}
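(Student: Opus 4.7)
The plan is to exploit the fact that $a_{\alpha\beta}=a_{\alpha\beta}(x_1)$: for any purely tangential multi-index $\beta=(0,\beta_2,\ldots,\beta_d)$, the commutator $[D^\beta,\cL_0]$ vanishes, so $D^\beta\vu$ again satisfies $\vu_t+(-1)^m\cL_0\vu=0$ in $Q_R$. This lets me iterate Lemma \ref{lem2010_03}: each application can convert tangential derivatives of the solution into derivatives in arbitrary directions, at the cost of shrinking the cylinder.

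Write $\alpha=\alpha_1 e_1+\alpha'$ with $\alpha'=(0,\alpha_2,\ldots,\alpha_d)$. Since $D^{\alpha'}\vu$ solves the equation and $\alpha_1\le m$, Lemma \ref{lem2010_03} applied to $D^{\alpha'}\vu$ on a pair $Q_r\subset Q_\rho$ with $r<\rho<R$ gives
$$
\|D^\alpha\vu\|_{L_2(Q_r)}=\|D_1^{\alpha_1}D^{\alpha'}\vu\|_{L_2(Q_r)}\le \|D^{\alpha_1}(D^{\alpha'}\vu)\|_{L_2(Q_r)}\le N\|D^{\alpha'}\vu\|_{L_2(Q_\rho)}.
$$
Thus the task reduces to bounding $\|D^{\alpha'}\vu\|_{L_2(Q_\rho)}$ by $\|D^m\vu\|_{L_2(Q_R)}$, where $\alpha'$ is purely tangential but of arbitrary order.

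To accomplish this I first normalize: let $\vP$ be the polynomial of degree $\le m-1$ from Lemma \ref{lemPoin} and set $\vv=\vu-\vP$. Since $|\alpha|\ge m$, one has $D^\gamma\vv=D^\gamma\vu$ for every $|\gamma|\ge m$; the lemma (with $p=2$), together with $D^m\vv=D^m\vu$, yields
$$
\|D^k\vv\|_{L_2(Q_R)}\le N\|D^m\vu\|_{L_2(Q_R)},\qquad k=0,1,\ldots,m.
$$
Moreover $\vv$ solves the same homogeneous system, because $\vP$ is $t$-independent and $\cL_0\vP=0$. I then prove by induction on $|\beta|$ that for every purely tangential $\beta$ and every $\rho<R$,
$$
\|D^\beta\vv\|_{L_2(Q_\rho)}\le N(\beta,\rho,R)\,\|D^m\vu\|_{L_2(Q_R)}.
$$
The base case $|\beta|\le m$ is immediate from the bound displayed above. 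For $|\beta|>m$, split $\beta=\beta^{(1)}+\beta^{(2)}$ with $|\beta^{(1)}|=m$ and $\beta^{(2)}$ still purely tangential; apply Lemma \ref{lem2010_03} with $j=m$ to the equation-solution $D^{\beta^{(2)}}\vv$ on $Q_\rho\subset Q_{\rho'}$ to obtain
$$
\|D^\beta\vv\|_{L_2(Q_\rho)}\le \|D^m(D^{\beta^{(2)}}\vv)\|_{L_2(Q_\rho)}\le N(\rho'-\rho)^{-m}\|D^{\beta^{(2)}}\vv\|_{L_2(Q_{\rho'})},
$$
and invoke the induction hypothesis on $\beta^{(2)}$, whose order has dropped by $m$.

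I do not expect a genuine obstacle here; the main bookkeeping is choosing nested radii $r<\rho_0<\rho_1<\cdots<R$, one per iteration, which is absorbed into the constant $N(\alpha,r,R)$. The constraint $\alpha_1\le m$ is sharp for this approach, since Lemma \ref{lem2010_03} provides at most $m$ derivatives per application and any further $x_1$-differentiation would fail to preserve the system.
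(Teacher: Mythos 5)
Your proof is correct and follows essentially the same path as the paper's: it rests on the observation that tangential derivatives of $\vu$ (and of $\vv=\vu-\vP$) again solve \eqref{eq2010_02} because the coefficients depend only on $x_1$, iterates Lemma \ref{lem2010_03} on shrinking cylinders, and closes the gap between lower-order norms and $\|D^m\vu\|_{L_2(Q_R)}$ via the Poincar\'e-type Lemma \ref{lemPoin}. The only difference is organizational — you normalize to $\vv$ first and phrase the iteration as an explicit induction on $|\beta|$, whereas the paper iterates Lemma \ref{lem2010_03} first to reach $\sum_{k<m}\|D^k\vu\|_{L_2(Q_R)}$ and invokes Lemma \ref{lemPoin} once at the end (note that your intermediate sentence ``the task reduces to bounding $\|D^{\alpha'}\vu\|_{L_2(Q_\rho)}$ by $\|D^m\vu\|_{L_2(Q_R)}$'' is literally false without the normalization, but your subsequent steps correctly work with $\vv$, using $D^\alpha\vu=D^\alpha\vv$ for $|\alpha|\ge m$).
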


\begin{proof}
Since $D_{x'}^{\alpha'} \vu$ also satisfies \eqref{eq2010_02}, by applying Lemma \ref{lem2010_03} repeatedly, we obtain
\begin{equation}
                                                \label{eq11.23}
\|D^\alpha\vu\|_{L_2(Q_r)}\le N\sum_{k<m}\|D^k\vu\|_{L_2(Q_R)}
\end{equation}
for any $\alpha$ with $\alpha_1\le m$.
Now let $\vP=\vP(x)$ be the vector-valued polynomial of order $m-1$ such that
$$
\left(D^k \vP\right)_{Q_R} = \left(D^k \vu\right)_{Q_R},\quad k=0,1,\ldots,m-1,
$$ and set $\vv=\vu-\vP$. Then we have
$$
\left(D^k \vv\right)_{Q_R}=0,\quad k=0,1,\ldots,m-1.
$$
Since $\vv$ satisfies \eqref{eq2010_02} in $Q_R$,
by \eqref{eq11.23} with $\vv$ in place of $\vu$ we obtain
$$
\|D^\alpha\vu\|_{L_2(Q_r)}=\|D^\alpha\vv\|_{L_2(Q_r)}\le N\sum_{k<m}\|D^k\vv\|_{L_2(Q_R)}
$$
$$
\le N \|D^m\vv\|_{L_2(Q_R)}=N\|D^m\vu\|_{L_2(Q_R)}
$$
for any $\alpha$ satisfying $|\alpha|\ge m$ and $\alpha_1\le m$,
where the second inequality is due to Lemma \ref{lemPoin}.
\end{proof}

\begin{lemma}
                                    \label{lem2010_1}
Let $0<r<R<\infty$ and $a_{\alpha\beta}=a_{\alpha\beta}(x_1)$, $|\alpha|=|\beta|=m$. Assume that $\vu\in C_{\text{loc}}^\infty(\bR^{d+1})$ satisfies
\eqref{eq2010_02} in $Q_R$. Then for any integers $i \ge 1$, $j \ge 0$, and any multi-index $\alpha=(\alpha_1,\ldots,\alpha_d)$ such that $|\alpha|\ge m$ and $\alpha_1\le m$,
we have
\begin{equation}
                                               \label{eq2010_08}
\|\partial_t^i\vu\|_{L_2(Q_r)}
+\|\partial_t^jD^\alpha\vu\|_{L_2(Q_r)}
\le N\|D^m\vu\|_{L_2(Q_R)},
\end{equation}
where $N=N(d,m,n,\delta, R, r, \alpha, i,j)$.
\end{lemma}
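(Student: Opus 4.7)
My plan is to reduce everything to the base case $i=1$, $j=0$:
\[
\|\partial_t \vu\|_{L_2(Q_r)} \le N \|D^m \vu\|_{L_2(Q_R)},
\]
and then bootstrap using two facts: since $a_{\alpha\beta}$ depends only on $x_1$, both $\partial_t \vu$ and $D_{x'}^{\alpha'}\vu$ still satisfy \eqref{eq2010_02}; and Lemma \ref{lem2010_03} together with Corollary \ref{cor3} apply to any such solution on suitably nested cylinders. Given the base case, the mixed bound follows by applying Corollary \ref{cor3} to $\partial_t^j\vu$ to obtain $\|D^\alpha\partial_t^j\vu\|_{L_2(Q_{r_1})}\le N\|D^m\partial_t^j\vu\|_{L_2(Q_{r_2})}$, and then Lemma \ref{lem2010_03} (also applied to $\partial_t^j\vu$) to bound the right side by $N\|\partial_t^j\vu\|_{L_2(Q_{r_3})}$. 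The pure time bound for $i\ge 2$ follows by induction on $i$: the base case applied to $\partial_t^{i-1}\vu$ gives $\|\partial_t^i\vu\|_{L_2(Q_{r_1})}\le N\|D^m\partial_t^{i-1}\vu\|_{L_2(Q_{r_2})}$, and Lemma \ref{lem2010_03} applied to $\partial_t^{i-1}\vu$ plus the induction hypothesis finishes the job.

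For the base case, choose $r<\rho_1<\rho_2<\rho_3<R$ and a smooth cutoff $\zeta$ with $\zeta\equiv 1$ on $Q_{\rho_1}$, support in $Q_{\rho_2}$, and compact $t$-support. Testing the weak form of \eqref{eq2010_02} against $\phi=\partial_t\vu\,\zeta^2$ gives
\[
\int|\partial_t\vu|^2\zeta^2\,dx\,dt
=-\sum_{|\alpha|=|\beta|=m}\int a_{\alpha\beta}D^\beta\vu\, D^\alpha(\partial_t\vu\,\zeta^2)\,dx\,dt.
\]
After Leibniz expansion, the top-order term $\sum\int a_{\alpha\beta}\zeta^2 D^\beta\vu\,\partial_t D^\alpha\vu$ is handled by integrating by parts in $t$ (using $t$-independence of $a_{\alpha\beta}$ and compact $t$-support of $\zeta$). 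Summing the resulting identity with the result of renaming $\alpha\leftrightarrow\beta$ produces
\[
\sum\int\zeta^2(a_{\alpha\beta}+a_{\beta\alpha})D^\beta\vu\,\partial_tD^\alpha\vu\,dx\,dt
=-\sum\int\partial_t(\zeta^2)\,a_{\alpha\beta}\,D^\beta\vu\, D^\alpha\vu\,dx\,dt,
\]
so the symmetric combination is controlled by $N\|D^m\vu\|_{L_2(Q_R)}^2$. The residual antisymmetric remainder $\tfrac12\sum(a_{\alpha\beta}-a_{\beta\alpha})\int\zeta^2 D^\beta\vu\,\partial_tD^\alpha\vu$ and the lower-order Leibniz terms are bounded directly by products of $\|D^m\vu\|_{L_2(Q_R)}$ with $\|D^k\partial_t\vu\|_{L_2(Q_{\rho_2})}$, $k\le m$. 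Since $\partial_t\vu$ also solves \eqref{eq2010_02}, Lemma \ref{lem2010_03} applied to $\partial_t\vu$ gives $\|D^k\partial_t\vu\|_{L_2(Q_{\rho_2})}\le N(\rho_3-\rho_2)^{-k}\|\partial_t\vu\|_{L_2(Q_{\rho_3})}$. Young's inequality then yields the hole-filling bound
\[
\|\partial_t\vu\|_{L_2(Q_{\rho_1})}^2 \le \varepsilon\,\|\partial_t\vu\|_{L_2(Q_{\rho_3})}^2
+ N_\varepsilon(\rho_3-\rho_1)^{-2m}\|D^m\vu\|_{L_2(Q_R)}^2,
\]
and a standard iteration over a geometrically spaced sequence $r=r_0<r_1<r_2<\cdots\nearrow R$ absorbs the $\varepsilon$-term and closes the base case.

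The main obstacle is exactly this antisymmetric piece: the time integration-by-parts identity produces only the symmetric combination $a_{\alpha\beta}+a_{\beta\alpha}$, so the antisymmetric part of the top-order term cannot be handled by an energy identity alone. It must instead be dominated by $\|D^m\partial_t\vu\|_{L_2}$ on a slightly larger cylinder, which in turn is controlled by $\|\partial_t\vu\|_{L_2}$ via Lemma \ref{lem2010_03} — thereby forcing the hole-filling iteration in order to close the loop. Everything else is routine Caccioppoli-style bookkeeping.
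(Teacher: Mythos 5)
Your proof is correct in substance and follows the same overall skeleton as the paper's: reduce to the base estimate $\|\vu_t\|_{L_2(Q_r)}\le N\|D^m\vu\|_{L_2(Q_R)}$ by observing that $\partial_t^j\vu$ and $D_{x'}^{\alpha'}\vu$ again solve \eqref{eq2010_02} and invoking Lemma \ref{lem2010_03} and Corollary \ref{cor3}, then establish the base case by testing against $\vu_t\zeta^2$ and closing with a hole-filling iteration. However, the treatment of the top-order Leibniz term is genuinely different from, and more complicated than, the paper's. The paper performs no integration by parts in $t$ at all: it treats the $l=m$ term $\int\zeta_k^2\,(D^\alpha\vu_t,a_{\alpha\beta}D^\beta\vu)$ by exactly the same mechanism as the lower-order terms, namely Young's inequality followed by the Caccioppoli estimate for $\vu_t$ from Lemma \ref{lem2010_03}. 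The observation that $\vu_t$ itself solves \eqref{eq2010_02}, and hence that all of $\|D^l\vu_t\|_{L_2}$ for $0\le l\le m$ are controlled on a slightly larger cylinder by $\|\vu_t\|_{L_2}$, already handles everything; there is no need to split the coefficient into symmetric and antisymmetric parts, and the antisymmetric piece is in no sense the ``main obstacle.''

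Your time-integration-by-parts route can be made to work, but as written it has a gap at the boundary term. Because $\zeta\equiv 1$ on $Q_{\rho_1}=(-\rho_1^{2m},0)\times B_{\rho_1}$, the cutoff cannot vanish at $t=0$; ``compact $t$-support'' only means $\zeta$ dies before $t=-\rho_2^{2m}$ and after $t=\rho_2^{2m}$. Hence when you integrate the total $t$-derivative $\partial_t[\zeta^2\Re(a_{\alpha\beta}D^\beta\vu,D^\alpha\vu)]$ over $Q_R=(-R^{2m},0)\times B_R$, a nonzero boundary term $\tfrac12\int_{B_R}\zeta^2(0,\cdot)\Re(a_{\alpha\beta}D^\beta\vu,D^\alpha\vu)(0,\cdot)\,dx$ appears at $t=0$, which is missing from your displayed identity. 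Fortunately it carries the favorable sign (by the uniform ellipticity \eqref{eq11.28} it is nonnegative, and it enters the energy balance with a minus sign), so it may simply be dropped. You should say this explicitly; as stated, the identity is incorrect. Also note that the relevant symmetric combination for complex matrices is $a_{\alpha\beta}+a_{\beta\alpha}^*$ rather than $a_{\alpha\beta}+a_{\beta\alpha}$, though since you treat the remainder in full generality this is only a notational slip. After these corrections the argument closes, but you should compare it with the paper's simpler version, which applies one uniform estimate to all Leibniz terms and avoids both the integration by parts and the symmetric/antisymmetric decomposition entirely.
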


\begin{proof}
Since $\vu_t$ satisfies \eqref{eq2010_02} in $Q_R$, by Lemma \ref{lem2010_03}
$$
\|D^m \vu_t\|_{L_2(Q_{r_1})} \le N\|\vu_t\|_{L_2(Q_{r_2})},
\quad r \le r_1 < r_2 \le R.
$$
Using this inequality, Corollary \ref{cor3}, and the fact that $\partial_t^i D_{x'}^{\alpha'} \vu$ also satisfies \eqref{eq2010_02} in $Q_R$,
we see that, in order to prove \eqref{eq2010_08}, it is enough to show
\begin{equation}
								\label{eq10_13}
\|\vu_t\|_{L_2(Q_r)}\le N(d,m,n,\delta,R,r)\|D^m\vu\|_{L_2(Q_R)},	
\end{equation}
where $R$ may be a smaller one than that in \eqref{eq2010_08}.
Take $r_k$, $s_k$, and $\zeta_k$ from the proof of Lemma \ref{lem2010_03}.
Set
$$
Q^{(k)} = Q_{r_k},\quad
\tilde Q^{(k)} = Q_{s_k},
\quad
k = 0, 1, 2, \ldots.
$$
Also set
$$
\rA_k = \| \vu_t\|_{L_2(Q^{(k)})}^2,
\quad
\rB = \| D^m\vu \|_{L_2(Q_R)}^2.
$$
We now apply $\vu_t\zeta_k^2$ to \eqref{eq2010_02} as a test function to get
\begin{equation}
								\label{eq10_12}
\int_{Q_R}|\vu_t \zeta_k|^2\,dx\,dt
= -\int_{Q_R}(D^{\alpha}(\vu_t \zeta_k^2), a_{\alpha\beta}D^{\beta}\vu)\,dx\,dt.	
\end{equation}
To estimate the terms in the right hand side,
we first observe that,
thanks to the fact that $\vu_t$ also satisfies \eqref{eq2010_02} in $Q_R$,
by applying \eqref{eq2010_10} with $s_k$ and $r_{k+1}$ in place
of $r$ and $R$, respectively,
$$
\int_{\tilde Q^{(k)}}|D^l \vu_t|^2\,dx\,dt
\le N (r_{k+1}-s_k)^{-2l}\int_{Q^{(k+1)}}|\vu_t|^2\,dx\,dt
$$
$$
= N\frac{2^{2lk}}{(R-r)^{2l}}\int_{Q^{(k+1)}}|\vu_t|^2\,dx\,dt,
\quad
l=0,1,\ldots,m.
$$
Hence, for each $l=0,1,\ldots,m$, by Young's inequality
$$
\int_{Q_R} |D^{m-l}(\zeta_k^2)| |D^l \vu_t| |D^m \vu| \,dx\,dt
$$
$$
\le \frac{2^{(m-l)k}}{(R-r)^{(m-l)}}
\left(\varepsilon_0\int_{\tilde Q^{(k)}}|D^l \vu_t|^2\,dx\,dt
+\frac{1}{4\varepsilon_0}\int_{Q_R}|D^m\vu|^2\,dx\,dt\right)
$$
$$
\le \varepsilon\int_{Q^{(k+1)}}|\vu_t|^2\,dx\,dt
+N\frac{1}{\varepsilon}\frac{2^{2mk}}{(R-r)^{2m}}\int_{Q_R}|D^m\vu|^2\,dx\,dt,
$$
where $\varepsilon>0$ is an arbitrary real number.
This along with \eqref{eq10_12} and Leibniz's rule implies that
$$
\rA_k \le \varepsilon \rA_{k+1} + N\frac{1}{\varepsilon}\frac{2^{2mk}}{(R-r)^{2m}}\rB.
$$
Finally, we prove \eqref{eq10_13} following the same argument as in the proof of Lemma \ref{lem2010_03}.
\end{proof}

\subsection{Maximal and sharp functions}
We recall the maximal function theorem and the Fefferman-Stein theorem.
Let
$$
\cQ=\{Q_r(X): X=(t,x) \in \bR^{d+1}, r \in (0,\infty)\}.
$$
For a function $g$ defined in $\bR^{d+1}$,
the (parabolic) maximal and sharp function of $g$ are given by
$$
\cM  g (t,x) = \sup_{Q \in \cQ, (t,x) \in Q} \dashint_{Q} |g(s,y)| \, dy \,ds,
$$
$$
g^{\#}(t,x) = \sup_{Q \in \cQ, (t,x) \in Q} \dashint_{Q} |g(s,y) -
(g)_{Q}| \, dy\,ds.
$$
It is well known that
$$
\| g \|_{L_p(\bR^{d+1})} \le N \| g^{\#} \|_{L_p(\bR^{d+1})},
\quad
\| \cM  g \|_{L_p(\bR^{d+1})} \le N \| g\|_{L_p(\bR^{d+1})},
$$
if $g \in L_p(\bR^{d+1})$, where $1 < p < \infty$ and $N = N(d,p)$.
Indeed, the first of the inequalities above is due to the Fefferman-Stein theorem on sharp functions
and the second one to the Hardy-Littlewood maximal function theorem (this inequality also holds trivially when $p = \infty$).

Theorem \ref{th081201} below is from \cite{Krylov08}
and can be considered as a generalized version of the Fefferman-Stein Theorem.
To state the theorem,
let
$$
\bC_l = \{ C_l(i_0, i_1, \ldots, i_d), i_0, i_1, \ldots, i_d \in \bZ \},
\quad l \in \bZ
$$
be the collection of
partitions given by parabolic dyadic cubes in $\bR^{d+1}$
\begin{multline*}
%C_l(i_0, \ldots, i_d)\\ =
[ i_0 2^{-2ml}, (i_0+1)2^{-2ml} ) \times [ i_1 2^{-l}, (i_1+1)2^{-l} ) \times \ldots \times [ i_d 2^{-l}, (i_d+1)2^{-l} ).
\end{multline*}

\begin{theorem}							\label{th081201}
Let $p \in (1, \infty)$, and $U,V,F\in L_{1,\text{loc}}(\bR^{d+1})$.
Assume that we have $|U| \le V$
and, for each $l \in \bZ$ and $C \in \bC_l$,
there exists a measurable function $U^C$ on $C$
such that $|U| \le U^C \le V$ on $C$ and
\begin{equation*}							 %\label{eq082006}
\int_C |U^C - \left(U^C\right)_C| \,dx\,dt
\le \int_C F(t,x) \,dx\,dt.
\end{equation*}
Then
$$
\| U \|_{L_p(\bR^{d+1})}^p
\le N(d,p) \|F\|_{L_p(\bR^{d+1})}\| V \|_{L_p(\bR^{d+1})}^{p-1},
$$
provided that $F,V\in L_p(\bR^{d+1})$.
\end{theorem}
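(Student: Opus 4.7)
The plan is to extend the classical Fefferman--Stein sharp-function argument to this generalized setting by reducing the problem to a dyadic parabolic maximal function and then exploiting the local oscillation hypothesis on each $U^C$. Since $|U(x)| \le (\cM_d|U|)(x)$ almost everywhere by Lebesgue differentiation on nested dyadic cubes, where
\[
\cM_d|U|(x) := \sup\{(|U|)_C : l \in \bZ,\ C \in \bC_l,\ x \in C\},
\]
it suffices to estimate $\|\cM_d|U|\|_{L_p(\bR^{d+1})}$.

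The main technical step will be the following good-$\lambda$-type inequality: for each $\lambda > 0$ and each $\gamma \in (0,1)$,
\[
|\{\cM_d|U| > \lambda\} \cap \{V \le \gamma\lambda\}|
\le \frac{1}{(1-\gamma)\lambda} \int_{\{\cM_d|U| > \lambda\}} F \,dx\,dt.
\]
To prove this, I would decompose the open level set $\{\cM_d|U| > \lambda\}$ into disjoint maximal dyadic cubes $\{C_j\}$ (from the nested family $\bigcup_{l}\bC_l$) satisfying $(|U|)_{C_j} > \lambda$. For each $j$ the hypothesis supplies a function $U^{C_j}$ with $(U^{C_j})_{C_j} \ge (|U|)_{C_j} > \lambda$. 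On the subset $E_j := C_j \cap \{V \le \gamma\lambda\}$ we have $U^{C_j} \le V \le \gamma\lambda$, hence $(U^{C_j})_{C_j} - U^{C_j} \ge (1-\gamma)\lambda$ pointwise on $E_j$, and integrating against the oscillation bound gives $(1-\gamma)\lambda\,|E_j| \le \int_{C_j} F$. Summing over $j$ yields the claim.

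From here I would apply the layer-cake identity
\[
\|\cM_d|U|\|_p^p = p\int_0^\infty \lambda^{p-1}\,|\{\cM_d|U| > \lambda\}|\,d\lambda
\]
and split each level set along the threshold $V \lessgtr \gamma\lambda$. On $\{V > \gamma\lambda\}$ the $\lambda$-integral collapses by Fubini into $\int \min(\cM_d|U|,\ V/\gamma)^p\,dx\,dt$, which I would dominate via the elementary inequality $\min(a,b)^p \le a^{p-1}b$ and then by H\"older with exponents $p$ and $p/(p-1)$, producing the factor $\|V\|_p\,\|\cM_d|U|\|_p^{p-1}$. On the complementary set $\{V \le \gamma\lambda\}$, the good-$\lambda$ inequality followed by Fubini yields the factor $\|F\|_p\,\|\cM_d|U|\|_p^{p-1}$. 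Summing and dividing by $\|\cM_d|U|\|_p^{p-1}$ (which can be assumed finite by a standard truncation/exhaustion applied to the sublevel sets $\{|U| \le M\} \cap Q_R$ and then passing to the limit) gives a bound of the form $\|\cM_d|U|\|_p \le N(\|V\|_p + \|F\|_p)$.

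The hard part, which I expect to be the main obstacle, is upgrading this additive conclusion to the stated multiplicative form $\|U\|_p^p \le N\|F\|_p\|V\|_p^{p-1}$; the latter is strictly stronger (it captures the regime $\|F\|_p \ll \|V\|_p$ that the additive bound cannot see). To bridge the gap I would refine the splitting by choosing $\gamma$ to depend on the ratio of $V$ to $\cM_d|U|$ rather than as a single global parameter, effectively optimizing the good-$\lambda$ inequality layer by layer; concretely, this amounts to working with a weighted layer-cake where the weight is built from $V^{p-1}$, so that the Hardy--Littlewood--type term on the set $\{V > \gamma\lambda\}$ is absorbed into $\|V\|_p^{p-1}$ rather than $\|V\|_p$. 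A secondary technicality is verifying the $a$-$\text{priori}$ finiteness needed for the absorption step, which is handled by first proving the inequality under the additional hypothesis that $U, V, F$ are bounded and compactly supported and then removing it by a limiting procedure.
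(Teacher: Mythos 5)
Your good-$\lambda$ inequality is proved correctly, and you are right to flag the transition from the additive bound $\|U\|_{L_p}\le N(\|F\|_{L_p}+\|V\|_{L_p})$ to the multiplicative estimate as the genuine obstacle; unfortunately the proposed fix (a pointwise $\gamma$, a weighted layer-cake with weight $V^{p-1}$) is not worked out and I do not see how it could succeed with your stopping-time family. The difficulty is structural: decomposing $\{\cM_d|U|>\lambda\}$ into maximal dyadic cubes $C_j$ yields only the \emph{lower} bound $(U^{C_j})_{C_j}\ge(|U|)_{C_j}>\lambda$. Maximality of $C_j$ (via the parent cube) controls $(|U|)_{C_j}$ from above, but it gives no upper bound on $(U^{C_j})_{C_j}\le(V)_{C_j}$, so Chebyshev applied to $U^{C_j}-(U^{C_j})_{C_j}$ can only be run on the pointwise good set $\{V\le\gamma\lambda\}$; the complementary set $\{V>\gamma\lambda\}$ then unavoidably produces a term proportional to $\|V\|_{L_p}\|\cM_d|U|\|_{L_p}^{p-1}$, which after dividing gives the additive bound and nothing stronger. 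No choice of $\gamma(\lambda)$ or $\gamma(x)$ removes that term.

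The idea you are missing --- and which drives Krylov's argument, which the paper cites (\cite{Krylov08}) rather than reproving --- is to run the Calder\'on--Zygmund stopping time on $\{\cM_d V>\lambda\}$, \emph{not} on $\{\cM_d|U|>\lambda\}$. If $C_j$ are the maximal dyadic parabolic cubes in $\{\cM_d V>\lambda\}$, then the parent $\hat C_j$ satisfies $(V)_{\hat C_j}\le\lambda$, whence $(V)_{C_j}\le 2^{d+2m}\lambda=:\nu\lambda$. This supplies exactly the missing \emph{upper} bound $(U^{C_j})_{C_j}\le(V)_{C_j}\le\nu\lambda$. Off $\bigcup_j C_j$ one has $\cM_d V\le\lambda$ and hence $|U|\le V\le\lambda$ a.e., so $\{|U|>2\nu\lambda\}\subset\bigcup_j C_j$; and on $C_j$, $\{|U|>2\nu\lambda\}\subset\{U^{C_j}>2\nu\lambda\}\subset\{U^{C_j}-(U^{C_j})_{C_j}>\nu\lambda\}$, so the oscillation hypothesis and Chebyshev give
$$
|\{|U|>2\nu\lambda\}|\le\frac{1}{\nu\lambda}\int_{\{\cM_d V>\lambda\}}F\,dx\,dt .
$$
Feeding this into $\|U\|_{L_p}^p=p(2\nu)^p\int_0^\infty\lambda^{p-1}|\{|U|>2\nu\lambda\}|\,d\lambda$, Fubini gives $N\int F(\cM_d V)^{p-1}$, and H\"older together with the $L_p$-boundedness of the dyadic maximal function (this is where $p>1$ enters) produces $\|U\|_{L_p}^p\le N\|F\|_{L_p}\|V\|_{L_p}^{p-1}$ directly, with no good-$\lambda$ iteration, no absorption, and no a priori finiteness to arrange.
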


\mysection{Interior H\"older estimates}
                        \label{sec3.2}

By using the $L_2$ estimates obtained in Section \ref{sec3.1}, in this section we shall
derive interior H\"older estimates of derivatives of $\vu$.
As usual, for $\mu\in (0,1)$ and a function $u$ defined on $\cD \subset \bR^{d+1} $, we denote
$$
[u]_{C^{\mu}(\cD)}
= \sup_{\substack{(t,x),(s,y)\in\cD\\(t,x)\ne(s,y)}}\frac{|u(t,x)-u(s,y)|}{|t-s|^{\mu/2}+|x-y|^{\mu}},\quad
\|u\|_{C^{\mu}(\cD)}=[u]_{C^{\mu}(\cD)}+\|u\|_{L_{\infty}(\cD)}.
$$

\begin{lemma}
 				\label{lemma100}
Let $a_{\alpha\beta}=a_{\alpha\beta}(x_1)$.
Assume that $\vu\in C_{\text{loc}}^\infty(\bR^{d+1})$ satisfies
\eqref{eq2010_02} in $Q_2$.
Then for any $\alpha=(\alpha_1,\ldots,\alpha_d)$ satisfying $|\alpha|=m$ and $\alpha_1<m$ we have
\begin{equation*}	
 			%\label{eq1002}
\left\|D^\alpha\vu\right\|_{C^{1/2}(Q_1)}
\le N \|D^m\vu\|_{L_2(Q_2)},
\end{equation*}
where $N=N(d,m,n,\delta)>0$.
\end{lemma}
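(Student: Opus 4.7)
The plan is to exploit the fact that since $a_{\alpha\beta}=a_{\alpha\beta}(x_1)$, the operator $\cL_0$ commutes with tangential derivatives $D_{x'}^{\beta'}$ (where $\beta'=(0,\beta_2,\ldots,\beta_d)$) and with $\partial_t$. Thus, for any nonnegative integer $j$ and any multi-index $\beta'$ with $\beta_1'=0$, the function $\partial_t^j D_{x'}^{\beta'}\vu$ again solves $\vw_t+(-1)^m\cL_0\vw=0$ in $Q_2$. Applying Lemma \ref{lem2010_1} (and Corollary \ref{cor3}) iteratively to these functions yields, for any $0<r<R<2$, the estimate
\[
\|\partial_t^j D^\gamma \vu\|_{L_2(Q_r)} \le N(d,m,n,\delta,r,R,\gamma,j)\,\|D^m\vu\|_{L_2(Q_R)}
\]
for every multi-index $\gamma$ with $\gamma_1\le m$ and every $j\ge 0$.

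Now set $\vw=D^\alpha\vu$. Since $\alpha_1\le m-1$, the translates $(\alpha+\gamma)_1\le m$ are permitted whenever $\gamma_1\le 1$. Consequently, for every $j\ge 0$ and every multi-index $\gamma$ with $\gamma_1\le 1$, the quantity $\|\partial_t^j D^\gamma \vw\|_{L_2(Q_{r})}$ is controlled by $\|D^m\vu\|_{L_2(Q_2)}$. The same applies to $\partial_t \vw=D^\alpha \vu_t$ because $\vu_t$ also satisfies the homogeneous equation in $Q_2$.

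To pass from these $L_2$ bounds to a Hölder estimate, I would use an anisotropic Sobolev embedding. The parabolic Sobolev embedding in the $(t,x')$-variables (in which we have unrestricted regularity of $\vw$ thanks to the tangential derivatives) gives, after choosing $j$ and $|\beta'|$ sufficiently large, pointwise control of $\vw(t,x_1,x')$ in $(t,x')$ uniformly in $x_1$. A one-dimensional Sobolev embedding in $x_1$ then converts the $L_2$-control of $\vw$ and $D_1\vw$ (the latter available because $\alpha_1+1\le m$) into an $L_\infty$-bound. Arguing in the same way for $D\vw$ and $\partial_t\vw$, we conclude
\[
\|\vw\|_{L_\infty(Q_1)}+\|D\vw\|_{L_\infty(Q_1)}+\|\partial_t\vw\|_{L_\infty(Q_1)}\le N\|D^m\vu\|_{L_2(Q_2)}.
\]
Since $\vw$ is then Lipschitz in both space and time on the bounded cylinder $Q_1$, and Lipschitz regularity trivially dominates the parabolic $C^{1/2}$ norm on a bounded set, the estimate follows.

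The main obstacle is the anisotropic nature of the regularity: because the coefficients depend (measurably) on $x_1$, the equation is not invariant under $D_1$, so higher $x_1$-derivatives of $\vw$ are not available and a straight parabolic Sobolev embedding fails. The remedy — and the reason the hypothesis $\alpha_1<m$ is essential — is that $D^\alpha\vu$ still admits one extra $x_1$-derivative (since $(\alpha+e_1)_1\le m$), while its tangential/time regularity is unlimited; combining a tangential parabolic embedding with a one-dimensional embedding in $x_1$ is exactly what is needed.
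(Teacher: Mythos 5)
There is a genuine gap in the final step of your argument, precisely where you claim to control $\|D\vw\|_{L_\infty(Q_1)}$ (and hence a Lipschitz bound for $\vw = D^\alpha\vu$). Your own bookkeeping shows that the available $L_2$-estimates from Lemma~\ref{lem2010_1} reach $\partial_t^j D^\gamma\vw$ only for $\gamma_1\le 1$, i.e.\ for at most one extra $x_1$-derivative on $\vw$. But to bound $\|D_1\vw\|_{L_\infty}$ by a one-dimensional Sobolev embedding in $x_1$, you must control $D_1\vw$ \emph{and} $D_1^2\vw = D^{\alpha+2e_1}\vu$ in $L_2$. If $\alpha_1=m-1$ (a case the lemma explicitly allows), this is $D_1^{m+1}D_{x'}^{\alpha'}\vu$, which involves $m+1$ derivatives in $x_1$ and is \emph{not} covered by Lemma~\ref{lem2010_1} or Corollary~\ref{cor3}. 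Indeed, since the coefficients are only measurable in $x_1$, one cannot expect $D_1^{m+1}\vu$ to be in $L_2$ at all; this is precisely the obstruction the paper works around by tracking the combination $\Theta$ rather than $D_1^m\vu$ itself. So the Lipschitz endpoint you aim for is genuinely out of reach, not merely inconvenient.

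The fix is to target the parabolic $C^{1/2}$ seminorm directly instead of upgrading to Lipschitz. For the oscillation of $\vw$ in the $x_1$-direction at fixed $(t,x')$, use the one-dimensional embedding $W^1_2((-1,1))\hookrightarrow C^{1/2}([-1,1])$, which requires only $\vw$ and $D_1\vw$ in $L_2$ along lines, i.e.\ $D^\alpha\vu$ and $D^{\alpha+e_1}\vu$; both have $\le m$ derivatives in $x_1$ and are thus available (uniformly in $(t,x')$ after taking enough tangential/time derivatives, as you correctly observe). For the oscillation in $(t,x')$ at fixed $x_1$, use a parabolic Sobolev embedding purely in the $(t,x')$-variables, again accessible through unlimited tangential and time derivatives. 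This is exactly the route the paper takes: it splits the Hölder quotient into an $x_1$-part $I$ and a $(t,x')$-part $J$, handles each with the appropriate embedding, and never asks for two $x_1$-derivatives beyond $D^\alpha\vu$. Your preliminary reductions (commutation with $\partial_t$ and $D_{x'}$, invoking Lemma~\ref{lem2010_1}/Corollary~\ref{cor3}) are correct and are the same as the paper's; only the final passage to Hölder regularity needs to be replaced as above.
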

\begin{proof}
Thanks to the well-known interpolation inequality, it is sufficient
to estimate $[D^\alpha u]_{C^{1/2}(Q_1)}$.
The proof is based on a convenient form of the Sobolev inequality.
By the triangle inequality, we have
$$
\sup_{\substack{(t,x),(s,y) \in Q_1\\(t,x) \ne (s,y)}}\frac{|D^\alpha \vu(t,x) - D^\alpha \vu(s,y)|}{|t-s|^{1/4}+|x-y|^{1/2}}\le I+J,
$$
where
$$
I:=
\sup_{\substack{x_1, y_1\in(-1,1), x_1 \ne y_1\\(t,x') \in Q_1'}}\frac{|D^\alpha \vu(t,x_1,x') - D^\alpha \vu(t,y_1,x')|}{|x_1-y_1|^{1/2}},
$$
$$
J:=\sup_{\substack{y_1\in(0,1)\\(t,x'),(s,y') \in Q_1',(t,x')\ne (s,y')}}
\frac{|D^\alpha \vu(t,y_1,x') - D^\alpha \vu(s,y_1,y')|}{|t-s|^{1/4}+|x'-y'|^{1/2}}.
$$

{\em Estimate of $I$:} By the Sobolev embedding theorem $D^\alpha \vu(t,x_1,x')$, as a function of $x_1\in (-1,1)$, satisfies
\begin{equation}							\label{eq01}
\sup_{\substack{x_1, y_1\in(-1,1)\\x_1 \ne y_1}}
\frac{|D^\alpha \vu(t,x_1,x')-D^\alpha \vu(t,y_1,x')|}{|x_1-y_1|^{1/2}}
\le N \| D^\alpha \vu(t,\cdot,x') \|_{W_2^1(-1,1)}.
\end{equation}
On the other hand, there exists a positive integer $k$
such that $D^\alpha \vu(t,x_1,x')$ and $D_1D^\alpha \vu(t,x_1,x')$, as functions of $(t,x') \in Q_1'$,
satisfy
$$
\sup_{(t,x') \in Q_1'}\left(|D^\alpha \vu(t,x_1,x')|+|D_1D^\alpha \vu (t,x_1,x')|\right)
$$
$$
\le N\|D^\alpha\vu(\cdot,x_1,\cdot)\|_{W_2^k(Q_1')}+
N\|D_1D^\alpha\vu(\cdot,x_1,\cdot)\|_{W_2^k(Q_1')}.
$$
The inequality above implies that, for all $(t,x') \in Q_1'$,
$$
\int_{-1}^1|D^\alpha\vu(t,x_1,x')|^2\, dx_1
+ \int_{-1}^1|D_1D^\alpha\vu(t,x_1,x')|^2\,dx_1
$$
$$
\le N \sum_{\substack{i+|\beta|\le m+1+k\\\beta_1\le m}} \| \partial_t^i D^\beta \vu\|^2_{L_2(Q_{\sqrt 2})}.
$$
This combined with \eqref{eq01} shows that
$$
I
\le N \sum_{\substack{i+|\beta|\le m+1+k\\\beta_1\le m}} \| \partial_t^i D^\beta \vu\|_{L_2(Q_{\sqrt 2})}
\le N \| D^m \vu \|_{L_2(Q_2)},
$$
where the last inequality is due to Lemma \ref{lem2010_1}.

{\em Estimate of $J$:} By the Sobolev embedding theorem,
we find a positive integer $k$ such that
$D^\alpha\vu(t,y_1,x')$, as a function of $(t,x') \in Q_1'$,
satisfies
\begin{equation}							\label{eq02}
\sup_{\substack{(t,x'), (s,y') \in Q_1'\\(t,x')\ne (s,y')}}
\frac{|D^\alpha\vu(t,y_1,x')-D^\alpha\vu(s,y_1,y')|}{|t-s|^{1/4}+|x'-y'|^{1/2}}
\le N \| D^\alpha\vu(\cdot,y_1, \cdot) \|_{W_2^k(Q_1')}.
\end{equation}
For each $i,j$ such that $i+j \le k$,
$\partial_t^iD^j_{x'}D^\alpha\vu(t,y_1,x')$, as a function of $y_1 \in (-1,1)$,
satisfies
$$
\sup_{y_1\in(-1,1)}|\partial_t^iD^j_{x'}D^\alpha\vu(t,y_1,x')|
$$
$$
\le N\|\partial_t^iD^j_{x'}D^\alpha\vu(t,\cdot,x')\|_{L_2(-1,1)}
+N \|\partial_t^iD^j_{x'}D_1D^\alpha\vu(t,\cdot,x')\|_{L_2(-1,1)}.
$$
This together with \eqref{eq02} and Lemma \ref{lem2010_1} gives
$$
J
\le N\sum_{\substack{i+|\beta|\le m+1+k\\\beta_1\le m}} \| \partial_t^iD^\beta \vu\|_{L_2(Q_{\sqrt 2})}
\le N \|D^m\vu\|_{L_2(Q_2)}.
$$
This completes the proof of the lemma.
\end{proof}

For $\lambda\ge0$, let
\begin{equation}
								\label{eq10_100}
U=\sum_{|\alpha| \le m}\lambda^{\frac 1 2-\frac {|\alpha|}{2m}} |D^\alpha \vu|,
\quad
U'=\sum_{|\alpha|\le m,\alpha_1<m} \lambda^{\frac 1 2-\frac {|\alpha|} {2m}} |D^\alpha \vu|.	 
\end{equation}
\begin{corollary}
 				\label{cor3.5}
Let $a_{\alpha\beta}=a_{\alpha\beta}(x_1)$ and $\lambda\ge0$.
Assume that $\vu\in C_{\text{loc}}^\infty(\bR^{d+1})$ satisfies
\begin{equation}
 					\label{eq10.32}
\vu_t+(-1)^m\cL_0 \vu+\lambda \vu=0
\end{equation}
in $Q_2$.
Then we have
\begin{equation*}	
 			%\label{eq10.31}
\left\|U'\right\|_{C^{1/2}(Q_1)}
\le N \|U\|_{L_2(Q_2)},
\end{equation*}
where $N=N(d,m,n,\delta)>0$.
\end{corollary}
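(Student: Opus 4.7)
The plan is to reduce to the homogeneous case $\lambda=0$ already handled in Lemma \ref{lemma100}. The reduction I will use is a standard device: extend $\vu$ to one additional spatial variable $y$ via
$$
\tilde\vu(t,x,y) := \vu(t,x)\, e^{i\mu y},\qquad \mu := \lambda^{1/(2m)}.
$$
Since $(-1)^m\partial_y^{2m}(e^{i\mu y}) = \lambda\, e^{i\mu y}$, this factor absorbs the lower-order term $\lambda \vu$ into a $2m$-th order derivative in $y$, and a direct computation shows that $\tilde\vu$ satisfies the homogeneous system $\tilde\vu_t + (-1)^m\tilde\cL_0\tilde\vu=0$ in $Q_2\times\bR$, where $\tilde\cL_0 := \cL_0 + \partial_y^{2m}$ acts in $d+1$ spatial variables. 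The coefficients of $\tilde\cL_0$ still depend on $x_1$ only, which is exactly the setting of Lemma \ref{lemma100}.

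For each $\alpha=(\alpha_1,\ldots,\alpha_d)$ with $|\alpha|\le m$ and $\alpha_1<m$, I will invoke Lemma \ref{lemma100} in $d+1$ spatial dimensions with the padded multi-index $\tilde\alpha := (\alpha, m-|\alpha|)$; note $|\tilde\alpha|=m$ and $\tilde\alpha_1 = \alpha_1 < m$. Since
$$
D^{\tilde\alpha}\tilde\vu(t,x,y) = (i\mu)^{m-|\alpha|}\, e^{i\mu y}\, D^\alpha\vu(t,x),
$$
we have $|D^{\tilde\alpha}\tilde\vu| = \lambda^{1/2 - |\alpha|/(2m)}|D^\alpha\vu|$ independent of $y$, so restricting the inequality of Lemma \ref{lemma100} on the extended $\tilde Q_1$ to the slice $\{y=0\}$ recovers a $C^{1/2}(Q_1)$ bound on $\lambda^{1/2-|\alpha|/(2m)}|D^\alpha\vu|$. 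On the right-hand side, $|D^m\tilde\vu(t,x,y)|^2\le N\sum_{|\beta|\le m}\lambda^{1-|\beta|/m}|D^\beta\vu(t,x)|^2\le N\,U(t,x)^2$ is again independent of $y$, and integrating over a bounded $y$-slab containing $\tilde Q_2$ yields $\|D^m\tilde\vu\|_{L_2(\tilde Q_2)}\le N\|U\|_{L_2(Q_2)}$. Summing over all admissible $\alpha$ then produces the desired bound on $\|U'\|_{C^{1/2}(Q_1)}$.

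The main point requiring care is the ellipticity of $\tilde\cL_0$ in $d+1$ spatial variables: multi-indices that mix $y$ with the $x$-variables carry no coefficient, so $\tilde\cL_0$ only satisfies the weak ellipticity \eqref{eq22.21}, not the strong form \eqref{eq11.28} under which Lemma \ref{lemma100} is stated. I will handle this as in Remark \ref{rm2.4}: add identically vanishing cross-term operators with constant (hence $x_1$-measurable) coefficients to recast $\tilde\cL_0$ as a strongly elliptic operator with a modified ellipticity constant $\tilde\delta = \tilde\delta(\delta,d,m)$, without altering the equation. After this purely cosmetic rewriting Lemma \ref{lemma100} applies verbatim to $\tilde\vu$, and the resulting constant $N$ depends only on $d,m,n,\delta$ as required.
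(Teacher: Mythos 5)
Your proof is correct and follows the same Agmon-style auxiliary-variable reduction as the paper, using the complex exponential $e^{i\mu y}$ where the paper uses the real function $\cos(\mu y)+\sin(\mu y)$ to the same effect. You also correctly flag that the extended operator $\cL_0+\partial_y^{2m}$ satisfies only the weak ellipticity \eqref{eq22.21} in the added dimension and must first be rewritten via Remark \ref{rm2.4} before Lemma \ref{lemma100} applies; the paper's proof applies Lemma \ref{lemma100} directly to the extended function and leaves this point implicit.
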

\begin{proof}
The case when $\lambda=0$ follows from Lemma \ref{lemma100}. To deal with the case $\lambda>0$, we follow an idea by S. Agmon. Let $
\eta(y)=\cos(\lambda^{1/(2m)}y)+\sin(\lambda^{1/(2m)}y)
$
so that $\eta$ satisfies
$$
D^{2m}\eta=(-1)^m\lambda \eta,\quad
\eta(0)=1,\quad |D^j\eta(0)|=\lambda^{j/(2m)}\,\,\,j=1,2,\ldots.
$$
Let $z = (x,y)$ be a point in $\bR^{d+1}$, where $x \in \bR^{d}$, $y \in \bR$,
and $\hat{\vu}(t,z)$ and $\hat{Q}_r$ be given by
$$
\hat{\vu}(t,z) = \hat{\vu}(t,x,y) = \vu(t,x)\eta(y),
\quad
\hat{Q}_r = (-r^{2m},0)\times\,\{ |z| < r: z \in \bR^{d+1} \}.
$$
Since $\hat{\vu}$ satisfies, in $\hat{Q}_2$,
$$
\hat{\vu}_t+(-1)^m\cL_0\hat{\vu}+(-1)^mD^{2m}_y(\hat{\vu}) = 0,
$$
by Lemma \ref{lemma100} applied to $\hat{\vu}$ we have
\begin{equation}								 \label{eq0804}
\left\| D_z^\beta\hat{\vu}\right\|_{C^{1/2}(\hat{Q}_1)}
\le N(d,m,n,\delta) \|D^m_z\hat{\vu}\|_{L_2(\hat{Q}_2)}
\end{equation}
for any $\beta=(\beta_1,\ldots,\beta_{d+1})$ satisfying $|\beta|=m$ and $\beta_1<m$.
Notice that for any $\alpha=(\alpha_1,\ldots,\alpha_d)$ satisfying $|\alpha|\le m$ and $\alpha_1<m$,
$$
\lambda^{\frac 1 2-\frac {|\alpha|} {2m}} \left\|D^\alpha\vu\right\|_{C^{1/2}(Q_1)}
\le N\left\| D^\beta_z\hat{\vu} \right\|_{C^{1/2}(\hat{Q}_1)},\quad \beta=(\alpha_1,\ldots,\alpha_d,m-|\alpha|)
$$
and $D_z^m \hat{\vu}$ is a linear combination of
$$
\lambda^{\frac 1 2-\frac k {2m}}\cos( \lambda^{\frac 1 {2m}} y) D^k_x\vu,
\quad
\lambda^{\frac 1 2-\frac k {2m}}\sin( \lambda^{\frac 1 {2m}} y) D^k_x\vu,\quad k=0,1,\ldots,m.
$$
Thus  the right-hand side of \eqref{eq0804} is less than the right-hand side of the inequality in the lemma.
The lemma is proved.
\end{proof}

Let $\bar\alpha=me_1=(m,0,\ldots,0)$. In the remaining part of this section, we shall establish a H\"older estimate of
\begin{equation}
								\label{eq10_101}
\Theta:=\sum_{|\beta|=m}a_{\bar\alpha\beta}D^\beta \vu.	
\end{equation}

We make use of the following elementary lemma.
\begin{lemma}
                                \label{lemA.1}
Let $r\in (0,\infty)$, $k\ge 1$ be an integer, $p\in [1,\infty]$, and $u\in L_p([0,r])$. Assume that $D^k u=f_0+Df_1+\ldots+D^{k-1}f_{k-1}$ in $(0,r)$, where $f_j\in L_1([0,r]),j=0,\ldots,k-2$ and $f_{k-1}\in L_p([0,r])$. Then
$Du \in L_p([0,r])$ and
\begin{equation}
                                        \label{eq11.06}
\|Du\|_{L_p([0,r])}\le N\|u\|_{L_1([0,r])}+N\|f_{k-1}\|_{L_p([0,r])}+N\sum_{j=0}^{k-2}
\|f_j\|_{L_1([0,r])},
\end{equation}
where $N=N(k, r)>0$.
\end{lemma}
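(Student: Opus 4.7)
\textbf{Proof plan for Lemma \ref{lemA.1}.}

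The strategy is to split $u$ into an explicit ``particular solution'' plus a polynomial correction. Define
\[
F(x) = \sum_{j=0}^{k-1} \int_0^x \frac{(x-s)^{k-j-1}}{(k-j-1)!}\, f_j(s)\, ds, \qquad x \in [0,r].
\]
A direct computation (integration by parts or iterated Fubini) shows that $D^k F = f_0 + D f_1 + \ldots + D^{k-1} f_{k-1}$ in the distributional sense on $(0,r)$. Consequently $D^k(u-F) = 0$ as distributions, so $P := u - F$ coincides a.e.\ with a polynomial of degree at most $k-1$.

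Next I would estimate $DF$ pointwise. Differentiating under the integral gives
\[
DF(x) = f_{k-1}(x) + \sum_{j=0}^{k-2} \int_0^x \frac{(x-s)^{k-j-2}}{(k-j-2)!}\, f_j(s)\, ds.
\]
The $j=k-1$ contribution is simply $f_{k-1}$, whose $L_p([0,r])$ norm is already on the right-hand side. For $j \le k-2$, the kernel $(x-s)^{k-j-2}/(k-j-2)!$ is bounded on $[0,r]$ by $r^{k-j-2}/(k-j-2)!$, so each such term is pointwise bounded by $N(k,r)\,\|f_j\|_{L_1([0,r])}$, which is an element of $L_\infty([0,r]) \subset L_p([0,r])$. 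This yields
\[
\|DF\|_{L_p([0,r])} \le N(k,r)\Bigl(\|f_{k-1}\|_{L_p([0,r])} + \sum_{j=0}^{k-2}\|f_j\|_{L_1([0,r])}\Bigr).
\]
A similar (slightly easier) estimate gives $\|F\|_{L_1([0,r])} \le N(k,r)\sum_{j=0}^{k-1}\|f_j\|_{L_1([0,r])}$, where the $j=k-1$ term is handled by $\|f_{k-1}\|_{L_1} \le r^{1-1/p}\|f_{k-1}\|_{L_p}$.

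It remains to control the polynomial piece. Since $P = u - F$ lies in the finite-dimensional space of polynomials of degree at most $k-1$ on $[0,r]$, all norms on it are equivalent, so
\[
\|DP\|_{L_p([0,r])} \le N(k,r)\,\|P\|_{L_1([0,r])} \le N(k,r)\bigl(\|u\|_{L_1([0,r])} + \|F\|_{L_1([0,r])}\bigr).
\]
Combining the two estimates via $Du = DF + DP$ yields \eqref{eq11.06}. I do not anticipate a serious obstacle here: the only delicate point is verifying that the explicit $F$ really solves $D^k F = \sum D^j f_j$ distributionally when the $f_j$'s have low regularity, which is handled by Fubini and the standard fact that $D\bigl(\int_0^x g(s)\,ds\bigr) = g$ in the distributional sense for $g \in L_1$.
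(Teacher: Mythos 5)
Your proposal is correct and follows essentially the same route as the paper: you construct exactly the particular solution $F=\cI f_{k-1}+\cI^2 f_{k-2}+\cdots+\cI^k f_0$ (the paper reaches the same object by first writing $D^k u=D^{k-1}\tilde f_{k-1}$ with $\tilde f_{k-1}=f_{k-1}+\cI f_{k-2}+\cdots+\cI^{k-1}f_0$ and then integrating), and you bound the residual polynomial $P=u-F$ in terms of $\|u\|_{L_1}+\|F\|_{L_1}$. The only cosmetic difference is that you invoke equivalence of norms on the finite-dimensional space of polynomials of degree $\le k-1$, whereas the paper realizes the same bound concretely by integrating the representation of $u$ over the subintervals $[0,j/k]$, $j=1,\dots,k$, and inverting the nondegenerate matrix $[(j/k)^i]_{i,j=1}^k$.
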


\begin{proof}
Thanks to scaling, we may assume $r=1$.
For any function $f\in L_1([0,1])$, we define its anti-derivative $\mathcal{I}f\,:\,[0,1]\to \bR$ as $\mathcal{I}f(x)=\int_0^x f(t)\,dt$. It is easily seen that $D^k u=D^{k-1}\tilde f_{k-1}$, where
$$
\tilde f_{k-1}=f_{k-1}+\cI f_{k-2}+\cI^2 f_{k-3}+\ldots+\cI^{k-1} f_{0},$$
and
$$
\|\tilde f_{k-1}\|_{L_p([0,1])}\le N\|f_{k-1}\|_{L_p([0,1])}+N\sum_{j=0}^{k-2}\|f_j\|_{L_1([0,1])}.
$$
Therefore, without loss of generality we may assume $f_j=0$ for $j=0,\ldots,k-2$. Under this assumption, we have for some constant $c_j,j=0,1,\ldots,k-1$,
\begin{equation}
                                    \label{eq12.06}
u(x)=(\cI f_{k-1})(x)+c_0+2c_1x+\ldots+kc_{k-1}x^{k-1}.
\end{equation}
We claim that
\begin{equation}
                                    \label{eq11.30}
|c_j|\le N\|u\|_{L_1([0,1])}+N\|f_{k-1}\|_{L_1([0,1])},\quad j=0,1,\ldots,k-1,
\end{equation}
which immediately yields \eqref{eq11.06}. To prove the claim, we integrate both sides of \eqref{eq12.06} on $[0,j/k],j=1,2,\ldots,k$ to get
$$
\int_0^{j/k}u(x)\,dx=\int_0^{j/k}(\cI f_{k-1})(x)\,dx+c_0(j/k)+c_1(j/k)^2+\ldots+c_{k-1}(j/k)^k.
$$
The claim \eqref{eq11.30} then follows since the matrix $[(j/k)^{i}]_{i,j=1}^k$ is nondegenerate and
\begin{multline*}
\Big|\int_0^{j/k}(u(x)-(\cI f_{k-1})(x))\,dx\Big|\\
\le N\|u\|_{L_1([0,1])}+N\|f_{k-1}\|_{L_1([0,1])},\quad j=1,2,\ldots,k.
\end{multline*}

\end{proof}

\begin{corollary}
                            \label{corA.2}
Let $k\ge 1$ be an integer, $r\in (0,\infty)$, $p\in [1,\infty]$, $\cD=[0,r]^{d+1}$, and $u(t,x)\in L_p(\cD)$. Assume that $D_1^k u=f_0+D_1f_1+\ldots+D_1^{k-1}f_{k-1}$ in $\cD$, where $f_j\in L_p(\cD),j=0,\ldots,k-1$.
Then  $D_1 u \in L_p(\cD)$ and
\begin{equation*}
                                        %\label{eq11.06}
\|D_1 u\|_{L_p(\cD)}\le N\|u\|_{L_p(\cD)}+N\sum_{j=0}^{k-1}\|f_j\|_{L_p(\cD)},
\end{equation*}
where $N=N(d,k, r)>0$.
\end{corollary}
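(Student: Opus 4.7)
The strategy is to reduce to the one-dimensional Lemma \ref{lemA.1} by slicing $\cD$ along the $x_1$-direction and integrating the resulting estimate over the transverse variables. Write $z=(t,x_2,\ldots,x_d)\in [0,r]^d$ and $u_z(\cdot):=u(\cdot,z)$, and similarly $(f_j)_z$.

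First I would establish that the distributional identity transfers to a.e.\ fiber. By Fubini, for a.e.\ $z\in [0,r]^d$, the functions $u_z,(f_0)_z,\ldots,(f_{k-1})_z$ lie in $L_p([0,r])$. Testing the assumed identity $D_1^k u=\sum_{j=0}^{k-1} D_1^j f_j$ against test functions of product form $\phi(x_1)\psi(z)$ with $\phi\in C_0^\infty((0,r))$ and $\psi\in C_0^\infty((0,r)^d)$, and letting $\psi$ concentrate around a Lebesgue point of the relevant integrals, one obtains that for a.e.\ $z\in (0,r)^d$ the one-dimensional distributional identity
$$
D_1^k u_z=(f_0)_z+D_1(f_1)_z+\ldots+D_1^{k-1}(f_{k-1})_z
$$
holds in $(0,r)$. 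The step requires care but is routine measure-theory.

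Next I apply Lemma \ref{lemA.1} on $[0,r]$ to $u_z$ for each such $z$. This gives $D_1 u_z\in L_p([0,r])$ together with
$$
\|D_1 u_z\|_{L_p([0,r])}\le N\|u_z\|_{L_1([0,r])}+N\|(f_{k-1})_z\|_{L_p([0,r])}+N\sum_{j=0}^{k-2}\|(f_j)_z\|_{L_1([0,r])},
$$
with $N=N(k,r)$. I would then raise both sides to the $p$-th power (absorbing a constant depending on $k$), and integrate over $z\in [0,r]^d$. Since $[0,r]$ has finite length, H\"older's inequality gives, for any $g\in L_p([0,r])$, the bound $\|g\|_{L_1([0,r])}\le r^{1-1/p}\|g\|_{L_p([0,r])}$. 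Applying this to $u_z$ and to each $(f_j)_z$, then using Fubini to convert the iterated integrals into $L_p(\cD)$ norms, yields
$$
\|D_1 u\|_{L_p(\cD)}^p\le N\|u\|_{L_p(\cD)}^p+N\sum_{j=0}^{k-1}\|f_j\|_{L_p(\cD)}^p,
$$
from which the desired estimate follows by taking $p$-th roots and absorbing constants.

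The only genuinely nonroutine step is the justification that the multidimensional distributional identity descends to a one-dimensional distributional identity on a.e.\ fiber, needed to apply Lemma \ref{lemA.1}. Everything else is a direct application of Fubini and H\"older on the bounded cube $[0,r]^{d+1}$.
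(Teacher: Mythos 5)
Your proof follows the paper's approach exactly: the paper's own proof is a single sentence, saying that the corollary follows from Lemma \ref{lemA.1} by fixing $(t,x')$ and then integrating with respect to $(t,x')$, which is precisely your slicing-plus-Fubini argument (with the fiberwise reduction of the distributional identity, which the paper takes for granted, spelled out). One small technical remark: the step where you raise the fiberwise estimate to the $p$-th power, integrate, and take $p$-th roots only makes sense for $p<\infty$; for the endpoint $p=\infty$ allowed in the statement, you should instead take the essential supremum in the transverse variable $z$ and use $\|g\|_{L_1([0,r])}\le r\|g\|_{L_\infty([0,r])}$, which gives the same conclusion.
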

\begin{proof}
The corollary follows from Lemma \ref{lemA.1} by first fixing $(t,x')$ and then integrating with respect to $(t,x')$.
\end{proof}

\begin{lemma}
 				\label{lemma01}
Let $0<r<R<\infty$ and $a_{\alpha\beta}=a_{\alpha\beta}(x_1)$.
Assume $\vu\in C_{\text{loc}}^\infty(\bR^{d+1})$ satisfies
\eqref{eq2010_02}
in $Q_R$.
Then, for any nonnegative integers $i,j$,
\begin{equation}	
 			\label{eq03}
\|\partial_t^iD^j_{x'} \Theta\|_{L_2(Q_r)}
+ \|\partial_t^i D^j_{x'} D_1 \Theta \|_{L_2(Q_r)}
\le N \|D^m\vu\|_{L_2(Q_R)},
\end{equation}
where $N=N(d,m,n,r,R,\delta, i, j)>0$.
\end{lemma}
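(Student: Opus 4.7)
My plan is to reduce to the base case $i=j=0$, then exploit the structure of \eqref{eq2010_02} to rewrite $D_1^m\Theta$ as a sum of $D_1^k$-derivatives of quantities controlled by $\|D^m\vu\|_{L_2(Q_R)}$, and finally invoke the one-dimensional Corollary \ref{corA.2}.

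For the reduction, since $a_{\alpha\beta}$ depends only on $x_1$, the operators $\partial_t$ and $D_{x'}$ commute with the differential operator in \eqref{eq2010_02}. Hence $\tilde\vu:=\partial_t^i D_{x'}^j\vu$ also satisfies \eqref{eq2010_02} in $Q_R$, and $\partial_t^i D_{x'}^j \Theta=\tilde\Theta:=\sum_{|\beta|=m}a_{\bar\alpha\beta}D^\beta\tilde\vu$. Thus, once the base case is proved on any pair of radii $r<r'<R$, the general case follows from the base case applied to $\tilde\vu$ on $(r,(r+R)/2)$ combined with Lemma \ref{lem2010_1} applied to $\partial_t^i D_{x'}^j D^m\vu$ on $((r+R)/2,R)$, since each $\partial_t^i D_{x'}^j D^\beta\vu$ has total order $\ge m$ and $x_1$-order $\beta_1\le m$.

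For the base case, the estimate $\|\Theta\|_{L_2(Q_r)}\le N\|D^m\vu\|_{L_2(Q_R)}$ is immediate from the boundedness of $a_{\bar\alpha\beta}$. To estimate $\|D_1\Theta\|_{L_2(Q_r)}$, I isolate the $\alpha=\bar\alpha$ term in \eqref{eq2010_02}: any other multi-index $\alpha$ with $|\alpha|=m$ satisfies $\alpha_1\le m-1$, so I can factor $D^\alpha=D_1^{\alpha_1}D_{x'}^{\alpha'}$ with $|\alpha'|\ge 1$, and the $x_1$-only dependence of $a_{\alpha\beta}$ lets $D_{x'}^{\alpha'}$ pass through the coefficients. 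Grouping by the value $\alpha_1=j$, one obtains
\begin{equation*}
D_1^m\Theta=(-1)^{m+1}\vu_t-\sum_{j=0}^{m-1}D_1^j G_j,
\end{equation*}
where each $G_j=\sum_{|\alpha|=m,\alpha_1=j,|\beta|=m}a_{\alpha\beta}D_{x'}^{\alpha'}D^\beta\vu$ is a sum of terms of the form $a\cdot D^\gamma\vu$ with $|\gamma|=2m-j\ge m$ and $\gamma_1=\beta_1\le m$.

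To conclude, fix an intermediate radius $r<r''<R$ and cover $Q_r$ by finitely many translates of cubes $[0,s]^{d+1}$, each contained in $Q_{r''}$, with $s=s(d,m,r,r'')>0$. On each such cube, Corollary \ref{corA.2} applied with $k=m$ gives
\begin{equation*}
\|D_1\Theta\|_{L_2}\le N\|\Theta\|_{L_2}+N\|\vu_t\|_{L_2}+N\sum_{j=0}^{m-1}\|G_j\|_{L_2},
\end{equation*}
and summing over the (bounded-overlap) cover controls $\|D_1\Theta\|_{L_2(Q_r)}$ by the corresponding norms on $Q_{r''}$. Finally, Lemma \ref{lem2010_1} bounds each of $\|\vu_t\|_{L_2(Q_{r''})}$, $\|G_j\|_{L_2(Q_{r''})}$, and $\|\Theta\|_{L_2(Q_{r''})}$ by $N\|D^m\vu\|_{L_2(Q_R)}$. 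The main obstacle is the rewriting of $D_1^m\Theta$: it hinges on the defining feature of $\Theta$ (coefficients $a_{\bar\alpha\beta}$ indexed by the top $x_1$-multi-index) together with the $x_1$-only dependence of the coefficients, which together ensure that all remaining terms carry strictly fewer than $m$ pure $x_1$-derivatives---exactly the structural hypothesis required to apply the one-dimensional Corollary \ref{corA.2}.
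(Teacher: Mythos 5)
Your proposal is correct and follows essentially the same route as the paper's proof: reduce to $i=j=0$ via the fact that $\partial_t$ and $D_{x'}$ commute with the operator and then invoke Lemma \ref{lem2010_1}, isolate the $\alpha=\bar\alpha$ term of the equation to write $D_1^m\Theta$ as $(-1)^{m+1}\vu_t$ minus a sum $\sum_j D_1^j G_j$ with each $G_j$ controlled by Lemma \ref{lem2010_1}, and close with Corollary \ref{corA.2} plus a covering argument. You have merely spelled out the details (the regrouping by $\alpha_1=j$ and the finite cover by small cubes) that the paper leaves to the reader.
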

\begin{proof}
%We can certainly assume that $a_{\alpha\beta}(x_1)$ are infinitely
%differentiable. If not, we take the standard mollifications and prove the estimate for the mollifications. Then we can pass to the limit because the constant in the estimate \eqref{eq03} is independent of the
%regularity of $a_{\alpha\beta}$.
Obviously, we have
\begin{equation}
                                    \label{eq11.07}
\|\Theta \|_{L_2(Q_r)}
\le N \|D^m\vu\|_{L_2(Q_r)}.
\end{equation}
Thus as noted at the beginning of the proof of Lemma \ref{lem2010_1}, it suffices to prove
\begin{equation}
                        \label{eq11.55}
\|D_1 \Theta \|_{L_2(Q_r)}
\le N \|D^m\vu\|_{L_2(Q_R')},
\end{equation}
where $R'=(r+R)/2 $.
From \eqref{eq2010_02}, in $Q_R$ we have
$$
D_1^m\Theta=(-1)^{m+1}\vu_t-\sum_{\substack{|\alpha|=|\beta|=m\\\alpha_1<m}}D_\alpha(a_{\alpha\beta}D_\beta \vu)
$$
$$
=(-1)^{m+1}\vu_t-\sum_{\substack{|\alpha|=|\beta|=m\\\alpha_1<m}}D_1^{\alpha_1}
(a_{\alpha\beta}D_{x'}^{\alpha'}D^\beta \vu),
$$
where $\alpha = (\alpha_1,\alpha_2,\ldots,\alpha_d) = (\alpha_1,\alpha')$.
Then the estimate \eqref{eq11.55} follows from Corollary \ref{corA.2} with a covering argument and Lemma \ref{lem2010_1}. The lemma is proved.
%%%%%%%%%%%%%%%%%%%%%%%%%%%%%%%%%%%%%%%%%%%%%%%%%%%%%%%%%%%%%%%%%%%%%%%%%%%%%%%%
\begin{comment}
{\em Case 1: when $n$ is even.} By using Corollary \ref{cor3}, it is easily seen that $\Theta$ satisfies in $B_3$
$$
\Delta^{n/2}\Theta=\sum_{k=0}^{n-1}D_1^k g_k,
$$
where $g_k,k=0,1,\ldots,n-1$ satisfies
\begin{equation}
                                \label{eq12.18}
\|g_k\|_{L_2(B_{5/2})}\le N \|D^n\vu\|_{L_2(B_3)}
\end{equation}
Thus by the classical elliptic theory,
$$
\|D \Theta \|_{L_2(B_2)}\le N\|\Theta \|_{L_2(B_{5/2})}+N\sum_{k=0}^{n-1}\|g_k\|_{L_2(B_{5/2})}\le N \|D^n\vu\|_{L_2(B_3)}.
$$
The inequality \eqref{eq11.55} and thus the lemma are proved in this case.

{\em Case 2: when $n$ is odd.} Again by Corollary \ref{cor3}, we have in  $B_3$
$$
\Delta^{(n+1)/2}\Theta=\sum_{k=0}^{n}D_1^k g_k,
$$
where $g_k,k=0,1,\ldots,n$ satisfies \eqref{eq12.18}. As before, we obtain \eqref{eq11.55}.
\end{comment}
%%%%%%%%%%%%%%%%%%%%%%%%%%%%%%%%%%%%%%%%%%%%%%%%%%%%%%%%%%%%%%%%%%%%%%%%%%%%%%%%
\end{proof}

The following H\"older estimate is deduced from Lemma \ref{lemma01} in the same way as Lemma \ref{lemma100} and Corollary \ref{cor3.5} are deduced from Lemma \ref{lem2010_1}.

\begin{lemma}
 				\label{lemma3.6}
Let $a_{\alpha\beta}=a_{\alpha\beta}(x_1)$ and $\lambda\ge0$.
Assume that $\vu\in C_{\text{loc}}^\infty(\bR^{d+1})$ satisfies \eqref{eq10.32} in $Q_2$.
Then we have
\begin{equation*}	
 			%\label{eq12.240}
\left\|\Theta\right\|_{C^{1/2}(Q_1)}
\le N \|U\|_{L_2(Q_2)},
\end{equation*}
where $N=N(d,m,n,\delta)>0$.
\end{lemma}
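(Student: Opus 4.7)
The plan is to follow exactly the two-step strategy of Lemma \ref{lemma100} and Corollary \ref{cor3.5}, now with the scalar quantity $\Theta$ in the role of $D^\alpha \vu$ and with Lemma \ref{lemma01} replacing Lemma \ref{lem2010_1}.

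First I would treat the case $\lambda=0$ and prove that $\|\Theta\|_{C^{1/2}(Q_1)}\le N\|D^m\vu\|_{L_2(Q_2)}$. By the standard interpolation inequality it suffices to control the seminorm $[\Theta]_{C^{1/2}(Q_1)}$, which I would split into an $x_1$-increment $I$ and a $(t,x')$-increment $J$ exactly as in the proof of Lemma \ref{lemma100}. For $I$, one-dimensional Sobolev embedding in $x_1$ bounds the difference quotient by $\|\Theta(t,\cdot,x')\|_{W_2^1(-1,1)}$, and a multidimensional Sobolev embedding in $(t,x')$ then lets me take the supremum in $(t,x')\in Q'_1$ at the price of
$$
\sum_{i+|\beta'|\le k}\bigl(\|\partial_t^i D_{x'}^{\beta'}\Theta\|_{L_2(Q_{\sqrt 2})}+\|\partial_t^i D_{x'}^{\beta'} D_1\Theta\|_{L_2(Q_{\sqrt 2})}\bigr),
$$
which is dominated by $\|D^m\vu\|_{L_2(Q_2)}$ thanks to Lemma \ref{lemma01}. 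For $J$ I would reverse the order: first a multidimensional Sobolev embedding in $(t,x')$, then a one-dimensional Sobolev embedding in $y_1$, and again close with Lemma \ref{lemma01}.

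Next I would extend to $\lambda>0$ via the Agmon trick used in Corollary \ref{cor3.5}. Setting $\eta(y)=\cos(\lambda^{1/(2m)}y)+\sin(\lambda^{1/(2m)}y)$ and $\hat{\vu}(t,z)=\vu(t,x)\eta(y)$ with $z=(x,y)\in \bR^{d+1}$, the function $\hat{\vu}$ satisfies the homogeneous parabolic system (with no $\lambda$ term) in $\hat Q_2$ for the augmented operator $\cL_0+(-1)^m D_y^{2m}$, whose leading coefficients remain measurable functions of $x_1$ alone and still satisfy the ellipticity condition (with a possibly smaller $\delta$). Because $\bar\alpha=me_1$ has no $y$-component, the corresponding quantity for the augmented problem, $\hat\Theta=\sum_{|\beta|=m}\hat a_{\bar\alpha\beta}D_z^\beta \hat{\vu}$, collapses to $\eta(y)\Theta(t,x)$: terms over multi-indices $\beta$ carrying a $y$-derivative vanish since no cross-terms between $x$- and $y$-derivatives appear in the augmented operator. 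Applying Step 1 to $\hat{\vu}$ gives $\|\hat\Theta\|_{C^{1/2}(\hat Q_1)}\le N\|D_z^m \hat{\vu}\|_{L_2(\hat Q_2)}$. Restricting to $y=0$ yields $\|\Theta\|_{C^{1/2}(Q_1)}\le \|\hat\Theta\|_{C^{1/2}(\hat Q_1)}$ since $\eta(0)=1$, while $D_z^m\hat{\vu}$ is a linear combination of terms of the form $\lambda^{(m-|\alpha|)/(2m)}(\cos\text{ or }\sin)(\lambda^{1/(2m)}y)\,D^\alpha\vu$ with $|\alpha|\le m$; the identity $\lambda^{(m-|\alpha|)/(2m)}=\lambda^{1/2-|\alpha|/(2m)}$ then converts the right-hand side into $N\|U\|_{L_2(Q_2)}$.

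The Sobolev-embedding bookkeeping in Step 1 is the longest routine part; the only genuinely subtle point is verifying in Step 2 that the augmented problem in $d+1$ spatial variables still satisfies the hypotheses used in Step 1, which is automatic because $D_y^{2m}$ has constant coefficients decoupled from the $x$-derivatives, and because the choice $\bar\alpha=me_1$ makes $\hat\Theta$ factorize cleanly as $\eta(y)\Theta(t,x)$.
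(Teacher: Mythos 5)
Your proof is correct and follows essentially the same route the paper indicates: the paper's proof of this lemma is a one-line remark that it follows from Lemma \ref{lemma01} in the same way that Lemma \ref{lemma100} and Corollary \ref{cor3.5} follow from Lemma \ref{lem2010_1}, and your two steps (Sobolev-embedding bookkeeping for $\lambda=0$ using Lemma \ref{lemma01}, then Agmon's trick as in Corollary \ref{cor3.5}) fill that in faithfully, including the key observation that $\hat\Theta$ factorizes as $\eta(y)\Theta(t,x)$ because $\bar\alpha=me_1$ has no $y$-component. One small caveat worth knowing (though you inherit it from the paper's own proof of Corollary \ref{cor3.5}): the augmented operator $\cL_0+(-1)^mD_y^{2m}$ does not literally satisfy the uniform ellipticity condition \eqref{eq11.28} — mixed derivatives $\xi_\beta$ with both $x$- and $y$-components do not appear in its symbol — it only satisfies the weaker condition \eqref{eq22.21}, and one must invoke Remark \ref{rm2.4} to rewrite it in a form satisfying \eqref{eq11.28} while keeping the leading coefficients measurable in $x_1$ only.
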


Since the matrix $[a_{\bar\alpha\bar\alpha}^{ij}]_{i,j=1}^n$ is positive definite, we obtain the following estimate by using Corollary \ref{cor3.5} and Lemma \ref{lemma3.6}.

\begin{corollary}
                                        \label{cor3.10}
Let $a_{\alpha\beta}=a_{\alpha\beta}(x_1)$ and $\lambda\ge0$.
Assume that $\vu\in C_{\text{loc}}^\infty(\bR^{d+1})$ satisfies \eqref{eq10.32} in $Q_2$.
Then we have
\begin{equation*}	
 			%\label{eq12.24}
\|U\|_{L_\infty(Q_1)}
\le N \|U\|_{L_2(Q_2)},
\end{equation*}
where $N=N(d,m,n,\delta)>0$.
\end{corollary}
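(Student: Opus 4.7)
The plan is to exploit the two available H\"older estimates (Corollary \ref{cor3.5} for $U'$ and Lemma \ref{lemma3.6} for $\Theta$) and reduce the corollary to controlling $U-U'$, which consists solely of the single term $|D^{\bar\alpha}\vu| = |D_1^m\vu|$ (the only multi-index $\alpha$ with $|\alpha|\le m$ and $\alpha_1\ge m$ is $\alpha=\bar\alpha=me_1$). Once this pointwise identification is made, it suffices to bound $|D_1^m\vu|$ in $L_\infty(Q_1)$ by $\|U\|_{L_2(Q_2)}$.

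The key algebraic step is to solve for $D^{\bar\alpha}\vu$ inside the definition
$$
\Theta = a_{\bar\alpha\bar\alpha}D^{\bar\alpha}\vu + \sum_{\substack{|\beta|=m\\ \beta\ne\bar\alpha}} a_{\bar\alpha\beta}D^\beta\vu.
$$
The ellipticity condition \eqref{eq11.28}, applied with $\xi_\beta=0$ for all $\beta\ne\bar\alpha$, yields
$$
\delta|\xi_{\bar\alpha}|^2 \le \Re\bigl(a_{\bar\alpha\bar\alpha}(t,x)\xi_{\bar\alpha},\xi_{\bar\alpha}\bigr) \le \delta^{-1}|\xi_{\bar\alpha}|^2,
$$
so the $n\times n$ matrix $a_{\bar\alpha\bar\alpha}$ is invertible with $|a_{\bar\alpha\bar\alpha}^{-1}|\le N(n,\delta)$. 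Inverting gives
$$
D^{\bar\alpha}\vu = a_{\bar\alpha\bar\alpha}^{-1}\Theta - a_{\bar\alpha\bar\alpha}^{-1}\sum_{\substack{|\beta|=m\\ \beta\ne\bar\alpha}} a_{\bar\alpha\beta}D^\beta\vu,
$$
and since every $\beta$ on the right has $\beta_1<m$, the sum is pointwise bounded by $NU'$.

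Combining, $|D^{\bar\alpha}\vu|\le N(|\Theta|+U')$ pointwise, hence
$$
\|U\|_{L_\infty(Q_1)} \le \|U'\|_{L_\infty(Q_1)} + \|D^{\bar\alpha}\vu\|_{L_\infty(Q_1)} \le N\bigl(\|U'\|_{L_\infty(Q_1)} + \|\Theta\|_{L_\infty(Q_1)}\bigr).
$$
Applying Corollary \ref{cor3.5} and Lemma \ref{lemma3.6} (noting $C^{1/2}\hookrightarrow L_\infty$) to bound each term on the right by $N\|U\|_{L_2(Q_2)}$ yields the claim.

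There is essentially no obstacle here: once one recognizes that the only missing component of $U$ compared with $U'$ is $D_1^m\vu$, the uniform ellipticity condition delivers it from $\Theta$ and $U'$ without any further analytic work. The ``real'' content has already been absorbed into Corollary \ref{cor3.5} and Lemma \ref{lemma3.6}; this corollary is a purely algebraic packaging of those two estimates.
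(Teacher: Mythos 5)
Your proof is correct and matches the paper's own (one-line) argument exactly: the paper notes that $[a_{\bar\alpha\bar\alpha}^{ij}]$ is positive definite and cites Corollary \ref{cor3.5} and Lemma \ref{lemma3.6}, which is precisely the invertibility-plus-H\"older-estimate packaging you spell out. The only thing you add is the (correct) detailed verification that uniform ellipticity forces $|a_{\bar\alpha\bar\alpha}^{-1}|\le N(n,\delta)$ and that $U-U'$ reduces to $|D_1^m\vu|$.
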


\mysection{Estimates of mean oscillations}
                    \label{sec4}

Recall the definitions of $U$, $U'$, and $\Theta$ in \eqref{eq10_100}
and \eqref{eq10_101}, respectively. With the preparations in the previous section, we obtain the following estimates of mean oscillations of $U'$ and $\Theta$.

\begin{lemma}
 				\label{lem4.1}
Let $r\in (0,\infty)$, $\kappa\in [2,\infty)$, $\lambda\ge 0$, and $a_{\alpha\beta}=a_{\alpha\beta}(x_1)$.
Assume $\vu\in C_{\text{loc}}^\infty(\bR^{d+1})$ satisfies
\begin{equation*}
 					%\label{eq10.32b}
\vu_t+(-1)^m\cL_0 \vu + \lambda \vu=0
\end{equation*}
in $Q_{\kappa r}$.
Then we have
\begin{equation}	
 			\label{eq5.111}
\left(|U'-(U')_{Q_r}|\right)_{Q_r}
+\left(|\Theta-(\Theta)_{Q_r}|\right)_{Q_r}
\le N\kappa^{-1/2} (U^2)_{Q_{\kappa r}}^{1/2},
\end{equation}
where $N=N(d,m,n,\delta)>0$.
\end{lemma}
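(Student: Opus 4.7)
The plan is to use a parabolic scaling to reduce to the interior H\"older estimates of Corollary \ref{cor3.5} and Lemma \ref{lemma3.6} at unit scale, and then convert $C^{1/2}$-continuity into the mean oscillation bound \eqref{eq5.111}. A small subtlety is that the $C^{1/2}$ seminorm uses the second-order parabolic metric $|t-s|^{1/4}+|x-y|^{1/2}$ while the cylinders $Q_r$ use the $(2m)$-th order scaling; but for $m\ge 1$ the time contribution will turn out to be dominated by the spatial one on the scaled small cylinder $Q_{2/\kappa}$.

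Concretely, set $\rho = \kappa r/2$ and $\tilde \vu(t,x) := \vu(\rho^{2m}t,\rho x)$. A chain-rule computation shows that $\tilde \vu$ satisfies an equation of the same form, $\tilde \vu_t + (-1)^m \tilde \cL_0 \tilde \vu + \tilde\lambda\tilde\vu = 0$ in $Q_2$, with leading coefficients $\tilde a_{\alpha\beta}(x_1) = a_{\alpha\beta}(\rho x_1)$ (still depending only on $x_1$ and satisfying \eqref{eq11.28} with the same $\delta$) and parameter $\tilde\lambda = \rho^{2m}\lambda \ge 0$. Denoting by $\tilde U,\tilde U',\tilde \Theta$ the analogues of \eqref{eq10_100} and \eqref{eq10_101} built from $\tilde\vu$ and $\tilde\lambda$, a term-by-term check gives $\tilde U(t,x) = \rho^m U(\rho^{2m}t,\rho x)$ pointwise, and the same identity for $\tilde U'$ and $\tilde\Theta$. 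Applying Corollary \ref{cor3.5} and Lemma \ref{lemma3.6} to $\tilde \vu$ yields
\[
[\tilde U']_{C^{1/2}(Q_1)} + [\tilde \Theta]_{C^{1/2}(Q_1)} \le N\|\tilde U\|_{L_2(Q_2)}.
\]

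Since $\kappa\ge 2$, we have $Q_{2/\kappa}\subset Q_1$, and for any two points in $Q_{2/\kappa}$ the quantity $|t-s|^{1/4}+|x-y|^{1/2}$ is at most $(2/\kappa)^{m/2}+(4/\kappa)^{1/2}\le N\kappa^{-1/2}$ (using $m\ge 1$ and $2/\kappa\le 1$). Consequently the pointwise oscillations of $\tilde U'$ and $\tilde\Theta$ on $Q_{2/\kappa}$, and hence their averaged oscillations, are each bounded by $N\kappa^{-1/2}\|\tilde U\|_{L_2(Q_2)} = N\kappa^{-1/2}(\tilde U^2)^{1/2}_{Q_2}$ after absorbing $|Q_2|^{1/2}$ into $N$. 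Unscaling, a change of variables gives $(\tilde U^2)^{1/2}_{Q_2} = \rho^m (U^2)^{1/2}_{Q_{\kappa r}}$ (because $2\rho = \kappa r$), while the averaged oscillations of $\tilde U'$ and $\tilde\Theta$ on $Q_{2/\kappa}$ equal $\rho^m$ times the corresponding averaged oscillations of $U'$ and $\Theta$ on $Q_r$. Cancelling $\rho^m$ produces \eqref{eq5.111}. No genuine analytical obstacle remains beyond what has already been handled in Corollary \ref{cor3.5} and Lemma \ref{lemma3.6}; the only care is the scaling bookkeeping together with the observation that $m\ge 1$ makes the time contribution $(2/\kappa)^{m/2}$ at most $\kappa^{-1/2}$.
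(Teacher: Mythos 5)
Your proposal is correct and follows essentially the same route as the paper: parabolic rescaling so that $\kappa r$ becomes the unit scale $2$, invoking Corollary \ref{cor3.5} and Lemma \ref{lemma3.6} on $Q_1$, and converting $C^{1/2}$-control under the parabolic metric into a mean-oscillation bound on the small cylinder of radius $2/\kappa$. The paper simply states ``by a scaling argument, we may assume $r=2/\kappa$'' and goes directly to $\left(|\Theta-(\Theta)_{Q_r}|\right)_{Q_r}\le N r^{1/2}[\Theta]_{C^{1/2}(Q_1)}\le N\kappa^{-1/2}(U^2)_{Q_2}^{1/2}$; your version just spells out the scaling bookkeeping that makes this rigorous.
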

\begin{proof}
By a scaling argument, we may assume $r=2/\kappa$.
Then by Lemma \ref{lemma3.6},
we have
$$
\left(|\Theta-(\Theta)_{Q_r}|\right)_{Q_r}
\le Nr^{1/2}[\Theta]_{C^{1/2}(Q_1)}
\le N \kappa^{-1/2} (U^2)_{Q_{2}}^{1/2}.
$$
The first term on the left-hand side of \eqref{eq5.111} is estimated similarly by using Corollary \ref{cor3.5}.
\end{proof}

For $\vf_\alpha= (f_\alpha^1, \ldots, f_\alpha^n)^{\text{tr}}$, we denote
$$
F=\sum_{|\alpha|\le m}\lambda^{\frac {|\alpha|} {2m}-\frac 1 2}|\vf_\alpha|.
$$
\begin{lemma}
 				\label{lem4.2}
Let $r\in (0,\infty)$, $\kappa\in [4,\infty)$, $\lambda> 0$, $\vf_\alpha \in L_{2,\text{loc}}(\bR^{d+1})$, $|\alpha|\le m$, and $a_{\alpha\beta}=a_{\alpha\beta}(x_1)$.
Assume $\vu\in C_{\text{loc}}^\infty(\bR^{d+1})$ satisfies
\begin{equation*}
 					%\label{eq9.36}
\vu_t+(-1)^m\cL_0 \vu + \lambda \vu=\sum_{|\alpha|\le m}D^\alpha \vf_\alpha
\end{equation*}
in $Q_{\kappa r}$.
Then we have
\begin{equation}	
 			\label{eq5.11}
\left(|U'-(U')_{Q_r}|\right)_{Q_r}
+\left(|\Theta-(\Theta)_{Q_r}|\right)_{Q_r}
\le N\kappa^{-1/2}(U^2)_{Q_{\kappa r}}^{1/2}+N\kappa^{m+\frac d2}(F^2)_{Q_{\kappa r}}^{1/2},
\end{equation}
%\le N\kappa^{-1/2}(U^2)_{Q_{\kappa r}}^{1/2}+N\kappa^{m+\frac d2}(F^2)_{Q_{\kappa r}}^{1/2},
\begin{equation}
                            \label{eq12.01}
N^{-1}U\le U'+\Theta\le NU,
\end{equation}
where $N=N(d,m,n,\delta)>0$. %In the case that $\lambda=0$ and $\vf_\alpha =0$, $|\alpha|< m$, we have
%$$
%\left(|D^\alpha\vu-(D^\alpha\vu)_{Q_r}|\right)_{Q_r}
%+\left(|\Theta-(\Theta)_{Q_r}|\right)_{Q_r}
%$$
%$$
%\le N\kappa^{-1/2}(|D^m\vu|^2)_{Q_{\kappa r}}^{1/2}+N\kappa^{m+\frac d2}\sum_{|\alpha|= m}(|\vf_\alpha|^2)_{Q_{\kappa r}}^{1/2}.
%$$
\end{lemma}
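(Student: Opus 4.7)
The plan is to decompose $\vu = \vw + \vv$, where $\vv$ absorbs the inhomogeneity and $\vw$ satisfies a homogeneous equation on $Q_{\kappa r}$ so that Lemma \ref{lem4.1} applies. By Theorem \ref{theorem08061901}, there exists a unique $\vv \in \cH_2^m(\bR^{d+1})$ solving
\begin{equation*}
\vv_t + (-1)^m \cL_0 \vv + \lambda \vv = \sum_{|\alpha|\le m} D^\alpha\bigl(\vf_\alpha \chi_{Q_{\kappa r}}\bigr) \quad \text{in } \bR^{d+1},
\end{equation*}
together with the global bound $\|V\|_{L_2(\bR^{d+1})} \le N \|F\|_{L_2(Q_{\kappa r})}$, where $V := \sum_{|\alpha|\le m}\lambda^{1/2-|\alpha|/(2m)}|D^\alpha \vv|$. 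Then $\vw := \vu - \vv$ solves the homogeneous equation $\vw_t + (-1)^m \cL_0 \vw + \lambda \vw = 0$ in $Q_{\kappa r}$. Although $\vw$ is only locally in $\cH_2^m$ rather than $C^\infty_{\text{loc}}$, a standard mollification in the $(t,x')$ variables (which commutes with $\cL_0$ since the coefficients depend only on $x_1$) followed by a limiting argument lets us apply Lemma \ref{lem4.1} to $\vw$.

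Lemma \ref{lem4.1} applied to $\vw$ (on a cylinder slightly smaller than $Q_{\kappa r}$, then passing to the limit) produces
\begin{equation*}
(|U'_\vw - (U'_\vw)_{Q_r}|)_{Q_r} + (|\Theta_\vw - (\Theta_\vw)_{Q_r}|)_{Q_r} \le N\kappa^{-1/2} (U_\vw^2)_{Q_{\kappa r}}^{1/2}.
\end{equation*}
From the pointwise inequalities $|U'_\vu - U'_\vw| \le V$ and $|\Theta_\vu - \Theta_\vw| \le NV$, the triangle inequality yields
\begin{equation*}
(|U'_\vu - (U'_\vu)_{Q_r}|)_{Q_r} + (|\Theta_\vu - (\Theta_\vu)_{Q_r}|)_{Q_r} \le (|U'_\vw - (U'_\vw)_{Q_r}|)_{Q_r} + (|\Theta_\vw - (\Theta_\vw)_{Q_r}|)_{Q_r} + N(V)_{Q_r}.
\end{equation*}
Using $\|V\|_{L_2(\bR^{d+1})}^2 \le N \|F\|_{L_2(Q_{\kappa r})}^2 = N|Q_{\kappa r}|(F^2)_{Q_{\kappa r}}$ and $|Q_{\kappa r}|/|Q_r| = \kappa^{2m+d}$,
\begin{equation*}
(V^2)_{Q_r}^{1/2} \le N \kappa^{m+d/2} (F^2)_{Q_{\kappa r}}^{1/2}, \qquad (V^2)_{Q_{\kappa r}}^{1/2} \le N (F^2)_{Q_{\kappa r}}^{1/2}.
\end{equation*}
Bounding $U_\vw \le U + V$ on the right-hand side of the Lemma \ref{lem4.1} estimate and combining all contributions gives \eqref{eq5.11}.

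For \eqref{eq12.01}, observe that $U - U' = |D_1^m \vu|$, since $\bar\alpha = me_1$ is the only multi-index with $|\alpha|\le m$ and $\alpha_1 = m$. The upper bound $U' + \Theta \le NU$ follows immediately from $|a_{\alpha\beta}| \le \delta^{-1}$. For the reverse inequality, specialize the ellipticity condition \eqref{eq11.28} to $\xi$ with $\xi_\alpha = 0$ for all $\alpha \ne \bar\alpha$ to conclude that the matrix $[a^{ij}_{\bar\alpha\bar\alpha}]$ satisfies $\Re(a_{\bar\alpha\bar\alpha}\zeta, \zeta) \ge \delta|\zeta|^2$ for every $\zeta \in \bC^n$, hence is invertible with $|a_{\bar\alpha\bar\alpha}^{-1}| \le \delta^{-1}$. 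Solving
\begin{equation*}
a_{\bar\alpha\bar\alpha} D_1^m \vu = \Theta - \sum_{\substack{|\beta|=m \\ \beta\ne\bar\alpha}} a_{\bar\alpha\beta} D^\beta \vu
\end{equation*}
for $D_1^m \vu$ gives $|D_1^m \vu| \le N(|\Theta| + U')$, so that $U = U' + |D_1^m \vu| \le N(U' + \Theta)$. The main technical obstacle is the mollification step required to apply Lemma \ref{lem4.1} to $\vw$, since Theorem \ref{theorem08061901} only delivers $\cH_2^m$-regularity of $\vv$; everything else reduces to linear algebra and $L_2$ bookkeeping.
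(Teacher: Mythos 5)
Your overall strategy matches the paper's: decompose $\vu$ by solving an auxiliary whole-space problem for the inhomogeneity (Theorem~\ref{theorem08061901}), apply Lemma~\ref{lem4.1} to the piece satisfying the homogeneous equation, bound the correction in $L_2$, and close by the triangle inequality. Your treatment of \eqref{eq12.01} is correct and more explicit than the paper's one-line remark, and the $L_2$ bookkeeping for \eqref{eq5.11} (Jensen, the $\kappa^{2m+d}$ volume ratio, $U_\vw \le U + V$) is fine.

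However, the regularization step has a genuine gap. You take the indicator cutoff $\chi_{Q_{\kappa r}}$, so your $\vv$ solves an equation with a discontinuous right-hand side and is therefore only in $\cH^m_2$, not smooth, and consequently $\vw=\vu-\vv$ is not in $C^\infty_{\text{loc}}(\bR^{d+1})$ as Lemma~\ref{lem4.1} requires. You propose to repair this by mollifying $\vw$ in $(t,x')$, using that $\cL_0$ commutes with translations in those variables. But this only produces a function smooth in $(t,x')$; it does nothing for regularity in $x_1$, and when $a_{\alpha\beta}(x_1)$ is merely measurable the solution is not smooth in $x_1$ at all. So after the $(t,x')$-mollification you still do not have the $C^\infty_{\text{loc}}$ regularity that Lemma~\ref{lem4.1} (and the whole chain of H\"older estimates behind it) is stated for, and the asserted ``limiting argument'' is not justified. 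The paper avoids this difficulty by first mollifying \emph{both} the coefficients $a_{\alpha\beta}$ and the data $\vf_\alpha$ and by using a smooth cutoff $\zeta$ ($\equiv 1$ on $Q_{\kappa r/2}$, supported in $Q_{\kappa r}$); with everything smooth, the auxiliary solution and hence $\vu$ minus it are genuinely $C^\infty$, Lemma~\ref{lem4.1} is applied with $\kappa/2$ in place of $\kappa$ (which is why the lemma requires $\kappa\ge 4$), and only at the very end is the mollification removed, which is legitimate because the constant $N$ in \eqref{eq5.11} depends only on $d,m,n,\delta$. Your argument can be repaired by following the same reduction — mollify the coefficients in $x_1$ and the data in all variables, then pass to the limit — but as written, the route via mollifying $\vw$ itself in the tangential variables does not establish the hypotheses of Lemma~\ref{lem4.1}.
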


\begin{proof}
%The second assertion follows from the first one.
%More precisely,
%by considering
%$$
%\vu_t+(-1)^m\cL_0 \vu + \lambda \vu = \sum_{|\alpha|=m}D^{\alpha}\vf_{\alpha}
%+ \lambda \vu,
%$$
%we obtain \eqref{eq5.11}. Then we take $\lambda \searrow 0$.
%
%
The inequality \eqref{eq12.01} follows from the definitions of $U$, $U'$ and $\Theta$ as well
as the fact that $[a_{\bar\alpha\bar\alpha}^{ij}]_{i,j=1}^n$ is positive definite.
To prove \eqref{eq5.11}, we adapt the idea in the proof of Theorem 7.1 in \cite{Krylov_2007_mixed_VMO} and take into account the presence of $\lambda$.
We can certainly assume that $\vu$ and $\vf_\alpha$ have compact supports.
In addition, we assume that $a_{\alpha\beta}$ and $\vf_\alpha$ are infinitely differentiable. If not, we take the standard mollifications and prove the estimate for the mollifications. Then we can pass to the limit because the constant $N$ in the estimate \eqref{eq5.11} is independent of the regularity of $a_{\alpha\beta}$ and $\vf_\alpha$.

Take a $\zeta \in C_0^{\infty}(\bR^{d+1})$ such that
$$
\zeta = 1
\quad
\text{on}
\quad Q_{\kappa r/2},
\quad
\zeta = 0
\quad
\text{outside}
\quad
(-(\kappa r)^{2m}, (\kappa r)^{2m})\times B_{\kappa r}.
$$

By Theorem \ref{theorem08061901}, for $\lambda > 0$, there exists a unique solution
$\cH_2^m(\bR^{d+1})$ to the equation
$$
\vw_t+(-1)^m\cL_0\vw + \lambda \vw = \sum_{|\alpha|\le m}D^\alpha( \zeta \vf_\alpha ).
$$
Since all functions and coefficients involved are infinitely differentiable,
by the classical parabolic theory, $\vw$ is infinitely differentiable.
The function $\vv := \vu - \vw$ is also infinitely differentiable and satisfies
$$
\vv_t+(-1)^m\cL_0\vv+\lambda \vv  = 0\quad \text{in} \quad Q_{\kappa r/2}.
$$
We define $V$, $V'$, $W$, and $W'$ in the same way as $U$ and $U'$.
Thus by Lemma \ref{lem4.1} (note that $\kappa /2 \ge 2$)
\begin{equation}	
 			\label{eq1003}
\left(|V'-(V')_{Q_r}|\right)_{Q_r}
+\left(|\hat\Theta-(\hat\Theta)_{Q_r}|\right)_{Q_r}
\le N\kappa^{-1/2}(V^2)_{Q_{\kappa r/2}}^{1/2},
\end{equation}
where $\hat\Theta$ is defined in the same way as $\Theta$ with $\vu$ replaced by $\vv$, i.e.
$$
\hat\Theta:=\sum_{|\beta|=m}a_{\bar\alpha\beta}D^\beta \vv,\quad \bar\alpha=me_1.
$$

Next we estimate $\vw$. By Theorem \ref{theorem08061901} we have
$$
\sum_{|\alpha|\le m}\lambda^{1-\frac {|\alpha|} {2m}} \|D^\alpha \vw \|_{L_2(\bR^d_0)}
\le N \sum_{|\alpha|\le m}\lambda^{\frac {|\alpha|} {2m}} \| \zeta\vf_\alpha \|_{L_2(\bR^d_0)}.
$$
In particular,
\begin{equation}							\label{eq1004}
\left(W^2\right)_{Q_r}^{1/2}
\le N \kappa^{m+\frac d2} (F^2)_{Q_{\kappa r}}^{1/2},\quad
\left(W^2\right)_{Q_{\kappa r}}^{1/2}
\le N (F^2)_{Q_{\kappa r}}^{1/2}.
\end{equation}

Now we are ready to prove \eqref{eq5.11}.
From \eqref{eq1003} and \eqref{eq1004}, and the fact that $\vu = \vw + \vv$,
we bound the left-hand side of \eqref{eq5.11} by
$$
\big(|V'-(V')_{Q_r}|\big)_{Q_r}
+\big(|\hat\Theta-(\hat\Theta)_{Q_r}|\big)_{Q_r}
+N\big(W^2\big)_{Q_r}^{1/2}
$$
$$
\le N\kappa^{-1/2}\big(V^2\big)_{Q_{\kappa r/2}}^{1/2}
+N \kappa^{m+\frac d2} \big(F^2\big)_{Q_{\kappa r}}^{1/2},
$$
which is less than the right-hand side of \eqref{eq5.11}.
\end{proof}

%For a multi-index $\alpha$ such that $|\alpha|=m$, we can write
%$$
%D^{\alpha}\vu=D_{x_{i_1}}D_{x_{i_2}} \ldots D_{x_{i_m}}\vu = D_{i_1}D_{i_2} \ldots D_{i_m}\vu
%$$
%if $i_1,\ldots,i_m \in \{1,\ldots,d\}$ are chosen appropriately.
%Thus for each $\alpha, \beta$ such that $|\alpha|=|\beta|=m$, we can write
%$$
%a_{\alpha\beta} = a_{(i_1,\ldots,i_m)(j_1,\ldots,j_j)}
%$$
%for appropriate $i_1,\ldots,i_m,j_1,\ldots,j_m\in \{1,\ldots,d\}$.
%Let $\rho=[\rho_{ij}]$ be a $d \times d$ orthogonal matrix and set
%\begin{multline}
%								\label{eq10_21}
%\tilde{a}_{\alpha\beta}=\tilde{a}_{(i_1,\ldots,i_m)(j_1,\ldots,j_j)}
%\\
%:= \sum_{\substack{k_1=1,\ldots,k_m=1\\l_1=1,\ldots,l_m=1}}^d\rho_{i_1k_1}\ldots\rho_{i_mk_m}
%a_{(k_1,\ldots,k_m)(l_1,\ldots,l_j)}
%\rho_{j_1l_1}\ldots\rho_{j_ml_m}.	
%\end{multline}
%Then we see that $\tilde{a}_{\alpha\beta}$ satisfy \eqref{eq11.28}
%as well as $|\tilde{a}_{\alpha\beta}| \le \delta^{-1}$
%if $a_{\alpha\beta}$ satisfy the same conditions.

Recall that the ellipticity condition \eqref{eq11.28} is invariant under any orthogonal transformation of the coordinates.

\begin{corollary}
								\label{cor10_01}
Let $r\in (0,\infty)$, $\kappa\in [4,\infty)$, $\lambda> 0$, $\vf_\alpha \in L_{2,\text{loc}}(\bR^{d+1})$, $|\alpha|\le m$, and $a_{\alpha\beta}=a_{\alpha\beta}(y_1)$,
where $\rho$ is a $d \times d$ orthogonal matrix and $y = \rho x$.
Assume $\vu\in C_{\text{loc}}^\infty(\bR^{d+1})$ satisfies
\begin{equation}
 					\label{eq10_20}
\vu_t+(-1)^m\cL_0 \vu + \lambda \vu=\sum_{|\alpha|\le m}D^\alpha \vf_\alpha
\end{equation}
in $Q:=Q_{\kappa r}$.
Then there exist a function $U^{Q}$ depending on $Q$, and a constant $N=N(d,m,n,\delta)>0$ such that
%,for any $\alpha=(\alpha_1,\ldots,\alpha_d)$ satisfying $|\alpha|=m$ and $\alpha_1<m$, we have
\begin{equation}
                        \label{eq12.20}
N^{-1}U\le U^Q\le NU,%\quad \text{in}\,\,\,Q,
\end{equation}
\begin{equation*}	
% 			\label{eq5.11}
\left(|U^Q -(U^Q)_{Q_r}|\right)_{Q_r}
\le N\kappa^{-1/2}(U^2)_{Q_{\kappa r}}^{1/2}+N\kappa^{m+\frac d2}(F^2)_{Q_{\kappa r}}^{1/2},
\end{equation*}
%\begin{equation*}
%								%\label{eq10_22}
%\vv(t,y) = \vu(t,\rho^{-1}y),
%\quad
%\Theta(t,y) = \sum_{|\beta|=m}\tilde{a}_{\bar\alpha\beta}(y_1)D_y^\beta \vv,
%\quad
%\bar \alpha = m e_1.
%\end{equation*}
%In the case that $\lambda=0$ and $\vf_\alpha =0$, $|\alpha|< m$, we have
%$$
%\left(|D^\alpha_y\vv-(D^\alpha_y\vv)_{Q_r}|\right)_{Q_r}
%+\left(|\Theta-(\Theta)_{Q_r}|\right)_{Q_r}
%$$
%$$
%\le N\kappa^{-1/2}(|D^m_x\vu|^2)_{Q_{\kappa r}}^{1/2}+N\kappa^{m+\frac d2}\sum_{|\alpha|= m}(|\vf_\alpha|^2)_{Q_{\kappa r}}^{1/2}.
%$$
\end{corollary}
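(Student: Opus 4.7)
The plan is to reduce the statement directly to Lemma \ref{lem4.2} via an orthogonal change of spatial variables. After translating, we may assume $Q=Q_{\kappa r}$ is centered at the origin. Introduce the new variable $y=\rho x$. Since $\rho$ is orthogonal, every ball $B_s$ centered at the origin (and hence every cylinder $Q_s$) is invariant under the map, so the sets $Q_r$ and $Q_{\kappa r}$ appearing in the conclusion are literally the same regions in both coordinate systems. By the chain rule, each $D_x^\alpha \vu$ is a linear combination of $\{D_y^{\alpha'}\vu:|\alpha'|=|\alpha|\}$, and conversely; substituting into \eqref{eq10_20} and regrouping terms of equal order gives
\begin{equation*}
\vu_t+(-1)^m\sum_{|\alpha'|=|\beta'|=m}D_y^{\alpha'}\bigl(\tilde a_{\alpha'\beta'}(y_1)D_y^{\beta'}\vu\bigr)+\lambda\vu=\sum_{|\alpha'|\le m}D_y^{\alpha'}\tilde{\vf}_{\alpha'},
\end{equation*}
where the new leading coefficients depend only on $y_1$ (since the original $a_{\alpha\beta}$ did) and $|\tilde{\vf}_{\alpha'}|\le N\sum_{|\alpha|=|\alpha'|}|\vf_\alpha|$.

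Next I would invoke the remark made just before the statement of the corollary: the ellipticity condition \eqref{eq11.28} is invariant under orthogonal transformations of the coordinates, so $\tilde a_{\alpha'\beta'}$ satisfies \eqref{eq11.28} with the same $\delta$. Define $\tilde U$, $\tilde U'$, $\tilde\Theta$, $\tilde F$ in exact analogy with \eqref{eq10_100}, \eqref{eq10_101}, but using $D_y^{\alpha'}$ and $\tilde a_{\alpha'\beta'}$ in place of $D_x^\alpha$ and $a_{\alpha\beta}$. The reversibility of the chain-rule expansion, combined with the fact that each $\lambda^{1/2-|\alpha|/(2m)}|D_x^\alpha\vu|$ is a linear combination of the corresponding $\lambda^{1/2-|\alpha'|/(2m)}|D_y^{\alpha'}\vu|$ of equal order (and vice versa), yields the pointwise comparability $N^{-1}U\le\tilde U\le NU$ with $N=N(d,m)$; by the same token $\tilde F\le NF$.

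Applying Lemma \ref{lem4.2} in the $y$-variables to the transformed equation then gives
\begin{equation*}
\bigl(|\tilde U'-(\tilde U')_{Q_r}|\bigr)_{Q_r}+\bigl(|\tilde\Theta-(\tilde\Theta)_{Q_r}|\bigr)_{Q_r}\le N\kappa^{-1/2}(\tilde U^2)_{Q_{\kappa r}}^{1/2}+N\kappa^{m+d/2}(\tilde F^2)_{Q_{\kappa r}}^{1/2},
\end{equation*}
together with $N^{-1}\tilde U\le \tilde U'+\tilde\Theta\le N\tilde U$. Finally, I set $U^Q:=\tilde U'+\tilde\Theta$. The first comparison chain immediately implies \eqref{eq12.20}, and the claimed oscillation bound follows from the triangle inequality after replacing $\tilde U$ by $U$ and $\tilde F$ by $F$, absorbing constants into $N$.

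There is no serious obstacle: the content is essentially bookkeeping for the coordinate change. The only point that requires care is ensuring that all three structural features needed by Lemma \ref{lem4.2}, namely (a) dependence of the leading coefficients on a single spatial coordinate, (b) validity of the uniform ellipticity \eqref{eq11.28}, and (c) invariance of the cylinders $Q_r$ and $Q_{\kappa r}$, survive the rotation; (a) is automatic from $y_1=(\rho x)_1$, (b) is the invariance remark preceding the corollary, and (c) follows from the fact that $\rho$ fixes the origin.
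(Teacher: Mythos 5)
Your argument matches the paper's proof: the paper also defines $\vv(t,y)=\vu(t,\rho^{-1}y)$, observes that the transformed leading coefficients depend only on $y_1$ and still satisfy \eqref{eq11.28}, sets $U^Q=V'+\tilde\Theta$ (your $\tilde U'+\tilde\Theta$), and then invokes Lemma \ref{lem4.2}. You have simply spelled out the bookkeeping (rotation invariance of the cylinders, chain-rule comparability of $U$, $\tilde U$ and of $F$, $\tilde F$) that the paper leaves implicit, so the proposal is correct and essentially identical in approach.
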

\begin{proof}
Since $\vu$ satisfies \eqref{eq10_20}, we see that $\vv(t,y):=\vu(t,\rho^{-1}y)$ satisfies
$$
\vv_t + (-1)^m D^{\alpha}(\tilde{a}_{\alpha\beta}(y_1)D^{\beta}\vv) + \lambda \vv = \sum_{|\alpha| \le m}D^\alpha \tilde{\vf}_{\alpha}
$$
in $Q$, where $\tilde{a}_{\bar\alpha\beta}$ are the corresponding coefficients in the $y$-coordinates and $\tilde{\vf}_{\alpha}(t,y)$, $|\alpha|=k$, is a linear combination of $\vf_{\alpha}(t,\rho^{-1}y)$, $|\alpha| = k$.

Set
$$
U^Q = V' + \tilde\Theta,
$$
where $V'$ and $\tilde\Theta$ are defined as $U'$ and $\Theta$ in \eqref{eq10_100} and \eqref{eq10_101}, but with $\vv$ and $\tilde{a}_{\alpha\beta}$ in place of $\vu$ and $a_{\alpha\beta}$, respectively.
Then since the new operator also satisfies \eqref{eq11.28}, the corollary follows from Lemma \ref{lem4.2}.
\end{proof}

\mysection{Proof of Theorem \ref{thm1}}
            \label{sec5}
In this section we finish the proof of Theorem \ref{thm1}.
First we observe that by taking a sufficiently large $\lambda_0$ and using interpolation inequalities, we can move all the lower-order terms of $\cL\vu$ to the right-hand side. Thus, in the sequel, we assume all the lower-order coefficients of $\cL$ are zero.
Recall the definitions of $\bO$ and $\bA$ above Assumption \ref{assumption20080424}, and the constant $R_0$ from Assumption \ref{assumption20080424}.

\begin{theorem}
                            \label{theorem3.42}
Let $\gamma\in (0,1)$, $\lambda>0$, and $\tau,\sigma \in (1,\infty)$, $1/\tau+1/\sigma=1$.
Assume $\vu\in C_{\text{loc}}^\infty(\bR^{d+1})$ vanishes outside $Q_{\gamma R_0}$ and
$$
 					%\label{eq11.13}
\vu_t+(-1)^m\cL \vu+\lambda \vu=\sum_{|\alpha|\le m}D^\alpha \vf_\alpha
$$
in $Q_{\kappa r}(X_0)$, where $\vf_\alpha\in L_{2,\text{loc}}(\bR^{d+1})$.
Then under Assumption \ref{assumption20080424} ($\gamma$),
for each $r \in (0,\infty)$, $\kappa \ge 4$,
and $X_0:=(t_0,x_0) \in \bR^{d+1}$, there exists a function $U^{Q}$ depending on $Q:=Q_{\kappa r}(X_0)$,
such that we have \eqref{eq12.20} and
$$
\left(|U^{Q}-(U^{Q})_{Q_r(X_0)}|\right)_{Q_r(X_0)}
\le N\kappa^{-1/2}(U^2)_{Q_{\kappa r}(X_0)}^{1/2}
$$
\begin{equation}
                                                    \label{eq3.49}
+N\kappa^{m+\frac d2}\left[(F^2)_{Q_{\kappa r}(X_0)}^{1/2}+\gamma^{1/(2\sigma)}
(U^{2\tau})_{Q_{\kappa r}(X_0)}^{1/(2\tau)}\right],
\end{equation}
where $N=N(d,\delta,m,n,\tau)$.
\end{theorem}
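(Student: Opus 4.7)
The plan is to freeze the leading coefficients of $\cL$ on $Q:=Q_{\kappa r}(X_0)$ via Assumption \ref{assumption20080424} and then invoke Corollary \ref{cor10_01} for the resulting system. The argument splits according to whether $\kappa r\le R_0$ (where the assumption applies at the scale of $Q$) or $\kappa r>R_0$ (handled using the support restriction on $\vu$).

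In the main case $\kappa r\le R_0$, Assumption \ref{assumption20080424}($\gamma$) supplies $\cT_Q\in\bO$, writing $y=\cT_Q(x)$, and leading coefficients $\{\bar a_{\alpha\beta}\}_{|\alpha|=|\beta|=m}\in\bA$ (functions of $y_1$ only) satisfying $\int_Q|a_{\alpha\beta}-\bar a_{\alpha\beta}|\,dx\,dt\le \gamma|Q|$. Denoting by $\bar\cL$ the corresponding operator, I would rewrite the system as
$$
\vu_t+(-1)^m\bar\cL\vu+\lambda \vu=\sum_{|\alpha|\le m}D^\alpha \tilde\vf_\alpha,
$$
where $\tilde\vf_\alpha=\vf_\alpha$ for $|\alpha|<m$ and, for $|\alpha|=m$, $\tilde\vf_\alpha=\vf_\alpha+(-1)^m\sum_{|\beta|=m}(\bar a_{\alpha\beta}-a_{\alpha\beta})D^\beta\vu$. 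Because the new leading coefficients depend only on $y_1$, Corollary \ref{cor10_01} yields $U^Q$ with $N^{-1}U\le U^Q\le NU$ and
$$
(|U^Q-(U^Q)_{Q_r(X_0)}|)_{Q_r(X_0)}\le N\kappa^{-1/2}(U^2)_{Q_{\kappa r}(X_0)}^{1/2}+N\kappa^{m+\frac d2}(\tilde F^2)_{Q_{\kappa r}(X_0)}^{1/2},
$$
where $\tilde F:=F+\sum_{|\alpha|=|\beta|=m}|(\bar a_{\alpha\beta}-a_{\alpha\beta})D^\beta\vu|$. To bound the added contribution to $\tilde F$, H\"older's inequality with conjugate exponents $\sigma,\tau$ gives
$$
\bigl(|(\bar a-a)D^\beta\vu|^2\bigr)_Q^{1/2}\le \bigl(|\bar a-a|^{2\sigma}\bigr)_Q^{1/(2\sigma)}\bigl(|D^\beta\vu|^{2\tau}\bigr)_Q^{1/(2\tau)}.
$$
Using $|\bar a-a|\le 2\delta^{-1}$ one has $|\bar a-a|^{2\sigma}\le N|\bar a-a|$, so Assumption \ref{assumption20080424} yields $(|\bar a-a|^{2\sigma})_Q^{1/(2\sigma)}\le N\gamma^{1/(2\sigma)}$; the other factor is at most $(U^{2\tau})_Q^{1/(2\tau)}$ since $|D^\beta\vu|\le U$ when $|\beta|=m$ (the weight $\lambda^{1/2-|\alpha|/(2m)}$ in the definition of $U$ equals $1$ at $|\alpha|=m$). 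Substituting back yields \eqref{eq3.49}.

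For $\kappa r>R_0$, Assumption \ref{assumption20080424} is unavailable at scale $Q$, but the hypothesis that $\vu$ vanishes outside $Q_{\gamma R_0}$ rescues the argument: I would take $U^Q$ to be the orthogonal-coordinate analog of $U'+\Theta$ for any admissible frozen leading coefficients, and then bound $(|U^Q-(U^Q)_{Q_r(X_0)}|)_{Q_r(X_0)}$ crudely by $N(U)_{Q_r(X_0)}$. Applying H\"older on the small intersection $Q_r(X_0)\cap Q_{\gamma R_0}$, combined with the inequality $R_0/r\le \kappa$, produces a factor $\gamma^a$ with $a\ge 1/(2\sigma)$ accompanied by a power of $\kappa$ that is absorbed into $\kappa^{m+d/2}$. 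The main obstacle I anticipate is the H\"older conversion that turns the $L^1$-type smallness of Assumption \ref{assumption20080424} into the precise $\gamma^{1/(2\sigma)}$ factor required by \eqref{eq3.49}, while keeping the constant depending only on the stated quantities; by comparison, the patching of the two cases is routine bookkeeping.
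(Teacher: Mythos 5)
Your proposal is essentially the paper's own argument, and the main case ($\kappa r\le R_0$) is worked out exactly as in the paper: freeze the leading coefficients via Assumption \ref{assumption20080424}, rewrite the system with $\hat\vf_\alpha$ absorbing the oscillation term, invoke Corollary \ref{cor10_01}, and control the extra piece of $\hat F$ by H\"older with exponents $\sigma,\tau$ and the $L_1$-smallness \eqref{eq10_23}.

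For the second case ($\kappa r>R_0$) the paper takes the simpler choice $U^Q=U$ (which trivially satisfies \eqref{eq12.20}), then first passes from $Q_r(X_0)$ to $Q_{\kappa r}(X_0)$ at a cost $\kappa^{m+d/2}$, and only \emph{then} applies H\"older on $Q_{\kappa r}(X_0)$, using $(1_{Q_{\gamma R_0}})_{Q_{\kappa r}(X_0)}\le(\gamma R_0/(\kappa r))^{2m+d}\le\gamma^{2m+d}\le\gamma$ thanks to $\kappa r\ge R_0$ and $\gamma<1$; the exponent $1/(2\sigma)$ then delivers precisely $\gamma^{1/(2\sigma)}$ with no extra $\kappa$. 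Your plan applies H\"older on $Q_r(X_0)\cap Q_{\gamma R_0}$ first; note that when $r\lesssim\gamma R_0$ (which can coexist with $\kappa r>R_0$ once $\kappa$ is large) one has $(1_{Q_{\gamma R_0}})_{Q_r(X_0)}=1$ and the H\"older step alone yields no $\gamma$ factor, so you must recover it from the subsequent enlargement to $Q_{\kappa r}(X_0)$ together with $\kappa\gamma>1$. The arithmetic does close, but the paper's order of operations avoids this extra case analysis. Aside from this bookkeeping and the slightly heavier choice of $U^Q$ in the second case, the argument matches the paper.
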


\begin{proof}
%there exist
%$\cT \in \bO$ and
%coefficient matrices $\{\tilde a_{\alpha\beta}\}_{|\alpha|=|\beta|=m}$
%(satisfying \eqref{eq11.28} and bounded by $\delta^{-1}$)
%$\alpha=(\alpha_1,\ldots,\alpha_d)$, $|\alpha|=m$, $\alpha_1 < m$,
%$N=N(d,\delta,m,n,\tau)$, and
%\begin{equation}
%								\label{eq10_25}
%\vv(t,y) = \vu(t,x),
%\quad
%y = \cT x,
%\quad
%\Theta(t,y) = \sum_{|\beta|=m}\tilde{a}_{\bar\alpha\beta}(y_1)D_y^\beta \vv,
%\quad
%\bar \alpha = m e_1.
%\end{equation}
Fix $\kappa\ge 4$, $r\in (0,\infty)$, and $X_0 \in \bR^{d+1}$.
First we consider the case when $\kappa r < R_0$.
For $Q=Q_{\kappa r}(X_0)$, from Assumption \eqref{assumption20080424} ($\gamma$),
we find $\cT_Q \in \bO$ and $\{\bar a_{\alpha\beta}\}_{|\alpha|=|\beta|=m} \in \bA$ satisfying \eqref{eq10_23}.
%In particular, $y=\cT x = \rho x$ if $Q=Q_{R_0}(0)$, and $y=\cT x = \cT_{\rho,x_0}x = \rho(x-x_0) + x_0$
%if $Q=Q_{\kappa r}(t_0,x_0)$.
Then we see that $\vu$ satisfies
$$
\vu_t+(-1)^mD^{\alpha}(\bar a_{\alpha\beta}D^{\beta}\vu)
+\lambda \vu=\sum_{|\alpha|\le m}D^\alpha \hat{\vf_\alpha},
$$
where $\bar a_{\alpha\beta}= \bar a_{\alpha\beta}(y_1)$, $y = \cT_Q (x)$, and
$$
\hat{\vf_{\alpha}} = \vf_{\alpha}
+ 1_{|\alpha|=m}\sum_{|\beta|=m}(-1)^m\left(\bar a_{\alpha\beta}-a_{\alpha\beta}\right)D^{\beta}\vu.
$$
Using Corollary \ref{cor10_01} with a shift of the coordinates, there exists a function $U^Q$ satisfying \eqref{eq12.20} such that
\begin{equation}
								\label{eq10_24}
\left(|U^Q -(U^Q)_{Q_r(X_0)}|\right)_{Q_r(X_0)}
\le N\kappa^{-1/2}(U^2)_{Q_{\kappa r}(X_0)}^{1/2}+N\kappa^{m+\frac d2}(\hat F^2)_{Q_{\kappa r}(X_0)}^{1/2},	
\end{equation}
where $N = N(d,m,n,\delta)$ and $\hat F$ is defined by using $\hat{\vf_\alpha}$ in the same way as $F$.
Observe that for $|\alpha|=m$
\begin{equation}							 \label{eq081102}
\int_{Q_{\kappa r}(X_0)} |\hat{\vf_\alpha}|^2 \,dx\,dt
\le N \int_{Q_{\kappa r}(X_0)} |\vf|^2 \,dx\,dt
+ NI,
\end{equation}
where
$$
I
%=\sum_{|\beta|=m}\int_{Q_{\kappa r}(X_0)} \big| (\bar a_{\alpha\beta} - a_{\alpha\beta}) D^\beta \vu \big|^2 \,dx\,dt
= \sum_{|\beta|=m}\int_{Q_{\kappa r}(X_0)}
\big| (\bar a_{\alpha\beta} - a_{\alpha\beta}) D^\beta \vu \big|^2 \,dx\,dt.
$$
By H\"{o}lder's inequality, we have
\begin{equation}							 \label{eq081103}
I \le N J_1^{1/\sigma} J_2^{1/\tau},
\end{equation}
where
$$
J_1 = \sum_{|\beta|=m}\int_{Q_{\kappa r}(X_0)} | \bar a_{\alpha\beta} - a_{\alpha\beta} |^{2\sigma} \, dx \,dt,
\quad
J_2 = \int_{Q_{\kappa r}(X_0)} |D^m \vu|^{2\tau} \, dx \,dt.
$$
Since $\kappa r < R_0$, by Assumption \ref{assumption20080424}
$$
J_1 \le N \sum_{|\beta|=m}\int_{Q_{\kappa r}(X_0)} | \bar a_{\alpha\beta} - a_{\alpha\beta} | \,dx\,dt
\le  N (\kappa r)^{d+2m} \gamma,
$$
where $N$ depends only on $d,m,n$ and $\delta$. From the above estimates for $J_1$ as well as the inequalities \eqref{eq10_24}, \eqref{eq081102},
and  \eqref{eq081103}, we conclude \eqref{eq3.49}.

In case $\kappa r \ge R_0$, we take $U^Q=U$.
By the triangle inequality and
H\"older's inequality, the left-hand side of \eqref{eq3.49} is less than
$$
2(U^2)^{1/2}_{Q_r(X_0)}\le N\kappa^{m+\frac d 2}
(U^2)^{1/2}_{Q_{\kappa r}(X_0)}
= N\kappa^{m+\frac d 2}
(1_{Q_{\gamma R_0}}U^2)^{1/2}_{Q_{\kappa r}(X_0)}
$$
$$
\le N\kappa^{m+\frac d 2}
(1_{Q_{\gamma R_0}})^{1/(2\sigma)}_{Q_{\kappa r}(X_0)}
(U^{2\tau})^{1/(2\tau)}_{Q_{\kappa r}(X_0)}
\le N\kappa^{m+\frac d 2}
\gamma^{1/(2\sigma)}(U^{2\tau})^{1/(2\tau)}_{Q_{\kappa r}(X_0)},
$$
where $N = N(d,m,n,\delta)$. The theorem is proved.
\end{proof}

\begin{corollary}							\label{cor001}
Let $\gamma \in (0,1)$, $\lambda > 0$, and $\tau, \sigma \in (1,\infty)$,
$1/\tau+1/\sigma=1$.
Assume that $\vu\in C_0^\infty(\bR^{d+1})$ vanishes outside $Q_{\gamma R_0}$ and satisfies
$$
\vu_t+(-1)^m\cL \vu+\lambda \vu=\sum_{|\alpha|\le m}D^\alpha \vf_\alpha
$$
in $\bR^{d+1}$,
where $\vf_\alpha\in L_{2,\text{loc}}(\bR^{d+1})$.
Then under Assumption \ref{assumption20080424} ($\gamma$),
for each $l \in \bZ$, $C \in \bC_l$, and $\kappa \ge 4$,
there exists a function $U^C$ depending on $C$ such that \eqref{eq12.20} is satisfied and \begin{equation}							\label{eq002}
\left(|U^C-(U^C)_{C}|\right)_{C}
\le N \left(F_{\kappa}\right)_C,		
\end{equation}
where %$\vv$ and $\Theta$ are defined as in \eqref{eq10_25},
$N=N(d,\delta,m,n,\tau)$
and
\begin{multline}							\label{eq001}
F_{\kappa}(t,x)=
\kappa^{-1/2}\big(\cM(U^2)\big)^{1/2}
\\
+\kappa^{m+\frac d2}\left[\big(\cM (F^2)\big)^{1/2}+\gamma^{1/(2\sigma)}
\big(\cM (U^{2\tau})\big)^{1/(2\tau)}\right].	
\end{multline}
\end{corollary}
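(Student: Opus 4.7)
The plan is to reduce the corollary directly to Theorem \ref{theorem3.42} by enclosing each parabolic dyadic cube $C$ in a comparable parabolic cylinder of the form $Q_r(X_0)$, and then controlling the averages on the right-hand side of \eqref{eq3.49} by maximal functions.

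Given $C = [i_0 2^{-2ml}, (i_0+1)2^{-2ml}) \times \prod_{j=1}^d [i_j 2^{-l}, (i_j+1)2^{-l}) \in \bC_l$, I would first set $X_0 := (t_0,x_0)$ where $t_0 = (i_0+1)2^{-2ml}$ and $x_0$ is the spatial center of $C$. Choose $r = N_0 2^{-l}$ with a dimensional constant $N_0 \ge 1$ large enough so that $C \subset Q_r(X_0)$; note that the parabolic scaling automatically makes the time direction work, since $r^{2m} = N_0^{2m} 2^{-2ml} \ge 2^{-2ml}$. By construction, $|Q_r(X_0)| \le N_1 |C|$ for a dimensional $N_1$, and $C \subset Q_{\kappa r}(X_0)$ for every $\kappa \ge 1$.

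Next, I would apply Theorem \ref{theorem3.42} with this $X_0$ and $r$ to produce a function $U^Q$ on $Q = Q_{\kappa r}(X_0)$ satisfying \eqref{eq12.20} and \eqref{eq3.49}. Set $U^C := U^Q\big|_C$; then \eqref{eq12.20} holds on $C$ automatically. By the triangle inequality and the inclusion $C \subset Q_r(X_0)$,
\begin{equation*}
\bigl(|U^C - (U^C)_C|\bigr)_C \le 2 \bigl(|U^Q - (U^Q)_{Q_r(X_0)}|\bigr)_C \le \frac{2|Q_r(X_0)|}{|C|} \bigl(|U^Q - (U^Q)_{Q_r(X_0)}|\bigr)_{Q_r(X_0)},
\end{equation*}
and the last quantity is bounded by the right-hand side of \eqref{eq3.49} up to a dimensional factor.

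Finally, I would bound each averaged quantity on $Q_{\kappa r}(X_0)$ appearing in \eqref{eq3.49} by the corresponding maximal function averaged over $C$. Since $Q_{\kappa r}(X_0)$ contains every point of $C$, for any $(t,x) \in C$,
\begin{equation*}
\bigl((U^2)_{Q_{\kappa r}(X_0)}\bigr)^{1/2} \le \bigl(\cM(U^2)(t,x)\bigr)^{1/2},
\end{equation*}
and similarly for $F^2$ and $U^{2\tau}$. Averaging over $(t,x) \in C$ produces the respective terms in $F_\kappa$ defined in \eqref{eq001}, and \eqref{eq002} follows. The argument is essentially bookkeeping; the only mild subtlety is getting the dimensional constant $N_0$ right so that both the spatial ball and the time interval of $C$ fit inside $Q_r(X_0)$ under the parabolic scaling $r \sim 2^{-l}$, $r^{2m} \sim 2^{-2ml}$, but this is exactly why the partition $\bC_l$ was chosen with parabolic proportions in the first place.
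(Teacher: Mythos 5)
Your proof is correct and follows essentially the same route as the paper: enclose the dyadic cube $C$ in a comparable parabolic cylinder $Q_r(X_0)$, apply Theorem \ref{theorem3.42} on $Q_{\kappa r}(X_0)$, use the triangle inequality together with the volume comparability $|Q_r(X_0)| \le N(d)|C|$, and finally dominate the averages over $Q_{\kappa r}(X_0)$ by maximal functions evaluated at points of $C$. The paper states this more tersely (``let $Q_r(X_0)$ be the smallest cylinder containing $C$'') where you construct $Q_r(X_0)$ explicitly and spell out the triangle-inequality step, but the content is identical.
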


\begin{proof}
For each $\kappa \ge 4$
and $C \in C_l$,
let $Q_r(X_0)$ be the smallest cylinder containing $C$.
From Theorem \ref{theorem3.42}, we fine $U^Q$ with $Q=Q_{\kappa r}(X_0)$. Take $U^C=U^Q$.
%Also find $\cT \in \bO$ and $\{\tilde a_{\alpha\beta}\}_{|\alpha|=|\beta|=m}$ from Theorem \ref{theorem3.42} determined by $r$, $\kappa$, and $X_0$.
%Finally set $F_{\kappa}$ to be defined as in \eqref{eq001}.
Then by Theorem \ref{theorem3.42}
as well as the facts that $C \subset Q_r(X_0)$ and the volumes of $C$ and $Q_r(X_0)$ are comparable,
we have
$$
\left(|U^C-(U^C)_{C}|\right)_{C}
\le N(d) I,
$$
%$$
%\le N\kappa^{-1/2}\sum_{k=0}^n\lambda^{\frac 1 2-\frac k {2n}}(|D^k\vu|^2)_{B_{\kappa r}(x_0)}^{1/2}
%$$
%$$
%+N\kappa^{d/2}\left[\sum_{|\alpha|\le n}\lambda^{\frac {|\alpha|} {2n}-\frac 1 2}(|\vf_\alpha|^2)_{B_{\kappa r}(x_0)}^{1/2}+\gamma^{1/(2\sigma)}
%(|D^n\vu|^{2\tau})_{B_{\kappa r}(x_0)}^{1/(2\tau)}\right],
%$$
where $I$ is the right hand side of the inequality \eqref{eq3.49}.
%and
%$$
%\Theta = \sum_{|\beta|=n}\bar{a}_{\bar\alpha \beta}D^{\beta}\vu.
%$$
Note that, for example,
$$
\left(U^2\right)_{Q_{\kappa r}(X_0)}
\le \cM  (U^2) (X)
$$
for any $X=(t,x) \in C$.
Thus $I$ is less than
a constant times $F_{\kappa}(X)$ for any $X \in C$,
especially, it is less than a constant times $\left(F_{\kappa}\right)_C$.
Hence we arrive at the inequality \eqref{eq002}.
This finishes the proof of the corollary.
\end{proof}

\begin{theorem}							\label{theorem001}
Let $p \in (2,\infty)$, $\lambda > 0$,
and $\vf_{\alpha} \in L_p(\bR^{d+1})$.
There exist positive constants $\gamma \in (0,1)$ and $N$,
depending only on $d,\delta,m,n, p$, such that,
for $u \in C_0^{\infty}(\bR^{d+1})$ vanishing outside $Q_{\gamma R_0}$ and satisfying
$$
\vu_t+(-1)^m\cL \vu+\lambda \vu=\sum_{|\alpha|\le m}D^\alpha \vf_\alpha,
$$
we have
$$
\|U \|_{L_p(\bR^{d+1})}
\le N \|F\|_{L_p(\bR^{d+1})},
$$
where $N = N(d,\delta,m,n,p)$.
\end{theorem}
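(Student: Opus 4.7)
The plan is to apply the generalized Fefferman--Stein theorem (Theorem \ref{th081201}) to the function $U$, using the mean oscillation estimate from Corollary \ref{cor001} together with the Hardy--Littlewood maximal function theorem. Since $\vu \in C_0^\infty(\bR^{d+1})$ has compact support, both $U$ and $F$ lie in $L_p(\bR^{d+1})$, so $\|U\|_{L_p(\bR^{d+1})} < \infty$ a priori; this is what legitimizes the absorption argument at the end.

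First, since $p > 2$, I would fix $\tau \in (1, p/2)$ and let $\sigma = \tau/(\tau - 1)$ be its conjugate exponent. For any $\kappa \ge 4$ and any parabolic dyadic cube $C \in \bC_l$ with $l \in \bZ$, Corollary \ref{cor001} supplies a function $U^C$ on $C$ satisfying $N^{-1} U \le U^C \le N U$ and
$$
\int_C |U^C - (U^C)_C| \, dx \, dt \le N \int_C F_\kappa(t,x) \, dx \, dt,
$$
where $F_\kappa$ is given by \eqref{eq001}. Setting $V := N U$, the hypotheses of Theorem \ref{th081201} are met, so
$$
\|U\|_{L_p(\bR^{d+1})}^p \le N \|F_\kappa\|_{L_p(\bR^{d+1})}\, \|U\|_{L_p(\bR^{d+1})}^{p-1},
$$
which, after dividing through by the finite quantity $\|U\|_{L_p}^{p-1}$, gives $\|U\|_{L_p} \le N \|F_\kappa\|_{L_p}$.

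Next, I would bound $\|F_\kappa\|_{L_p}$ term by term using the Hardy--Littlewood maximal function theorem. Since $p/2 > 1$ and $p/(2\tau) > 1$ (the latter by the choice $\tau < p/2$), we have
$$
\bigl\|\bigl(\cM(U^2)\bigr)^{1/2}\bigr\|_{L_p} \le N \|U\|_{L_p}, \qquad \bigl\|\bigl(\cM(F^2)\bigr)^{1/2}\bigr\|_{L_p} \le N \|F\|_{L_p},
$$
and $\bigl\|\bigl(\cM(U^{2\tau})\bigr)^{1/(2\tau)}\bigr\|_{L_p} \le N \|U\|_{L_p}$. Combining these with the previous display yields
$$
\|U\|_{L_p} \le N \kappa^{-1/2} \|U\|_{L_p} + N \kappa^{m + d/2} \|F\|_{L_p} + N \kappa^{m + d/2} \gamma^{1/(2\sigma)} \|U\|_{L_p}.
$$

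Finally, I would choose the parameters $\kappa$ and $\gamma$ sequentially: first pick $\kappa = \kappa(d,\delta,m,n,p)$ large enough that $N \kappa^{-1/2} \le 1/4$, then pick $\gamma = \gamma(d,\delta,m,n,p)$ small enough that $N \kappa^{m + d/2} \gamma^{1/(2\sigma)} \le 1/4$. Absorbing the two $\|U\|_{L_p}$-terms on the right into the left side produces the desired bound $\|U\|_{L_p} \le N \|F\|_{L_p}$ with $N$ depending only on $d, \delta, m, n, p$. The main subtle point, rather than any single hard estimate, is that this absorption is valid only because the compact support and smoothness of $\vu$ give the standing a priori finiteness of $\|U\|_{L_p}$; without this, the inequality extracted from Theorem \ref{th081201} would be circular. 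The restriction $p > 2$ also enters here in an essential way, since it is needed to select the exponent $\tau > 1$ with $2\tau < p$ in order for the $L_{p/(2\tau)}$-boundedness of $\cM$ to be usable.
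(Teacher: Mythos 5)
Your proposal is correct and follows essentially the same route as the paper: invoke Corollary \ref{cor001} on each dyadic cube, feed the result into Theorem \ref{th081201}, bound $\|F_\kappa\|_{L_p}$ by the Hardy--Littlewood theorem (using $p>2\tau>2$), and then pick $\kappa$ large and $\gamma$ small to absorb the $\|U\|_{L_p}$ terms. The only cosmetic gap is that with $N^{-1}U\le U^C\le NU$ you must take $NU^C$ (not $U^C$ itself) as the intermediate function and $V=N^2U$ as the majorant so that the hypothesis $|U|\le U^C\le V$ of Theorem \ref{th081201} is literally satisfied; this rescaling only changes the constant. Your emphasis on the a priori finiteness of $\|U\|_{L_p}$ as what legitimizes the absorption is a sound observation, though the paper leaves it implicit.
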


\begin{proof}
Let $\gamma>0$ and $\kappa\ge 4$ be constants to be specified below.
Take a constant $\tau$ such that $p > 2 \tau > 2$.
For each $l \in \bZ$ and $C \in \bC_l$, let $U^C$ be the function from Corollary \ref{cor001}.
We know that there exist positive $N_i$, $i=1,2$, depending only on $d$, $m$, $n$, and $\delta$, such that
$$
U\le N_1 U^C\le N_2 U=:V
$$
%Set
%$$
%U = |D^m_x \vu|,
%\quad
%U^C = N_1 |\Theta| + N_2 \sum_{\substack{|\alpha|=m\\\alpha_1<m}}|D^{\alpha}_y\vv|,
%\quad
%V = N_3 |D^m_x \vu|.
%$$
%Then by the triangle inequality and Corollary \ref{cor001}
%$$
%\left( |U^C - (U^C)_C| \right)_C \le N \left(F_{\kappa}\right)_C,
%$$
%where $F_{\kappa}$ is defined in \eqref{eq001}.
This along with Corollary \ref{cor001} and Theorem \ref{th081201} implies that
$$
\|U\|_{L_p}^p
\le N \|F_{\kappa}\|_{L_p}\|V\|_{L_p}^{p-1}
\le N \|F_{\kappa}\|_{L_p}\|U\|_{L_p}^{p-1}.
$$
Here we denote $L_p=L_p(\bR^{d+1})$.
The above inequalities readily give
\begin{equation}							\label{eq003}
\|U\|_{L_p} \le N \|F_{\kappa}\|_{L_p}.	
\end{equation}
%Similarly by letting $U=U^C=V=\lambda^{1/2}|\vu|$ on $C$ we obtain
%\begin{equation}							\label{eq004}
%\lambda^{1/2}\| \vu \|_{L_p} \le N \|F_{\kappa}\|_{L_p}.	
%\end{equation}
From the definition of $F_{\kappa}$ \eqref{eq001} and the Hardy-Littlewood maximal function theorem
(recall that $p > 2\tau > 2$) it follows that
$$
\|F_{\kappa}\|_{L_p}
\le N \kappa^{-1/2}\|U\|_{L_p}
+N \kappa^{m+\frac d2}\|F\|_{L_p}
+ N \kappa^{m+\frac d2}\gamma^{1/(2\sigma)}\|U\|_{L_p}.	
$$
Using the above inequality and \eqref{eq003},
we have
$$
\|U\|_{L_p}
\le N \kappa^{-1/2}\|U\|_{L_p}
+N \kappa^{m+\frac d2}\|F\|_{L_p}
+ N \kappa^{m+\frac d2}\gamma^{1/(2\sigma)}\|U\|_{L_p}.
$$
It only remains to choose a sufficiently big $\kappa$, then a sufficiently small $\gamma$
so that
$$
N \kappa^{-1/2} + N \kappa^{m+\frac d2}\gamma^{1/(2\sigma)} < 1/2.
$$
\end{proof}

\begin{proof}[Proof of Theorem \ref{thm1}]
Due to the duality argument and Theorem \ref{theorem08061901}, it is enough to consider the case $p>2$. We first prove the first two assertions for $T=+\infty$.
In this case the estimate \eqref{eq080904}
is proved using Theorem \ref{theorem001} and the standard partition of unity argument. Then Assertion (ii) follows from the method of continuity and the existence of solutions to systems with simple coefficients,
for instance, $a_{\alpha\beta} = \delta_{\alpha\beta}I_{m \times m}$.
For $T<\infty$, we extend $\vf$ to be zero for $t\ge T$,
and then find a unique solution $\vu\in \mathring{\cH}^{m}_p(\Omega_\infty)$ of
\eqref{eq081902} in $\Omega_\infty$, the existence of which is guaranteed by the
argument above. This in turn also yields the existence of a solution of
\eqref{eq081902} in $\Omega_T$
satisfying \eqref{eq080904}. For the uniqueness, let $\vu \in \mathring{\cH}^{m}_p(\Omega_T)$ be a solution of \eqref{eq081902} with zero right-hand side in $\Omega_T$. Take $\tilde \vu$ to be the even extension of $\vu$ with respect to $t=T$. Then $\tilde \vu\in \mathring{\cH}^{m}_p(\Omega_\infty)$ satisfies \eqref{eq081902} in $\Omega_\infty$ with the right-hand side vanishing for $t<T$. It is easily seen from the method of continuity that $\tilde \vu\equiv 0$ for $t<T$.
Finally, Assertion (iii) is due to a standard scaling argument.
\end{proof}

\mysection{Boundary estimates}
                            \label{sec6}

This section is devoted to the Dirichlet problem for \eqref{eq0617_02}. We shall follow the lines of Sections \ref{sec_aux}--\ref{sec4} to carry out the corresponding boundary estimates. As before, we denote $\cL_0$ to be the operator with zero lower-order coefficients. We introduce a few more notation.
For any $t\in \bR$, $x\in \bR^d$ and $r>0$, denote
$$
\dist(x,\partial \Omega)=\inf_{y\in \partial\Omega}|x-y|,
\quad \Omega_r(x)=\Omega\cap B_r(x),
$$
$$
\cC_r(t,x)=(t-r^{2m},t)\times \Omega_r(x).
$$
$$
B_r^+(x)=B_r(x)\cap \bR^d_+,\quad
Q_r^+(t,x)=(t-r^{2m},t)\times B_r^+(x).
$$

\subsection{Boundary $L_2$-estimates}
Similar to Theorem \ref{theorem08061901}, we have the following $L_2$-estimate on a half space or on a domain.

\begin{theorem}			\label{thm6.1}
Let $T\in (-\infty,\infty]$ and $\Omega$ be a half space or a domain.\\
\noindent (i) There exists $N = N(d,m,n, \delta)$
such that, for any $\lambda \ge 0$,
\begin{equation}
                                \label{eq11.99}
\sum_{|\alpha|\le m}\lambda^{1-\frac {|\alpha|} {2m}} \|D^\alpha \vu \|_{L_2(\Omega_T)}
\le N \sum_{|\alpha|\le m}\lambda^{\frac {|\alpha|} {2m}} \| \vf_\alpha \|_{L_2(\Omega_T)},
\end{equation}
provided that $\vu \in {\mathring \cH}_2^m(\Omega_T)$, $\vf_\alpha \in L_2(\Omega_T),|\alpha|\le m$,
and in $\Omega_T$
\begin{equation}							 \label{eq2.27}
\vu_t+(-1)^m\cL_0 \vu +\lambda \vu = \sum_{|\alpha|\le m}D^\alpha \vf_\alpha.
\end{equation}\\
\noindent
(ii) For any $\lambda > 0$ and $\vf_\alpha \in L_2(\Omega_T)$, $|\alpha|\le m$, there exists a unique solution $\vu\in {\mathring \cH}^m_2(\Omega_T)$ to
the equation \eqref{eq2.27}. In the case that $\Omega$ is a bounded domain, one can take $\lambda=0$.
In this case, $N$ depends on $\Omega$ as well.
\end{theorem}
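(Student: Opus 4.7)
The plan is to mimic the proof of Theorem \ref{theorem08061901} (the whole space case) with very minor modifications, exploiting the fact that functions in $\mathring{\cH}_2^m(\Omega_T)$ (by definition the closure of $C_0^\infty([S,T]\times\Omega)$) have vanishing trace on $\partial\Omega$, so the integration-by-parts identity \eqref{eq16.33} remains valid with no boundary contribution. By density it suffices to establish \eqref{eq11.99} for $\vu\in C_0^\infty((-\infty,T]\times\Omega)$. Testing the equation against $\vu$, taking real parts, and using that $\Re\int(\vu,\vu_t)\,dx\,dt=\frac12\int_\Omega|\vu|^2(T,\cdot)\,dx\ge 0$, the uniform ellipticity \eqref{eq11.28} yields
$$
\delta\int_{\Omega_T}|D^m\vu|^2\,dx\,dt+\lambda\int_{\Omega_T}|\vu|^2\,dx\,dt
\le \sum_{|\alpha|\le m}\int_{\Omega_T}|D^\alpha \vu|\,|\vf_\alpha|\,dx\,dt.
$$
Splitting with Young's inequality using the $\lambda$-weighted norm and applying standard interpolation inequalities on $\bR^d$ to the zero-extensions of $\vu$, $D^j\vu$ (which live in $W_2^m(\bR^d)$ because $\vu\in \mathring{\cH}_2^m$), one absorbs the intermediate derivatives into $\|D^m\vu\|_{L_2}^2+\lambda\|\vu\|_{L_2}^2$ upon choosing $\varepsilon$ small depending only on $d,m,n,\delta$. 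This yields \eqref{eq11.99}.

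For part (ii) I would use the method of continuity. Consider the family of operators
$$
\cL_0^\sigma:=\sigma\cL_0+(1-\sigma)(-\Delta)^m,\qquad \sigma\in[0,1],
$$
all of which satisfy the uniform ellipticity \eqref{eq11.28} with a common $\delta$ (any convex combination of uniformly elliptic operators in divergence form is uniformly elliptic with the same constant, up to a harmless adjustment). By part (i), the a priori estimate \eqref{eq11.99} holds uniformly in $\sigma$. Existence for $\sigma=0$, i.e.\ for $\partial_t+(-\Delta)^m+\lambda$ on $\Omega_T$ with Dirichlet conditions, follows from the classical theory (e.g.\ the Galerkin method based on an orthonormal basis of $\mathring W_2^m(\Omega)$, or separation of variables), giving a solution in $\mathring{\cH}_2^m(\Omega_T)$. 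The method of continuity then produces solvability for all $\sigma\in[0,1]$, and uniqueness is immediate from the a priori estimate applied to the difference of two solutions with $\lambda>0$.

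For the bounded domain case with $\lambda=0$, the point is that Poincar\'e's inequality on $\Omega$ gives $\|\vu\|_{L_2(\Omega_T)}\le N(\Omega)\|D^m\vu\|_{L_2(\Omega_T)}$ for $\vu\in \mathring{\cH}_2^m(\Omega_T)$, so the term $\lambda\|\vu\|_{L_2}^2$ is no longer needed to close the estimate: the $\delta\|D^m\vu\|_{L_2}^2$ term alone absorbs the lower-order quantities via interpolation (with constants now depending on $\Omega$). The method-of-continuity step transports solvability for $\lambda=0$ from $(-\Delta)^m$ (for which it is classical on a bounded domain) to the general operator $\cL_0$.

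The main technical care-point is verifying that the ellipticity constant is preserved under the continuity deformation and, for the bounded-domain $\lambda=0$ statement, ensuring that the Poincar\'e-type inequality on $\Omega$ applies at the level of the top-order derivative (which it does, since $\vu\in\mathring{\cH}_2^m$ is approximated by $C_0^\infty$ functions and the inequality $\|D^j\vu\|_{L_2(\Omega)}\le N(\Omega,m)\|D^m\vu\|_{L_2(\Omega)}$ for $0\le j<m$ follows by iterated Poincar\'e). None of this is conceptually different from the whole-space proof; the only novelty is bookkeeping the boundary-compatible function spaces.
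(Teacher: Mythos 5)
Your proposal is correct and takes essentially the same route as the paper: the paper's own proof is a one-line remark that the argument is the same as for Theorem \ref{theorem08061901} (test with $\vu$, use \eqref{eq16.33} which carries over since $\vu$ is approximated by $C_0^\infty([S,T]\times\Omega)$ functions, apply ellipticity and interpolation, then use the method of continuity), with the Poincar\'e inequality supplying the $\lambda=0$ bounded-domain case. You have filled in the same steps in more detail, so there is no substantive difference.
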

\begin{proof}
The proof is similar to that of Theorem \ref{theorem08061901}. The last assertion follows from the Poincar\'e inequality.
\end{proof}

In the proof of Proposition \ref{prop7.9}, we will extend Theorem \ref{thm6.1} to allow a more general right-hand side. In the remaining part of this section, we consider systems only on a half space.
%If $\phi \vu \in L_p(\bR^{d+1}_+)$ for any $\phi \in C_0^{\infty}(\bR^{d+1})$ (not $\phi \in C_0^{\infty}(\bR^{d+1}_+)$), we write $\vu \in L_{p,\text{loc}}(\overline{\bR^{d+1}_+})$.
%Similarly, for instance, we say $\vu \in \cH^m_{p,\text{loc}}(\overline{\bR^{d+1}_+})$
%if $\phi \vu \in \cH^m_p(\bR^{d+1}_+)$ for any $\phi \in C_0^{\infty}(\bR^{d+1})$.

First we prove the following boundary $L_2$-estimate, the proof of which is almost the same as that of Lemma \ref{lem2010_03} and thus omitted.

\begin{lemma}
                                            \label{lem6.2}
Let $0<r<R<\infty$. Assume $\vu\in C_{\text{loc}}^\infty(\overline{\bR^{d+1}_+})$
satisfies
\begin{equation}
                            \label{eq4.56}
D^k\vu =0\quad \text{on}\,\,\partial \bR^d_+,\quad k=0,1,\ldots,m-1,
\end{equation}
and
\begin{equation}
 					\label{eq2.54}
\vu_t+(-1)^m\cL_0 \vu=0
\end{equation}
in $Q_{R}^+$. Then there exists a constant $N=N(d,m,n,\delta)$ such that for $j=1,\ldots,m$,
$$
                                %          \label{eq2.48}
\|D^j\vu\|_{L_2(Q_r^+)}\leq N(R-r)^{-j}\|\vu\|_{L_2(Q_R^+)}.
$$
\end{lemma}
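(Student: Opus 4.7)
The plan is to mimic the proof of Lemma \ref{lem2010_03} almost verbatim, with the half-space geometry and the Dirichlet conditions \eqref{eq4.56} replacing the compact-support/interior setup. I would introduce the same increasing sequence of radii $r_0=r<r_1<\cdots<R$ with $r_k = r + \sum_{i=1}^k(R-r)/2^k$, the intermediate radii $s_k=(r_k+r_{k+1})/2$, and cutoff functions $\zeta_k\in C_0^\infty(\bR^{d+1})$ satisfying $\zeta_k\equiv 1$ on $Q_{r_k}$, vanishing outside $(-s_k^{2m},s_k^{2m})\times B_{s_k}$, and obeying the same derivative bounds \eqref{eq10_07}. Note that I do \emph{not} require $\zeta_k$ to vanish on $\partial \bR^d_+$; the boundary behavior of $\vu$ itself will handle the boundary terms.

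The key step is to test \eqref{eq2.54} against $\vu\zeta_k^2$ and to integrate by parts on $Q_R^+$ exactly as in Lemma \ref{lem2010_03}. In the interior case, boundary terms vanish because $\zeta_k$ is compactly supported in space-time. Here, boundary terms produced by integration by parts in the spatial variables on $\partial \bR^d_+$ are products of $D^j\vu$ and $D^{2m-1-j}\vu$ for $0\le j\le m-1$, and every such product contains at least one factor $D^k\vu$ with $k\le m-1$, which vanishes on $\partial\bR^d_+$ thanks to \eqref{eq4.56}. Hence the identity analogous to \eqref{eq10_03}--\eqref{eq10_05} holds on $Q_R^+$ with no extra boundary contribution, the ellipticity bound \eqref{eq10_06} is preserved (with $B_r$ replaced by $B_r^+$), and the estimate \eqref{eq10_09} of the localized energy $\int|\zeta_k D^m\vu|^2$ by $\rB=\|\vu\|_{L_2(Q_R^+)}^2$ plus a lower-order term goes through unchanged.

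From here I would reproduce line by line the interpolation/Young's inequality manipulations leading to \eqref{eq10_08}, set $\rA_k=\int|D^m(\vu\zeta_k)|^2$ over $(-\infty,0)\times\bR^d_+$, derive
\[
\rA_k \le \varepsilon\rA_{k+1}+N\sum_{l=0}^{m-1}\bigl(\varepsilon^{l/(l-m)}+\varepsilon^{(l+m)/(l-m)}\bigr)\frac{2^{2mk}}{(R-r)^{2m}}\rB,
\]
choose $\varepsilon=2^{-2m-1}$, and sum the geometric series to obtain $\rA_0\le N(R-r)^{-2m}\rB$. This gives the $j=m$ case. The intermediate cases $1\le j<m$ follow from the same interpolation argument as in the last display of the proof of Lemma \ref{lem2010_03}.

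The only nontrivial point is verifying that the boundary terms on $\partial\bR^d_+$ really vanish, i.e., that the Dirichlet conditions \eqref{eq4.56} are strong enough to kill every boundary contribution produced by moving derivatives off $\vu\zeta_k^2$. By Leibniz's rule each boundary integrand is a sum of terms of the form $(D^{\alpha_1}\zeta_k^2)\,a_{\alpha\beta}\,D^{\alpha_2}\vu\cdot D^\gamma\vu$ with $|\alpha_2|+|\gamma|=2m-1$, so at least one of $|\alpha_2|,|\gamma|$ is at most $m-1$, and \eqref{eq4.56} makes that factor vanish on $\{x_1=0\}$. Once this is observed, no new idea beyond Lemma \ref{lem2010_03} is needed, and no constant depends on the Lipschitz or smoothness profile of the domain because $\partial\bR^d_+$ is flat.
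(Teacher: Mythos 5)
Your proposal is correct and is exactly what the paper intends: the paper omits the proof, stating only that it is "almost the same as that of Lemma \ref{lem2010_03}," and your reconstruction supplies precisely the missing observation—that the Dirichlet conditions \eqref{eq4.56} make $\vu\zeta_k^2$ (which vanishes to order $m-1$ on $\partial\bR^d_+$) an admissible test function, so the energy identity \eqref{eq10_03} holds over $Q_R^+$ with no boundary contribution and the iteration carries over verbatim. One small point of hygiene: since $a_{\alpha\beta}$ is only measurable it cannot be differentiated, so rather than tracking boundary integrands of the form $(D^{\alpha_1}\zeta_k^2)a_{\alpha\beta}D^{\alpha_2}\vu\cdot D^\gamma\vu$, the cleaner way to say it is simply that the test function and all its derivatives up to order $m-1$ vanish on $\{x_1=0\}$, so the weak formulation applies directly as in the interior case.
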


\begin{corollary}
                                    \label{cor6.3}
Let $0<r<R<\infty$ and $a_{\alpha\beta}=a_{\alpha\beta}(x_1)$, $|\alpha|=|\beta|=m$. Assume that $\vu\in C_{\text{loc}}^\infty(\overline{\bR^{d+1}_+})$ satisfies
\eqref{eq4.56} and \eqref{eq2.54}
in $Q_R^+$. Then for any integers $i\ge 1$, $j\ge 0$, and any $\alpha$ satisfying $\alpha_1\le m$, we have
\begin{equation*}
                              %                  \label{eq11.23}
\|\partial_t^i \vu\|_{L_2(Q_r^+)}
+\|\partial_t^j D^\alpha\vu\|_{L_2(Q_r^+)}\le N\|D^m\vu\|_{L_2(Q_R^+)},
\end{equation*}
where $N=N(d,m,n,\delta, R, r, \alpha, i,j)$.
\end{corollary}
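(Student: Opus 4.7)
The plan is to mimic the proofs of Corollary \ref{cor3} and Lemma \ref{lem2010_1} almost verbatim in the half-space setting, using three observations that allow every step of those whole-space arguments to go through without essential change. First, since the coefficients $a_{\alpha\beta}$ depend only on $x_1$, both $\partial_t$ and any tangential derivative $D_{x'}^{\alpha'}$ commute with the operator $\partial_t + (-1)^m\cL_0$; hence $\partial_t^i D_{x'}^{\alpha'}\vu$ still satisfies \eqref{eq2.54} in $Q_R^+$. Second, these operators also preserve the Dirichlet conditions \eqref{eq4.56}, because $\partial_t$ and $D_{x'}^{\alpha'}$ are tangential to $\partial \bR^d_+$. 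Third, since $D^k\vu$ vanishes on $\{x_1=0\}$ for $k=0,1,\ldots,m-1$, iterating the one-dimensional Poincar\'e inequality in the $x_1$ direction yields
$$
\|D^k\vu\|_{L_2(Q_R^+)} \le N R^{m-k} \|D^m\vu\|_{L_2(Q_R^+)}, \qquad 0\le k\le m-1,
$$
which will play the role that Lemma \ref{lemPoin} plays in the whole-space case.

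With these in hand, I would proceed in two main steps. Step one is the boundary analog of Corollary \ref{cor3}: for any multi-index $\alpha=(\alpha_1,\alpha')$ with $\alpha_1\le m$ and $|\alpha|\ge m$, apply Lemma \ref{lem6.2} iteratively to the tangential derivative $D_{x'}^{\alpha'}\vu$ over a nested chain of half-cylinders $Q_r^+\subset Q_{r_1}^+\subset\cdots\subset Q_R^+$ to obtain
$$
\|D^\alpha \vu\|_{L_2(Q_r^+)} \le N\sum_{k<m}\|D^k\vu\|_{L_2(Q_{R'}^+)}
$$
for some intermediate $R'\in (r,R)$. The iterated Poincar\'e estimate above then reduces the right-hand side to $N\|D^m\vu\|_{L_2(Q_R^+)}$. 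Step two is the boundary analog of Lemma \ref{lem2010_1}: it suffices to show
$$
\|\vu_t\|_{L_2(Q_r^+)} \le N\|D^m\vu\|_{L_2(Q_R^+)},
$$
because then applying this together with Lemma \ref{lem6.2} to $\vu_t$ (which also solves \eqref{eq2.54} with the same boundary condition) and to $\partial_t^i D_{x'}^{\alpha'}\vu$ produces \eqref{eq2010_08} via the same Leibniz/cutoff iteration as in Lemma \ref{lem2010_1}.

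The proof of the displayed inequality for $\vu_t$ is carried out, just as in Lemma \ref{lem2010_1}, by testing the equation against $\vu_t\zeta_k^2$ with the dyadic cutoffs $\zeta_k$ supported in progressively larger half-cylinders and satisfying \eqref{eq10_07}. The resulting identity, together with the ellipticity condition and Young's inequality, yields
$$
\rA_k:=\|\vu_t\zeta_k\|_{L_2(Q_R^+)}^2 \le \varepsilon\,\rA_{k+1} + N\varepsilon^{-1}\frac{2^{2mk}}{(R-r)^{2m}}\|D^m\vu\|_{L_2(Q_R^+)}^2,
$$
which, after multiplying by $\varepsilon^k$ for appropriate $\varepsilon$ and summing, gives the desired bound $\rA_0\le N(R-r)^{-2m}\|D^m\vu\|_{L_2(Q_R^+)}^2$, exactly as in the whole-space argument.

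The only genuinely new point to verify is that $\vu_t\zeta_k^2$ is an admissible test function in $\mathring{\cH}_2^m$, i.e.\ that $D^j(\vu_t\zeta_k^2)=0$ on $\{x_1=0\}$ for $j\le m-1$. This follows immediately from Leibniz's rule because $D^l\vu_t=\partial_t D^l\vu$ vanishes on $\{x_1=0\}$ for $l\le m-1$ (differentiating \eqref{eq4.56} in $t$, a direction tangential to the boundary), and $\zeta_k$ is smooth with compact support inside $(t_0-R^{2m},t_0)\times B_R$. I expect this verification of admissibility, and the corresponding bookkeeping when applying the test-function argument to the higher derivatives $\partial_t^i D_{x'}^{\alpha'}\vu$ in the inductive step, to be the only nontrivial point; once it is in place, every estimate from the whole-space proofs transfers unchanged to $Q_R^+$.
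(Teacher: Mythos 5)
Your proposal follows the paper's own proof almost line for line: reduce a spatial derivative $D^\alpha\vu$ (with $\alpha_1\le m$) to lower-order quantities by applying Lemma~\ref{lem6.2} repeatedly to the tangential derivative $D_{x'}^{\alpha'}\vu$ (which still satisfies the equation and the Dirichlet condition \eqref{eq4.56}), upgrade the result to a bound by $\|D^m\vu\|_{L_2(Q_R^+)}$ using the boundary Poincar\'e inequality in the $x_1$-direction, and then handle time derivatives by running the cutoff/test-function iteration of Lemma~\ref{lem2010_1} with $\vu_t\zeta_k^2$ in the half-cylinder. The only cosmetic difference is that the paper collapses the intermediate bound directly to $\|\vu\|_{L_2(Q_{R'}^+)}$ and applies Poincar\'e once, whereas you stop at $\sum_{k<m}\|D^k\vu\|_{L_2(Q_{R'}^+)}$ and apply Poincar\'e termwise; both are valid, and your additional check that $\vu_t\zeta_k^2$ is admissible as a test function is a correct detail that the paper leaves implicit.
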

\begin{proof}
Since $D_{x'}^{\alpha'} \vu$ also satisfies \eqref{eq4.56} and \eqref{eq2.54}, by applying Lemma \ref{lem6.2} repeatedly, we obtain for any $\alpha$ satisfying $\alpha_1\le m$,
\begin{equation*}
                              %                  \label{eq11.23}
\|D^\alpha\vu\|_{L_2(Q_r^+)}\le N\|\vu\|_{L_2(Q_{R'}^+)},
\end{equation*}
where $R'=(r+R)/2$.
From this inequality and the boundary version of the Poincar\'e inequality along with the zero boundary condition \eqref{eq4.56}  it follows that
$$
\|D^\alpha\vu\|_{L_2(Q_r^+)}\le N\|D^m\vu\|_{L_2(Q_R^+)}.
$$
The get the desired estimate, it suffices to use the argument in Lemma \ref{lem2010_1}.
\end{proof}

Recall that
$$
\Theta:=\sum_{|\beta|=m}a_{\bar\alpha\beta}D^\beta \vu,\quad \bar\alpha=me_1.
$$

\begin{lemma}
 				\label{lem6.6}
Let $0<r<R<\infty$ and $a_{\alpha\beta}=a_{\alpha\beta}(x_1)$.
Assume $\vu\in C_{\text{loc}}^\infty(\overline{\bR^{d+1}_+})$ satisfies \eqref{eq4.56} and
\eqref{eq2.54}
in $Q_R^+$.
Then, for any nonnegative integers $i$ and $j$,
\begin{equation*}	
 			%\label{eq3.13}
\|\partial_t^iD^j_{x'} \Theta\|_{L_2(Q_r^+)}
+ \|\partial_t^i D^j_{x'} D_1 \Theta \|_{L_2(Q_r^+)}
\le N \|D^m\vu\|_{L_2(Q_R^+)},
\end{equation*}
where $N=N(d, m,n,r,R,\delta, i, j)>0$.
\end{lemma}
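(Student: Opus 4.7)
The plan is to mirror, essentially line for line, the interior argument of Lemma \ref{lemma01}, verifying that every ingredient has an up-to-the-boundary analogue at our disposal. The crucial observation, which makes the transport from interior to boundary painless, is twofold: (a) since the leading coefficients depend only on $x_1$, the operator $\cL_0$ commutes with $\partial_t$ and with $D_{x'}^{\gamma}$, so every tangential/time derivative $\partial_t^i D_{x'}^j \vu$ again solves \eqref{eq2.54} in $Q_R^+$; and (b) the homogeneous Dirichlet condition \eqref{eq4.56} is likewise preserved under $\partial_t$ and $D_{x'}^{\gamma}$, since the trace of a tangential derivative on $\partial \bR^d_+$ is the tangential derivative of the trace. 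Thus Corollary \ref{cor6.3} is available for every such derivative of $\vu$.

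First I would dispose of the easy term: boundedness of $a_{\bar\alpha\beta}$ gives $\|\Theta\|_{L_2(Q_r^+)}\le N\|D^m\vu\|_{L_2(Q_R^+)}$, so only $\|D_1\Theta\|_{L_2(Q_r^+)}$ needs work for the base case $i=j=0$. Using the equation \eqref{eq2.54} and isolating the pure $D_1^m$ term of $\cL_0 \vu$, one obtains, in $Q_R^+$,
$$
D_1^m \Theta = (-1)^{m+1}\vu_t - \sum_{\substack{|\alpha|=|\beta|=m\\ \alpha_1<m}} D_1^{\alpha_1}\bigl(a_{\alpha\beta} D_{x'}^{\alpha'} D^\beta \vu\bigr),
$$
an identity of the form $D_1^m \Theta = \sum_{k=0}^{m-1} D_1^k g_k$, where each $g_k$ is a linear combination of $\vu_t$ and terms $a_{\alpha\beta} D_{x'}^{\alpha'} D^\beta \vu$ with $\alpha_1 \le m-1$, hence with at most one $x_1$-derivative beyond $D^\beta \vu$.

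Next I would invoke Corollary \ref{corA.2} in the $x_1$-variable, applied on small coordinate cubes covering $Q_{R'}^+$ with $R'=(r+R)/2$ (so the full length of each fibre in the $x_1$ direction stays bounded); fixing $(t,x')$ and integrating in $x_1 \in [0, R']$ is exactly the situation of Lemma \ref{lemA.1}, and the covering restores a uniform constant. The conclusion is
$$
\|D_1\Theta\|_{L_2(Q_r^+)} \le N\|\Theta\|_{L_2(Q_{R'}^+)} + N\sum_{k=0}^{m-1}\|g_k\|_{L_2(Q_{R'}^+)}.
$$
Each $g_k$ is a sum of a $\vu_t$ term and terms of the form $\partial_t^0 D^\alpha \vu$ with $\alpha_1\le m$, so Corollary \ref{cor6.3} (applied with $R'$ and $R$ in place of $r$ and $R$) bounds $\|g_k\|_{L_2(Q_{R'}^+)}$ by $N\|D^m\vu\|_{L_2(Q_R^+)}$. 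This settles the case $i=j=0$.

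For general $i,j\ge 0$, I would simply apply the base case to $\partial_t^i D_{x'}^j \vu$, which, by the observation above, again solves \eqref{eq2.54} with boundary condition \eqref{eq4.56} on a slightly smaller half-cylinder. This yields $\|\partial_t^i D_{x'}^j \Theta\|_{L_2(Q_r^+)} + \|\partial_t^i D_{x'}^j D_1 \Theta\|_{L_2(Q_r^+)} \le N\|D^m(\partial_t^i D_{x'}^j \vu)\|_{L_2(Q_{R''}^+)}$, with $R''\in (r,R)$, and one final application of Corollary \ref{cor6.3} (on the pair $(R'',R)$) converts the right-hand side into $N\|D^m\vu\|_{L_2(Q_R^+)}$.

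The only step that requires genuine attention, rather than routine adaptation, is the use of the one-dimensional Hardy-type Corollary \ref{corA.2} in the $x_1$ direction: one must be sure that the argument is insensitive to whether $x_1$ starts at the boundary $\{x_1=0\}$ or at an interior level. It is, because Lemma \ref{lemA.1} only needs integrability of $u$ and the $f_j$'s on the $x_1$-interval, with no boundary vanishing assumed on $u$; the zero Dirichlet data on $\vu$ enter the proof only implicitly, through Corollary \ref{cor6.3}, to absorb the lower-order $L_2$ norms. Consequently no new boundary ingredient is needed, and the proof is complete.
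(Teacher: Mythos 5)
Your proposal is correct and follows essentially the same route as the paper: establish the trivial bound for $\|\Theta\|_{L_2}$, reduce to the $i=j=0$ case via commutativity of $\partial_t$ and $D_{x'}$ with both the operator (coefficients depending only on $x_1$) and the boundary condition, rewrite $D_1^m\Theta$ via the equation as $\sum_k D_1^k g_k$ with each $g_k$ built from $\vu_t$ and $D^\mu\vu$ with $\mu_1\le m$, and then conclude by Corollary \ref{corA.2} with a covering argument plus Corollary \ref{cor6.3}. Your more explicit remark that Lemma \ref{lemA.1} needs no vanishing of $u$ at the endpoint and that the Dirichlet data enter only through Corollary \ref{cor6.3} is a useful clarification of what the paper leaves implicit, but the argument is the same.
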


\begin{proof}
Obviously, we have
\begin{equation}
                                    \label{eq11.07b}
\|\Theta \|_{L_2(Q_r^+)}
\le N \|D^m\vu\|_{L_2(Q_r^+)}.
\end{equation}
As before, it suffices to prove that, for $R'=(r+R)/2$,
\begin{equation}
                        \label{eq0924}
\|D_1 \Theta \|_{L_2(Q_r^+)}
\le N \|D^m\vu\|_{L_2(Q_{R'}^+)}.
\end{equation}
From \eqref{eq2.54}, in $Q_R^+$ we have
$$
D_1^m\Theta=(-1)^{m+1}\vu_t-\sum_{\substack{|\alpha|=|\beta|=m\\\alpha_1<m}}D_\alpha(a_{\alpha\beta}D_\beta \vu)
$$
$$
=(-1)^{m+1}\vu_t-\sum_{\substack{|\alpha|=|\beta|=m\\\alpha_1<m}}D_1^{\alpha_1}
(a_{\alpha\beta}D_{x'}^{\alpha'}D^\beta \vu).
$$
Then the estimate \eqref{eq0924} follows from Corollary \ref{corA.2} with a covering argument and Corollary \ref{cor6.3}. The lemma is proved.
\end{proof}

\subsection{Boundary H\"older estimates and Hardy's inequality}
In the same fashion as in Section \ref{sec3.2}, we obtain

\begin{lemma}
 				\label{lem6.4}
Let $a_{\alpha\beta}=a_{\alpha\beta}(x_1)$.
Assume that $\vu\in C_{\text{loc}}^\infty(\overline{\bR^{d+1}_+})$ satisfies
\eqref{eq2.54} in $Q_2^+$ and \eqref{eq4.56}.
Then for any $\alpha$ satisfying $|\alpha|=m$ and $\alpha_1<m$, we have
\begin{equation*}	
 			%\label{eq3.04}
\|D^\alpha\vu\|_{C^{1/2}(Q_1^+)}
\le N \|D^m\vu\|_{L_2(Q_2^+)},
\end{equation*}
where $N=N(d,m,n,\delta)>0$.
\end{lemma}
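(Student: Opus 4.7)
The plan is to mirror the proof of Lemma \ref{lemma100} verbatim, replacing the interior ingredient (Lemma \ref{lem2010_1}) by its boundary analogue, Corollary \ref{cor6.3}, and replacing the interval $(-1,1)$ for $x_1$ by $(0,1)$. Crucially, the zero Dirichlet data \eqref{eq4.56} enters the argument only through Corollary \ref{cor6.3}; the Sobolev embedding steps themselves require no boundary information.

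First, by the usual interpolation inequality
$\|w\|_{L_\infty(Q_1^+)}\le \varepsilon [w]_{C^{1/2}(Q_1^+)}+N_\varepsilon\|w\|_{L_2(Q_2^+)}$
applied to $w=D^\alpha\vu$ (together with Corollary \ref{cor6.3} for the $L_2$ part), it suffices to bound the Hölder seminorm $[D^\alpha\vu]_{C^{1/2}(Q_1^+)}$. By the triangle inequality this seminorm is dominated by $I+J$, where
\[
I:=\sup_{\substack{x_1,y_1\in (0,1),\,x_1\ne y_1\\(t,x')\in Q_1'}}\frac{|D^\alpha\vu(t,x_1,x')-D^\alpha\vu(t,y_1,x')|}{|x_1-y_1|^{1/2}},
\]
\[
J:=\sup_{\substack{y_1\in(0,1)\\(t,x'),(s,y')\in Q_1',\,(t,x')\ne(s,y')}}\frac{|D^\alpha\vu(t,y_1,x')-D^\alpha\vu(s,y_1,y')|}{|t-s|^{1/4}+|x'-y'|^{1/2}}.
\]

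For $I$, the one-dimensional Sobolev embedding on the bounded interval $(0,1)$ gives
\[
|D^\alpha\vu(t,x_1,x')-D^\alpha\vu(t,y_1,x')|\le N|x_1-y_1|^{1/2}\|D^\alpha\vu(t,\cdot,x')\|_{W_2^1((0,1))}.
\]
Applying Sobolev embedding in the tangential variables $(t,x')\in Q_1'$ (choosing $k$ large enough to embed $W_2^k(Q_1')$ into $L_\infty$), integrating in $x_1\in(0,1)$, and noting that all derivatives that appear are of the form $\partial_t^i D^\beta\vu$ with $\beta_1\le m$ and $i+|\beta|\le m+1+k$, we arrive at
\[
I\le N\sum_{\substack{i+|\beta|\le m+1+k\\ \beta_1\le m}}\|\partial_t^i D^\beta\vu\|_{L_2(Q_{\sqrt 2}^+)}\le N\|D^m\vu\|_{L_2(Q_2^+)},
\]
where the last inequality is Corollary \ref{cor6.3}.

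For $J$, reverse the order: first use $(t,x')$-Sobolev embedding into $C^{1/2}$ at each fixed $y_1$, then control the resulting $y_1$-supremum by the 1D embedding of $W_2^1((0,1))$ into $L_\infty$ applied to $\partial_t^i D^j_{x'} D^\alpha\vu$ and $\partial_t^i D^j_{x'}D_1D^\alpha\vu$. This again produces only derivatives $\partial_t^i D^\beta\vu$ with $\beta_1\le m$, so Corollary \ref{cor6.3} yields $J\le N\|D^m\vu\|_{L_2(Q_2^+)}$.

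There is no real obstacle: the only place where \eqref{eq4.56} plays a role is in invoking Corollary \ref{cor6.3}, whose proof already absorbs the Dirichlet condition. The minor point to be careful about is that the 1D embedding step is now over $(0,1)$ rather than $(-1,1)$, but this interval is bounded and the embedding constant is harmless; everything else is a clean transposition of the proof of Lemma \ref{lemma100}.
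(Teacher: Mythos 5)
Your proposal is correct and coincides with the paper's proof, which states precisely that one follows the proof of Lemma \ref{lemma100} with Corollary \ref{cor6.3} in place of Lemma \ref{lem2010_1}. You have merely spelled out the details of that same transposition, including the harmless change of the $x_1$-interval to $(0,1)$.
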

\begin{proof}
We follow the proof of Lemma \ref{lemma100} by using Corollary \ref{cor6.3} instead of Lemma \ref{lem2010_1}.
\end{proof}

\begin{corollary}
 				\label{cor6.5}
Let $a_{\alpha\beta}=a_{\alpha\beta}(x_1)$ and $\lambda\ge 0$.
Assume that $\vu\in C_{\text{loc}}^\infty(\overline{\bR^{d+1}_+})$ satisfies \eqref{eq4.56} and
\begin{equation*}
 					%\label{eq3.10}
\vu_t+(-1)^m\cL_0 \vu+\lambda \vu=0
\end{equation*}
in $Q_2^+$.
Then we have
\begin{equation*}	
 			%\label{eq3.11}
\|U'\|_{C^{1/2}(Q_1^+)}
\le N \|U\|_{L_2(Q_2^+)},
\end{equation*}
where $N=N(d,m,n,\delta)>0$, and $U,U'$ are defined  as in \eqref{eq10_100}.
\end{corollary}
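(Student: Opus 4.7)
The plan is to run the same Agmon lifting argument as in Corollary~\ref{cor3.5}, with the boundary Hölder estimate of Lemma~\ref{lem6.4} replacing its interior counterpart Lemma~\ref{lemma100}. When $\lambda=0$ the sums defining $U$ and $U'$ collapse to $U=|D^m\vu|$ and $U'=\sum_{|\alpha|=m,\alpha_1<m}|D^\alpha\vu|$, so the assertion is precisely Lemma~\ref{lem6.4} (the $L_\infty$ component of the $C^{1/2}$ norm is absorbed by the interpolation step already embedded in that lemma).

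For $\lambda>0$ I would introduce an auxiliary variable $y\in\bR$ and
\begin{equation*}
\eta(y)=\cos(\lambda^{1/(2m)}y)+\sin(\lambda^{1/(2m)}y),
\end{equation*}
so that $D_y^{2m}\eta=(-1)^m\lambda\eta$ and $|D_y^j\eta(0)|=\lambda^{j/(2m)}$. Setting $\hat\vu(t,x,y)=\vu(t,x)\eta(y)$ and $\hat Q_r^+=(-r^{2m},0)\times\{(x,y)\in\bR^d_+\times\bR:\,|(x,y)|<r\}$, a direct computation gives
\begin{equation*}
\hat\vu_t+(-1)^m\cL_0\hat\vu+(-1)^mD_y^{2m}\hat\vu=0\quad\text{in }\hat Q_2^+,
\end{equation*}
whose leading coefficients depend only on $x_1$ and satisfy~\eqref{eq22.21}; after the symmetrization of Remark~\ref{rm2.4} the lifted operator also satisfies the strong ellipticity~\eqref{eq11.28}. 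The Dirichlet condition~\eqref{eq4.56} is inherited by $\hat\vu$, for any $(d+1)$-multi-index $\beta$ with $|\beta|\le m-1$ one factors $D_{(x,y)}^\beta\hat\vu=(D_x^{(\beta_1,\ldots,\beta_d)}\vu)\,D_y^{\beta_{d+1}}\eta$ and the first factor vanishes on $\{x_1=0\}$.

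Now I apply the $(d+1)$-dimensional form of Lemma~\ref{lem6.4} to $\hat\vu$ on $\hat Q_2^+$: for every $\beta$ with $|\beta|=m$ and $\beta_1<m$,
\begin{equation*}
\|D_{(x,y)}^\beta\hat\vu\|_{C^{1/2}(\hat Q_1^+)}\le N\|D_{(x,y)}^m\hat\vu\|_{L_2(\hat Q_2^+)}.
\end{equation*}
For $\alpha=(\alpha_1,\ldots,\alpha_d)$ with $|\alpha|\le m$ and $\alpha_1<m$, the choice $\beta=(\alpha_1,\ldots,\alpha_d,m-|\alpha|)$ yields the factorization $D_{(x,y)}^\beta\hat\vu=D^\alpha\vu\cdot D_y^{m-|\alpha|}\eta$; restricting to $y=0$ and using $|D_y^{m-|\alpha|}\eta(0)|=\lambda^{1/2-|\alpha|/(2m)}$ gives
\begin{equation*}
\lambda^{1/2-|\alpha|/(2m)}\|D^\alpha\vu\|_{C^{1/2}(Q_1^+)}\le N\|D_{(x,y)}^\beta\hat\vu\|_{C^{1/2}(\hat Q_1^+)}.
\end{equation*}
Finally, $D_{(x,y)}^m\hat\vu$ is a linear combination of the products $D_x^k\vu\cdot D_y^{m-k}\eta$, $k=0,\ldots,m$, with $|D_y^{m-k}\eta|\le 2\lambda^{1/2-k/(2m)}$, hence $|D_{(x,y)}^m\hat\vu|\le N\,U$ pointwise. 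Summing the previous display over $\alpha$ and noting that the $y$-integration over $(-2,2)$ contributes only a bounded factor delivers the desired estimate. The only delicate points are the ellipticity adjustment via Remark~\ref{rm2.4} and the preservation of \eqref{eq4.56} by the lift; neither is a serious obstacle, as both follow from the product structure of $\hat\vu$ and the fact that $\eta$ carries no $x$-derivative.
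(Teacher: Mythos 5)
Your proof is correct and follows essentially the same Agmon lifting argument the paper invokes in its one-line proof of this corollary, substituting the boundary H\"older estimate Lemma~\ref{lem6.4} for the interior estimate Lemma~\ref{lemma100}. The extra care you take in verifying that the lifted operator satisfies \eqref{eq22.21} and can be brought into the form \eqref{eq11.28} via Remark~\ref{rm2.4} (with coefficients still depending only on $x_1$), and that the Dirichlet condition \eqref{eq4.56} passes to $\hat\vu$, fills in details the paper's terse proof leaves tacit.
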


\begin{proof}
The corollary follows
from Lemma \ref{lem6.4} and Agmon's idea as in the proof of Corollary \ref{cor3.5}.
\end{proof}

As an analogy of Lemma \ref{lemma3.6}, we obtain

\begin{lemma}
 				\label{lem6.7}
Under the conditions of Corollary \ref{cor6.5}, we have
\begin{equation*}	
 			%\label{eq3.14}
\|\Theta\|_{C^{1/2}(Q_1^+)}
\le N \|U\|_{L_2(Q_2^+)},
\end{equation*}
where $N=N(d,m,n,\delta)>0$.
\end{lemma}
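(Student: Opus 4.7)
The plan is to mirror the derivation of Lemma \ref{lemma3.6} from Lemma \ref{lemma01}, replacing the interior $L_2$ bounds for $\Theta$ by the boundary bounds from Lemma \ref{lem6.6}, and keeping track of the Dirichlet data \eqref{eq4.56} throughout. Concretely, the argument is two steps: first handle $\lambda=0$ by a Sobolev embedding argument, then pass to general $\lambda\ge 0$ via S.\ Agmon's trick as in Corollary \ref{cor6.5}.

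\emph{Step 1 ($\lambda=0$).} We prove
$\|\Theta\|_{C^{1/2}(Q_1^+)}\le N\|D^m\vu\|_{L_2(Q_2^+)}$.
By an interpolation inequality it suffices to estimate the H\"older seminorm. As in the proof of Lemma \ref{lemma100}, we split
$$
[\Theta]_{C^{1/2}(Q_1^+)}\le I+J,
$$
where $I$ is the supremum of the $x_1$-increment quotient (with $(t,x')$ fixed) and $J$ is the supremum of the $(t,x')$-increment quotient (with $x_1$ fixed). For $I$, the one-dimensional Sobolev embedding $W_2^1(0,1)\hookrightarrow C^{1/2}([0,1])$ applied to $\Theta(t,\cdot,x')$ gives
$I\le N\sup_{(t,x')\in Q_1'}\|\Theta(t,\cdot,x')\|_{W_2^1((0,1))}$, and the latter supremum is controlled, via another Sobolev embedding in $(t,x')$, by a finite sum of $L_2$-norms of $\partial_t^i D_{x'}^{\beta'}\Theta$ and $\partial_t^i D_{x'}^{\beta'}D_1\Theta$ on a slightly larger cylinder; Lemma \ref{lem6.6} bounds each of these by $\|D^m\vu\|_{L_2(Q_2^+)}$. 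For $J$ we directly use Sobolev embedding in $(t,x')$ on $\Theta(t,y_1,x')$, again reducing to the same derivatives controlled by Lemma \ref{lem6.6}. This yields the $\lambda=0$ estimate.

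\emph{Step 2 (general $\lambda\ge0$).} Following Corollary \ref{cor6.5}, set
$\eta(y)=\cos(\lambda^{1/(2m)}y)+\sin(\lambda^{1/(2m)}y)$,
which satisfies $D^{2m}\eta=(-1)^m\lambda\eta$ and $|D^j\eta(0)|=\lambda^{j/(2m)}$, and define $\hat\vu(t,x,y)=\vu(t,x)\eta(y)$ on $\hat Q_2^+:=(-2^{2m},0)\times\{(x,y):|(x,y)|<2,\ x_1>0\}$. Then $\hat\vu$ satisfies
$\hat\vu_t+(-1)^m\cL_0\hat\vu+(-1)^m D_y^{2m}\hat\vu=0$
in $\hat Q_2^+$, which is a $(d+1)$-dimensional divergence-form system of the same type with coefficients depending only on $x_1$ and with the leading part still satisfying \eqref{eq11.28} (the extended matrix is block-diagonal with an identity block in $y$). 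Crucially, because $\eta$ is a function of $y$ alone, the boundary condition \eqref{eq4.56} on $\partial\bR^d_+=\{x_1=0\}$ is preserved: $D_x^k\hat\vu|_{x_1=0}=\eta(y)\,D^k\vu|_{x_1=0}=0$ for $k=0,\ldots,m-1$. Hence Step 1 applies in one higher dimension and gives
$\|\hat\Theta\|_{C^{1/2}(\hat Q_1^+)}\le N\|D_{(x,y)}^m\hat\vu\|_{L_2(\hat Q_2^+)}$,
where $\hat\Theta:=\sum_{|\tilde\beta|=m}a_{\bar\alpha\tilde\beta}D^{\tilde\beta}\hat\vu$ with $\bar\alpha=me_1$ in the enlarged coordinates. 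Since $a_{\bar\alpha\tilde\beta}$ vanishes whenever $\tilde\beta$ has a nonzero $y$-component, $\hat\Theta=\eta(y)\Theta(t,x)$; in particular $\hat\Theta(t,x,0)=\Theta(t,x)$, so restricting to $y=0$ yields $\|\Theta\|_{C^{1/2}(Q_1^+)}\le N\|\hat\Theta\|_{C^{1/2}(\hat Q_1^+)}$. On the right-hand side, Leibniz's rule identifies $D_{(x,y)}^m\hat\vu$ as a linear combination of the terms $\lambda^{1/2-k/(2m)}\{\cos,\sin\}(\lambda^{1/(2m)}y)D_x^k\vu$ for $k=0,\ldots,m$, whose $L_2$-norm on $\hat Q_2^+$ is bounded by $N\|U\|_{L_2(Q_2^+)}$ because $\hat\vu$ is a tensor product and the $y$-slice has bounded length. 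Combining these gives the claimed bound.

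The only nontrivial point is verifying that Agmon's reduction preserves both the Dirichlet boundary condition on $\{x_1=0\}$ and the algebraic identity $\hat\Theta=\eta\,\Theta$; both hold because the extension is purely in the $y$-direction while $\Theta$ only involves $x$-derivatives and coefficients $a_{\bar\alpha\beta}(x_1)$. Once this is in place, the remainder is bookkeeping parallel to the interior case.
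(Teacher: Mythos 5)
Your proof replicates the paper's intended route, which is left implicit: Lemma~\ref{lem6.7} is stated ``as an analogy of Lemma~\ref{lemma3.6}'', and that one is ``deduced from Lemma~\ref{lemma01} in the same way as Lemma~\ref{lemma100} and Corollary~\ref{cor3.5} are deduced from Lemma~\ref{lem2010_1},'' i.e.\ Sobolev embedding for $\lambda=0$ (with Lemma~\ref{lem6.6} playing the role of Lemma~\ref{lemma01}) followed by Agmon's trick for general $\lambda\ge 0$. That is precisely your two steps, and your observations that the Dirichlet data on $\{x_1=0\}$ and the identity $\hat\Theta=\eta\,\Theta$ are preserved under the Agmon extension are exactly the points that need checking, so the overall structure is faithful to the paper.

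One imprecision worth flagging, though you share it with the paper's own sketches of Corollaries~\ref{cor3.5}, \ref{cor6.5} and Lemma~\ref{lemma3.6}: the parenthetical claim that the Agmon-extended operator ``still satisfies \eqref{eq11.28}'' because ``the extended matrix is block-diagonal'' is not literally correct. The extended quadratic form only controls $\sum_{|\alpha|=m}|\tilde\xi_{(\alpha,0)}|^2 + |\tilde\xi_{m e_{d+1}}|^2$ and leaves the mixed indices $\tilde\xi_{(\alpha,j)}$, $1\le j\le m-1$, $\alpha\ne 0$, uncontrolled, so it satisfies only the weaker \eqref{eq22.21}. Applying Lemma~\ref{lem6.6} (which via Lemma~\ref{lem6.2} uses \eqref{eq11.28}) to $\hat\vu$ therefore needs the rewriting of Remark~\ref{rm2.4}; but that rewriting can change the row $\tilde a_{\bar\alpha\,\cdot}$ entering $\hat\Theta$, so $\hat\Theta$ may differ from $\eta\Theta$ by terms $\tilde a_{\bar\alpha(\beta,j)} D^\beta\vu\, D_y^j\eta$ with $j\ge1$. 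At $y=0$ these are bounded multiples of $\lambda^{j/(2m)}D^\beta\vu$ with $\beta_1<m$ and $|\beta|=m-j$, i.e.\ pieces of $U'$ already controlled by Corollary~\ref{cor6.5}, so the conclusion still holds after a triangle inequality — but neither your write-up nor the paper records this step.
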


Similar to Corollary \ref{cor3.10}, Lemma \ref{lem6.7} and Corollary \ref{cor6.5} imply
\begin{corollary}
                                        \label{cor3.10b}
Under the conditions of Corollary \ref{cor6.5}, we have
\begin{equation*}	
 			%\label{eq12.24b}
\|U\|_{L_\infty(Q_1^+)}
\le N \|U\|_{L_2(Q_2^+)},
\end{equation*}
where $N=N(d,m,n,\delta)>0$.
\end{corollary}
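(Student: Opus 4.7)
The plan is to reduce the $L_\infty$ bound on $U$ to the Hölder (in particular $L_\infty$) bounds on $U'$ and $\Theta$ that have already been established in Corollary \ref{cor6.5} and Lemma \ref{lem6.7}. The key algebraic observation is the pointwise equivalence
$$
N^{-1}\,U \le U' + \Theta \le N\,U
$$
for a constant $N=N(d,m,n,\delta)$, exactly as noted in \eqref{eq12.01} for the interior case. Once this is in place, the corollary follows immediately: by Corollary \ref{cor6.5} and Lemma \ref{lem6.7},
$$
\|U'\|_{L_\infty(Q_1^+)} + \|\Theta\|_{L_\infty(Q_1^+)}
\le \|U'\|_{C^{1/2}(Q_1^+)} + \|\Theta\|_{C^{1/2}(Q_1^+)}
\le N\,\|U\|_{L_2(Q_2^+)},
$$
and combining with the equivalence above yields the desired estimate.

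To justify the equivalence, I would proceed as follows. The upper bound $U' + \Theta \le N\,U$ is immediate from the definitions, since $U'$ contains a subset of the terms in $U$ (those with $\alpha_1 < m$) and $\Theta = \sum_{|\beta|=m} a_{\bar\alpha\beta} D^\beta \vu$ is bounded by $N|D^m\vu|$, which is controlled by $U$ thanks to the coefficient $\lambda^{1/2 - m/(2m)} = 1$ on the top order terms in $U$. For the lower bound, I would split
$$
\Theta = a_{\bar\alpha\bar\alpha} D_1^m \vu + \sum_{\substack{|\beta|=m\\ \beta \ne \bar\alpha}} a_{\bar\alpha\beta} D^\beta \vu,
$$
and use the uniform ellipticity \eqref{eq11.28}, specialized to vectors $\xi$ supported at the single multi-index $\bar\alpha = me_1$, to conclude that the Hermitian part of the $n\times n$ matrix $[a_{\bar\alpha\bar\alpha}^{ij}]$ has eigenvalues in $[\delta,\delta^{-1}]$. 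In particular $a_{\bar\alpha\bar\alpha}$ is invertible with bounded inverse, so
$$
|D_1^m \vu| \le N\,|\Theta| + N\!\!\sum_{\substack{|\beta|=m\\ \beta \ne \bar\alpha}}\!\! |D^\beta \vu| \le N\bigl(|\Theta| + U'\bigr),
$$
because the displayed sum has $\beta_1 < m$ and is already incorporated in $U'$. The remaining terms of $U$ (those with $\alpha_1 < m$) are controlled by $U'$ by definition, yielding $U \le N(U' + \Theta)$.

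There is really no hard step here: the result is a direct consequence of the two preceding boundary Hölder estimates together with the pointwise algebraic equivalence between $U$ and $U' + \Theta$. The only point requiring any care is the matrix-inversion argument, where one must note that \eqref{eq11.28} applied to a one-entry $\xi$ gives control of the Hermitian part of $[a_{\bar\alpha\bar\alpha}^{ij}]$ rather than the matrix itself, which is still enough to invert and bound the inverse uniformly in $x_1$.
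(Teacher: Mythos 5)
Your proof is correct and follows essentially the same route as the paper: you combine the boundary $C^{1/2}$ (hence $L_\infty$) estimates on $U'$ from Corollary \ref{cor6.5} and on $\Theta$ from Lemma \ref{lem6.7} with the pointwise equivalence $N^{-1}U\le U'+\Theta\le NU$, the latter resting on the positive definiteness of the Hermitian part of $[a_{\bar\alpha\bar\alpha}^{ij}]$ that follows from \eqref{eq11.28}. This is exactly the argument the paper invokes (mirroring Corollary \ref{cor3.10}), and your justification of the matrix inversion step is sound.
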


We will use the following Hardy type inequality.

\begin{lemma}
                            \label{gHardy}
Let $R\in (0,\infty)$, $p\in (1,\infty]$, $m$ be a positive integer, and $f\in C^\infty([0,R])$. Suppose that
$f(0)=Df(0)=...=D^{m-1} f(0)$. Then we have
\begin{equation}
                                    \label{eq10.11}
\|x^{-k}D^{m-k}f(x)\|_{L_p([0,R])}\le N\|D^mf\|_{L_p([0,R])},
\quad k = 1, \ldots, m,
\end{equation}
where $N=N(p,k)>0$ is a constant. Furthermore, for any function $g\in L_q([0,R])$, $q=p/(p-1)$, and $\eta\in C_{\text{loc}}^\infty((0,R])$ satisfying
$$
|D^k \eta(x)|\le Kx^{-k},\quad x\in (0,R],\,\,k=0,1,\ldots,m,\,\,K>0,
$$
we have
$$
\|D^m(\eta f)\|_{L_p([0,R])}\le N\|D^m f\|_{L_p(\text{supp}\,\eta)},
$$
$$
\|g D^m(\eta f)\|_{L_1([0,R])}\le N\|g\|_{L_{q}(\text{supp}\,\eta)}\|D^m f\|_{L_p(\text{supp}\,\eta)},
$$
where $N=N(p,m,K)>0$.
\end{lemma}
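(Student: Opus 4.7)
The plan is to first establish the weighted Hardy inequality \eqref{eq10.11}, and then deduce both bounds on $D^m(\eta f)$ from it by combining Leibniz's rule with H\"older's inequality. All three estimates essentially reduce to the iterated Hardy inequality \eqref{eq10.11}, so that is where the real work is concentrated.

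To prove \eqref{eq10.11}, I would start from the Taylor formula with integral remainder, which, thanks to $f(0)=Df(0)=\cdots=D^{m-1}f(0)=0$, gives
\[
D^{m-k}f(x)=\frac{1}{(k-1)!}\int_0^x(x-s)^{k-1}D^mf(s)\,ds.
\]
The substitution $s=ux$ converts this into the scaling identity
\[
x^{-k}D^{m-k}f(x)=\frac{1}{(k-1)!}\int_0^1(1-u)^{k-1}D^mf(ux)\,du.
\]
Applying Minkowski's integral inequality in $L_p([0,R])$ together with the scaling bound $\|D^mf(u\cdot)\|_{L_p([0,R])}\le u^{-1/p}\|D^mf\|_{L_p([0,R])}$ then yields
\[
\|x^{-k}D^{m-k}f\|_{L_p([0,R])}\le \frac{B(k,1-1/p)}{(k-1)!}\,\|D^mf\|_{L_p([0,R])}.
\]
The Beta-function constant is finite precisely because $p>1$, so $N$ depends only on $p$ and $k$; the endpoint $p=\infty$ is handled separately and even more easily from $|D^{m-k}f(x)|\le x^k\|D^mf\|_{L_\infty}/k!$.

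For the two cutoff estimates, Leibniz's rule gives $D^m(\eta f)=\sum_{j=0}^m\binom{m}{j}D^{m-j}\eta\cdot D^jf$, with each summand supported in $\text{supp}\,\eta$. The top term $j=m$ obeys $|\eta D^mf|\le K|D^mf|$, contributing at most $K\|D^mf\|_{L_p(\text{supp}\,\eta)}$ to the $L_p$-norm. For $j<m$, the hypothesis $|D^{m-j}\eta(x)|\le Kx^{-(m-j)}$ together with \eqref{eq10.11} applied on the interval $[0,L]$ with $L=\sup\text{supp}\,\eta$ gives
\[
\|D^{m-j}\eta\cdot D^jf\|_{L_p([0,R])}\le K\|x^{-(m-j)}D^jf\|_{L_p([0,L])}\le N\|D^mf\|_{L_p(\text{supp}\,\eta)}.
\]
Summing over $j$ produces the first bound on $\|D^m(\eta f)\|_{L_p([0,R])}$; the final inequality then follows from a single application of H\"older's inequality, since $D^m(\eta f)$ is supported in $\text{supp}\,\eta$.

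The main point of difficulty, modest as it is, lies in step~1: verifying that the Minkowski argument produces a constant depending only on $p$ and $k$ (via the Beta function) and that the endpoint $p=\infty$ is handled correctly. In step~2 one also has to observe that $\eta$'s support is effectively an interval of the form $[0,L]$, which is natural for cutoffs used near the boundary where the $x^{-k}$ bound on derivatives is actually informative; with this in hand, the propagation through Leibniz and H\"older is routine.
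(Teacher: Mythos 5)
Your proposal is correct and takes essentially the same route as the paper: Taylor's formula with integral remainder, the substitution $s = ux$, Minkowski's integral inequality (which the paper calls Young's inequality) together with the $r^{-1/p}$ scaling bound, giving the Beta/Gamma constant for \eqref{eq10.11}; the two cutoff estimates then follow by Leibniz, \eqref{eq10.11}, and H\"older, exactly as the paper indicates in one line. You also spell out and correctly flag the implicit point that $\operatorname{supp}\eta$ is effectively of the form $[0,L]$, which is precisely how the lemma is invoked in the proof of Proposition \ref{prop7.9} and Lemma \ref{lem0927}.
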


\begin{proof}
In the case $k=1$, the inequality \eqref{eq10.11} is the classical Hardy's inequality. For $1<k\le m$, the inequality is known in one form or another.
Here we give a simple proof for completeness. By Taylor's formula, for any $x\in (0,R]$, we have
$$
x^{-k}D^{m-k}f(x)=\big(\Gamma(k)\big)^{-1}\int_0^1 (1-r)^{k-1}(D^m f)(rx)\,dr,
$$
where $\Gamma$ is the Gamma function.
Thanks to Young's inequality, we get
$$
\|x^{-k}D^{m-k}f(x)\|_{L_p([0,R])}\le
\big(\Gamma(k)\big)^{-1}\int_0^1 (1-r)^{k-1}\|(D^m f)(rx)\|_{L_p([0,R])}\,dr
$$
$$
\le \big(\Gamma(k)\big)^{-1}\int_0^1 (1-r)^{k-1}r^{-1/p}\,dr \|D^m f\|_{L_p([0,R])}
$$
$$
=\Gamma(1-1/p)\big(\Gamma(k+1-1/p)\big)^{-1}\|D^m f\|_{L_p([0,R])}.
$$
The first assertion is proved. The second assertion follows easily from the first one by using the Leibniz rule and H\"older's inequality.
\end{proof}

%In the remaining part of this section, we denote by $\hat \Theta$ and $\hat \vf_\alpha$ the even extensions of $\Theta$ and $\vf_\alpha$ with respect to $x_1$.
%By $\hat \vu$ we mean the odd (even) extension of $\vu$ with respect to $x_1$ if $m$ is odd (even). Clearly $\hat \vu\in \cH^m_p$ if $\vu\in {\mathring \cH}^m_p(\bR^d_+)$, and we have
%
%\begin{corollary}
% 				\label{cor6.8}
%Under the conditions of Corollary \ref{cor6.5}, we have
%\begin{equation*}	
% 			%\label{eq5.22}
%\left[D^\alpha\hat\vu\right]_{C^{1/2}(Q_1)}
%+\lambda^{1/2}\left[\hat\vu\right]_{C^{1/2}(Q_1)}
%+[\hat\Theta]_{C^{1/2}(Q_1)}
%\le N \|U\|_{L_2(Q_8)}
%\end{equation*}
%for any $\alpha$ satisfying $|\alpha|=m$ and $\alpha_1<m$,
%where $N=N(d,m,n,\delta)>0$.
%\end{corollary}

\subsection{Estimates of mean oscillations}
Now we prove the following estimate of mean oscillations.
As in Section \ref{sec5}, we assume that all the lower-order coefficients of $\cL$ are zero.

\begin{proposition}
 				\label{prop7.9}
Let $t_0\in \bR$, $x_0\in \overline{\bR^d_+}$, $X_0=(t_0,x_0)$, $r\in (0,\infty)$, $\kappa\in [64,\infty)$, $\lambda\ge 0$, $\nu \in (2,\infty)$, $\nu'=2\nu/(\nu-2)$, and
$\vf_\alpha= (f_\alpha^1, \ldots, f_\alpha^n)^{\text{tr}} \in L_{2,\text{loc}}(\overline{\bR^{d+1}_+})$.
Assume that $\kappa r\le R_0$ and $\vu\in C_{\text{loc}}^\infty(\overline{\bR^{d+1}_+})$ satisfies \eqref{eq4.56} and
\begin{equation*}
 					%\label{eq11.13}
\vu_t+(-1)^m\cL \vu+\lambda \vu=\sum_{|\alpha|\le m}D^\alpha \vf_\alpha
\end{equation*}
in $Q^+_{\kappa r}(X_0)$. Then under Assumption \ref{assumption20100901} ($\gamma$),
there exists a function $U^Q$ depending on $Q^+:=Q^+_{\kappa r}(X_0)$ such that $N^{-1}U\le U^Q\le NU$ and
$$
\big(|U^Q-(U^Q)_{Q_r^+(X_0)}|\big)_{Q_r^+(X_0)}
\le N(\kappa^{-1/2}+\kappa \gamma) \big(U^2\big)_{Q^+_{\kappa r}(X_0)}^{1/2},
$$
\begin{equation}	
 			\label{eq5.42}
+N\kappa^{m+\frac d2}\left[(F^2)_{Q^+_{\kappa r}(X_0)}^{1/2}+\gamma^{1/\nu'}
(U^\nu)_{Q^+_{\kappa r}(X_0)}^{1/\nu}\right],
\end{equation}
where $N=N(d,m,n,\delta,\nu)>0$.
\end{proposition}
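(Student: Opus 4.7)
The plan is to follow the strategy of Lemma \ref{lem4.2} and Theorem \ref{theorem3.42}, adapted to the half-space Dirichlet setting via the boundary tools of Section \ref{sec6} and the higher-order Hardy-type inequality of Lemma \ref{gHardy}. I first distinguish cases based on $\dist(x_0,\partial\bR^d_+)$. When $\dist(x_0,\partial\bR^d_+)\ge \kappa r$, the half-cylinder coincides with $Q_{\kappa r}(X_0)\subset \bR^{d+1}_+$, and Assumption \ref{assumption20100901}(i) together with Theorem \ref{theorem3.42} (applied with $2\sigma=\nu'$, $2\tau=\nu$) immediately yields a bound strictly stronger than \eqref{eq5.42}, missing the $\kappa\gamma$ term. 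In the complementary range, I project $X_0$ to $X_0^*:=(t_0,0,x_0')\in\partial\bR^d_+$ and absorb a bounded enlargement into $\kappa$, reducing to the case $x_0\in\partial\bR^d_+$.

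For a boundary-centered $Q:=Q^+_{\kappa r}(X_0)$, I invoke Assumption \ref{assumption20100901}(ii) to obtain $\cT_Q\in\bO$ with $\rho_{11}\ge\cos(\gamma/2)$ and $\bar a_{\alpha\beta}(y_1)$ satisfying \eqref{eq10_23}, where $y=\cT_Q(x)$. Rewriting the equation with the frozen leading coefficients produces a modified right-hand side $\sum D^\alpha\hat{\vf_\alpha}$ with
\[
\hat{\vf_\alpha}=\vf_\alpha+1_{|\alpha|=m}\sum_{|\beta|=m}(-1)^m(\bar a_{\alpha\beta}-a_{\alpha\beta})D^\beta\vu.
\]
Using Theorem \ref{thm6.1}, I solve
\[
\vw_t+(-1)^m D^\alpha(\bar a_{\alpha\beta}D^\beta\vw)+\lambda\vw=\sum D^\alpha(\zeta\hat{\vf_\alpha})
\]
on $\bR^{d+1}_+$ with zero Dirichlet data, where $\zeta$ is a standard cutoff equal to $1$ on $Q^+_{\kappa r/2}(X_0)$ and supported in $Q$. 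The difference $\vv:=\vu-\vw$ satisfies the homogeneous frozen equation on $Q^+_{\kappa r/2}(X_0)$ with zero Dirichlet condition on $\{x_1=0\}$. The $L_2$-bound from Theorem \ref{thm6.1} applied to $\vw$, combined with the coefficient-approximation argument of Theorem \ref{theorem3.42} (splitting $|\bar a_{\alpha\beta}-a_{\alpha\beta}|^2|D^m\vu|^2$ via H\"older's inequality with exponents $\nu'/2$, $\nu/2$), contributes the terms $N\kappa^{m+d/2}[(F^2)_Q^{1/2}+\gamma^{1/\nu'}(U^\nu)_Q^{1/\nu}]$ of \eqref{eq5.42}.

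The remaining task is a mean-oscillation estimate for the analogues $V'$ and $\tilde\Theta$ of $U'$ and $\Theta$ built from $\vv$. If $y_1$ coincided with $\pm x_1$, rescaled versions of Corollary \ref{cor6.5} and Lemma \ref{lem6.7} would give the $\kappa^{-1/2}(U^2)^{1/2}$ bound verbatim as in Lemma \ref{lem4.1}. Since $y_1$ is only within angle $\gamma/2$ of $x_1$, I pass to $y$-coordinates via $\tilde\vv(t,y):=\vv(t,\cT_Q^{-1}y)$; this function satisfies a homogeneous frozen equation on the tilted half space $\tilde\Omega:=\cT_Q(\bR^d_+)$, whose boundary deviates from $\{y_1=0\}$ by at most $O(\gamma\kappa r)$ inside $\cT_Q(B_{\kappa r}(x_0))$. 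I then introduce a one-dimensional cutoff $\eta(y_1)$ vanishing for $y_1\le c\gamma\kappa r$ and equal to $1$ for $y_1\ge 2c\gamma\kappa r$, with $c$ chosen so that $\{\eta>0\}\cap \cT_Q(B_{\kappa r}(x_0))\subset \tilde\Omega$. Then $\eta\tilde\vv$ is supported in $\{y_1>0\}$, vanishes near $\{y_1=0\}$, and satisfies the frozen equation on $\{y_1>0\}$ with a commutator source $\sum_{|\alpha|\le m}D^\alpha\vg_\alpha$ made of products $(D^{k_1}\eta)(D^{k_2}\tilde\vv)$ supported in the strip $\{c\gamma\kappa r<y_1<2c\gamma\kappa r\}$. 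Applying Lemmas \ref{lem6.6}, \ref{lem6.7} and Corollary \ref{cor6.5} to $\eta\tilde\vv$ on $\{y_1>0\}$ yields the required H\"older and mean-oscillation control, modulo this commutator source.

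The main obstacle is estimating the commutator without losing powers of $\gamma^{-1}$: naively $|\vg_\alpha|\lesssim(\gamma\kappa r)^{-(m-k)}|D^{k}\tilde\vv|$ in the strip, a bound that diverges as $\gamma\to 0$. Here Lemma \ref{gHardy} is indispensable. Since $\tilde\vv$ together with all its derivatives of order $<m$ vanishes on $\partial\tilde\Omega$, and $\partial\tilde\Omega$ lies within $O(\gamma\kappa r)$ of the transition strip, the higher-order Hardy inequality (applied slicewise along lines normal to $\partial\tilde\Omega$) absorbs the singular weight and produces $\|\vg_\alpha\|_{L_2}\le N\|D^m\tilde\vv\|_{L_2(Q)}$ with no negative power of $\gamma$. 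Propagating this through the $L_2$-theory of Theorem \ref{thm6.1} and converting back to a mean oscillation on $Q^+_r$ via the volume ratio between the strip and $Q$ produces exactly the extra $\kappa\gamma(U^2)_{Q^+_{\kappa r}}^{1/2}$ term in \eqref{eq5.42}. Finally, taking $U^Q:=V'+\tilde\Theta$ pulled back to $x$-coordinates, the double inequality $N^{-1}U\le U^Q\le NU$ follows from \eqref{eq12.01} applied in $y$-coordinates together with the orthogonality of $\cT_Q$, completing the argument.
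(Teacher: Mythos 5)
Your broad strategy (case split by distance to the boundary, pass to $y$-coordinates, solve an auxiliary frozen problem, handle a cutoff commutator with Lemma \ref{gHardy}) is aligned with the paper, but the \emph{order} in which you cut off and split is reversed from the paper's, and this reversal creates a genuine gap. The paper first multiplies $\vu$ by the cutoff $\chi$ (supported in $\{y_1>\gamma R\}$, with $\chi-1$ supported in the thin strip), obtaining $\hat\vu=\chi\vu$ which satisfies an inhomogeneous frozen equation whose source terms $\hat\vg$, $\hat\vh$ involve derivatives of $\vu$ localized to the strip; it then solves for $\hat\vw$ and sets $\vw=\hat\vw+(1-\chi)\vu$, $\vv=\vu-\vw$. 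Because the sources depend on $\vu$, which is in $C^\infty_{\text{loc}}$ and hence $L_\nu$ on compact sets, the Hardy inequality followed by H\"older's inequality over the strip of thickness $O(\gamma R)$ gives the factor $\gamma^{1/\nu'}(U^\nu)^{1/\nu}$; moreover, the resulting $\vv$ solves a \emph{homogeneous} frozen equation on $\{y_1>\gamma R\}$ with zero Dirichlet data, so Corollary \ref{cor6.5} and Lemma \ref{lem6.7} apply directly. You instead first solve for $\vw$ on $\{x_1>0\}$, set $\vv=\vu-\vw$, tilt, and only then cut off $\tilde\vv$ with $\eta$. Now $\eta\tilde\vv$ satisfies an \emph{inhomogeneous} equation with a commutator source, so the boundary H\"older estimates do not apply directly (you acknowledge this "modulo the commutator source" but never close the loop), and — more seriously — the commutator involves derivatives of $\tilde\vv=\tilde\vu-\tilde\vw$ on the strip. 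Since $\tilde\vw$ is only controlled in $L_2$ (Theorem \ref{thm6.1}), $D^m\tilde\vv$ has no $L_\nu$ control, so the thin-strip H\"older step that produces the small factor $\gamma^{1/\nu'}$ fails for the $\tilde\vw$ part; your stated Hardy bound $\|\vg_\alpha\|_{L_2}\le N\|D^m\tilde\vv\|_{L_2(Q)}$ carries no small parameter at all and would inject an error of size $\kappa^{m+d/2}(U^2)^{1/2}$ into \eqref{eq5.42}, which is not admissible.

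You also misattribute the origin of the $\kappa\gamma(U^2)^{1/2}$ term. In the paper it does not come from Hardy or from the commutator; it comes from the mean-oscillation mismatch caused by the region $\cD_1=\cC_r(\tilde X_0)\cap\{y_1<\gamma R\}$, on which $\vv\equiv0$. Since $|\cD_1|\le N\kappa\gamma|\cC_r(\tilde X_0)|$, the contribution of $\cD_1$ to the average $\big(|U^Q-(U^Q)_{\cC_r}|\big)_{\cC_r}$ is bounded by $N\kappa\gamma\|V\|_{L_\infty}$, and $\|V\|_{L_\infty}$ is in turn controlled via the interior $L_\infty$ bound (Corollary \ref{cor3.10b}). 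The Hardy and thin-strip arguments produce the $\gamma^{1/\nu'}(U^\nu)^{1/\nu}$ term, not the $\kappa\gamma(U^2)^{1/2}$ term. To make your proposal rigorous you would need to (i) move the cutoff before the auxiliary solve so that its source depends on $\vu$ and enjoys $L_\nu$ control, and (ii) extract the $\kappa\gamma$ term from the measure of the mismatch region rather than from the commutator. After these fixes, your argument would essentially reproduce the paper's.
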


The proof of the proposition is split into two cases.
As in the proof of Lemma \ref{lem4.2}, we assume that coefficients $a_{\alpha\beta}$ and $\vf_{\alpha}$ are infinitely differentiable.

{\em Case 1: the first coordinate of $x_0$ $\ge \kappa r/16$.} In this case, we have
$$
Q_r^+(X_0)=Q_r(X_0)\subset Q_{\kappa r/16}(X_0)\subset \bR^{d+1}_+.
$$
Since $\kappa/16\ge 4$, \eqref{eq5.42} follows immediately
by applying Theorem \ref{theorem3.42}  with $\frac \kappa {16}$ in place of $\kappa$.
Note that Theorem \ref{theorem3.42} is proved under the assumption that $\vu$ vanishes outside $Q_{\gamma R_0}$. However, one can see that the proof of the theorem does not use this assumption in the case $\kappa r < R_0$, more precisely, $\frac \kappa {16} r < R_0$.

{\em Case 2: $0\le$ the first coordinate of $x_0$ $< \kappa r/16$.}
Denote $y_0=(0,x_0')$ and $Y_0=(t_0,y_0)$. Without loss of generality, one may assume $Y_0=(0,0)$. Notice that in this case,
\begin{equation}
                                \label{eq22.15}
Q_r^+(X_0)\subset Q^+_{\kappa r/8} \subset Q^+_{\kappa r/4}\subset Q^+_{\kappa r/2}
\subset Q^+_{\kappa r}(X_0).
\end{equation}
Denote $R=\kappa r/2(< R_0)$. Because of Assumption \ref{assumption20100901}, after an orthogonal transformation $y = \rho x$ centered at $Y_0=(0,0)$, we may assume
$$
 \{(y_1,y'):\gamma R<y_1\}\cap B_R
 \subset\Omega\cap B_{R}
 \subset \{(y_1,y'):-\gamma R< y_1\}\cap B_{R},
$$
where $\Omega$ is the image of $\bR^d_+$ under the orthogonal transformation,
and
\begin{equation}
								\label{eq17_50}
\sup_{|\alpha|=|\beta|=m}\int_{Q_R} |a_{\alpha\beta}(t,y) - \bar{a}_{\alpha\beta}(y_1)| \, dx \, dt \le \gamma |Q_R|.
\end{equation}
Let $\tilde X_0$ be the new coordinates of $X_0$ after the orthogonal transformation. Then \eqref{eq22.15} becomes
\begin{equation}
                                \label{eq22.16}
\cC_r(\tilde X_0)\subset \cC_{R/4} \subset \cC_{R/2}\subset \cC_{R}
\subset \cC_{\kappa r}(\tilde X_0).
\end{equation}
Without any confusion, in the new coordinate system we still denote the corresponding unknown function, the coefficients, and the data by $\vu$, $a_{\alpha\beta}$, and $\vf$, respectively.
%loss of generality we now assume that the transformation of $\vu$ by $y=\rho x$, we again call it $\vu$, satisfies \eqref{eq11.13} in $\cC_{\kappa r}(\tilde X_0)$.
Take a smooth function $\chi$ defined on $\bR$ such that
$$
\chi(y_1)\equiv 0\quad\text{for}\,\,y_1\le \gamma R,
\quad \chi(y_1)\equiv 1\quad\text{for}\,\,y_1\ge  2\gamma R,
$$
$$
|D^k \chi|\le N(\gamma R)^{-k}\quad\text{for}\,\,k=1,2,...,m.
$$

Below we present a few lemmas, which should be read as parts of the proof of the second case.

\begin{lemma}
Let $\hat \vu:=\chi\vu$.
Then $\hat \vu$ along with all its derivatives vanishes on $Q_R \cap \{y_1 \le \gamma R\}$ and satisfies in $Q_{R}^{\gamma+}:=Q_{R}\cap \{y_1> \gamma R\}$,
$$
\hat \vu_t+(-1)^m \cL_0 \hat\vu+\lambda \hat\vu=(-1)^m \sum_{|\alpha|=|\beta|=m}D^\alpha\left(
(\bar a_{\alpha\beta}- a_{\alpha\beta})D^\beta \vu\right)
$$
\begin{equation}
                                    \label{eq17.23b}
+ \sum_{|\alpha|\le m}\chi D^\alpha \vf_\alpha+(-1)^m\vg+(-1)^m\vh,
\end{equation}
where $\cL_0$ is the differential operator with the coefficients $\bar a_{\alpha\beta}$ from \eqref{eq17_50}, and
\begin{align*}
\vg&=\sum_{|\alpha|=|\beta|=m}
D^\alpha\big(\bar a_{\alpha\beta} D^{\beta}((\chi-1)\vu)\big),\\
%\vg=\sum_{|\alpha|=|\beta|=m}\Big\{\sum_{\hat \beta\le \beta}c_{\beta,\hat \beta}D^\alpha(\bar a_{\alpha\beta}(D^{\beta-\hat \beta}\chi) D^{\hat \beta}\vu)+
%D^\alpha(\bar a_{\alpha\beta}(1-\chi) D^{\beta}\vu)\Big\}
\vh&=(1-\chi)\sum_{|\alpha|=|\beta|=m}
D^\alpha( a_{\alpha\beta} D^{\beta}\vu).
\end{align*}	
\end{lemma}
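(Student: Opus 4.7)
The plan is to derive \eqref{eq17.23b} by direct algebraic manipulation, since no estimates are needed at this step. First, the vanishing statement is immediate: because $\chi(y_1)$ depends only on $y_1$ and vanishes identically for $y_1\le \gamma R$, the Leibniz rule gives $D^{\alpha}\hat\vu\equiv 0$ on $Q_R\cap\{y_1\le \gamma R\}$ for every multi-index $\alpha$.

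For the equation itself, I would multiply the original system
$\vu_t+(-1)^m\cL\vu+\lambda\vu=\sum_{|\alpha|\le m}D^\alpha \vf_\alpha$ by $\chi=\chi(y_1)$. This is legitimate on $Q_R^{\gamma+}$ because \eqref{eq22.16} gives $Q_R^{\gamma+}\subset \cC_R\subset Q_{\kappa r}^+(X_0)$, where the original equation holds. Since $\chi$ is $t$-independent, $\chi\vu_t=\hat\vu_t$, yielding
$$
\hat\vu_t+(-1)^m\,\chi\,\cL\vu+\lambda\hat\vu=\sum_{|\alpha|\le m}\chi\, D^\alpha \vf_\alpha.
$$
The task then reduces to rewriting $\chi\,\cL\vu$ as $\cL_0\hat\vu$ plus the advertised error terms.

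I would carry this out in three short steps. First, split off the $(1-\chi)$ part of $\cL\vu$:
$$
\chi\,\cL\vu=\cL\vu-(1-\chi)\cL\vu=\cL\vu-\vh.
$$
Next, freeze the coefficients to $\bar a_{\alpha\beta}$:
$$
\cL\vu=\cL_0\vu+\sum_{|\alpha|=|\beta|=m}D^\alpha\!\bigl((a_{\alpha\beta}-\bar a_{\alpha\beta})D^\beta\vu\bigr).
$$
Finally, replace the argument $\vu$ by $\hat\vu$ inside $\cL_0$ using $(\chi-1)\vu=\hat\vu-\vu$, which gives
$$
\cL_0\vu=\cL_0\hat\vu-\vg.
$$
Assembling the three identities, substituting back into the displayed equation above, and flipping the sign of the coefficient-difference term produces exactly \eqref{eq17.23b}.

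No serious obstacle is anticipated; the only delicate point is careful sign bookkeeping and the observation that the Leibniz commutators between $\chi$ and the divergence-form operator $\cL_0$ are absorbed entirely into $\vg$, while the $(1-\chi)$ factor of $\cL\vu$ is absorbed into $\vh$. It is worth emphasizing already here, with an eye toward the rest of the proof of Proposition \ref{prop7.9}, that both $\vg$ and $\vh$ are supported in the thin boundary slab $\{y_1\le 2\gamma R\}$ and that $|D^k\chi|\le N(\gamma R)^{-k}$; this combination of slab width and derivative growth is precisely what Lemma \ref{gHardy} is designed to handle when the contributions of $\vg$ and $\vh$ to the mean oscillation are subsequently estimated.
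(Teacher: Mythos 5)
Your proof is correct and follows the same route as the paper: multiply the original system by $\chi$, use that $\chi$ is $t$-independent, and then algebraically rearrange the $\chi\,\cL\vu$ term into $\cL_0\hat\vu$ plus the error terms $\vg$, $\vh$ and the coefficient-difference term. The paper phrases the bookkeeping slightly differently (``add $(-1)^m\vh$ to both sides'' rather than subtracting it first), but the computations are identical, and your three-step decomposition and the remarks about the support of $\vg,\vh$ and the role of Lemma \ref{gHardy} are all accurate.
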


\begin{proof}
This can be easily seen if one begins with multiplying the equation of $\vu$ by $\chi$ and then adding $(-1)^m \vh$ to the both sides.	
\end{proof}

Now let $\hat\vw$ be the unique $\mathring\cH^{m}_2(\bR\times \{y:y_1>\gamma R\})$ solution of
$$
\hat\vw_t+(-1)^m \cL_0 \hat\vw+\lambda \hat\vw
=(-1)^m \sum_{|\alpha|=|\beta|=m}D^\alpha\left(
\varphi(\bar a_{\alpha\beta}- a_{\alpha\beta})D^\beta \vu\right)
$$
\begin{equation}
                        \label{eq17.25}
+(-1)^m\hat \vg
+ \sum_{|\alpha|\le m}\chi D^\alpha (\varphi\vf_\alpha)+(-1)^m \hat\vh
\end{equation}
in $\bR\times \{y:y_1>\gamma R\}$, where
$\varphi$ is an infinitely differentiable function such that
$$
0 \le \varphi \le 1,
\quad
\varphi = 1 \,\, \text{on} \,\, Q_{R/2},
\quad
\varphi = 0 \,\, \text{outside} \,\, (-R^{2m}, R^{2m}) \times B_R,
$$
and
\begin{align*}
\hat \vg&=\sum_{|\alpha|=|\beta|=m}
D^\alpha\big(\bar a_{\alpha\beta} \varphi D^{\beta}((\chi-1)\vu)\big),\\
\hat\vh&=(1-\chi)\sum_{|\alpha|=|\beta|=m}
D^\alpha( a_{\alpha\beta} \varphi D^{\beta}\vu).
\end{align*}
Note that by the classical theory $\hat{\vw}$ is infinitely differentiable in $\bR \times \{y: y_1 > \gamma R\}$.

\begin{lemma}
								\label{lem0927}
Let $T \in (-\infty,\infty]$ and $\Omega' = \{y \in \bR^d: y_1 > \gamma R\}$.
Then for the function $\hat \vw$ in \eqref{eq17.25}, we have
\begin{multline*}	
 			%\label{eq0927}
\sum_{|\alpha|\le m}\lambda^{\frac 1 2-\frac {|\alpha|} {2m}}\|D^{\alpha} \hat \vw\|_{L_2(\Omega'_T)}
\le N \sum_{|\alpha|=|\beta|=m}\|\varphi(\bar a_{\alpha\beta}- a_{\alpha\beta})D^\beta \vu\|_{L_2(\Omega'_T)}
\\
+N \sum_{|\beta|=m}\|\varphi D^{\beta}((\chi-1)\vu)\|_{L_2(\Omega'_T)}
+N\sum_{|\alpha|\le m}\lambda^{\frac{|\alpha|}{2m}-\frac 1 2} \|\varphi\vf_\alpha\|_{L_2(\Omega'_T)}
\\
+N \sum_{|\beta|=m}\|\varphi D^{\beta}\vu\|_{L_2(\Omega'_T \cap \{(t,y): \gamma R < y_1 < 2 \gamma R\})},
\end{multline*}
where $N=N(d,m,n,\delta)$.
\end{lemma}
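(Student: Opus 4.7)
The plan is to derive the estimate by an energy argument, testing the weak formulation of \eqref{eq17.25} against $\hat\vw$ itself. Using the ellipticity condition \eqref{eq11.28}, integration by parts gives the coercive bound
\[
\delta\|D^m\hat\vw\|_{L_2(\Omega'_T)}^2 + \lambda\|\hat\vw\|_{L_2(\Omega'_T)}^2 \le \bigl|\langle G, \hat\vw\rangle\bigr|,
\]
where $G$ denotes the right-hand side of \eqref{eq17.25}. The task reduces to bounding each of the four contributions to $|\langle G,\hat\vw\rangle|$ by a product of one of the quantities listed in the claim and $\|D^\alpha\hat\vw\|_{L_2}$ for suitable $\alpha$; Young's inequality will then absorb a small multiple of $\|D^m\hat\vw\|_{L_2}^2+\lambda\|\hat\vw\|_{L_2}^2$ into the left-hand side, and taking square roots produces the desired linear estimate.

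For the first two contributions, which are already in divergence form of order $m$, one integration by parts and Cauchy--Schwarz immediately give the bound $\|\varphi(\bar a_{\alpha\beta}-a_{\alpha\beta})D^\beta\vu\|_{L_2}\|D^\alpha\hat\vw\|_{L_2}$ and analogously for the $\hat\vg$ term. For the third contribution, I would integrate by parts as
\[
\int\chi D^\alpha(\varphi\vf_\alpha)\cdot\hat\vw=(-1)^{|\alpha|}\int\varphi\vf_\alpha\cdot D^\alpha(\chi\hat\vw),
\]
and, since $\chi=\chi(y_1)$, expand via Leibniz as
\[
D^\alpha(\chi\hat\vw)=\sum_{j=0}^{\alpha_1}\binom{\alpha_1}{j}(D_1^j\chi)\,D^{\alpha-je_1}\hat\vw.
\]
The $j=0$ term is bounded by $\|\varphi\vf_\alpha\|_{L_2}\|D^\alpha\hat\vw\|_{L_2}$. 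For $j\ge 1$ the key observation is that $D_1^j\chi$ is supported in the strip $\{\gamma R<y_1<2\gamma R\}$ with $|D_1^j\chi|\le N(\gamma R)^{-j}\le N s^{-j}$, where $s=y_1-\gamma R$, while $\hat\vw\in\mathring\cH^m_2$ enforces $D_1^kD_{x'}^{\alpha'}\hat\vw|_{y_1=\gamma R}=0$ for $k\le\alpha_1-1$. Applying Lemma \ref{gHardy} fiberwise in $y_1$ to $D_{x'}^{\alpha'}\hat\vw$ with $M=\alpha_1$, then integrating in $(t,x')$, yields
\[
\|s^{-j}D_1^{\alpha_1-j}D_{x'}^{\alpha'}\hat\vw\|_{L_2(\Omega'_T)}\le N\|D^\alpha\hat\vw\|_{L_2(\Omega'_T)},
\]
so the $j\ge 1$ terms are again controlled by $N\|\varphi\vf_\alpha\|_{L_2}\|D^\alpha\hat\vw\|_{L_2}$. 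The fourth contribution is treated identically, with $1-\chi$ in place of $\chi$: since $1-\chi$ and all its positive-order derivatives are supported in the same strip, even the $j=0$ piece produces the localized norm $\|\varphi D^\beta\vu\|_{L_2(\text{strip})}\|D^\alpha\hat\vw\|_{L_2}$ that matches the last term of the claim.

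To close the estimate I would split each product $\|\cdot\|_{L_2}\|D^\alpha\hat\vw\|_{L_2}$ as $(\lambda^{|\alpha|/(2m)-1/2}\|\cdot\|_{L_2})\cdot(\lambda^{1/2-|\alpha|/(2m)}\|D^\alpha\hat\vw\|_{L_2})$ and apply Young's inequality. By standard interpolation, $\sum_{|\alpha|\le m}(\lambda^{1/2-|\alpha|/(2m)}\|D^\alpha\hat\vw\|_{L_2})^2\le N(\|D^m\hat\vw\|_{L_2}^2+\lambda\|\hat\vw\|_{L_2}^2)$, so a sufficiently small $\varepsilon$-piece is absorbed into the left-hand side of the energy identity, and taking square roots gives the claim. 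The main technical point, and the only nontrivial step, is the Hardy argument: the blow-up $(\gamma R)^{-j}$ of $|D_1^j\chi|$ on a strip of width $\gamma R$ must be exactly traded, via the order $m-1$ vanishing of $\hat\vw$ at $\{y_1=\gamma R\}$, for a compensating $(\gamma R)^{j}$ so that the narrow support does not produce any unfavorable negative power of $\gamma$ in the final estimate.
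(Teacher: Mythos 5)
Your proposal is correct and matches the paper's proof: the paper likewise tests \eqref{eq17.25} against $\hat\vw$, obtains the $L_2$ estimate for the first two (genuinely divergence-form) terms as in Theorem \ref{thm6.1}, and for the $\chi$- and $(1-\chi)$-weighted terms observes that integration by parts produces $D^\alpha(\chi\hat\vw)$ (resp.\ $D^\alpha((1-\chi)\hat\vw)$), which it controls via the bound $|D^k\chi|\le N(y_1-\gamma R)^{-k}$ and Lemma \ref{gHardy}, exploiting the order-$(m-1)$ vanishing of $\hat\vw$ at $y_1=\gamma R$. Your more explicit Leibniz expansion with the $j=0$ piece singled out, and the absorption via interpolation and Young's inequality, are exactly the mechanism the paper leaves implicit.
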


\begin{proof}
The first two terms on the right-hand side result from a direct application of Theorem \ref{thm6.1} with $\Omega'$ in place of $\bR^d_+$. For the third term, due to the presence of the factor $\chi$, one cannot directly apply Theorem \ref{thm6.1} to get the desired estimate.
However, we observe that as in the proof of Theorem \ref{theorem08061901}, after testing the equation by $\hat \vw$ and integrating by parts, the terms $D^\alpha \vu$ on the right-hand side of \eqref{eq16.33} are now replaced by $D^\alpha(\chi \hat\vw)$.
Notice that
$$
|D^k \chi|\le N(y_1-\gamma R)^{-k}\quad\text{for}\,\,k=1,2,...,m.
$$
Then by Lemma \ref{gHardy}, the $L_2$ norm of $D^\alpha(\chi \hat\vw)$ is bounded above by the $L_2$ norm of $D^\alpha \hat\vw$, which implies that the same estimate as in Theorem \ref{thm6.1} still holds true even with the presence of $\chi$.
For the last term in the above estimate, we argue in the same way and also use the property that $\chi - 1$ is supported on $\{y: y_1 \le 2\gamma R\}$.
\end{proof}

Recall that
$$
U=\sum_{|\alpha| \le m}\lambda^{\frac 1 2-\frac {|\alpha|}{2m}} |D^\alpha \vu|,
\quad
F=\sum_{|\alpha|\le m}\lambda^{\frac {|\alpha|} {2m}-\frac 1 2}|\vf_\alpha|.
$$

\begin{lemma}
For the function $\hat \vw$ in \eqref{eq17.25}, we have
\begin{equation}	
 			\label{eq21.52h}
\sum_{k=0}^m\lambda^{\frac 1 2-\frac k {2m}}(I_{Q_{R}^{\gamma +}}|D^k \hat \vw|^2)_{\cC_{R}}^{1/2}
\le N\gamma^{1/ {\nu'}} (U^\nu)_{\cC_R}^{1/ \nu}+
N(F^2)_{\cC_{R}}^{1/2},
\end{equation}
where $\nu$ and $\nu'$ are from Proposition \ref{prop7.9}.
\end{lemma}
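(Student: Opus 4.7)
The plan is to apply Lemma \ref{lem0927} with the global setting $\Omega'_\infty=\bR\times\{y:y_1>\gamma R\}$, and then estimate each of the four resulting $L_2$-norms by means of H\"older's inequality (with the exponent pair $(\nu,\nu')$, which is admissible because $1/\nu+1/\nu'=1/2$), together with the support properties of $\chi-1$ and $D^k\chi$, \eqref{eq17_50}, and the higher-order Hardy inequality of Lemma \ref{gHardy}. Observing that $Q_R^{\gamma+}\cap \cC_R\subset \Omega'_\infty$, the left-hand side of \eqref{eq21.52h} is at most $|\cC_R|^{-1/2}\sum_{k}\lambda^{\frac12-\frac{k}{2m}}\|D^k\hat\vw\|_{L_2(\Omega'_\infty)}$, so it suffices to bound the four right-hand side terms in Lemma \ref{lem0927}, each divided by $|\cC_R|^{1/2}$.

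For the first term, apply H\"older in $(t,y)$ to get $\|\varphi(\bar a_{\alpha\beta}-a_{\alpha\beta})D^\beta\vu\|_{L_2}\le \|\bar a_{\alpha\beta}-a_{\alpha\beta}\|_{L_{\nu'}(Q_R)}\|D^\beta\vu\|_{L_\nu(Q_R)}$. Boundedness of $|\bar a_{\alpha\beta}-a_{\alpha\beta}|$ combined with \eqref{eq17_50} gives $\|\bar a_{\alpha\beta}-a_{\alpha\beta}\|_{L_{\nu'}(Q_R)}\le N(\gamma|Q_R|)^{1/\nu'}$, which after division by $|\cC_R|^{1/2}$ yields the $N\gamma^{1/\nu'}(U^\nu)_{\cC_R}^{1/\nu}$ contribution (using $|D^\beta\vu|\le U$ since $|\beta|=m$). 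For the third term, simply bound $\|\varphi\vf_\alpha\|_{L_2(Q_R)}\le \|\vf_\alpha\|_{L_2(Q_R)}$ and multiply by $\lambda^{\frac{|\alpha|}{2m}-\frac12}$ to obtain the $N(F^2)_{\cC_R}^{1/2}$ piece. For the fourth term, which is already restricted to the strip $\{\gamma R<y_1<2\gamma R\}$, H\"older on the indicator of that strip (whose measure inside $Q_R$ is $\lesssim \gamma|Q_R|$) against $\|D^\beta\vu\|_{L_\nu}$ again delivers $N\gamma^{1/\nu'}(U^\nu)_{\cC_R}^{1/\nu}$.

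The most delicate step is the second term $\|\varphi D^\beta((\chi-1)\vu)\|_{L_2(\Omega'_\infty)}$. Expand by Leibniz:
\[
D^\beta\big((\chi-1)\vu\big)=\sum_{j=0}^{\beta_1}\binom{\beta_1}{j}D_1^j(\chi-1)\,D^{\beta-je_1}\vu.
\]
For $j=0$, $\chi-1$ is supported in $\{y_1\le 2\gamma R\}$, and a direct H\"older argument as in the first term produces the desired $\gamma^{1/\nu'}(U^\nu)^{1/\nu}_{\cC_R}$ bound. For $j\ge1$, use $|D_1^j\chi|\le N(\gamma R)^{-j}\le Ny_1^{-j}$ on its support $\{\gamma R\le y_1\le 2\gamma R\}$. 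Here the main obstacle is to compensate the $(\gamma R)^{-j}$ blow-up: this is precisely where the Hardy-type inequality enters. Since $\vu$ vanishes to order $m-1$ at $y_1=0$ by \eqref{eq4.56}, the function $g(y_1):=D^{\beta-je_1}\vu(t,y_1,y')$ satisfies $g(0)=Dg(0)=\ldots=D^{j-1}g(0)=0$ for each fixed $(t,y')$ (using $|\beta|=m$, so $|\beta-je_1|=m-j$ and derivatives up to order $j-1$ in $y_1$ still have total order $\le m-1$). Then Lemma \ref{gHardy} gives
\[
\|y_1^{-j}D^{\beta-je_1}\vu(t,\cdot,y')\|_{L_\nu([0,R])}\le N\|D^\beta\vu(t,\cdot,y')\|_{L_\nu([0,R])},
\]
and integration over $(t,y')$ preserves the inequality in $L_\nu(Q_R)$. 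Combining with H\"older against the characteristic function of the strip $\{\gamma R<y_1<2\gamma R\}\cap Q_R$ (whose measure is $\lesssim\gamma|Q_R|$) gives the bound $N(\gamma|Q_R|)^{1/\nu'}\|D^\beta\vu\|_{L_\nu(Q_R)}$, hence the $N\gamma^{1/\nu'}(U^\nu)^{1/\nu}_{\cC_R}$ contribution after normalization.

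Collecting the four estimates yields \eqref{eq21.52h}. The only nontrivial bookkeeping is in the $j\ge 1$ Leibniz terms, where the precise matching of vanishing orders of $\vu$ at $y_1=0$ with the power $y_1^{-j}$ is what allows Hardy's inequality to absorb the singular weight and leave a clean factor of $\gamma^{1/\nu'}$.
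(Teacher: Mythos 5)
Your decomposition via Lemma \ref{lem0927} into four terms and your treatment of $I_1$, $I_3$, $I_4$ are sound, and you correctly identify that Lemma \ref{gHardy} must compensate the $(\gamma R)^{-j}$ blow-up of $D_1^j\chi$ in $I_2$. However, the pivot of your $I_2$ argument is wrong: you invoke the Hardy inequality at $y_1=0$, asserting that ``$\vu$ vanishes to order $m-1$ at $y_1=0$ by \eqref{eq4.56}.'' That is not true in the coordinate system in which this lemma lives. The lemma is stated (and must be proved) in the rotated $y$-coordinates of Case 2 of Proposition \ref{prop7.9}, where $y=\rho x$ and $\rho$ is an orthogonal matrix with $\rho_{11}\ge\cos(\gamma/2)$ but generally not the identity. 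Condition \eqref{eq4.56} gives $D^k\vu=0$ on $\partial\bR^d_+=\{x_1=0\}$, i.e.\ on $\partial\Omega=\rho(\partial\bR^d_+)$ in $y$-coordinates, which is a hyperplane through the origin that is close to, but not equal to, $\{y_1=0\}$. Thus for a fixed $(s,y')$, the function $y_1\mapsto\vu(s,y_1,y')$ (and its $y_1$-derivatives through order $m-1$) vanishes at $y_1=\hat y_1(y')$ — the unique abscissa with $(s,\hat y_1,y')\in\partial\Omega$, which lies in $(-\gamma R,\gamma R)$ — and not at $y_1=0$. Consequently $g(y_1)=D^{\beta-je_1}\vu(t,y_1,y')$ does not satisfy $g(0)=\ldots=D^{j-1}g(0)=0$, and your application of Lemma \ref{gHardy} with center $0$ is invalid; indeed if $\hat y_1>0$ then $\vu$ is not even defined at $y_1=0$.

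The repair, which is what the paper does, is to shift the origin of the Hardy inequality to $\hat y_1(y')$ fiberwise. One observes that on the support $\{\gamma R<y_1<2\gamma R\}$ of $D_1^j\chi$ ($j\ge1$) one has $y_1-\hat y_1\in(0,3\gamma R)$, so $|D_1^j\chi|\le N(\gamma R)^{-j}\le N'(y_1-\hat y_1)^{-j}$, and the hypotheses of Lemma \ref{gHardy} are met on $[\hat y_1,\cdot]$ rather than $[0,\cdot]$. With this correction the remainder of your H\"older/Hardy bookkeeping for $I_2$ (and hence the whole lemma) goes through, and your proof becomes essentially the paper's, the only cosmetic difference being that you apply Hardy in $L_\nu$ after H\"older while the paper applies Hardy in $L_2$ before H\"older.
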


\begin{proof}
Note that $\varphi(t,y)$ vanishes on $\{y: |y| \ge R, y_1 > \gamma R\}$.
Thus by the estimate in Lemma \ref{lem0927} when $T=0$, it follows that
the left-hand side 	of \eqref{eq21.52h} is less than a constant times
$$
\sum_{|\alpha|=|\beta|=m}\left(|(\bar a_{\alpha\beta}- a_{\alpha\beta})D^\beta \vu|^2\right)^{1/2}_{\cC_R}
+ \sum_{|\beta|=m}\left(I_{Q_R^{\gamma +}}|D^{\beta}((\chi-1)\vu)|^2\right)^{1/2}_{\cC_R}
$$
$$
+ \sum_{|\alpha|\le m}\lambda^{\frac{|\alpha|}{2m}-\frac 1 2} \left(|\vf_\alpha|^2\right)^{1/2}_{\cC_R}
+ \sum_{|\beta|=m}\left(I_{\{\gamma R <y_1< 2\gamma R\}}|D^{\beta}\vu|^2\right)^{1/2}_{\cC_R}
:=I_1+I_2+I_3+I_4.
$$
It is clear that $I_3$ is bounded by $N(F^2)_{\cC_{R}}^{1/2}$.
By using \eqref{eq17_50} as in the interior case we see that $I_1$ is bounded by $N\gamma^{1/ {\nu'}} (U^\nu)_{\cC_R}^{1/ \nu}$.
%We now estimate $I_k$, $k=1,2,3,4$, to conclude that they are bounded by the right-hand side of \eqref{eq21.52h}.}
Observe that by H\"older's inequality we have
\begin{multline}
                                    \label{eq21.44h}
\left(I_{\{\gamma R < y_1 < 2\gamma R\}}
|D^{m}\vu|^2\right)^{1/2}_{\cC_R}
\le \left(I_{\{\gamma R < y_1 < 2 \gamma R\}}\right)_{\cC_R}^{1/\nu'}
\left(|D^m\vu|^\nu\right)^{1/\nu}_{\cC_R}
\\
\le N\gamma^{1/\nu'}(|D^m\vu|^{\nu})_{\cC_R}^{1/\nu}.
\end{multline}
Thus $I_4$ is also bounded by $N\gamma^{1/ {\nu'}} (U^\nu)_{\cC_R}^{1/ \nu}$.

To estimate $I_2$, we notice that $\chi-1=0$ for $y_1\ge 2\gamma R$, $D^{\beta-\hat \beta}\chi=0$ in $(-\infty,\gamma R]\cap [2\gamma R,+\infty)$, and
\begin{equation}
                                    \label{eq21.13h}
|D^{\beta-\hat \beta}\chi|\le N(\gamma R)^{-|\beta|+|\hat \beta|}
\end{equation}
in $(\gamma R,2\gamma R)$, where $|\beta|=m$ and $|\hat \beta| \le m-1$.
For any $(s,\gamma R, y')\in Q_R$, let $\hat y_1=\hat y_1(s,y')$ be the largest number such that $(s,\hat y_1,y')\in \partial\Omega$. (Indeed, in this case $\hat y_1$ is uniquely determined as a function of $y'$.) Because $\hat y_1\in (-\gamma R,\gamma R)$, the inequality \eqref{eq21.13h} implies
\begin{equation}
								\label{eq0929}
|D^{\beta-\hat \beta}\chi(s,y)|\le N(y_1-\hat y_1)^{-|\beta|+|\hat \beta|}.
\end{equation}
We also notice that $\vu$, as a function of $y_1$, vanishes along with its derivatives up to $(m-1)$-th order at $(s,\hat y_1,y')$.
Thus, by Lemma \ref{gHardy} together with \eqref{eq0929},
\begin{multline}
								\label{eq0928}
\int_{\gamma R}^r |D^{\beta}\big((\chi-1)\vu(s,y_1,y')\big)|^2\,dy_1
\\
\le \int_{\hat y_1}^r |D^{\beta}\big((\chi-1)\vu(s,y_1,y')\big)|^2\,dy_1
\le N\int_{\hat y_1}^r |D^{m}\vu(s,y_1,y')|^2\,dy_1,
\end{multline}
where $r=r(y')=\min\{2 \gamma R,\sqrt{R^2-|y'|^2}\}$.
Integrating with respect to $s$ and $y'$ and using H\"older's inequality as in \eqref{eq21.44h}, we estimate $I_2$ by
\begin{equation*}
                %                \label{eq21.49h}
\big(I_{Q_R^{\gamma+}}|D^{\beta}((\chi-1)\vu)|^2\big)_{\cC_R}^{1/2}
\le N\gamma^{1/ {\nu'}}(|D^m\vu|^{\nu})_{\cC_R}^{1/ \nu}.
\end{equation*}
From this and the above estimates for $I_i$, $i=1,3,4$, we conclude \eqref{eq21.52h}.
\end{proof}

Now we are ready to complete the proof of Proposition \ref{prop7.9}.
We extend $\hat \vw$ to be zero in $\cC_R\setminus Q_R^{\gamma +}$, so that $\hat\vw\in \cH^{m}_2(\cC_R)$, and let $\vw=\hat \vw+(1-\chi)\vu$.
Since $(1-\chi)\vu$ vanishes for $y_1 \ge 2 \gamma R$,
using the second inequality in \eqref{eq0928} and H\"older's inequality as in \eqref{eq21.44h}, we see that
$$
\sum_{k=0}^m\lambda^{\frac 1 2-\frac k {2m}}(|D^k ((1-\chi)\vu)|^2)_{\cC_{R}}^{1/2}
\le N\gamma^{1/ {\nu'}} (U^\nu)_{\cC_R}^{1/ \nu}.
$$
%the right-hand side of \eqref{eq21.52h} with $|D^k ((1-\chi)\vu)|^2$ in place of $I_{Q_{R}^{\gamma +}}|D^k \hat \vw|^2$ is bounded by $N\gamma^{1/ {\nu'}} (U^\nu)_{\cC_R}^{1/ \nu}$.
This combined with \eqref{eq21.52h} shows that
\begin{equation}	
 			\label{eq18.34h}
(W^2)_{\cC_{R}}^{1/2}
\le N\gamma^{1/ {\nu'}} (U^\nu)_{\cC_{R}}^{1/ \nu}+
N(F^2)_{\cC_{R}}^{1/2}.
\end{equation}
Noting that $\cC_r(\tilde X_0) \subset \cC_R$ and $|\cC_R|/|\cC_r(\tilde X_0)|
\le N(d) \kappa^{2m+d}$, we obtain from \eqref{eq18.34h}
\begin{equation}
								\label{eq28_01}
(W^2)_{\cC_r(\tilde X_0)}^{1/2}
\le N\kappa^{m+\frac d 2} \left(\gamma^{1/ {\nu'}} (U^\nu)_{\cC_{R}}^{1/ \nu}+
(F^2)_{\cC_{R}}^{1/2}\right).
\end{equation}

Next, we define $\vv=\vu-\vw$ in $\cC_R$. It is easily seen that $\vv=0$ in $\cC_R\setminus Q_R^{\gamma +}$ and $\vv$ satisfies
$$
								%\label{eq0902}
\vv_t+(-1)^m \cL_0 \vv+\lambda \vv=0	
$$
in $Q_{R/2} \cap \{y_1> \gamma R\}$ and vanishes along with its derivatives up to $(m-1)$-th order on $Q_R \cap \{y_1=\gamma R\}$.
Denote
$$
\cD_1=\cC_{r}(\tilde X_0)\cap \{y_1<\gamma R\},
\quad
\cD_2=\cC_{r}(\tilde X_0)\setminus \cD_1,
\quad
\cD_3=Q_{R/4}\cap\{y_1 > \gamma R\}.
$$
Because of \eqref{eq22.16},
$
|\cD_1|\le N\kappa\gamma|\cC_{r}(\tilde X_0)|
$.
Thanks to the fact that $\hat \vw$ is infinitely differentiable in $\bR \times \{y: y_1 > \gamma R\}$,
we see that $\vv$ is infinitely differentiable in $Q_R \cap \{y_1 > \gamma R\}$. Then applying Corollary \ref{cor6.5} and Lemma \ref{lem6.7} with a scaling argument, we compute
$$
\big(|V'-(V')_{\cC_r(\tilde X_0)}|\big)_{\cC_r(\tilde X_0)}
+\big(|\hat \Theta-(\hat \Theta)_{\cC_r(\tilde X_0)}|\big)_{\cC_r(\tilde X_0)}
$$
$$
\le N r^{1/2}\big([V']_{C^{1/2}(\cD_2)}
+[\hat \Theta]_{C^{1/2}( \cD_2)}\big)
+N\kappa\gamma \|V\|_{L_\infty(\cD_2)}
$$
$$
\le N r^{1/2}\big([V']_{C^{1/2}(\cD_3)}
+[\hat \Theta]_{C^{1/2}( \cD_3)}\big)
+N\kappa\gamma \|V\|_{L_\infty(\cD_3)}
$$
$$
\le N(\kappa^{-1/2}+\kappa \gamma)(V^2)_{\cC_{R/2}}^{1/2},
$$
which together with \eqref{eq18.34h} and \eqref{eq28_01} yields \eqref{eq5.42}.
Indeed,
we set $U^Q=U'+\Theta$.
Then
$$
\big(|U^Q-(U^Q)_{\cC_r(\tilde X_0)}|\big)_{\cC_r(\tilde X_0)}
\le N\big(|V'-(V')_{\cC_r(\tilde X_0)}|\big)_{\cC_r(\tilde X_0)}
$$
$$
+ N \big(|\hat \Theta-(\hat \Theta)_{\cC_r(\tilde X_0)}|\big)_{\cC_r(\tilde X_0)}
+ N \big(W\big)_{\cC_r(\tilde X_0)},
$$
where the last term is estimated by \eqref{eq28_01}.
As shown above, the first two terms on the right-hand side are estimated by
$N(\kappa^{-1/2}+\kappa \gamma)(V^2)_{\cC_{R/2}}^{1/2}$, which is taken care of by \eqref{eq18.34h} and the fact that $\vu = \vv+\vw$ in $\cC_R$.
Finally, we transform the obtained inequality back to the original coordinates to get the inequality \eqref{eq5.42}.
This completes the proof of Proposition \ref{prop7.9}.

\subsection{Proof of Theorem \ref{thm3}}
We finish the proof of Theorem \ref{thm3} in this subsection. As in Section \ref{sec5}, we assume all the lower-order coefficients of $\cL$ are zero.

Next in the measure
space $\bR^{d+1}_{+}$ endowed with the Borel $\sigma$-field and Lebesgue measure consider the filtration of dyadic parabolic cubes $\{\bC_{l},l\in\bZ\}$, where
$\bZ=\{0,\pm1,\pm2,...\}$ and $\bC_l$ is the collection of cubes
$$
(i_{0}2^{-2ml},(i_{0}+1)2^{-2ml}]\times
 (i_{1}2^{-l},(i_{1}+1)2^{-l}]\times...\times
(i_{d}2^{-l},(i_{d}+1)2^{-l}],
$$
where $i_{0},i_{1},...,i_{d}\in\bZ,\,\,i_1\ge 0$.
Notice that if $X\in C\in \bC_l$, then for the smallest $r>0$
such that $C\subset Q_{r}(X)$ we have
$$
\dashint_{C} \dashint_{C}|g(Y)-g(Z)|
\,dY\,dZ\leq N(d)
\dashint_{Q^{+}_{r}(X)} \dashint_{Q^{+}_{r}(X)}|g(Y)-g(Z)|
\,dY\,dZ.
$$
By using this, the following corollary is proved in the same manner as Corollary \ref{cor001}.

\begin{corollary}							\label{cor001b}
Let $\gamma \in (0,1/4)$, $\lambda > 0$, $\nu\in (2,\infty)$, $\nu'=2\nu/(\nu-2)$, and
$Z_0 \in \overline{\bR^{d+1}_+}$.
Assume that $\vu\in C_{\text{loc}}^\infty(\overline{\bR^{d+1}_+})$ vanishes outside $Q_{\gamma R_0}(Z_0)$ and
satisfies \eqref{eq4.56} as well as
$$
\vu_t+(-1)^m\cL \vu+\lambda \vu=\sum_{|\alpha|\le m}D^\alpha \vf_\alpha
$$
in $\bR^{d+1}_+$,
where $\vf_\alpha\in L_{2,\text{loc}}(\overline{\bR^{d+1}_+})$.
Then under Assumption \ref{assumption20100901} ($\gamma$),
for each $l \in \bZ$, $C \in \bC_l$, and $\kappa \ge 64$,
there exists a function $U^C$ depending on $C$
such that $N^{-1}U\le U^C\le NU$ and
$$
\left(U^C-(U^C)_{C}|\right)_{C}
\le N \left(F_{\kappa}\right)_C,		
$$
where
$N=N(d,\delta,m,n,\tau)$ and
$$
F_{\kappa}=
(\kappa^{-1/2}+\kappa \gamma)\big(\cM ( \hat U^2)\big)^{1/2}
+\kappa^{m+d/2}\left[\big(\cM(\hat F^2)\big)^{1/2}+\gamma^{1/\nu'}
(\cM(U^{\nu}))^{1/\nu}\right].	
$$
\end{corollary}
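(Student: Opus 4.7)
The plan is to follow the interior argument for Corollary \ref{cor001} essentially verbatim, replacing Theorem \ref{theorem3.42} by its boundary analogue Proposition \ref{prop7.9}. The one genuinely new ingredient is the inequality displayed immediately before the statement: for any $X\in C\in\bC_l$, with $r$ the smallest radius such that $C\subset Q_r(X)$, the double-integral mean oscillation of a function over the dyadic cube $C$ is bounded, up to $N(d)$, by the corresponding quantity over the half-cylinder $Q_r^+(X)$. This will convert oscillation estimates on $Q_r^+(X)$ (which is where Proposition \ref{prop7.9} is formulated) into oscillation estimates on the dyadic cube $C$.

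Fix $l\in\bZ$, $C\in\bC_l$, and $\kappa\ge 64$. Choose any $X_0=(t_0,x_0)\in\overline{C}$, which lies in $\overline{\bR^{d+1}_+}$ because the cubes in $\bC_l$ are indexed by $i_1\ge 0$. Let $r$ be the smallest radius with $C\subset Q_r(X_0)$; then $r\sim 2^{-l}$ and $|Q_r^+(X_0)|\sim |C|$ uniformly in $C$ (even for boundary cubes with $i_1=0$, since $C\subset\overline{\bR^{d+1}_+}$).

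In the main case $\kappa r\le R_0$, apply Proposition \ref{prop7.9} at $X_0$ with this $r$ to obtain $U^Q$ with $N^{-1}U\le U^Q\le NU$ and the oscillation estimate \eqref{eq5.42} on $Q_r^+(X_0)$, and set $U^C:=U^Q$. Combining the hint above with the triangle inequality gives
$$
(|U^C-(U^C)_C|)_C\le 2N(d)\,\bigl(|U^C-(U^C)_{Q_r^+(X_0)}|\bigr)_{Q_r^+(X_0)},
$$
and \eqref{eq5.42} bounds the right-hand side by the three averages that appear in the definition of $F_\kappa$, taken over $Q_{\kappa r}^+(X_0)$. Each such average is dominated by the corresponding parabolic maximal function at any $Y\in C$: indeed $C\subset Q_{\kappa r}(X_0)$, so the cylinder $Q_{2\kappa r}(Y)$ contains $Q_{\kappa r}(X_0)$ and has volume comparable to $|Q_{\kappa r}^+(X_0)|$. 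Thus the three averages are controlled by $\cM(U^2)(Y)$, $\cM(F^2)(Y)$, $\cM(U^\nu)(Y)$, and hence by $(F_\kappa)_C$.

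For the residual case $\kappa r>R_0$ I would set $U^C:=U$ and exploit the support condition $\vu\equiv 0$ outside $Q_{\gamma R_0}(Z_0)$, exactly as in the last paragraph of the proof of Theorem \ref{theorem3.42}: by H\"older's inequality $(U^2)_C^{1/2}$ is absorbed into $\kappa^{m+d/2}\gamma^{1/\nu'}\cM(U^\nu)^{1/\nu}$, which is itself part of $(F_\kappa)_C$. The only mildly delicate point, beyond bookkeeping, is the uniform comparability $|Q_r^+(X_0)|\sim |C|$ for boundary cubes, which is immediate from the structure of $\bC_l$; no new analysis is required.
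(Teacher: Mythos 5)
Your proposal is correct and follows essentially the same route as the paper, which itself only remarks that the corollary ``is proved in the same manner as Corollary \ref{cor001}'' using the displayed comparison between oscillations over dyadic cubes $C\in\bC_l$ and over $Q^+_r(X)$. You also rightly notice that, unlike Theorem \ref{theorem3.42}, Proposition \ref{prop7.9} is only stated for $\kappa r\le R_0$, so the residual case $\kappa r>R_0$ must be handled separately via the support condition and H\"older's inequality (with exponents $\nu/2$ and $\nu/(\nu-2)$, passing through the average over $Q^+_{\kappa r}(X_0)$), exactly as in the last paragraph of the proof of Theorem \ref{theorem3.42}.
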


%%%%%%%%%%%%%%%%%%%%%%%%%%%%%%%%%%%%%%%%%%%%%%%%%%%%%%%%%%%%%%%%%
\begin{comment}
	
\begin{proof}
For $\kappa \ge 16$
and $C \in C_n$,
let $B_r(x_0)$ be the smallest ball containing $C$
and $a_{\alpha\beta}(x_1)$ be the coefficients from Theorem \ref{theorem3.42} determined by $r$, $\kappa$, and $x_0$.
Also let $F_{\kappa}$ be as in \eqref{eq001}.
Then by Theorem \ref{theorem3.42}
as well as the facts that $C \subset B_r(x_0)$ and the volumes of $C$ and $B_r(x_0)$ are comparable,
we have
$$
\left(|D^\alpha\vu-(D^\alpha\vu)_{C}|\right)_{C}
+\lambda^{1/2}\left(|\vu-(\vu)_{C}|\right)_{C}
+\left(|\Theta-(\Theta)_{C}|\right)_{C}
\le N(d) I,
$$
%$$
%\le N\kappa^{-1/2}\sum_{k=0}^n\lambda^{\frac 1 2-\frac k {2n}}(|D^k\vu|^2)_{B_{\kappa r}(x_0)}^{1/2}
%$$
%$$
%+N\kappa^{d/2}\left[\sum_{|\alpha|\le n}\lambda^{\frac {|\alpha|} {2n}-\frac 1 2}(|\vf_\alpha|^2)_{B_{\kappa r}(x_0)}^{1/2}+\gamma^{1/(2\sigma)}
%(|D^n\vu|^{2\tau})_{B_{\kappa r}(x_0)}^{1/(2\tau)}\right],
%$$
where $I$ is the right hand side of the inequality \eqref{eq3.49}.
%and
%$$
%\Theta = \sum_{|\beta|=n}\bar{a}_{\bar\alpha \beta}D^{\beta}\vu.
%$$
Note that, for example,
$$
\left(|D^k \vu|^2\right)_{B_{\kappa r}(x_0)}
\le 2^d \left(|D^k \vu|^2\right)_{B_{2\kappa r}(x)}
\le 2^d \cM  |D^k \vu|^2 (x)
$$
for all $x \in C$.
Thus $I$ is less than
a constant times $F_{\kappa}(x)$ for all $x \in C$,
especially, less than a constant times $\left(F_{\kappa}\right)_C$.
Hence we arrive at the inequality \eqref{eq002}.
This finishes the proof of the corollary.
\end{proof}

\end{comment}
%%%%%%%%%%%%%%%%%%%%%%%%%%%%%%%%%%%%%%%%%%%%%%%%%%%%%%%%%%%%%%%%%

\begin{theorem}							\label{theorem001b}
Let $p \in (2,\infty)$, $\lambda > 0$, $Z_0 \in \overline{\bR^{d+1}_+}$,
and $\vf_{\alpha} \in L_p(\bR^{d+1}_+)$.
There exist positive constants $\gamma\in (0,1/4)$ and $N$,
depending only on $d$, $\delta$, $m$, $n$, $p$, such that under Assumption \ref{assumption20100901} ($\gamma$),
for $\vu \in C_{\text{loc}}^\infty(\overline{\bR^{d+1}_+})$ vanishing outside $Q_{\gamma R}(Z_0)$ and satisfying
\eqref{eq4.56} as well as
$$
\vu_t+(-1)^m\cL \vu+\lambda \vu=\sum_{|\alpha|\le m}D^\alpha \vf_\alpha
$$
in $\bR^{d+1}_+$,
we have
$$
\|U\|_{L_p(\bR^{d+1}_+)}
\le N \| F\|_{L_p(\bR^{d+1}_+)},
$$
where $N = N(d,\delta,m,n,p)$.
\end{theorem}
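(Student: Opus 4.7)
The plan is to mirror the whole-space argument used to prove Theorem \ref{theorem001}, replacing Corollary \ref{cor001} with its boundary analogue Corollary \ref{cor001b}. Fix $p\in (2,\infty)$ and choose $\nu\in (2,p)$; let $\sigma = p/\nu$, $\nu' = 2\nu/(\nu-2)$, and take $\kappa\ge 64$ and $\gamma\in (0,1/4)$ to be specified at the end. Because $\vu$ has compact support and is smooth, all quantities below are finite.

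First I would apply Corollary \ref{cor001b} to produce, for every $l\in\bZ$ and every $C\in\bC_l$, a measurable function $U^C$ with $N^{-1}U\le U^C\le NU$ and
$$
(|U^C-(U^C)_C|)_C \le N(F_\kappa)_C.
$$
With the choices $V:=NU$, $F:=N F_\kappa$, this places us in exactly the setting of the generalized Fefferman--Stein result Theorem \ref{th081201}. Applying that theorem on the filtered space $\bR^{d+1}_+$ yields
$$
\|U\|_{L_p(\bR^{d+1}_+)}^p \le N\,\|F_\kappa\|_{L_p(\bR^{d+1}_+)}\,\|U\|_{L_p(\bR^{d+1}_+)}^{p-1},
$$
and hence (since $U\in L_p$ by compact support of $\vu$)
$$
\|U\|_{L_p(\bR^{d+1}_+)} \le N\,\|F_\kappa\|_{L_p(\bR^{d+1}_+)}.
$$

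Next I would bound $\|F_\kappa\|_{L_p}$ using the explicit form given in Corollary \ref{cor001b}. Since $p>\nu>2$, the Hardy--Littlewood maximal function theorem applied to $U^2$, $F^2$ and $U^\nu$ gives
$$
\|F_\kappa\|_{L_p(\bR^{d+1}_+)} \le N(\kappa^{-1/2}+\kappa\gamma)\|U\|_{L_p(\bR^{d+1}_+)} + N\kappa^{m+d/2}\|F\|_{L_p(\bR^{d+1}_+)} + N\kappa^{m+d/2}\gamma^{1/\nu'}\|U\|_{L_p(\bR^{d+1}_+)}.
$$
Combining with the previous inequality,
$$
\|U\|_{L_p(\bR^{d+1}_+)} \le N\bigl(\kappa^{-1/2}+\kappa\gamma+\kappa^{m+d/2}\gamma^{1/\nu'}\bigr)\|U\|_{L_p(\bR^{d+1}_+)} + N\kappa^{m+d/2}\|F\|_{L_p(\bR^{d+1}_+)}.
$$
To close the estimate it remains to choose $\kappa$ large enough (depending only on $d,\delta,m,n,p$) so that $N\kappa^{-1/2}<1/6$, and then $\gamma\in(0,1/4)$ small enough (depending on $\kappa$, hence on the same parameters) so that $N\kappa\gamma<1/6$ and $N\kappa^{m+d/2}\gamma^{1/\nu'}<1/6$. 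The coefficient of $\|U\|_{L_p}$ on the right becomes $<1/2$, which can be absorbed into the left-hand side.

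The one delicate point (compared to the whole-space case) is the appearance of the extra $\kappa\gamma$ term in $F_\kappa$ coming from the boundary mean oscillation estimate of Proposition \ref{prop7.9}; this forces $\gamma$ to be chosen not only small but small relative to the already fixed $\kappa$. Since $\kappa$ depends only on the admissible constants, this still leaves a legitimate choice of $\gamma=\gamma(d,\delta,m,n,p)$, so no circularity arises. Everything else is routine, the main engines being Corollary \ref{cor001b} and Theorem \ref{th081201}.
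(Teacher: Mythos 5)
Your proof is correct and takes essentially the same route as the paper, which simply cites the proof of Theorem \ref{theorem001} with Corollary \ref{cor001b} and Theorem \ref{th081201} substituted in; you have spelled out the argument, including the one genuinely new point (the $\kappa\gamma$ term from the boundary oscillation estimate, handled by fixing $\kappa$ first and then $\gamma$), accurately.
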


\begin{proof}
It follows from the proof of Theorem \ref{theorem001} by using Corollary \ref{cor001b} and Theorem \ref{th081201}.	
\end{proof}

\begin{proof}[Proof of Theorem \ref{thm3}]
Due to the duality argument and Theorem \ref{thm6.1}, it is enough to consider the case $p>2$.
In this case the theorem is proved using Theorem \ref{theorem001b} and the standard partition of unity argument.
\end{proof}

\mysection{Systems on Reifenberg flat domains}
							\label{Reifenberg}

As an application, in this section we consider systems on a cylindrical domain $(0,T)\times \Omega$. Here $T\in (0,\infty)$ and $\Omega$ is a Reifenberg flat domain in $\bR^d$, which is not necessarily to be bounded. Roughly speaking, the boundary of a Reifenberg flat domain is locally trapped in thin discs. 

%First of all, we observe that Corollaries \ref{cor3.10} and \ref{cor3.10b} still hold true if everywhere we replace parabolic cylinders $(t-r^{2m},t)\times B_r(x)$ by centered cylinders $(t-r^{2m},t+r^{2m})\times B_r(x)$. Throughout this section, we denote the latter to be $Q_r(t,x)$. Similarly, we also define the object $Q_r'(t,x')$. The reason of this change of definitions will be made clear soon in the proof of Corollary \ref{cor7.5}.We also denote
%$$
%\cC_r(t,x)=(t-r^{2m},t+r^{2m})\times \Omega_r(x).
%$$

We impose a similar regularity assumption on $a_{\alpha\beta}$ as Assumption \ref{assumption20080424}. Near the boundary, we require that in each small scale the direction in which the coefficients are only measurable coincides with the ``normal'' direction of a certain thin disc, which contains a portion of $\partial\Omega$. More precisely, we assume the following, where the parameter $\gamma\in (0,1/20)$ will be determined later.
\begin{assumption}[$\gamma$]
                                        \label{assump1}
There is a constant $R_0\in (0,1]$ such that the following holds.

(i) For any $x\in \Omega$, $t\in \bR$, and any $r\in (0,\min\{R_0,\dist(x,\partial\Omega)/2\}]$ (so that $B_r(x)\subset \Omega$), there is a spatial coordinate system depending on $(t,x)$ and $r$ such that in this new coordinate system, we have
\begin{equation}
                            \label{eq13.07}
%\dashint_{(t-r^{2m},t+r^{2m})}\dashint_{\Gamma_r(x)}
\dashint_{Q_r(t,x)}\Big| a_{\alpha\beta}(s,y_1, y') - \dashint_{Q'_r(t,x')} a_{\alpha\beta}(\tau,y_1,z') \, dz'\,d\tau \Big| \, dy\,ds\le \gamma.
\end{equation}

(ii) For any $x\in \partial\Omega$, $t\in \bR$, and any $r\in (0,R_0]$, there is a spatial coordinate system depending on $(t,x)$ and $r$ such that in this new coordinate system, we have \eqref{eq13.07} and
$$
 \{(y_1,y'):x_1+\gamma r<y_1\}\cap B_r(x)
 \subset\Omega_r(x)
 \subset \{(y_1,y'):x_1-\gamma r<y_1\}\cap B_r(x).
$$
\end{assumption}

We remark that the boundary of a Reifenberg flat domain may have a fractal structure. In particular, if the boundary $\partial \Omega$ is locally the graph of a Lipschitz continuous function with a small Lipschitz constant, then $\Omega$ is Reifenberg flat. Thus all $C^1$ domains are Reifenberg flat for any $\gamma>0$.

The next theorem is the main result of this section.

\begin{theorem}
                                \label{thm4}
Let $T\in (0,\infty]$, $\Omega$ be a domain in $\bR^d$, $p \in (1,\infty)$,
and $\vf_\alpha= (f_\alpha^1, \ldots, f_\alpha^n)^{\text{tr}} \in L_p(\Omega_T)$, $|\alpha|\le m$.
Then there exists a constant $\gamma=\gamma(d,n,m,p,\delta)$
such that, under Assumption \ref{assump1} ($\gamma$),
the following hold true.

\noindent
(i)
For any $\vu \in \mathring{\cH}^{m}_p(\Omega_T)$
satisfying
\begin{equation}
                                    \label{eq09_01}
\vu_t+(-1)^m \cL \vu +\lambda \vu = \sum_{|\alpha|\le m}D^\alpha \vf_\alpha
\end{equation}
in $\Omega_T$, we have
\begin{equation}
                                    \label{eq20.39}
\sum_{|\alpha|\le m}\lambda^{1-\frac {|\alpha|} {2m}} \|D^\alpha \vu \|_{L_p(\Omega_T)}
\le N \sum_{|\alpha|\le m}\lambda^{\frac {|\alpha|} {2m}} \| \vf_\alpha \|_{L_p(\Omega_T)},
\end{equation}
provided that $\lambda \ge \lambda_0$,
where $N$ and $\lambda_0 \ge 0$
depend only on $d$, $n$, $m$, $p$, $\delta$, $K$, and $R_0$.
%$T$, and $\text{diam}(\Omega)$.

\noindent
(ii) For any  $\lambda > \lambda_0$, there exists a unique $\vu \in \mathring{\cH}_p^{m}(\Omega_T)$ satisfying \eqref{eq09_01}.

\noindent
(iii) Let $\lambda=0$ and suppose $T\in (0,\infty)$. There exists a unique solution $\vu \in \mathring{\cH}_p^{m}((0,T) \times\Omega)$ of \eqref{eq09_01} with the initial condition $\vu(0,\cdot)\equiv 0$ in $\Omega$. Moreover, $\vu$ satisfies
$$
\|\vu \|_{\cH^{m}_p((0,T) \times\Omega)}
\le N \sum_{|\alpha|\le m}\|\vf_\alpha \|_{L_p((0,T) \times\Omega)},
$$
where $N$ depends only on $d$, $n$, $m$, $p$, $\delta$, $K$, $R_0$, and $T$.
%, and $\text{diam}(\Omega)$.
\end{theorem}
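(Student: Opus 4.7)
The plan is to follow the two-step scheme used in Sections \ref{sec5} and \ref{sec6}: derive interior and boundary mean oscillation estimates for $\vu$, feed them into the generalized Fefferman-Stein theorem (Theorem \ref{th081201}) applied to a dyadic filtration of parabolic cubes contained in $\bR \times \Omega$, and pass from the a priori estimate to existence via the method of continuity. A preliminary interpolation lets us, after enlarging $\lambda_0$, assume all lower-order coefficients of $\cL$ vanish, and we extend $\vu$ and each $\vf_\alpha$ by zero outside $\Omega_T$.

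First I would establish two local mean oscillation estimates. At an interior point $(t_0,x_0)$ with $r \le \dist(x_0,\partial\Omega)/2$, Assumption \ref{assump1}(i) reduces, after an orthogonal change of coordinates, to Assumption \ref{assumption20080424}, so Theorem \ref{theorem3.42} supplies an estimate of the form \eqref{eq3.49} on $Q_r(t_0,x_0)$. At a boundary point $x_0 \in \partial\Omega$, Assumption \ref{assump1}(ii) gives a rotation and translation in which $\Omega \cap B_r(x_0)$ is trapped between the hyperplanes $\{y_1 = \pm \gamma r\}$ and the coefficients admit a $\gamma$-close approximation $\bar a_{\alpha\beta}(y_1)$. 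This is exactly the geometric setting handled in the proof of Proposition \ref{prop7.9}; with $\Omega_r(x_0)$ in place of $B_r^+(x_0)$, the same argument — cutoff $\chi(y_1)$ supported away from the trapping strip, auxiliary problem on the shifted half-space $\{y_1 > \gamma r\}$, and Lemma \ref{gHardy} applied fiberwise in the $y_1$-direction using the zero Dirichlet trace of $\vu$ on $\partial\Omega$ — yields a boundary estimate of the form \eqref{eq5.42} on the truncated cylinder $\cC_r(t_0,x_0) = (t_0 - r^{2m}, t_0) \times \Omega_r(x_0)$.

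With both estimates in hand, I would construct a dyadic filtration of parabolic cubes in $\bR \times \Omega$ as before Corollary \ref{cor001b}. For each cube $C$, the minimal cylinder containing $C$ is either purely interior or touches $\partial\Omega$, and the corresponding local estimate furnishes a function $U^C$ satisfying $N^{-1} U \le U^C \le N U$ together with a mean oscillation bound $(|U^C - (U^C)_C|)_C \le N(F_\kappa)_C$, where $F_\kappa$ is the maximal-function majorant built from $U^2$, $F^2$, and $U^\nu$ as in \eqref{eq001}. Theorem \ref{th081201} combined with the Hardy-Littlewood maximal theorem then yields $\|U\|_{L_p} \le N \|F_\kappa\|_{L_p}$ for $p \in (2,\infty)$; choosing $\kappa$ large and then $\gamma$ small so that
\begin{equation*}
N\bigl(\kappa^{-1/2} + \kappa\gamma + \kappa^{m+d/2}\gamma^{1/\nu'}\bigr) < \tfrac{1}{2}
\end{equation*}
absorbs the self-coupling term and produces the a priori estimate \eqref{eq20.39}. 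The range $p \in (1,2)$ follows by duality applied to the formal adjoint on $\Omega$, which belongs to the same class. Existence in part (ii) is then obtained by the method of continuity, linking $\cL$ to $(-\Delta)^m$ (whose coefficients trivially satisfy Assumption \ref{assump1} for any $\gamma$) whose solvability in $\mathring{\cH}^m_p$ is available from Theorem \ref{thm6.1} in the $L_2$-case combined with the just-obtained a priori estimate. Part (iii) reduces to (ii) via the substitution $\vv = e^{-\mu t}\vu$ with $\mu > \lambda_0$, producing an $L_p$-bound that depends on $T$ through $e^{\mu T}$; uniqueness follows from the zero initial condition together with \eqref{eq20.39} applied on $(0,T)$.

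The main obstacle is transferring the half-space boundary estimate of Proposition \ref{prop7.9} to the Reifenberg setting, since $\partial\Omega$ may be fractal and flatness cannot be used pointwise. The rescue is that Proposition \ref{prop7.9} was engineered precisely to accommodate a boundary-hyperplane mismatch of size $\gamma r$: the cutoff $\chi$ is supported outside the trapping strip of width $2\gamma r$, so $\chi \vu$ automatically has zero Dirichlet trace on the shifted half-space $\{y_1 > \gamma r\}$, while the remainder $(1-\chi)\vu$ is controlled fiberwise by Lemma \ref{gHardy} using the vanishing of $\vu$ and its derivatives up to order $m-1$ at the ``nearest boundary point'' $\hat y_1(s,y')$ in the $y_1$-fiber. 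Under Assumption \ref{assump1}(ii), $\partial\Omega \cap B_r(x_0)$ is contained in the very strip where $\chi \equiv 0$, so this fiberwise vanishing remains available and the argument transfers with only cosmetic notational changes, with no requirement of additional regularity on $\partial\Omega$.
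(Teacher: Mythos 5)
Your proposed route breaks down at the step where you ``construct a dyadic filtration of parabolic cubes in $\bR \times \Omega$ as before Corollary \ref{cor001b}.'' That construction works for the whole space and for the half space because $\bR^d$ and $\bR^d_+$ are exactly tiled by the dyadic cells (for the half space, one simply imposes $i_1 \ge 0$), so $\{\bC_l\}$ is an honest nested filtration on which Theorem \ref{th081201} applies. A Reifenberg flat domain has no such tiling: $\partial\Omega$ may be fractal, so dyadic cubes cannot be fitted inside $\Omega$ in a nested, measure-doubling way, and truncating cubes by $\Omega$ destroys the filtration property that Theorem \ref{th081201} needs. Extending $U$ by zero to $\bR^{d+1}$ and using the ordinary dyadic grid does not rescue the argument either: near $\partial\Omega$ the oscillation $\dashint_C |U^C - (U^C)_C|$ picks up a term of order $(U)_{C\cap\Omega}\,|C\setminus\Omega|/|C|$ from the jump across the boundary, which is not controlled by the quantities on the right-hand side of \eqref{eq3.49} or \eqref{eq5.42}. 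This is exactly why the paper abandons the mean-oscillation/Fefferman--Stein machinery of Sections \ref{sec5}--\ref{sec6} for this section and instead adapts the Caffarelli--Peral / Byun--Wang scheme: a decomposition $\vu=\vv+\vw$ on each cylinder with an $L_\infty$ bound for $V$ (Lemma \ref{lem7.3}, using Corollaries \ref{cor3.10} and \ref{cor3.10b}), then a good-$\lambda$ inequality between the super-level sets $\cA(\kappa s)$ and $\cB(s)$ (Corollary \ref{cor7.5}), closed via the ``crawling of ink spots'' covering lemma of Krylov--Safonov/Safonov (Theorem \ref{theorem101}), which requires no dyadic structure on $\Omega$ at all. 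The paper flags this switch explicitly before Theorem \ref{thm4}.

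Your handling of the boundary decomposition itself is essentially right and matches Lemma \ref{lem7.3}(ii): cut off with $\chi$ supported outside the strip $\{|y_1|\le\gamma R\}$ that traps $\partial\Omega$, solve the auxiliary problem on $\{y_1>\gamma R\}$, and control $(1-\chi)\vu$ with the fiberwise Hardy inequality (Lemma \ref{gHardy}) via vanishing of the Dirichlet trace. And your reductions — interpolation to drop lower-order terms, duality for $p<2$, method of continuity for existence, exponential substitution for part (iii) — all agree with the paper. But without replacing the Fefferman--Stein step by a level-set/covering argument, the $L_p$ bound cannot be assembled from the local estimates on a Reifenberg domain.
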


A similar result for second-order scalar elliptic equations with symmetric coefficient matrices in bounded Reifenberg flat domains was recently studied by Byun and Wang \cite{BW10}. The proofs there are based on an approach developed by the same authors in \cite{BW04}, which in turn uses an idea of approximations originally due to Caffarelli; see \cite{CaPe98}. This approach is quite suitable for studying equations on domains with rough boundaries. Here we adapt it to investigate higher-order systems. Our proofs, however, are in several aspects different from those in \cite{BW10}. In particular, we do not use the reverse H\"older's inequality, the iteration argument or the maximum principle, the latter of which is not available in our case. Compared to \cite{BW10}, we  consider more general operators and domains
by allowing lower-order terms and unbounded domains.

The crucial ingredients of the proofs below are the interior and the boundary estimates established in Sections \ref{sec3.2} and \ref{sec6}.
%We introduce a few more notation. For $\lambda>0$, let
%$$
%U=\sum_{k=0}^m \lambda^{\frac 1 2-\frac k {2m}} |D^k \vu|
%$$
%and, in a similar fashion, define $W$ and $V$ for functions $\vw$ and $\vv$. For $\vf_\alpha= (f_\alpha^1, \ldots, f_\alpha^n)^{\text{tr}}$, we denote
%$$
%F=\sum_{|\alpha|\le m}\lambda^{\frac {|\alpha|} {2m}-\frac 1 2}|\vf_\alpha|.
%$$
By a scaling, we may assume $R_0=1$ in the sequel. Recall the definitions of $U$, $V$, $W$, and $F$ in Sections \ref{sec3.2} and \ref{sec4}.
\begin{lemma}
                            \label{lem7.3}
Let $R\in (0,1]$, $\lambda\in (0,\infty)$, $\nu\in (2,\infty)$, $\nu'=2\nu/(\nu-2)$, $\vf_\alpha= (f_\alpha^1, \ldots, f_\alpha^n)^{\text{tr}} \in L_{2,\text{loc}}(\bR\times \Omega)$, $|\alpha|\le m$. Assume that $a_{\alpha\beta}\equiv 0$ for any $\alpha,\beta$ satisfying $|\alpha|+|\beta|<2m$ and that $\vu\in C_0^{\infty}(\bR\times \Omega)$ satisfies \eqref{eq09_01}.
Then the following hold true.

\noindent(i) Suppose $0\in \Omega$, $\dist(0,\partial\Omega)\ge R$, and Assumption \ref{assump1} ($\gamma$) (i) holds at the origin. Then $\vu$ admits a decomposition $\vu=\vv+\vw$ in $Q_{R}$, and $\vw$ and $\vv$ satisfy
\begin{equation}	
 			\label{eq16.15}
(W^2)_{Q_{R}}^{1/2}
\le N\gamma^{1/\nu'} (U^\nu)_{Q_{R}}^{1/\nu}+
N(F^2)_{Q_{R}}^{1/2}
\end{equation}
and
\begin{equation}
                                \label{eq16.22}
\|V\|_{L_\infty(Q_{R/4})}
\le N\gamma^{1/\nu'} (U^\nu)_{Q_{R}}^{1/ \nu}+
N(F^2)_{Q_{R}}^{1/2}+
N(U^2)_{Q_{R}}^{1/2},
\end{equation}
where $N=N(d,n,m,\delta,\nu)>0$ is a constant.

\noindent(ii) Suppose $0\in \partial\Omega$ and Assumption \ref{assump1} ($\gamma$) (ii) holds at the origin. Then $\vu$ admits a decomposition $\vu=\vv+\vw$ in $\cC_{R}$, and $\vw$ and $\vv$ satisfy
\begin{equation}	
 			\label{eq17.10}
(W^2)_{\cC_{R}}^{1/2}
\le N\gamma^{1 /\nu'} (U^\nu)_{\cC_{R}}^{1/\nu}+
N(F^2)_{\cC_{R}}^{1/2}
\end{equation}
and
\begin{equation} \label{eq17.11}
\|V\|_{L_\infty(\cC_{R/4})} \le N\gamma^{1/\nu'}
(U^\nu)_{\cC_{R}}^{1/\nu}
+N(F^2)_{\cC_{R}}^{1/2}+ N(U^2)_{\cC_{R}}^{1/2},
\end{equation}
where $N=N(d,n,m,\delta,\nu)>0$ is a constant.
\end{lemma}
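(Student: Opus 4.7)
Both parts use the same two-step scheme developed for Lemma \ref{lem4.2} and Proposition \ref{prop7.9}: freeze the coefficients in the $y'$-directions to produce a ``simple'' operator $\bar\cL_0$ with coefficients $\bar a_{\alpha\beta}(y_1)$ (the averages supplied by Assumption \ref{assump1}), define $\vw$ as the solution of an auxiliary system whose right-hand side absorbs both $\sum D^\alpha\vf_\alpha$ and the coefficient error $(a_{\alpha\beta}-\bar a_{\alpha\beta})D^\beta\vu$, and set $\vv=\vu-\vw$. The function $\vv$ then satisfies the homogeneous simple equation with respect to $\bar\cL_0$ on a slightly smaller region, so the regularity of $\vv$ is furnished by the interior estimate Corollary \ref{cor3.10} in part (i) or by the boundary estimate Corollary \ref{cor3.10b} in part (ii), while $\vw$ is controlled purely in $L_2$.

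For (i), I would introduce a smooth cutoff $\zeta$ adapted to $Q_R$ and let $\vw\in\cH^m_2(\bR^{d+1})$ solve
$$
\vw_t+(-1)^m\bar\cL_0\vw+\lambda\vw=\sum_{|\alpha|\le m}D^\alpha(\zeta\vf_\alpha)+(-1)^m\sum_{|\alpha|=|\beta|=m}D^\alpha\bigl(\zeta(\bar a_{\alpha\beta}-a_{\alpha\beta})D^\beta\vu\bigr),
$$
whose well-posedness is given by Theorem \ref{theorem08061901}. The $L_2$-estimate of that theorem bounds $W$ in $L_2$ by $F$ plus the coefficient-error term; splitting the latter by H\"older with exponents $(\tau,\sigma)=(\nu/2,\nu'/2)$ and invoking \eqref{eq13.07} (which yields $\|\bar a-a\|_{L_1(Q_R)}\le N\gamma|Q_R|$, hence $\|\bar a-a\|_{L_{\nu'/2}(Q_R)}\le N\gamma^{2/\nu'}|Q_R|^{2/\nu'}$ since $|\bar a-a|$ is bounded) produces \eqref{eq16.15}. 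Since $\vv$ solves $\vv_t+(-1)^m\bar\cL_0\vv+\lambda\vv=0$ on $Q_{R/2}$, Corollary \ref{cor3.10} together with a scaling argument gives $\|V\|_{L_\infty(Q_{R/4})}\le N(V^2)^{1/2}_{Q_{R/2}}$, and the triangle inequality $V\le U+W$ combined with \eqref{eq16.15} delivers \eqref{eq16.22}.

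For (ii), the construction is the one already carried out in Proposition \ref{prop7.9}. After rotating so that Assumption \ref{assump1}(ii) places $\partial\Omega\cap B_R$ inside $\{|y_1|\le\gamma R\}$, take the cutoff $\chi(y_1)$ which vanishes for $y_1\le\gamma R$ and equals $1$ for $y_1\ge 2\gamma R$, localize with a $\varphi$ supported in $Q_R$, and let $\hat\vw$ be the $\mathring\cH^m_2(\bR\times\{y_1>\gamma R\})$-solution of the analogue of \eqref{eq17.25}. Setting $\vw=\hat\vw+(1-\chi)\vu$ (extended by zero outside $Q_R^{\gamma+}$) and $\vv=\vu-\vw$, estimate \eqref{eq17.10} is obtained exactly as \eqref{eq18.34h}. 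The function $\vv$ then solves the homogeneous simple equation on $Q_{R/2}\cap\{y_1>\gamma R\}$ and vanishes to order $m-1$ on $\{y_1=\gamma R\}$, so applying Corollary \ref{cor3.10b} on half-cylinders (after translating the flat face to $\{y_1=0\}$) yields \eqref{eq17.11}. The \emph{main obstacle} is the mismatch between $\partial\Omega$ and the genuine hyperplane $\{y_1=\gamma R\}$: controlling the transition layer $\{0<y_1-\hat y_1\le 2\gamma R\}$, and in particular the terms involving $\chi'$ and $(1-\chi)\vu$, relies crucially on the Hardy-type inequality of Lemma \ref{gHardy} together with the fact that $\vu$ vanishes to order $m-1$ on $\partial\Omega$; this is also where the factor $\gamma^{1/\nu'}$ arises, via H\"older's inequality applied to the thin strip whose measure is comparable to $\gamma|Q_R|$.
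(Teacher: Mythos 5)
Your proposal is correct and follows essentially the same route as the paper: freeze the coefficients in the $(t,y')$-directions to get $\bar a_{\alpha\beta}(y_1)$, solve a localized auxiliary system for $\vw$ via Theorem \ref{theorem08061901} (part (i)) or via the cutoff-in-$y_1$ construction of Proposition \ref{prop7.9} with Lemma \ref{gHardy} (part (ii)), bound $W$ in $L_2$ using H\"older with exponents $\nu/2,\nu'/2$ on the coefficient-error term, and then apply Corollary \ref{cor3.10} or Corollary \ref{cor3.10b} to the homogeneous solution $\vv=\vu-\vw$ together with the triangle inequality $V\le U+W$. (One small slip: in the H\"older step the relevant quantity is $\|\bar a-a\|_{L_{\nu'}}^2$, not $\|\bar a-a\|_{L_{\nu'/2}}$, though the final bound $N\gamma^{2/\nu'}|Q_R|^{2/\nu'}$ you write is the correct one.)
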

\begin{proof}
The proof is similar to that of Proposition \ref{prop7.9} with some modifications.
Without loss of generality, we may assume Assumption \ref{assump1} holds in the original $(t,x)$-coordinates.
First we prove Assertion (i). Define
$$
\bar a_{\alpha\beta}(y_1)=\dashint_{Q'_R} a_{\alpha\beta}(\tau,y_1,z') \, dz'\,d\tau,
$$
and $\cL_0$ be the operator with $\bar a_{\alpha\beta}$ in place of $a_{\alpha\beta}$ in $\cL$. Then in $Q_R$, $\vu$ satisfies
$$
\vu_t+(-1)^m \cL_0 \vu+\lambda \vu
$$
\begin{equation}
                                    \label{eq17.23}
=(-1)^m \sum_{|\alpha|=|\beta|=m}D^\alpha\left(
(\bar a_{\alpha\beta}- a_{\alpha\beta})D^\beta \vu\right)+ \sum_{|\alpha|\le m}D^\alpha \vf_\alpha.
\end{equation}
Now let $\vw$ be the unique $\cH^{m}_2(\bR^{d+1})$ solution of
$$
\vw_t+(-1)^m \cL_0 \vw+\lambda \vw
$$
$$
=(-1)^m \sum_{|\alpha|=|\beta|=m}D^\alpha\left(
\varphi(\bar a_{\alpha\beta}- a_{\alpha\beta})D^\beta \vu\right)+ \sum_{|\alpha|\le m}D^\alpha (\varphi\vf_\alpha)
$$
in $\bR^{d+1}$, where
$\varphi$ is an infinitely differentiable function such that
$$
0 \le \varphi \le 1,
\quad
\varphi = 1 \,\, \text{on} \,\, Q_{R/2},
\quad
\varphi = 0 \,\, \text{outside} \,\, (-R^{2m}, R^{2m}) \times B_R.
$$
The existence of such solution is due to Theorem \ref{theorem08061901}. By the same theorem, we have
$$
\|W\|_{L_2(\bR^d_0)}
\le N \sum_{|\alpha|\le m}\lambda^{\frac {|\alpha|} {2m}-\frac 1 2} \| \varphi\vf_\alpha \|_{L_2(\bR^d_0)}
$$
$$
+N\sum_{|\alpha|=|\beta|=m}\left\|
\varphi(\bar a_{\alpha\beta}- a_{\alpha\beta})D^\beta \vu\right\|_{L_2(\bR^d_0)},
$$
where $\bR^d_0=(-\infty,0)\times\bR^d$.
This together with H\"older's inequality gives \eqref{eq16.15}. Next, it is easily seen that $\vv:=\vu-\vw$ satisfies in $Q_{R/2}$
\begin{equation}
                            \label{eq17.02}
\vv_t+(-1)^m \cL_0 \vv+\lambda \vv=0.
\end{equation}
With a scaling, we apply Corollary \ref{cor3.10} to \eqref{eq17.02} to get
$$
\|V\|_{L_\infty(Q_{R/4})}
\le N(V^2)_{Q_{R/2}}^{1/2}.
$$
The estimate above together with \eqref{eq16.15} leads to \eqref{eq16.22}.

Next, we prove Assertion (ii). Define $\bar a_{\alpha\beta}$ as above and notice that $\vu$ satisfies \eqref{eq17.23} in $\cC_R$. To apply Corollary \ref{cor3.10b}, we need to locally cutoff $\vu$ so that the new function vanishes for $y_1<\gamma R$. To this end, take the function $\chi$ from the proof of Proposition \ref{prop7.9}.
Then $\hat \vu:=\chi\vu$ along with all its derivatives vanishes on $y_1\le \gamma R$ and satisfies \eqref{eq17.23b} in $Q_R^{\gamma+}:=Q_R\cap \{y_1> \gamma R\}$.
%$$
%\hat \vu_t+(-1)^m \cL_0 \hat\vu+\lambda \hat\vu=(-1)^m \sum_{|\alpha|=|\beta|=m}D^\alpha\left(
%(\bar a_{\alpha\beta}- a_{\alpha\beta})D^\beta \vu\right)
%$$
%\begin{equation*}
%                                    %\label{eq17.23b}
%+ \sum_{|\alpha|\le m}D^\alpha \vf_\alpha+(-1)^m\vg,
%\end{equation*}
%where
%$$
%\vg=\sum_{|\alpha|=|\beta|=m}\Big\{\sum_{\hat \beta\le \beta}c_{\beta,\hat \beta}D^\alpha(\bar a_{\alpha\beta}(D^{\beta-\hat \beta}\chi) D^{\hat \beta}\vu)+
%D^\alpha(\bar a_{\alpha\beta}(1-\chi) D^{\beta}\vu)\Big\}
%$$
%with some combinatorial numbers $c_{\beta,\hat \beta}$.
%$$
%\hat\vw_t+(-1)^m \cL_0 \hat\vw+\lambda \hat\vw
%=(-1)^m \sum_{|\alpha|=|\beta|=m}D^\alpha\left(
%I_{Q_r^{\gamma+}}(\bar a_{\alpha\beta}- a_{\alpha\beta})D^\beta \vu\right)
%$$
%$$
%+ \sum_{|\alpha|\le m}D^\alpha (I_{Q_r^{\gamma+}}\vf_\alpha)+(-1)^m\hat \vg
%+(-1)^m\hat \vh
%$$
Let $\hat\vw$ be the unique $\mathring\cH^{m}_2(\bR\times \{y:y_1>\gamma R\})$ solution of \eqref{eq17.25} in $\bR\times \{y:y_1>\gamma R\}$.
Following the argument in the proof of Proposition \ref{prop7.9} with obvious modifications, we get
\begin{equation}	
 			\label{eq21.52}
\sum_{k=0}^m\lambda^{\frac 1 2-\frac k {2m}}(I_{Q_{R}^{\gamma +}}|D^k \hat \vw|^2)_{\cC_R}^{1/2}
\le N\gamma^{1/ {\nu'}} (U^\nu)_{\cC_{R}}^{1/ \nu}+
N(F^2)_{\cC_{R}}^{1/2}.
\end{equation}
We extend $\hat \vw$ to be zero in $\cC_R\setminus Q_R^{\gamma +}$, so that $\hat\vw\in \cH^{m}_2(\cC_R)$, and let $\vw=\hat \vw+(1-\chi)\vu$.
By Lemma \ref{gHardy} and H\"older's inequality, we get \eqref{eq17.10} from \eqref{eq21.52}.

Next, we define $\vv=\vu-\vw$ in $\cC_R$. It is easily seen that $\vv=0$ in $\cC_R\setminus Q_R^{\gamma +}$ and $\vv$ satisfies
$$
\vv_t+(-1)^m \cL_0 \vv+\lambda \vv=0
$$
in  $Q_{R/2} \cap \{y_1> \gamma R\}$ and vanishes along with its derivatives up to $(m-1)$-th order on $Q_R \cap \{y_1=\gamma R\}$. Applying Corollary \ref{cor3.10b}, we get
$$
\|V\|_{L_\infty(\cC_{R/4})}=
\|V\|_{L_\infty(Q_{R/4}\cap\{y_1>\gamma R\})}
\le N(V^2)_{\cC_{R/2}}^{1/2},
$$
which together with \eqref{eq17.10} gives \eqref{eq17.11}.
This completes the proof of the lemma.
\end{proof}

For a function $f$ on a set $\cD\subset\bR^{d+1}$, we define
its maximal function $\cM f$ by $\cM f=\cM (I_{\cD}f)$.
For any $s>0$, we introduce two level sets
$$
\cA(s)=\{(t,x)\in \bR\times\Omega:U> s\},
$$
$$
\cB(s)=\Big\{(t,x)\in \bR\times\Omega:\gamma^{-1/\nu'}\big(\cM(F^2)\big)^{1/2}+
\big(\cM(U^\nu)\big)^{1/\nu}>s\Big\}.
$$
With Lemma \ref{lem7.3} in hand, we get the following corollary.

\begin{corollary}
                                    \label{cor7.5}
Under the assumptions of Lemma \ref{lem7.3}, suppose $0\in \bar\Omega$ and Assumption \ref{assump1} ($\gamma$) holds. Let $s\in (0,\infty)$ be a constant. Then there exists a constant $\kappa\in (1,\infty)$, depending only on $d$, $n$, $m$, $\delta$, and $\nu$, such that the following holds.
If
\begin{equation}
                    \label{eq15.59}
\big|\cC_{R/32}\cap \cA(\kappa s)\big|> \gamma^{2/\nu'} |\cC_{R/32}|,
\end{equation}
then we have $\cC_{R/32}\subset \cB(s)$.
%$$
%\gamma^{-1/\nu'}\big(\cM(F^2)\big)^{1/2}+
%\big(\cM(U^\nu)\big)^{1/\nu}>s.
%$$
\end{corollary}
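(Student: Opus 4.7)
The plan is to argue by contrapositive: I fix an arbitrary $(t^*,x^*)\in\cC_{R/32}$ and assume $(t^*,x^*)\notin\cB(s)$; I will show that this forces $|\cC_{R/32}\cap\cA(\kappa s)|\le\gamma^{2/\nu'}|\cC_{R/32}|$ for a suitably large constant $\kappa$, contradicting \eqref{eq15.59}. Since $(t^*,x^*)\notin\cB(s)$, each summand in the definition of $\cB(s)$ is individually bounded by $s$, giving
$$
\big(\cM(F^2)\big)^{1/2}(t^*,x^*)\le\gamma^{1/\nu'}s,\qquad \big(\cM(U^\nu)\big)^{1/\nu}(t^*,x^*)\le s.
$$
The parabolic cylinder $Q_{2R}(0,0)\in\cQ$ contains both $(t^*,x^*)$ and $\cC_R$, and Assumption \ref{assump1} (ii) combined with $\gamma<1/20$ yields the lower bound $|\cC_R|\ge c(d)R^{d+2m}$, so the ratio $|Q_{2R}|/|\cC_R|$ is bounded by a dimensional constant. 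Passing to averages and using H\"older's inequality for the last estimate (since $\nu>2$), I obtain
$$
(F^2)_{\cC_R}^{1/2}\le N\gamma^{1/\nu'}s,\qquad (U^\nu)_{\cC_R}^{1/\nu}\le Ns,\qquad (U^2)_{\cC_R}^{1/2}\le Ns.
$$

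Next I invoke Lemma \ref{lem7.3} — case (ii) when $0\in\partial\Omega$ and case (i) when $\dist(0,\partial\Omega)\ge R$; both produce estimates of the same shape on $\cC_R$ (with $\cC_R=Q_R$ in the interior case). The resulting decomposition $\vu=\vv+\vw$ satisfies
$$
(W^2)_{\cC_R}^{1/2}\le N_2\gamma^{1/\nu'}s,\qquad \|V\|_{L_\infty(\cC_{R/4})}\le N_1 s,
$$
with constants $N_1,N_2$ depending only on $d,n,m,\delta,\nu$ (the harmless $\gamma^{1/\nu'}\le 1$ factor in front of $\|V\|_{L_\infty}$ is absorbed into $N_1$). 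Pointwise on $\cC_{R/4}$ the identity $\vu=\vv+\vw$ gives $U\le V+W$. Fixing $\kappa\ge 2N_1$, every point of $\cC_{R/32}\cap\cA(\kappa s)\subset\cC_{R/4}$ must satisfy $W\ge U-V>\kappa s-N_1 s\ge\kappa s/2$.

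Chebyshev's inequality now yields
$$
|\cC_{R/32}\cap\cA(\kappa s)|\le\frac{4}{\kappa^2 s^2}\int_{\cC_R}W^2\,dy\,d\tau=\frac{4|\cC_R|}{\kappa^2 s^2}(W^2)_{\cC_R}\le\frac{4N_2^2}{\kappa^2}\gamma^{2/\nu'}|\cC_R|.
$$
Combining with $|\cC_R|/|\cC_{R/32}|\le N(d,m)$ (the same volume comparison as above, applied now to $R$ and $R/32$) I arrive at $|\cC_{R/32}\cap\cA(\kappa s)|\le (4N_2^2 N(d,m)/\kappa^2)\gamma^{2/\nu'}|\cC_{R/32}|$. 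Choosing $\kappa$ large enough that both $\kappa\ge 2N_1$ and $4N_2^2 N(d,m)\le\kappa^2$ hold then forces $|\cC_{R/32}\cap\cA(\kappa s)|\le\gamma^{2/\nu'}|\cC_{R/32}|$, contradicting the hypothesis \eqref{eq15.59}; hence $(t^*,x^*)\in\cB(s)$, as claimed. The main technical ingredients — the $L_\infty$ control on $\vv$ and the $L_2$ smallness of $\vw$ — are already in hand from Lemma \ref{lem7.3}, and the only real care required is in the volume comparison $|\cC_R|\asymp R^{d+2m}$, which is exactly what the Reifenberg-flatness ($\gamma<1/20$) buys us and what keeps all geometric constants absorbable into the final choice of $\kappa$.
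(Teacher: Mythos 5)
Your overall strategy mirrors the paper's: pick a point $(t^*,x^*)\in\cC_{R/32}$ outside $\cB(s)$, convert the maximal-function bounds into averages, feed them into the decomposition $\vu=\vv+\vw$ from Lemma \ref{lem7.3}, and conclude by Chebyshev. The difficulty is that your case split before invoking Lemma \ref{lem7.3} is incomplete. You invoke case (ii) when $0\in\partial\Omega$ and case (i) when $\dist(0,\partial\Omega)\ge R$, but the corollary only assumes $0\in\bar\Omega$, so the case $0\in\Omega$ with $0<\dist(0,\partial\Omega)<R$ remains uncovered. Neither part of Lemma \ref{lem7.3} applies in that regime: part (i) requires $B_R(0)\subset\Omega$, and part (ii) requires $0\in\partial\Omega$.

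The paper closes this hole by splitting according to whether $\dist(0,\partial\Omega)\ge R/8$ or $<R/8$. In the first subcase one notes that $\cC_{R/32}=Q_{R/32}\subset Q_{R/8}\subset\bR\times\Omega$ and applies Lemma \ref{lem7.3}(i) with $R/8$ in place of $R$. In the second subcase one picks $y\in\partial\Omega$ with $|y|=\dist(0,\partial\Omega)<R/8$, so that $\cC_{R/32}\subset\cC_{R/4}(0,y)\subset\cC_R(0,y)$, and applies Lemma \ref{lem7.3}(ii) recentred at $(0,y)$; since $(t^*,x^*)\in\cC_{R/4}(0,y)$, the maximal-function bounds at $(t^*,x^*)$ still control the averages over $\cC_R(0,y)$ up to a fixed geometric constant. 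You would need to add this recentring step, after which the rest of your argument (the volume comparison $|\cC_R|\gtrsim R^{d+2m}$, H\"older with $\nu>2$, the $L_\infty$ bound on $V$, the $L_2$ smallness of $W$, and Chebyshev) goes through exactly as you wrote it.
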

\begin{proof}
By dividing $\vu$ and $\vf$ by $s$, we may assume $s=1$.
We prove by contradiction. Suppose at a point $(t,x)\in \cC_{R/32}$, we have
\begin{equation}
                            \label{eq15.38}
\gamma^{-1/\nu'}\big(\cM(F^2)(t,x)\big)^{1/2}+
\big(\cM(U^\nu)(t,x)\big)^{1/\nu}\le 1.
\end{equation}
Let us consider two cases.

{\em Case 1: $\dist(0,\partial \Omega)\ge R/8$.}
Notice that
$$
(t,x)\in\cC_{R/32}=Q_{R/32}\subset Q_{R/8}\subset \bR\times \Omega.
$$
Due to Lemma \ref{lem7.3} (i), we can write $\vu=\vw+\vv$ in $Q_{R/8}$ and, by \eqref{eq15.38},
\begin{equation}
                                \label{eq15.47}
\|V\|_{L_\infty(Q_{R/32})}
\le N_1,
\quad
(W^2)_{Q_{R/8}}^{1/2}
\le N_1\gamma^{1/\nu'},
\end{equation}
where $N_1$ and constants $N_i$ below depend only on $d$, $n$, $m$, $\delta$, and $\nu$. By \eqref{eq15.47}, the triangle inequality and Chebyshev's inequality, we get
$$
\big|\cC_{R/32}\cap \cA(\kappa)\big|=
\big|\{(t,x)\in \cC_{R/32}: U>\kappa\}\big|
$$
$$
\le \big|\{(t,x)\in \cC_{R/32}: W>\kappa-N_1\}\big|
\le (\kappa-N_1)^{-2} N_1^2 \gamma^{2/\nu'}|Q_{R/8}|,
$$
which contradicts with \eqref{eq15.59} if we choose $\kappa$ sufficiently large.

{\em Case 2: $\dist(0,\partial \Omega)< R/8$.} We take $y\in \partial\Omega$ such that $|y|=\dist(0,\partial \Omega)$. Notice that in this case we have
$$
(t,x)\in\cC_{R/32}\subset \cC_{R/4}(0,y)\subset \cC_{R}(0,y).
$$
Due to Lemma \ref{lem7.3} (ii), we can write $\vu=\vw+\vv$ in $\cC_R(0,y)$ and, by \eqref{eq15.38},
\begin{equation}
                                \label{eq17.23c}
\|V\|_{L_\infty(\cC_{R/4}(0,y))}
\le N_2,
\quad
(W^2)_{\cC_{R}(0,y)}^{1/2}
\le N_2\gamma^{1/\nu'}.
\end{equation}
%where $N_2=N_2(d,n,m,\delta,\nu)>0$.
By \eqref{eq17.23c}, the triangle inequality and Chebyshev's inequality, we get
$$
\big|\cC_{R/32}\cap \cA(\kappa)\big|=
\big|\{(t,x)\in \cC_{R/32}: U> \kappa\}\big|
$$
$$
\le \big|\{(t,x)\in \cC_{R/32}: W> \kappa-N_2\}\big|
\le (\kappa-N_2)^{-2} N_2^2 \gamma^{2/\nu'}|\cC_{R}|,
$$
which contradicts with \eqref{eq15.59} if we choose $\kappa$ sufficiently large.
\end{proof}

\begin{theorem}							\label{theorem101}
Let $p \in (2,\infty)$, $\lambda > 0$, $X_0\in \bR^{d+1}$
and $\vf_{\alpha} \in L_p(\bR\times \Omega)$. Suppose that $a_{\alpha\beta}\equiv 0$ for any $\alpha,\beta$ satisfying $|\alpha|+|\beta|<2m$, and $\vu \in C_0^{\infty}(\bR\times \Omega)$ vanishes outside $Q_{\gamma}(X_0)$ and satisfies \eqref{eq09_01} in $\bR\times \Omega$.
There exist positive constants $\gamma \in (0,1/20)$ and $N$,
depending only on $d,\delta,m,n, p$, such that, under Assumption \ref{assump1} ($\gamma$)
we have
\begin{equation}
                                \label{eq16.00}
\|U \|_{L_p(\bR\times \Omega)}
\le N \|F\|_{L_p(\bR\times \Omega)},
\end{equation}
where $N = N(d,\delta,m,n,p)$.
\end{theorem}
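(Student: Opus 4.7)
The proof follows a Caffarelli--Peral ``good-$\lambda$'' strategy; the measure-density statement of Corollary~\ref{cor7.5} is the crucial input, and the remainder converts it into an $L_p$ estimate via three steps.

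\emph{Step 1 (global density upgrade).} Apply a Vitali-type measure-density covering lemma for parabolic cylinders, adapted to the domain $\bR\times\Omega$, to transform Corollary~\ref{cor7.5} into the global weak-type inequality
$$
|\cA(\kappa s)|\le N\gamma^{2/\nu'}|\cB(s)|
$$
valid for all $s\ge s_0$, where $s_0$ is a threshold chosen so that $|\cA(\kappa s_0)|$ is smaller than $\gamma^{2/\nu'}$ times the volume of a fixed cylinder enclosing the support of $\vu$. Chebyshev's inequality together with the $L_2$ estimate of Theorem~\ref{theorem08061901} allows one to take $s_0$ comparable to $\|F\|_{L_2}/|Q_\gamma(X_0)|^{1/2}$, and H\"older's inequality (using that $U$ is supported in $Q_\gamma(X_0)$) then shows the ``small-$s$'' contribution to $\|U\|_{L_p}^p$ is dominated by $N\|F\|_{L_p}^p$.

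\emph{Step 2 (layer cake and maximal function).} Fix $\nu\in(2,p)$ so that $p/\nu>1$ and $p/2>1$. The layer-cake formula and Step~1 yield
$$
\|U\|_{L_p}^p=p\kappa^p\int_0^{\infty}s^{p-1}|\cA(\kappa s)|\,ds\le N\|F\|_{L_p}^p+N\kappa^p\gamma^{2/\nu'}\int_0^{\infty}s^{p-1}|\cB(s)|\,ds,
$$
while the Hardy--Littlewood maximal function theorem applied to the two maximal operators defining $\cB(s)$ gives
$$
\int_0^{\infty}s^{p-1}|\cB(s)|\,ds\le N\gamma^{-p/\nu'}\|F\|_{L_p}^p+N\|U\|_{L_p}^p.
$$

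\emph{Step 3 (absorption).} Combining the previous two bounds,
$$
\|U\|_{L_p}^p\le N\kappa^p\gamma^{(2-p)/\nu'}\|F\|_{L_p}^p+N\kappa^p\gamma^{2/\nu'}\|U\|_{L_p}^p.
$$
Since $p>2$ the exponent $2/\nu'$ is positive, so choosing $\gamma=\gamma(d,n,m,\delta,p)$ small enough that $N\kappa^p\gamma^{2/\nu'}\le 1/2$ absorbs the $\|U\|_{L_p}$-term and proves \eqref{eq16.00}. The most delicate ingredient is Step~1: setting up the Vitali / measure-density covering on a parabolic cylinder structure over a (possibly unbounded) Reifenberg flat domain. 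Reifenberg flatness supplies the uniform lower measure-density bound $|\cC_r(X)\cap(\bR\times\Omega)|\ge c|\cC_r(X)|$ for all boundary-adjacent $X$ and all $r\le R_0$, which is exactly what is needed to run the Vitali selection together with Corollary~\ref{cor7.5} near the boundary; everything else reduces to standard maximal-function bookkeeping.
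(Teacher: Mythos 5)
Your proposal follows essentially the same good-$\lambda$/level-set route as the paper: Corollary~\ref{cor7.5} feeds into a measure-density covering (the paper invokes the ``crawling of ink spots'' lemma of Safonov and Krylov--Safonov, which plays the role of your Vitali-type step), then the layer-cake formula, the Hardy--Littlewood maximal theorem, and an absorption via a small choice of $\gamma$. One small inaccuracy: the threshold you name, $s_0\sim\|F\|_{L_2}/|Q_\gamma(X_0)|^{1/2}$, should carry a $\gamma^{-1/\nu'}$ factor (the paper takes $\kappa s_0=\gamma^{-1/\nu'}\|U\|_{L_2}$, so that Chebyshev gives the smallness $|\cA(\kappa s)|\le\gamma^{2/\nu'}$ needed to start the covering argument), and the resulting small-$s$ contribution is then bounded by $N\gamma^{-p/\nu'}\|F\|_{L_p}^p$ rather than $N\|F\|_{L_p}^p$ — but since $\gamma$ is ultimately fixed as a function of $d,m,n,\delta,p$, this harmlessly folds into the constant, so the argument still closes.
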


\begin{proof}
We fix $\nu=p/2+1$ and let $\nu'=2\nu/(\nu-2)$. Let $\kappa$ be the constant in Corollary \ref{cor7.5}. For any $s>0$, by Chebyshev's inequality,
\begin{equation}
                            \label{eq17.57}
|\cA(\kappa s)|\le (\kappa s)^{-2}\|U\|_{L_2(\bR\times\Omega)}^2.
\end{equation}
From \eqref{eq17.57}, Corollary \ref{cor7.5}, and a result from measure theory
on the ``crawling of ink spots'' which can be found in \cite{Sa80} or \cite[Sect. 2]{KS80}, we have the following upper bound of the distribution of $U$ (cf. \cite[Sect. 4]{BW04}). For any $\gamma\in (0, 1/20]$ and $\kappa s\ge \gamma^{-1/\nu'}\|U\|_{L_2(\bR\times\Omega)}$,
%$$
%\left|\{(t,x)\in \bR\times\Omega:U\ge \kappa s\}\right|
%$$
\begin{equation}
                            \label{eq18.34}
%\le N_4\gamma^{2/\nu'}
%\left|\{(t,x)\in \bR\times\Omega:\gamma^{-1/\nu'}\big(\cM(F^2)\big)^{1/2}+
%\big(\cM(U^\nu)\big)^{1/\nu}>s\}\right|.
|\cA(\kappa s)|\le  N_4\gamma^{2/\nu'}|\cB(s)|.
\end{equation}
For $0<\kappa s<\gamma^{-1/\nu'}\|U\|_{L_2(\bR\times\Omega)}$, we simply bound the distribution function by \eqref{eq17.57}.
Now we use the elementary identity:
$$
\|f\|_{L_p(\cD)}^p=p\int_0^\infty \big|\{(t,x)\in\cD:|f(t,x)|> s\}\big|s^{p-1}\,ds,
$$
to deduce from \eqref{eq17.57} and \eqref{eq18.34} that
$$
\|U\|_{L_p(\bR\times\Omega)}^p \le N_5\gamma^{(2-p)/\nu'}\Big(\|U\|_{L_2(\bR\times\Omega)}^p+
\big\|\big(\cM(F^2)\big)^{1/2}\big\|_{L_p(\bR\times\Omega)}^p\Big)
$$
$$
+N_5\gamma^{2/\nu'}\big\|\big(\cM(U^\nu)\big)^{1/\nu}
\big\|_{L_p(\bR\times\Omega)}^p.
$$
By H\"older's inequality,
\begin{equation}
                            \label{eq20.31bb}
\|U\|_{L_2(\bR\times\Omega)}=\|U\|_{L_2(Q_\gamma(X_0)\cap\bR\times\Omega)}
\le N\|U\|_{L_p(\bR\times\Omega)}\gamma^{(d+2m)(1/2-1/p)}.
\end{equation}
Since $2<\nu<p$, by the Hardy--Littlewood maximal function theorem and \eqref{eq20.31bb}, we obtain
$$
\|U\|_{L_p(\bR\times\Omega)}^p \le N_6\gamma^{(2-p)/\nu'}
\|F\|_{L_p(\bR\times\Omega)}^p
+N_6\gamma^{2/\nu'}\|U\|_{L_p(\bR\times\Omega)}^p.
$$
To get the estimate \eqref{eq16.00}, it suffices to take $\gamma=\gamma(d,n,m,\delta,p)\in (0,1/20]$ sufficiently small such that $N_6\gamma^{2/\nu'}\le 1/2$.
\end{proof}

Now we are ready to give the proof of Theorem \ref{thm4}.

\begin{proof}[Proof of Theorem \ref{thm4}]
We derive Assertion (i) from Theorem \ref{theorem101} as in the proof of Theorem \ref{thm1}. Assertion (ii) follows from Assertion (i), the method of continuity and an approximation argument. To prove Assertion (iii), let $\vu=e^{t(\lambda_0+1)}\tilde \vu$. Clearly $\tilde \vu$ satisfies \eqref{eq09_01} with $\lambda_0+1$ and $\tilde {\vf_\alpha}:=e^{-t(\lambda_0+1)}\vf_\alpha$ in place of $\lambda$ $(=0)$ and $\vf$ respectively.
Therefore, to show the existence and uniqueness of $\vu$ is equivalent to show those of the solution $\tilde \vu$ to the equation
\begin{equation}
                    \label{eq18.07}
\vv_t+(-1)^m \cL \vv +(\lambda_0+1) \vv = \sum_{|\alpha|\le m}D^\alpha \tilde{\vf_\alpha}
\end{equation}
with the zero initial condition. We extend $\tilde \vf$ to be zero for $t\le 0$. Using Assertion (ii), we find a unique $\mathring{\cH}^m_p(\Omega_T)$ solution $\vv$ of \eqref{eq18.07} in $\Omega_T$.
Set $\tilde \vu := \vv$.
By the uniqueness, $\vv\equiv 0$ for $t\le 0$, which implies that $\tilde \vu$ satisfies the same equation with the zero initial condition at $t=0$. This gives the existence. For the uniqueness, let $\tilde \vu\in \mathring{\cH}^m_p((0,T)\Omega)$ be a solution to \eqref{eq18.07} with the zero initial condition and zero right-hand side. We extend $\tilde \vu$ to be zero for $t\le 0$ and denote it by $\vv$. It is easily seen that $\vv$ satisfies \eqref{eq18.07} in $\Omega_T$ with zero right-hand side. By Assertion (ii), we have $\vv\equiv 0$ in $\Omega_T$. Finally, the estimate of $\vu$ is deduced from the estimate of $\tilde \vu$, which in turn follows from Assertion (ii). The theorem is proved.
\end{proof}

We finish the paper by considering the corresponding Dirichlet boundary value problem for elliptic equations in the case that all the involved functions are independent of the time variable.

\begin{theorem}
                                \label{thm5}
Let $\Omega$ be a domain in $\bR^d$, $p \in (1,\infty)$,
and $\vf_\alpha= (f_\alpha^1, \ldots, f_\alpha^n)^{\text{tr}} \in L_p(\Omega)$, $|\alpha|\le m$.
Then there exists a constant $\gamma=\gamma(d,n,m,p,\delta)$
such that, under Assumption \ref{assump1} ($\gamma$),
the following hold true.

\noindent
(i)
For any $\vu \in \mathring{W}^m_p(\Omega)$
satisfying
\begin{equation}
                                    \label{eq11_01}
\cL \vu +(-1)^m \lambda \vu = \sum_{|\alpha|\le m}D^\alpha \vf_\alpha \quad \text{in}\quad \Omega,
\end{equation}
we have
\begin{equation*}
                                   % \label{eq20.39e}
\sum_{|\alpha|\le m}\lambda^{1-\frac {|\alpha|} {2m}} \|D^\alpha \vu \|_{L_p(\Omega)}
\le N \sum_{|\alpha|\le m}\lambda^{\frac {|\alpha|} {2m}} \| \vf_\alpha \|_{L_p(\Omega)},
\end{equation*}
provided that $\lambda \ge \lambda_0$,
where $N$ and $\lambda_0 \ge 0$
depend only on $d$, $n$, $m$, $p$, $\delta$, $K$, and $R_0$.
%and $\text{diam}(\Omega)$.

\noindent
(ii) For any  $\lambda > \lambda_0$, there exists a unique $\vu \in \mathring{W}_p^m(\Omega)$ satisfying \eqref{eq11_01}.

\noindent
(iii) Let $\lambda=0$. Assume that $|\Omega|<\infty$ and that equation \eqref{eq11_01} admits a $W^m_2$ estimate, i.e. for any $\vu\in \mathring{W}^m_2(\Omega)$ satisfying \eqref{eq11_01} with $\vf_\alpha\in L_2(\Omega)$, we have
$$
\|\vu \|_{W^{m}_2(\Omega)}
\le N \sum_{|\alpha|\le m}\|\vf_\alpha \|_{L_2(\Omega)},
$$
where $N$ depends only on $d$, $n$, $m$, $\delta$, $K$, $R_0$, and $|\Omega|$.
Then there exists a unique solution $\vu \in \mathring{W}_p^m(\Omega)$ of \eqref{eq11_01} and $\vu$ satisfies
\begin{equation}
                                \label{eq23.08}
\|\vu \|_{W^{m}_p(\Omega)}
\le N \sum_{|\alpha|\le m}\|\vf_\alpha \|_{L_p(\Omega)},
\end{equation}
where $N$ depends only on $d$, $n$, $m$, $p$, $\delta$, $K$, $R_0$, and $|\Omega|$.
\end{theorem}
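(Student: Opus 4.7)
The strategy is to reduce all three parts to the parabolic result in Theorem \ref{thm4} by a time-cutoff trick, with the zero-$\lambda$ case additionally leveraging the hypothesized $W^m_2$ estimate.

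For assertion (i), view the elliptic equation as a time-independent parabolic one. Fix $\eta_0 \in C_0^\infty(\bR)$ with $\|\eta_0\|_{L_p(\bR)}=1$ and set $\eta_R(t)=\eta_0(t/R)$. If $\vu \in \mathring{W}^m_p(\Omega)$ satisfies \eqref{eq11_01}, then the tensor product $\tilde\vu(t,x):=\eta_R(t)\vu(x)$ lies in $\mathring{\cH}_p^m(\Omega_T)$ and solves
\begin{equation*}
\tilde\vu_t+(-1)^m\cL\tilde\vu+\lambda\tilde\vu \;=\; \eta_R'(t)\,\vu(x) \,+\, \eta_R(t)\sum_{|\alpha|\le m}D^\alpha \vf_\alpha(x) \quad\text{in } \Omega_T.
\end{equation*}
Applying \eqref{eq20.39}, dividing by $\|\eta_R\|_{L_p(\bR)}=R^{1/p}$, and observing that $\|\eta_R'\|_{L_p(\bR)}/\|\eta_R\|_{L_p(\bR)}=O(R^{-1})$, I can let $R\to\infty$ to kill the $\eta_R'$-contribution on the right-hand side and read off the elliptic estimate. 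Assertion (ii) then follows by the method of continuity along the homotopy $s\cL+(1-s)(-\Delta)^m+\lambda I$: the a priori bound from (i) is uniform in $s$, and the endpoint at $s=0$ is $\mathring{W}^m_p$-solvable by the same cutoff reduction applied to Theorem \ref{thm4}(ii). Uniqueness is immediate from (i) with $\vf_\alpha\equiv0$.

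For assertion (iii), apply (i) with $\lambda=\lambda_0+1$ to the rewritten equation $\cL\vu+(-1)^m(\lambda_0+1)\vu=\sum_{\alpha}D^\alpha\vf_\alpha+(-1)^m(\lambda_0+1)\vu$, which gives
\begin{equation*}
\|\vu\|_{W^m_p(\Omega)} \;\le\; N\sum_{|\alpha|\le m}\|\vf_\alpha\|_{L_p(\Omega)}\,+\,N\|\vu\|_{L_p(\Omega)}.
\end{equation*}
The crucial step is absorbing the last term. When $p\ge 2$, the finite measure of $\Omega$ gives $\vu\in\mathring{W}^m_2(\Omega)$ and $\vf_\alpha\in L_2(\Omega)$, so the hypothesized $W^m_2$ estimate yields $\|\vu\|_{W^m_2}\le N\sum\|\vf_\alpha\|_{L_p}$. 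I then close the $L_p$ bound by a compactness/contradiction argument: were the desired $W^m_p$ estimate to fail, a sequence $\vu_k$ of normalized solutions with vanishing right-hand sides would, via Rellich--Kondrachov applied to $\mathring{W}^m_p(\Omega)$ (valid because $|\Omega|<\infty$, by extension by zero to a bounded cube), subconverge strongly in $L_p$ to some nonzero $\vu_\infty\in\mathring{W}^m_p\cap\mathring{W}^m_2$ with $\cL\vu_\infty=0$, contradicting the $W^m_2$ estimate. The range $1<p<2$ is handled by duality with $\cL^*$ at the dual exponent $p'>2$, noting that $\cL^*$ inherits Assumption \ref{assump1} and an analogous $W^m_2$ estimate from the structural symmetry of the hypotheses. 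Existence of the $\mathring{W}^m_p$ solution is then obtained by approximating $\vf_\alpha$ by bounded data, constructing the $\mathring{W}^m_2$ solution (whose existence follows from the $W^m_2$ estimate combined with the Fredholm alternative for $\cL:\mathring{W}^m_2(\Omega)\to W^{-m}_2(\Omega)$, or equivalently by the method of continuity from the Laplacian), upgrading it to $\mathring{W}^m_p$ via the a priori bound \eqref{eq23.08}, and passing to the limit in $L_p$.

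I expect the main difficulty to be the $L_p$-absorption in (iii): the compactness invocation must be carried out without a maximum principle or classical elliptic regularity, relying solely on the a priori estimate from (i) together with Rellich--Kondrachov, which on a general (possibly unbounded) Reifenberg flat domain of finite measure requires a careful extension-by-zero argument for $\mathring{W}^m_p(\Omega)$. The duality reduction for $p<2$ is technically routine but requires verifying the self-consistency of the hypotheses under adjunction, in particular that $\cL^*$ enjoys its own $W^m_2$ estimate on $\Omega$ of finite measure.
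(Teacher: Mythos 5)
Your route to (i)--(ii) via the tensor product $\tilde\vu(t,x)=\eta_R(t)\vu(x)$ and the $R\to\infty$ limit is a standard and valid way to pass from the parabolic Theorem \ref{thm4} to the elliptic statement; the paper simply remarks that the proofs are ``completely analogous'' to those of Theorem \ref{thm4}, so there is no real divergence here (modulo a harmless sign: for \eqref{eq11_01} one has $\tilde\vu_t+(-1)^m\cL\tilde\vu+\lambda\tilde\vu=\eta_R'\vu+(-1)^m\eta_R\sum D^\alpha\vf_\alpha$).

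Where you genuinely diverge from the paper is in (iii). Starting from the same intermediate bound $\|\vu\|_{W^m_p}\le N\sum_\alpha\|\vf_\alpha\|_{L_p}+N_7\|\vu\|_{L_p}$, the paper absorbs $\|\vu\|_{L_p}$ by a \emph{direct} interpolation: pick $p_1>p$ with $1-d/p>-d/p_1$, write $\|\vu\|_{L_p}\le N(\varepsilon)\|\vu\|_{L_2}+\varepsilon\|\vu\|_{L_{p_1}}$ (Young plus $|\Omega|<\infty$), control $\|\vu\|_{L_{p_1}}\le N\|D\vu\|_{L_p}$ by Poincar\'e--Sobolev on $\mathring W^1_p(\Omega)$, absorb the $\varepsilon$-term, and finally bound $\|\vu\|_{L_2}$ by the hypothesized $W^m_2$ estimate combined with $\|\vf_\alpha\|_{L_2}\le|\Omega|^{1/2-1/p}\|\vf_\alpha\|_{L_p}$. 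You instead run a blow-up/compactness argument. That argument can be made to work, but it is heavier than needed, and as written it contains a genuine flaw: $\Omega$ in this theorem is only assumed to have finite measure, not to be bounded, so the parenthetical ``extension by zero to a bounded cube'' does not justify Rellich--Kondrachov. Compactness of $\mathring W^m_p(\Omega)\hookrightarrow L_p(\Omega)$ for unbounded $\Omega$ of finite measure is still true, but proving it requires precisely the Sobolev embedding $\mathring W^m_p\hookrightarrow L_{p_1}$ plus a small-tail argument --- i.e.\ the same ingredients the paper uses directly. Once you have those ingredients in hand, the direct absorption is both shorter and avoids the detour through sequential compactness, weak limits, and the uniqueness of the limit. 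I would recommend replacing the blow-up argument in (iii) with the interpolation/absorption step; the rest of your proposal (the duality reduction to $p>2$ and the existence argument) is fine.
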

\begin{proof}
The proofs of the first two assertions in Theorem \ref{thm5}
are completely analogous to those of Theorem \ref{thm4}, and are thus omitted.

For the last assertion, as before it is sufficient to show the a priori estimate \eqref{eq23.08} for $\vu\in \mathring{W}_p^m(\Omega)$. Again, by a duality argument we may focus on the case $p>2$. From Assertion (i), we have
\begin{equation}
                                \label{eq23.17}
\|\vu \|_{W^{m}_p(\Omega)}
\le N \sum_{|\alpha|\le m}\|\vf_\alpha \|_{L_p(\Omega)}+N_7\|\vu \|_{L_p(\Omega)},
\end{equation}
Therefore, we only need to estimate $\|\vu\|_{L_p(\Omega)}$.
Take $p_1\in (p,\infty)$ such that  $1-d/p>-d/p_1$. By H\"older's inequality, Young's inequality and the Poincar\'e-Sobolev inequality, we get for any $\varepsilon>0$,
$$
\|\vu\|_{L_p(\Omega)}\le N(\varepsilon)\|\vu\|_{L_2(\Omega)}+ \varepsilon\|\vu\|_{L_{p_1}(\Omega)}\le N(\varepsilon)\|\vu\|_{L_2(\Omega)}+N_8\varepsilon\|D\vu\|_{L_{p}(\Omega)}.
$$
Choosing $\varepsilon=1/(2N_7N_8)$ and using \eqref{eq23.17}, we obtain \eqref{eq23.08}. The theorem is proved.
\end{proof}

\begin{remark}
As an immediate corollary of Theorem \ref{thm5}, we obtain the $\mathring{W}^m_p(\Omega)$ solvability of \eqref{eq11_01} with $\lambda=0$ if, in addition, $|\Omega|<\infty$ and the lower-order coefficients of $\cL$ are all zero. Finally, for second-order scalar elliptic equations in the form
$$
D_j(a_{ij}D_i u)+D_i(a_i u)+b_iD_i u+cu=\Div g+f,
$$
under the same assumptions and the additional condition that $D_ia_i+c\le 0$ in $\Omega$ in the weak sense, we also get the $\mathring W^1_p(\Omega)$ solvability by using the classical $W^1_2$ estimate of the Dirichlet problem on a domain with a bounded measure; see \cite[\S 8]{GT83} or \cite[Sect. 7]{DongKim08a}. This result generalizes Theorem 2.7 in \cite{DongKim08a}, in which bounded Lipschitz domains are considered. It also extends the main result of \cite{BW10} to equations with lower-order terms.
\end{remark}

%===============================================================================%

%\section*{Acknowledgement}

\bibliographystyle{plain}

\begin{thebibliography}{1}

\bibitem{A65} \textsc{Agmon S.}:
\textit{Lectures on elliptic boundary value problems.}
Prepared for publication by B. Frank Jones, Jr. with the assistance of George W. Batten, Jr. Revised edition of the 1965 original. AMS Chelsea Publishing, Providence, RI, 2010

\bibitem{AuQa} \textsc{Auscher, P., Qafsaoui, M.}: Equivalence between regularity theorems and heat kernel estimates for higher order elliptic operators and systems under divergence form, \textit{J. Funct. Anal.} \textbf{177}, no. 2, 310--364 (2000).

\bibitem{ADN64} \textsc{Agmon S., Douglis A., Nirenberg L.}: Estimates near the boundary for solutions of elliptic partial differential equations satisfying general boundary
conditions, I, \textit{Comm. Pure Appl. Math.}, \textbf{12}, 623--727  (1959); II, ibid., \textbf{17}, 35--92  (1964).

\bibitem{BW04} \textsc{Byun S., Wang L.}: Elliptic equations with BMO coefficients in Reifenberg domains,
\textit{Comm. Pure Appl. Math.} \textbf{57}, no. 10, 1283--1310  (2004).

\bibitem{B09} \textsc{Byun S.}: Hessian estimates in Orlicz spaces for fourth-order parabolic systems in non-smooth domains, \textit{J. Differential Equations}  \textbf{246},  no. 9, 3518--3534  (2009).

\bibitem{BW10} \textsc{Byun S., Wang L.}: Elliptic equations with measurable coefficients in Reifenberg domains, \textit{Adv. Math.} (2010), doi:10.1016/j.aim.2010.05.014.

\bibitem{BC93} \textsc{Bramanti M., Cerutti M.}: $W_p^{1,2}$ solvability for the Cauchy-Dirichlet problem for parabolic equations with VMO coefficients, \textit{Comm. Partial Differential Equations} \textbf{18} (1993), no. 9-10, 1735--1763. %parabolic version of CFLA in C^{1,1} domains

\bibitem{CaPe98} \textsc{Caffarelli L. A., Peral I.}: On $W^{1,p}$ estimates for elliptic equations in divergence form, \textit{Comm. Pure Appl. Math.} \textbf{51} (1998), no. 1, 1--21.

\bibitem{CFL2} \textsc{Chiarenza F., Frasca M., Longo P.}: $W^{2,p}$-solvability of the Dirichlet problem for
nondivergence elliptic equations with VMO coefficients,
\textit{Trans. Amer. Math. Soc.} \textbf{336}, no. 2,  841--853  (1993).


\bibitem{CFF} \textsc{Chiarenza F., Franciosi M., Frasca  M.}:
$L\sp p$-estimates for linear elliptic systems with discontinuous coefficients, \textit{Atti Accad. Naz. Lincei Cl. Sci. Fis. Mat. Natur. Rend. Lincei (9) Mat. Appl.} \textbf{5} (1994), no. 1, 27--32. %det elliptic cond.

\bibitem{Davies} \textsc{Davies E. B.}: Uniformly elliptic operators with measurable coefficients, \textit{J. Funct. Anal.} \textbf{132}, no. 1, 141--169  (1995).

\bibitem{DFG}
\textsc{Di Fazio G.}: $L^p$ estimates for divergence form elliptic equations with discontinuous coefficients. (Italian summary)
\textit{Boll. Un. Mat. Ital. A (7)} \textbf{10}, no. 2, 409--420  (1996).

%\bibitem{DK08} \textsc{Dong H., Kim D.}: Parabolic and elliptic systems with VMO coefficients, submitted (2008).

\bibitem{Dong08b} \textsc{Dong H.}: Parabolic equations with variably partially VMO coefficients, \textit{Algebra i Analis (St. Petersburg Math. J.)}, to appear (2010), arXiv:0811.4124 [math.AP].

\bibitem{DongKim08a} \textsc{Dong H., Kim D.}: Elliptic equations in divergence form with partially BMO coefficients, \textit{Arch. Ration. Mech. Anal.}, \textbf{196} (2010) no. 1, 25--70.

\bibitem{DK09} \textsc{Dong H., Kim D.}: Parabolic and elliptic systems in divergence form with variably partially BMO coefficients, \textit{SIAM J. Math. Anal.}, to appear (2011).

\bibitem{DK09_01} \textsc{Dong H., Kim D.}: On the $L_p$-solvability of higher order parabolic and elliptic systems with BMO coefficients, \textit{Arch. Rational Mech. Anal.}, DOI: 10.1007/s00205-010-0345-3.

 \bibitem{DongKrylov08} \textsc{Dong H.,  Krylov N.~V.}:   Second-order elliptic and parabolic equations with $B(\bR^{2}, VMO)$ coefficients, \textit{Trans. Amer. Math. Soc.}, \textbf{362}  (2010), no. 12, 6477--6494.

%\bibitem{MR1914441}
%\textsc{Ebenfeld S.},
%{$L\sp 2$}-regularity theory of linear strongly elliptic {D}irichlet systems of order {$2m$} with minimal regularity in the coefficients,
%\textit{Quart. Appl. Math.} \textbf{60}, no. 3, 547--576  (2002).

\bibitem{Fried} \textsc{Friedman A.}: \textit{Partial Differential Equations of Parabolic Type.}, Prentice-Hall, Englewood Cliffs, N.J, 2008. %L-H

\bibitem{Fried83} \textsc{Friedman A.}  ``Partial Differential Equations'', Krieger, Melbourne, FL, 1983.

\bibitem{GT83} \textsc{Gilbarg D., Trudinger N.~S.}: \textit{Elliptic Partial Differential Equations of Second Order}, Springer, 2nd edition, 1983.

\bibitem{HHH} \textsc{Haller-Dintelmann R., Heck  H., Hieber  M.}:
$L^p$--$L^q$-estimates for parabolic systems in non-divergence form with VMO coefficients, \textit{J. London Math. Soc.}, (2) 74 (3), 717¨C736  (2006).

\bibitem{KimKrylov07} \textsc{Kim D.,  Krylov N.~V.}: Elliptic differential equations with coefficients measurable with respect to one variable and VMO with respect to the others, \textit{SIAM J. Math. Anal.} \textbf{39}, no. 2, 489--506 (2007).

\bibitem{KimKrylov:par06} \textsc{Kim D.,  Krylov N.~V.}: Parabolic equations with measurable coefficients, \textit{Potential Anal.} \textbf{26}, no. 4, 345--361 (2007).

\bibitem{Kim:par06} \textsc{Kim D.}: Parabolic equations with measurable coefficients {II},
\textit{J. Math. Anal. Appl.} \textbf{334}, no. 1, 534--548  (2007).


\bibitem{Kim07a} \textsc{Kim D.}: Elliptic and parabolic equations with measurable coefficients in $L_p$-spaces with mixed norms, \textit{Methods Appl. Anal.} \textbf{15}, no. 4, 437--468 (2008).

\bibitem{KS80} N.V. Krylov, M.V. Safonov,
A certain property of solutions of parabolic equations
with measurable coefficients,
\textit{Izvestiya Akademii Nauk SSSR, seriya matematicheskaya}
\textbf{44} (1980),  no. 1,  161--175 in Russian; English
translation in \textit{Math. USSR Izvestija}
\textbf{16} (1981), no. 1, 151--164.

\bibitem{Krylov_2005} \textsc{Krylov N.~V.}: Parabolic and elliptic
equations with VMO coefficients, \textit{Comm. Partial Differential
Equations} \textbf{32}, no. 1-3, 453--475  (2007).

\bibitem{Krylov_2007_mixed_VMO} \textsc{Krylov N.~V.}: Parabolic
equations with VMO coefficients in Sobolev spaces with mixed norms,
\textit{J. Funct. Anal.} \textbf{250}, no. 2, 521--558  (2007).

%\bibitem{Krylov:book:2008}  \textsc{Krylov N.~V.}: \textit{Lectures on elliptic and parabolic equations in Sobolev spaces}, American Mathematical Society, 2008.

\bibitem{Krylov08}  \textsc{Krylov N.~V.}: Second-order elliptic equations with variably partially VMO coefficients, \textit{J. Funct. Anal.} \textbf{257}, 1695--1712  (2009).

\bibitem{LSU} \textsc{Lady\v{z}enskaja O. A., Solonnikov V. A., Ural'ceva N. N.}: \textit{Linear and quasilinear equations of parabolic type}.
American Mathematical Society, Providence, RI, 1967.

\bibitem{Lo72} \textsc{Lorenzi A.}: On elliptic equations with piecewise constant coefficients. II , \textit{Ann. Scuola Norm. Sup. Pisa (3)} \textbf{26}, 839-870  (1972).

\bibitem{MaMiSh} \textsc{Maz'ya V., Mitrea  M., Shaposhnikova T.}: The Dirichlet problem in Lipschitz domains for higher order elliptic systems with rough coefficients, preprint.

\bibitem{Mi06} \textsc{Miyazaki Y.}: Higher order elliptic operators of divergence form in $C^1$ or Lipschitz domains, \textit{J. Differential Equations}  \textbf{230},  no. 1, 174--195  (2006).

\bibitem{PS1} \textsc{Palagachev D., Softova L.}:
A priori estimates and precise regularity for parabolic systems with discontinuous data, \textit{Discrete Contin. Dyn. Syst.} \textbf{13} (3), 721--742 (2005).%det elliptic cond.

\bibitem{PS3} \textsc{Palagachev D., Softova L.}: Precise regularity of solutions to elliptic systems with discontinuous data,  \textit{Ricerche Mat.}  \textbf{54}  (2005),  no. 2, 631--639 (2006)


\bibitem{Sa80} M.V. Safonov,
Harnack inequality for elliptic equations and the
H\"older property of their solutions, \textit{Zap. Nauchn. Sem. Leningrad.
Otdel. Mat. Inst. Steklov. (LOMI)} \textbf{96} (1980), 272--287 in Russian;
English translation in  \textit{J. Soviet Math.} \textbf{21} (1983), no. 5, 851--863.

%H. Qingbo, Estimates on generalized Morrey spaces $L_\phi^{2, \lambda}$ and $BMO_\psi$ for linear elliptic systems, Indiana Univ. Math. J. 45 (1996), 397--439.

\bibitem{Solo} \textsc{Solonnikov V. A.}:
On boundary value problems for linear parabolic systems of differential equations of general form, (Russian), \textit{Trudy Mat. Inst. Steklov.} \textbf{83}, 3--163  (1965);
English translation:
\textit{Proceedings of the Steklov Institute of Mathematics. No. 83 (1965): Boundary value problems of mathematical physics. III}.
Edited by O. A. Lady\v zenskaja. Translated from the Russian by A. Jablonski\u\i, American Mathematical Society, Providence, R.I. 1967 iv+184 pp. %weakest

\end{thebibliography}

\def\cprime{$'$}\def\cprime{$'$} \def\cprime{$'$} \def\cprime{$'$}
  \def\cprime{$'$} \def\cprime{$'$}

\end{document}